\documentclass{amsart}
\usepackage{amsmath,amsthm,amsfonts,graphicx,amssymb}
\usepackage{amscd}
\usepackage{mathrsfs}
\usepackage[utf8]{inputenc}
\usepackage[left=40mm,right=40mm]{geometry}
\usepackage{tikz-cd}
\usepackage{enumitem}
\usepackage{quiver}
\usepackage{thm-restate}
\long\def\comment#1\endcomment{}

\usepackage{hyperref}
 \usepackage{dirtytalk}

\usepackage{mathtools}
\usepackage{caption}
\usepackage{subcaption}
\usepackage{tikz}
\usetikzlibrary{positioning,calc}

\newcommand{\acts}{\curvearrowright}

\newcounter{dcom}

\newcounter{scom}

\newcounter{ccom}

\def\dist{{\rm{d}}}

\usepackage{color}

\title[Weak rank rigidity and navigable path systems]{Weak rank rigidity for groups with a navigable path system}
\author{Cornelia Dru\c{t}u}
\email{drutu@maths.ox.ac.uk}
\address{Mathematical Institute \\
 University of Oxford \\ Oxford, UK.}
\author{Davide Spriano}

\email{davide.spriano@warwick.ac.uk}
\address{Mathematics Institute \\ 
 University of Warwick \\ Coventry, UK.}
 \author{Stefanie Zbinden}
\email{szbinden@math.uni-bonn.de}
\address{Mathematical Institute \\
 University of Bonn\\ Bonn, Germany.}

\definecolor{darkgreen}{cmyk}{1,0,1,.2}
\allowdisplaybreaks[1] 

\long\def\comment#1\endcomment{}

\newtheorem{theorem}{Theorem}[section]
\newtheorem{proposition}[theorem]{Proposition}
\newtheorem{corollary}[theorem]{Corollary}
\newtheorem{lemma}[theorem]{Lemma}

\newtheorem{notation}[theorem]{Notation}
\newtheorem{cvn}[theorem]{Convention}

\newtheorem{definition}[theorem]{Definition}

\numberwithin{equation}{section}

\newtheorem{claim}{Claim}
\theoremstyle{definition}
\newtheorem{example}[theorem]{Example}
\newtheorem{remark}[theorem]{Remark}

\newtheoremstyle{citing}
  {3pt}
  {3pt}
  {\itshape}
  {}
  {\bfseries}
  {}
  {.5em}
  {\thmnote{#3}}

\theoremstyle{citing}

\DeclareMathOperator{\diam}{diam}
\DeclarePairedDelimiter\abs{\lvert}{\rvert}%
\DeclarePairedDelimiter\norm{\lVert}{\rVert}
\DeclarePairedDelimiter\ceil{\lceil}{\rceil}
\DeclarePairedDelimiter\floor{\lfloor}{\rfloor}
\newcommand{\eps}{\epsilon}

\newcommand{\mb}{\partial_*}
\newcommand{\Con}{{\mathrm{Con}}}

\newcommand {\la}{\lambda}
\newcommand{\lao}{\lambda_0}
\newcommand {\ka}{\kappa}
\newcommand{\kao}{\kappa_0}

\newcommand{\nn}{{\mathcal N}}

\newcommand{\col}{\qgot}

\newcommand{\spath}{{\mf{h}}}

\newcommand{\dv}{{\mathrm{div}}}
\newcommand{\Dv}{{\mathrm{Div}}}

\newcommand{\length}[1]{{\norm{{#1}}}}
\newcommand{\dlength}[1]{{\abs{{#1}}}}
\newcommand{\isub}[2]{{{#1}{[#2]}}}
\newcommand{\psub}[2]{{[#2]}_{#1}} 
\newcommand{\pref}[2]{{#1}_{\mathrm{pref}}[#2]}
\newcommand{\suf}[2]{{#1}_{\mathrm{suf}}[#2]}

\newcommand{\R}{\mathbb{R}}

\newcommand{\N}{\mathbb{N}}
\newcommand{\Z}{\mathbb{Z}}

\newcommand{\isom}{\mathrm{Isom}}

\newcommand {\qgot}{{\mathfrak q}}
\newcommand {\pgot}{{\mathfrak p}}

\newcommand {\mf}[1]{\mathfrak{#1}}
\newcommand{\mc}{\mathcal}

\newcommand{\deletethis}[1]{}

\numberwithin{equation}{section}

\begin{document}

\begin{abstract}
We show that groups with a mild form of non-positive curvature (a navigable path system) satisfy the weak rank rigidity conjecture: they either have linear divergence or a Morse element. This class includes discrete groups of projective automorphisms of open convex cones, Helly groups (answering a question of Genevois), Coxeter groups, weak Garside groups (in particular Deligne's groups and fundamental groups of Salvetti complexes of oriented matroids), hierarchically hyperbolic groups, and other examples. Along the way, we show that those groups satisfy the Morse local-to-global property, providing a unified proof for the whole class.

In the metric setting, the same condition of non-positive curvature allows to provide a local definition (that is, in a sense, optimal) of rank one/Morse geodesics, mirroring the one using parallel Jacobi fields from Riemannian geometry; to deduce linearity of divergence from linearity on a sequence; to obtain new cases in which Morse geodesics are strongly contracting. 

The main new tool introduced is the \emph{generalised contraction space}, a hyperbolic space that encodes the negative curvature of a given space. 
\end{abstract}

\maketitle

\tableofcontents
\section{Introduction}

Curvature, as a fundamental spatial feature, lies at the core of geometry. Non-positive curvature, in particular, occupies a central place in modern geometry.

Various notions of curvature have been defined and intensively used, at first, mainly in differential geometry and in the study and classification of manifolds, using analytic tools. Answering the need for such a notion in the setting of metric spaces, curvature later began to be used in synthetic geometry, via the notion of CAT(0) space, requiring a distance function more convex than in the flat plane.

With the work of Efremovic \cite{Efremovic:growth}, Albert Schwarz \cite{Svarc} and Milnor \cite{Milnor(1968b)}, advancing the idea that  the fundamental group of a compact Riemannian manifold encodes much of the geometry of its universal cover, the need for a coarse notion of curvature, that would make sense for discrete groups as well, began to make itself felt. Thus,  more recently, through various coarse versions, curvature made its way to algebra, combinatorics, topology and non-commutative geometry.  

The seminal work of Gromov \cite{Gromov:hyperbolic} showed that negative curvature can be translated in terms of groups in a clear-cut way, yielding the notion of hyperbolic groups, and more generally Gromov hyperbolic spaces. In \cite[p. 80-81]{Gromov:hyperbolic}, M. Gromov pointed out that, in the same way in which the example of cocompact groups of isometries of negatively curved spaces led to hyperbolic groups, there should be a notion of semihyperbolic groups, modelled on cocompact groups of isometries of non-positively curved spaces. However, the right approach towards formulating such a notion has been less clear.

Various possible definitions have been suggested, with usefulness depending on the viewpoint and the problem addressed. Suggestions for finitely generated groups include the existence of bounded quasi-geodesic bicombings on Cayley graphs, equivariant or not (\cite{Epstein-Cannon-Holt-Levy-Paterson-Thurston(1992)}, \cite{AlonsoBridson} and \cite{Bridson-Haefliger})    
and for finitely presented groups a quadratic Dehn function \cite{Gromov:Asymptotic}. 

In \cite[$\S 6.D_3$]{Gromov:Asymptotic} Gromov also argued in favour of a definition of `non-positively curved space' that would cover Banach spaces and Hilbert geometries. In this paper, we focus on a version of coarse non-positive curvature that covers many Hilbert geometries and appears to be particularly well adapted for the topic of rank rigidity.

Generally speaking, non-positive curvature displays its most striking and informative manifestations when confronted with problems of rigidity. `Rigidity' in a wide sense covers rigidity of representations under local deformations or bounded perturbations, and rigidity of the structures themselves. The latter has several interpretations. A structure is rigid if it is preserved under loose equivalence relations (e.g. quasi-isometry, orbit equivalence), its group of automorphisms is small etc.

The brand of rigidity that we consider here comes from the Rank Rigidity Theorem of W. Ballmann \cite{Ballman:nonpositive} (see also \cite{KKL:QI}) stating that 
if $X$ is a non-flat, de Rham irreducible, locally compact, complete CAT(0) manifold on which a discrete group $G$ acts cocompactly by isometries, then $X$ is either a symmetric space of non-compact type and rank at least two or $X$ contains a periodic Morse geodesic. In the Riemannian manifold setting, a periodic geodesic is Morse if and only if its space of parallel Jacobi fields is of dimension one, while in the more general metric space setting such a geodesic is Morse if and only if it does not bound a half-plane \cite[Proposition~4.5]{KKL:QI}. 

In the second case, of the existence of a periodic Morse geodesic in $X$, the group $G$ contains Morse elements and free non-abelian subgroups. 

Thus, we are aiming at a class of metric spaces with some weak form of non-positive curvature that would allow for a meaningful investigation of the presence of strictly negative curvature, via Morse quasi-geodesics, which are, in a sense, directions with negative curvature in a space; if possibly also for the formulation of a similar dichotomy. 

Note that some extra assumption (e.g. of weak non-positive curvature) is needed, as otherwise the conclusions of the Rank Rigidity Theorem do not hold in general:
 
\begin{itemize}  
\item a group may contain a Morse quasi-geodesic (with respect to a word metric) but not a Morse element \cite{fink:morse};

\item a group may contain a Morse element, but not a free subgroup \cite{OOS}. 
\end{itemize}

The relatively mild condition of `coarse non-positive curvature' that we propose is that of the {\emph{existence of an undirected navigable path system}}. This allows us to formulate a dichotomy (Theorem \ref{thmi:dichotomy}) that generalizes the case of groups endowed with a bounded equivariant bicombing, moreover it also covers another notion of coarse non-positive curvature, coming from a different direction where such a theory has been successful. Indeed, in topology and algebra, CAT(0) cubical structures and their coarse versions often came into play in recent years, allowing to solve major problems. These structures have been generalized to median structures, and the latter turn out to be particular cases of spaces with undirected navigable path systems. We refer to Subsection~\ref{subsec:spaces_with_navigable_PS} for further details. Last but not least, the condition is satisfied by Banach spaces, convex cones with the Hilbert metric and many more (see paragraph following Theorem \ref{cori:median-is-md} and Section \ref{sec:metric-spaces}).

In our dichotomy theorem, considering the generality of the setting, the case with no rank one (quasi-)geodesic cannot hope to yield such a very rigid structure as that of a symmetric space or Euclidean building. In this case, the `presence of flat geometry along every geodesic' is expressed via `linear divergence', which is a more flexible condition. Nevertheless, there is some rigidity behind this condition that we briefly explain below.

The notion of divergence that we use here is the one introduced in \cite{DrutuMozesSapir}, and it essentially measures the length of minimal paths joining two points  while staying away from a ball around a  third point. Linear divergence for a group is equivalent to the property that no asymptotic cone has global cut-points. The importance of having cut-points in asymptotic cones has been shown in \cite{DrutuSapir:TreeGraded} and \cite{DrutuSapir:splitting}.

Besides lattices of isometries of higher rank symmetric spaces, other groups with linear divergence are groups satisfying a law and groups with an infinite order element in the center.  

Among the instances of rigidity that groups with linear divergence display, one can mention for instance that such a group, when embedded in a relatively hyperbolic group, must be parabolic, while when embedded in a mapping class group it must be virtually abelian. This latter theorem can be used to give a proof of the Birman-Lubotzky-McCarthy theorem \cite{BirmanLubotzkyMcCarthy}. It also implies the theorem of Farb-Kaimanovich-Masur \cite{KaimanovichMasur,FarbMasur} stating that a homomorphism from a higher rank lattice into the mapping class group of a surface must have finite image.

\subsection{Main Theorem and Applications} 

In \cite[Question 6.10]{BehrstockDrutu:Divergence} it was asked whether a cobounded CAT(0) space satisfies some form of weak rank rigidity. The question was answered positively in \cite{KentRicks:Asymptotic} and then in \cite{PetytSprianoZalloum:hyperbolic} with different techniques. As far as we are aware, CAT(0) spaces are the only class of cobounded spaces currently known to (non-trivially) satisfy weak rank rigidity. For instance, Genevois asked whether Helly groups satisfy weak rank rigidity, as recorded by Haettel in \cite[Section~13]{Haettel:survey_injective}.

Our main result is the following. 

\begin{theorem}[Weak rank rigidity, {Theorem~\ref{thm:main-dichotomy}}]\label{thmi:dichotomy}
    Let $G$ be a non-virtually cyclic group acting properly and coboundedly on a geodesic metric space $X$ endowed with an undirected, $G$--invariant, and navigable path system. Then the following dichotomy holds. 
    \begin{enumerate}
        \item\label{dich1} Either $G$ has linear divergence, equivalently no asymptotic cone of $G$ has a global cut point, or 
        \item\label{dich2} $G$ contains an infinite order Morse element, every asymptotic cone of $G$ has cut points, and $G$ is acylindrically hyperbolic.
    \end{enumerate}
\end{theorem}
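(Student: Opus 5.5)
The plan is to prove the dichotomy through the generalised contraction space mentioned in the abstract. I will build a hyperbolic space $\mathcal{C}X$ on which $G$ acts, in such a way that loxodromic isometries of $\mathcal{C}X$ are exactly the Morse elements of $G$, and then split according to whether $\mathcal{C}X$ is bounded. For the construction, fix the undirected, $G$--invariant, navigable path system $\mathcal{P}$ on $X$. Declare a path in $\mathcal{P}$ to be \emph{$D$--contracting along a subpath} when closest-point projections of balls disjoint from it have diameter at most $D$; this is measured only against paths of $\mathcal{P}$, which makes it a local, coarsely checkable condition. Then cone off, or electrify, every path of $\mathcal{P}$ along its maximal non-contracting subsegments. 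Navigability, meaning that paths in $\mathcal{P}$ between nearby points fellow-travel and that concatenations can be rerouted inside $\mathcal{P}$, should give thin triangles in the coned-off space via a guessing-geodesics criterion of Bowditch/Masur--Minsky type. I expect this hyperbolicity check, together with the invariance of the construction under $G$, to be the main technical step.

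\textbf{Case 1: $\mathcal{C}X$ is unbounded.} Coboundedness of the action on $X$ gives a cobounded action on $\mathcal{C}X$. By Gromov's classification, either there is a loxodromic element or the action is horocyclic or lineal, and coboundedness rules out horocyclic. So some $g\in G$ acts loxodromically. Its axis in $X$ then crosses contracting subsegments of $\mathcal{P}$--paths with linear frequency, and the local-to-global argument along $\mathcal{P}$ upgrades this to $g$ being Morse in $X$, hence in $G$ by Švarc--Milnor. It remains to identify which subsegments are contracting; the Morse local-to-global property should be exactly the tool for this. A Morse element in a non-virtually-cyclic group gives cut points in every asymptotic cone, by Druţu--Mozes--Sapir, and hence superlinear divergence. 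To obtain acylindrical hyperbolicity, I will show that $g$ is WPD on $\mathcal{C}X$, or use Sisto's result that Morse elements that are strongly contracting are generalised loxodromic. Properness of the action on $X$ plus contraction gives WPD, since elements moving two far-apart points of the axis a bounded amount must coarsely fix a long contracting segment, and there are finitely many such elements.

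\textbf{Case 2: $\mathcal{C}X$ is bounded.} Here every long path in $\mathcal{P}$ is mostly non-contracting at every scale, and I need to show that the divergence is linear. Divergence is controlled by paths of $\mathcal{P}$ via navigability. A failure of contraction along a segment of length $r$ produces, by a standard argument, a detour of length $O(r)$ around an $r/C$--ball centred on it. I then iterate this in the way that "linearity on a sequence implies linearity" is used in the abstract: linear detours on a sequence of scales, combined with coboundedness, give $\mathrm{Div}(r)\preceq r$, which is equivalent to no cut points in asymptotic cones. I expect the main obstacle to be making the quantifiers uniform, namely that a bounded $\mathcal{C}X$ yields non-contraction with constants independent of the segment. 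To get this I will argue by contradiction: take a sequence of segments with increasingly good contraction, pass to a limit in an asymptotic cone to produce a cut point, and pull that cut point back to a contracting path of $\mathcal{P}$, which would be unbounded in $\mathcal{C}X$.
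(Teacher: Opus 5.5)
Your Case~1 broadly matches the paper: a loxodromic element from the cobounded action on a hyperbolic space, WPD from properness, and a Morse axis. Case~2, however, has a genuine gap. Boundedness of the coned-off space does not by itself give linear divergence. Coboundedness only yields the weak dichotomy, $\widehat{\diam}\leq \Delta$ or $=\infty$ (Corollary~\ref{cor:weak-diameter}). With diameter $\Delta>1$ you only know that two far-apart points are joined by a chain of about $\Delta$ anti-contracting paths. The junctions of this chain may sit right next to the point you need to avoid, so no detour is produced.

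The argument of Theorem~\ref{thm:divergence-dichotomy1} needs a \emph{single} anti-contracting special path between the two far points, i.e.\ diameter exactly $1$. It also needs the notion to be scale-dependent: a contraction gauge $K$, with midthinness tested against $(n,\mc P)$--polygonal lines. Then every subsegment of length at least $K(r)$ admits an $(n,\mc P)$--polygonal line avoiding the $r$--ball around its midpoint, and navigability tightens this to a detour of linear length. Your notion uses one fixed constant $D$ and closest-point projections of balls, so it carries no information at scale $r$. The ``standard argument'' producing an $O(r)$ detour from a failure of $D$--contraction is therefore not available.

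The repair you sketch is circular. A cut point in one asymptotic cone only produces segments that are thin at a sequence of scales. Turning this into a contracting direction of $\mc P$ that is unbounded in $\mathcal{C}X$ is exactly the hard implication of weak rank rigidity. Moreover, a long but finite contracting segment does not make the cone-off unbounded. Similarly, the ``linear on a sequence implies linear'' result requires a linear bound on some sequence of scales, which you never produce.

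The missing idea is the strong diameter dichotomy (Theorem~\ref{thm:strict-diameter-dichotomy}), obtained by comparing two contraction spaces. Suppose some $\mc K$--midthin segment $\gamma$ exists. Superlinear growth of the non-virtually-cyclic group $G$ (Lemma~\ref{lemma:linear-growth-implies-virtually-cyclic}) provides $g\in G$ and a connector $\eta$ staying far from $m_\gamma$ and $gm_\gamma$ (Lemma~\ref{lemma:good-connector}). Chaining the translates $g^i\gamma\ast g^i\eta$ and using Proposition~\ref{prop:no-skips} and Lemma~\ref{lemma:close-to-thin-implies-thin-contraction-gauge-version} gives $\hat{d}_{\mc K'}(x,g^{2\Delta+4}x)>\Delta$. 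By the weak dichotomy, $\hat{X}_{\mc K'}$ is then unbounded. If no such $\gamma$ exists, $\hat{X}_{\mc K}$ has diameter exactly $1$ and the divergence argument applies. This is also where the non-virtually-cyclic hypothesis is genuinely used, which your proposal never does for the dichotomy itself.
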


A \emph{path system} $\mc P$ is a family of chosen quasi-geodesics connecting every pair of points. For instance, in a geodesic metric space the set of all geodesics is a path system. 
The key requirement that we impose on our path systems is \emph{navigability} (see Definition~\ref{defn:navigable}). \emph{Navigability} is a property allowing to tighten $\mc P$-polygonal lines that avoid a given ball until their size is comparable to that of the ball. It is satisfied for instance by path systems with some form of coarse continuity, such as path systems coming from bounded geodesic combings. We defer a discussion on how to endow a space with a navigable path system, and for a long list of examples to Subsection~\ref{subsec:spaces_with_navigable_PS}. 

The strength of Theorem \ref{thmi:dichotomy} is illustrated by the list of examples to which it applies, thus providing a uniform approach to rigidity for many different classes of groups.

\begin{corollary}\label{cori:groups_satisfying_WRR}
The Weak Rank Rigidity is satisfied by the following classes of groups. 
\begin{enumerate}
   \item discrete groups $G$ of projective automorphisms of open convex cones $C$ such that $C/G$ is compact; the geometry of such groups and such cones (called {\emph{divisible convex cones}}) has been studied by Y. Benoist \cite{BenoistTata, BenoistDuke, BenoistDim3};
   
   \item\ naively convex cocompact groups \cite{DancigerGueritaudKassel}; these are groups of automorphisms as above with cocompact action on a convex subset of the cone $C$;
   
   \item\label{cor:helly} Helly groups, which include cubulable groups, FC type Artin groups, Garside groups such as spherical Artin groups;
    \item Coxeter groups; 
     \item CAT(0) groups;
    \item hierarchically hyperbolic groups; in particular mapping class groups, Artin groups of large, extra-large and hyperbolic type, and extensions of lattice Veech subgroups;
    \item finitely presented C(6), C(3)-T(6), C(4)-T(4) small cancellation groups;
    \item groups acting properly coboundedly on geodesic median spaces;
    \item weakly systolic groups; in particular systolic groups.
\end{enumerate}
\end{corollary}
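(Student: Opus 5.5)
The plan is to derive the corollary directly from Theorem~\ref{thmi:dichotomy}. For each listed class I will exhibit a geodesic metric space $X$ with a proper cobounded $G$--action that carries an undirected, $G$--invariant, navigable path system. Virtually cyclic groups satisfy weak rank rigidity trivially, since any infinite-order element of such a group is Morse, so only the non-virtually cyclic case needs attention. Most of the work is therefore a verification that the path systems in question are navigable. The cleanest route is through a small number of general criteria, deferred in the paper to Subsection~\ref{subsec:spaces_with_navigable_PS}.

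\textbf{Criterion 1 (bicombings).} A $G$--invariant (or reversible) bounded, coarsely continuous quasi-geodesic bicombing yields a navigable path system. To check this, I would use the fellow-travelling/convexity estimate to tighten a $\mathcal P$--polygonal line avoiding a ball: consecutive paths are replaced by the combing path between their endpoints, and convexity of the distance function keeps the new path outside a ball of comparable radius.

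\textbf{Criterion 2 (median spaces).} Geodesic median spaces are navigable, using the median projection onto the ball's centre.

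\textbf{Criterion 3 (HHS).} Hierarchy paths form a navigable system. The check is done in each curve graph via bounded geodesic image and the distance formula.

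With these criteria in hand, the classes are dispatched one by one.
(a) CAT(0) groups: geodesics form a convex bicombing, so Criterion~1 applies.
(b) Divisible and naively convex cocompact cones: Hilbert-metric straight segments form a convex, hence bicombing-type, system on the convex set on which $G$ acts cocompactly; properness comes from discreteness.
(c) Helly groups: injective hulls carry a $G$--invariant reversible conical geodesic bicombing (Lang, Descombes--Lang), so Criterion~1 applies. Cubulable groups, FC type Artin groups and Garside groups are Helly by Huang--Osajda and Chalopin et al.
(d) Coxeter groups: they act on the Davis complex, which is CAT(0), so they reduce to (a) directly.
(e) Groups acting on median spaces are covered by Criterion~2.
(f) HHGs, including mapping class groups, the listed Artin groups and Veech extensions, are covered by Criterion~3.
(g) Small cancellation groups: finitely presented C(6) and C(3)-T(6) groups act on systolic/Helly-type complexes, and C(4)-T(4) groups are cubulable and hence Helly. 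All of these fall under (c) or (h).
(h) Weakly systolic and systolic groups: they are Helly by Chalopin et al., which reduces them to (c). Alternatively, one can use the bicombing of Osajda--Przytycki and Criterion~1.

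The main obstacle is Criterion~1 in the generality of merely coarsely conical, not convex, bicombings. Navigability requires tightening a polygonal line that avoids $B(x,r)$ into a path whose total length is comparable to $r$. To do this I would first replace the line by a combing path between points on a sphere of radius about $r/2$. I would then use the conical inequality to show that the combing path between two points outside $B(x,r)$ stays outside $B(x, r/C - C)$, and argue inductively along the polygonal line. If the constant in this argument degenerates, the fallback is to cite the bicombing-to-navigability statement from Subsection~\ref{subsec:spaces_with_navigable_PS} verbatim, and then only check the hypotheses for each class above.
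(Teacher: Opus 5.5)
Your overall architecture is the paper's. For each class you produce a proper cobounded action on a geodesic space carrying an undirected, invariant, navigable path system, and then invoke Theorem~\ref{thm:main-dichotomy}. Routing Coxeter groups through the CAT(0) Davis complex is a legitimate and simpler alternative to the paper's voracious geodesic combing \cite{OsajdaPrzytycki:Coxeter}.

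The gap is in your navigability criteria. Criterion~1 is stronger than anything established here. The paper only gets navigability from a bounded \emph{geodesic} combing (Proposition~\ref{prop:combing-navigability}) or a bounded \emph{quasi-consistent} quasi-geodesic bicombing (Proposition~\ref{prop:bicombing-is-navigable}), and it deliberately restricts biautomatic groups to one of these two kinds. Quasi-consistency is not cosmetic. The combing path system consists of all subpaths of combing lines, and boundedness of the bicombing gives no control on a subpath unless it is Hausdorff-close to the combing line between its endpoints (Lemma~\ref{lem:bicombng-is-bounded}). It is used again to handle a combing line meeting an inverse combing line in Lemma~\ref{lem:bicombing-is-avoidant}.

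Worse, the tightening step you sketch is false. A combing path between two points outside $B(x,r)$ need not avoid any concentric ball of comparable radius. In $\mathbb{R}^2$ with straight segments, which form a convex, consistent bicombing, the segment between antipodal points of the circle of radius $r$ passes through $x$. So neither conicality nor convexity can give your inductive step. What works is different: first slide each piece, using boundedness, until it meets the annulus $R-\kappa_0\le d(\cdot,m)\le R$. Then join consecutive annulus points by following the given special paths outward to distance about $5R$ (or parameter about $KR$) and crossing there. At that distance, boundedness makes the crossing path of length $O(R)$ yet far from $m$ (Proposition~\ref{prop:bounded-avoidant-implies-navigable}, Lemmas~\ref{lem:combing-avoidance} and~\ref{lem:bicombing-is-avoidant}).

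Criterion~3 is asserted with no mechanism. Bounded geodesic image and the distance formula estimate distances, but they do not produce a detour of length $O(nR)$ avoiding $B(m,R/C)$, and the paper proves nothing of this kind. The paper instead uses one of two routes:
\begin{itemize}
\item the Petyt--Zalloum bicombing on an HHS, which is bounded, quasi-consistent and quasi-geodesic \cite{petytzalloum:constructing};
\item the fact that HHGs are coarsely Helly \cite{HaettelHodaPetytGT}, so they act properly and coboundedly on an injective space whose Lang bicombing is a bounded geodesic combing, with Lemma~\ref{lem:navigable-push-forward} transporting navigability.
\end{itemize}

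Your fallback of citing the paper's propositions repairs Criterion~1 only if you then verify geodesicity or quasi-consistency class by class. This is fine for the following classes:
\begin{itemize}
\item CAT(0) and Coxeter groups;
\item cones: the Nussbaum--Walsh straight-line bicombing \cite{NussbaumWalsh} is geodesic, bounded, consistent and invariant, and in the naively convex cocompact case you restrict it to the invariant convex subset;
\item Helly groups: Lang's conical geodesic bicombing on the injective hull \cite{Lang:injective} is a bounded geodesic combing.
\end{itemize}
For the remaining classes you need different sources:
\begin{itemize}
\item weakly systolic groups: use the bounded quasi-consistent normal clique path bicombing \cite{JS2006,ChalopinChepoiG}, not an unspecified bicombing fed into Criterion~1;
\item C(6) and C(3)--T(6) groups: use the geodesic biautomatic structure of \cite{GerstenShort:automatic};
\item C(4)--T(4) groups: use that finitely presented graphical C(4)--T(4) groups are Helly \cite{Chalopin:Helly}, not a general cubulation claim.
\end{itemize}
Median spaces are covered by citing Proposition~\ref{prop:median-implies-navigable}. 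Your median-projection sketch is not the argument used there, which goes through boundedness plus avoidance via the median of $x_1,x_2,y$.
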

Note that Corollary  \ref{cori:groups_satisfying_WRR}, \eqref{cor:helly}, answers Genevois' question. The list above is not exhaustive, we refer to Section \ref{sec:groups} for more applications.

\subsection{The family of contraction spaces}

A key step in the proof of Theorem~\ref{thmi:dichotomy} is to associate to every space $X$ endowed with a path system a family of hyperbolic spaces $\hat{X}_{\mc K}$, parametrized by contraction triples $\mc K$ (see Section~\ref{sec:generalised_contraction} for precise definitions) called {\emph{generalized contraction spaces}}, that detect, among special paths, those that are $\mc P$--contracting. This is a custom version of the contraction spaces introduced in \cite{Zbinden:hyperbolic}.

\begin{theorem}[{Theorem~\ref{thm:contraction_space_hyp_for_graphs} and Theorem~\ref{prop:morse-quasi-geo-equivalence}}] \label{thmi:contraction_sp_hyperbolic}
    There exists $\delta_0$ such that for any graph $X$ equipped with a path-system $\mc P$ and for large enough $\mc K$, the corresponding $\mc K$--contraction space $\hat{X}_{\mc K}$ is $\delta_0$-hyperbolic, and the canonical vertex inclusion map $\iota_{\mc K} : X\to \hat{X}_{\mc K}$ is $1$--Lipschitz. Moreover, a special path $\spath \in \mc P$ is $\mc P$--contracting if and only if $\iota_{\mc K}\circ \spath$ is a quasi-geodesic in $\hat{X}_{\mc K}$.

    Every graph isomorphism of $X$ preserving $\mc P$ induces a graph isomorphism (hence an isometry) of $\hat{X}_{\mc K}$.
\end{theorem}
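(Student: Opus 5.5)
We follow the construction of contraction spaces in \cite{Zbinden:hyperbolic}, adapted to a path system. Given a contraction triple $\mc K$, let $\hat{X}_{\mc K}$ be the graph obtained from $X$ by adding an edge between $x$ and $y$ whenever they are joined by a special path $\spath\in\mc P$ that is \emph{not} $\mc K$--contracting, i.e.\ $\mc P$-projections to $\spath$ fail to have the contraction behaviour prescribed by $\mc K$. The edges of $X$ are kept, so $\iota_{\mc K}$ is $1$--Lipschitz. The definition uses only the graph structure and $\mc P$, so any graph isomorphism preserving $\mc P$ maps contracting special paths to contracting ones and non-contracting ones to non-contracting ones. It therefore extends to a graph isomorphism of $\hat{X}_{\mc K}$, which gives the last assertion.

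For hyperbolicity we use a guessing-geodesics criterion (Bowditch / Masur--Schleimer type): for each pair of vertices $x,y$ choose the path $\iota_{\mc K}(\spath_{xy})$ with $\spath_{xy}\in\mc P$, and verify two things.
\begin{enumerate}[label=(\roman*)]
\item If $d_{\hat X}(x,y)\le 1$, then $\spath_{xy}$ has uniformly bounded diameter in $\hat{X}_{\mc K}$.
\item Special triangles are uniformly slim in $\hat{X}_{\mc K}$.
\end{enumerate}
For (i): either $x,y$ are adjacent in $X$, or $\spath_{xy}$ is a non-contracting special path. Then we must show that any special path with the same endpoints as a non-contracting one has bounded $\hat X$--diameter. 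The intended mechanism is that a long contracting subpath would force the whole path to look contracting; otherwise it decomposes into boundedly many non-contracting pieces, each of which is a single edge in $\hat X$.

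For (ii): take a triangle of special paths and a point $p$ on one side. Let $p'$ be the last point before $p$ that is close in $X$ to the other two sides, and $p''$ the first such point after $p$. If the subpath between $p'$ and $p''$ is contracting, the triangle's thinness in $X$ at $p$ follows from contraction; this uses the first two parameters of $\mc K$ to propagate contraction to subpaths. If it is not contracting, it is one $\hat X$--edge, so $p$ is $\hat X$--close to the other sides. Choosing $\mc K$ large guarantees that the constants close up, with $\delta_0$ depending only on the criterion and not on $X$ or $\mc P$.

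For the equivalence: if $\spath$ is $\mc P$--contracting (with worse constants than $\mc K$), then any $\hat X$--edge between points near $\spath$ comes from a non-contracting special path. Projecting that path to $\spath$ gives an image of bounded $X$--diameter, since otherwise a long fellow-travelling subpath would inherit contraction. Hence the $\hat X$--distance between points of $\spath$ is bounded below linearly by their $X$--distance, and $\iota\circ\spath$ is a quasi-geodesic. Conversely, suppose $\iota\circ\spath$ is a $(\lambda,c)$ quasi-geodesic. Every subpath of $\spath$ that is not $\mc K$--contracting becomes a single $\hat X$--edge, so such subpaths have length bounded in terms of $\lambda,c$. A bounded-non-contracting-subpaths-implies-contracting lemma (a local-to-global statement for contraction along special paths) then gives that $\spath$ is $\mc P$--contracting.

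The main obstacle is that the notion of ``$\mc K$--contracting'' must be set up, via the triple, to be robust under passing to subpaths and concatenation. That robustness is exactly what drives (i), (ii) and the local-to-global step. I would first try to prove a subpath lemma, namely that a subpath of a $\mc K$--contracting special path is $\mc K'$--contracting for a controlled $\mc K'$, and then calibrate ``large enough $\mc K$'' around it.
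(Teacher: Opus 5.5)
Your $1$--Lipschitz and equivariance arguments are fine. The edge rule is wrong, though, and everything else depends on it.

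\textbf{The edge rule.} You join $x,y$ whenever some special path between them fails to be contracting \emph{as a whole}. That property is not inherited by subpaths, yet both (i) and the forward implication need it to be. The paper instead joins $x,y$ only if some special path from $x$ to $y$ has \emph{no} $\mc K$--midthin subsegment. Here $\mc K$--midthin with neck $r$ means that every $(n,\mc P)$--polygonal line between the endpoints meets $B(m_\spath,r)$, and that $\abs{\spath}\geq K(r)$. Heredity makes the vertices of such a path a clique, so $\widehat{\diam}_{\mc K}\leq 3$. If the guessed path $\eta(x,y)$ is chosen anti-contracting whenever $(x,y)$ is, (i) is immediate.

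Under your rule, your mechanism for (i) (a long contracting subpath forcing the whole path to look contracting) is false, and so is the forward implication. Take the Cayley graph of $\mathbb{Z}^2\ast\mathbb{Z}=\langle a,b\rangle\ast\langle t\rangle$ with $\mc P$ all geodesics. The $t$--axis $\gamma$ is $\mc P$--contracting, since its vertices are cut points. The unique geodesic from $a^L$ to $t^{\pm N}$ runs along a flat segment of length $L$ and then along the axis. So for $L$ large relative to $N$ and your constants, it is not contracting. Your rule then makes $a^L$ adjacent to both $t^{N}$ and $t^{-N}$. Hence $\hat d_{\mc K}(t^{-N},t^{N})\leq 2$ for all $N$, and $\iota_{\mc K}\circ\gamma$ is not a quasi-geodesic.

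Your step ``a long fellow-travelling subpath would inherit contraction'' gives no contradiction under your rule. It does under the hereditary rule, and it is then the paper's Proposition~\ref{prop:contr-to-quasi-geo}. Note that there $d(\tau(x),\tau(y))$ must be bounded for \emph{every} $\hat X$--edge, not only edges near $\spath$, because $\hat X$--geodesics between points of $\spath$ can leave any neighbourhood of it.

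\textbf{Step (ii).} Even with the right definition, (ii) does not close, and this is the main difficulty of the theorem. Suppose the piece of $\gamma$ between $p'$ and $p''$ contains a $\mc K$--midthin segment $\tilde\gamma$. Its neck $r$ is not uniformly bounded; you only know $K(r)\leq\abs{\tilde\gamma}$. So midthinness only says the other sides pass within $X$--distance $r$ of $m=m_{\tilde\gamma}$. Closeness in $X$ at an unbounded scale gives no bound on $\hat d_{\mc K}$, and enlarging $\mc K$ cannot absorb it.

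What you need is a vertex $x$ on the other two sides with $\hat d_{\mc K}(x,\tilde\gamma)\leq 1$; this is Proposition~\ref{lem:technical-properties-of-contraction-space}. For geodesics one takes $x$ nearest to $m$ and cuts the triangle along $[m,x]$. For quasi-geodesic special paths this breaks: a midthin piece of $[m,x]_{\mc P}$ may have its midpoint near the other sides even though $x$ is nearest. The paper replaces nearest-point projection by minimising the triangular pull $\Phi(x,y)=\max\{d(x,y),1\}\cdot\max\{1,d(x,y),d(m,y)\}$ over the other sides, after cutting off the tip at the first ``cross''. A case analysis then rules out a midthin piece of $[x_0,\Psi(x_0)]_{\mc P}$.

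\textbf{The converse.} Your converse also leaves its key lemma unstated, and the paper's version is concrete. Points of $\gamma$ at $\hat d_{\mc K}$--distance $2$ force a midthin subpath of bounded length, hence bounded neck. Testing it against a $7$--gon built from projection paths gives $\mc P$--contraction, which is why contraction triples require $n\geq 7$.
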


By far the hardest part of the proof of Theorem~\ref{thmi:contraction_sp_hyperbolic}, and one of the most involved of the paper, is the one showing that the resulting space is hyperbolic. This is achieved via the introduction of a function called the \emph{triangular pull}. \emph{In the setting of quasi-geodesics, a minimizer of the triangular pull has properties similar to those of a closest point projection in the setting of geodesics (see Section \ref{sec:hyperbolicity})}.

For spaces with navigable and undirected path systems, the contraction spaces are some sort of Elysium fields in which only Morse paths survive as quasi-geodesics. 

\begin{theorem}[{Theorem~\ref{prop:morse-quasi-geo-equivalence}}]\label{propi:Morseiffqiem}
    Let $X$ be a graph equipped with a navigable and undirected path system $\mc P$, and let $\spath \in \mc P$ be a special path. Then $\spath$ is Morse if and only if $\iota_{\mc K} \circ \spath$ is a quasi-geodesic in $\hat{X}_{\mc K}$.
\end{theorem}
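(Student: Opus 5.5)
By Theorem~\ref{thmi:contraction_sp_hyperbolic}, for $\mc K$ large enough a special path $\spath\in\mc P$ is $\mc P$--contracting if and only if $\iota_{\mc K}\circ\spath$ is a quasi-geodesic in $\hat X_{\mc K}$. So the statement reduces to a purely metric one in $X$: for a navigable, undirected path system, a special path $\spath$ is Morse if and only if it is $\mc P$--contracting, with constants depending only on each other and on the data of $\mc P$. I will prove the two implications separately and then compose with Theorem~\ref{thmi:contraction_sp_hyperbolic}.

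\emph{Contracting implies Morse.} This is the standard argument, which I would adapt to the $\mc P$--setting. Let $\gamma$ be a $(\lambda,c)$--quasi-geodesic with endpoints on $\spath$. Consider a maximal subpath of $\gamma$ that stays outside a large neighbourhood $N_R(\spath)$. Cover it by consecutive pieces whose length is comparable to their distance to $\spath$. The $\mc P$--contraction of $\spath$ bounds the diameter of the projection of each piece, where the projection is measured via special paths to $\spath$. Summing gives a projection that is small compared with the length of the excursion. This contradicts the quasi-geodesic inequality once $R$ is large enough. Undirectedness is used so that special paths from $\gamma$ to $\spath$ may be traversed in either direction.

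\emph{Morse implies contracting.} This is the main obstacle, and it is where navigability enters. Suppose $\spath$ is $M$--Morse but not $\mc P$--contracting. Then for each $n$ there are a ball $B$ disjoint from a large neighbourhood of $\spath$, and special paths from points of $B$ to $\spath$, whose projections onto $\spath$ are far apart compared with the radius of $B$. Concatenate a special path $p_1$ from $\spath$ to $B$, a short path through $B$, and a special path $p_2$ from $B$ back to $\spath$, then close up along $\spath$. The result is a $\mc P$--polygonal line avoiding a ball centred on $\spath$ between the two projection points. Navigability (Definition~\ref{defn:navigable}) lets us tighten this polygonal line to a path of length comparable to the size of that ball that still avoids it. I would then pass to the special path between its endpoints, which is a uniform quasi-geodesic. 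That special path either leaves the $M(\lambda,c)$--neighbourhood of $\spath$, contradicting the Morse property, or it stays close to $\spath$. In the second case it must pass near the centre, contradicting avoidance. The delicate points are choosing the ball radii so that the navigability constants beat the Morse gauge, and keeping the quasi-geodesic constants of the tightened path uniform. I expect to run the argument through a sequence of counterexamples with ratio tending to infinity to get a clean contradiction.

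Composing the two implications with Theorem~\ref{thmi:contraction_sp_hyperbolic} gives the equivalence. The contraction triple $\mc K$ is chosen large relative to the constants produced in the Morse-implies-contracting step, so that every Morse special path is $\mc K$--contracting.
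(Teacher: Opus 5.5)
Your reduction matches the paper's route, and your excursion sketch for $\mc P$--contracting $\Rightarrow$ Morse is fine (the paper simply cites Theorem~\ref{theorem:contracting-implies-morse}). For the contraction-space equivalence, cite Propositions~\ref{prop:contr-to-quasi-geo} and~\ref{prop:quasi-geodesic-implies-contracting} directly, since the introductory theorem you quote credits its last clause to the very statement being proved. Also, $\mc K$ need not depend on the Morse gauge.

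The genuine gap is in Morse $\Rightarrow$ $\mc P$--contracting, and it occurs at both ends of your argument. At the start, you never fix a projection map. Moreover, the negation you write down (a ball far from $\spath$ whose points have far-apart projections) is a strong-contraction statement, not the negation of Definition~\ref{def:contracting}. What must be shown is that some map $\pi$ has the following property: whenever $d(\pi(x),\pi(y))\geq C$, every special path from $x$ to $y$ passes $C$--close to $\pi(x)$ and $\pi(y)$. Its failure gives points $x,y$, possibly very far apart, and a special path $\beta$ between them missing $B(\pi(x),C)$; no ball of nearby points appears. The paper takes $\pi(x)$ to be a $D$--lower projection point of a special path $\spath_x$ from $x$ to $\gamma$, with $\tau(x)$ the $D$--upper one. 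It then closes $\pref{\spath_x}{\tau(x)}^{-1}\ast\beta\ast\pref{\spath_y}{\tau(y)}$ up, using short special paths and special paths near $\gamma$, into a $(7,\mc P)$--polygonal line from $z_1$ to $z_2$ that avoids $B(z,D/4)$, where $z\in\gamma$ is at distance $C/4$ from $\pi(x)$. The avoidance uses exactly that the prefixes before the upper projection points stay $D$--far from $\gamma$, and navigability is applied at scale $D/4$, not at the scale of $\beta$. Note also that your ``short path through $B$'' is not a special path, so navigability does not even apply to your concatenation as written.

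At the end, your contradiction does not materialize. After tightening you hold a $(kn,\mc P)$--polygonal line of length comparable to the radius, with endpoints on $\spath$, avoiding a proportional ball around a point of $\spath$ between them. This line is not a quasi-geodesic, so the Morse gauge says nothing about it directly. The special path between its endpoints does lie near $\spath$ and pass near the centre, but there is no conflict: avoidance is a property of the tightened line, not of that special path. The missing ingredient is that Morse quasi-geodesics are proportionally thin with respect to short paths with endpoints on them. This is Lemma~\ref{lemma:localmltg-implies-path-avoidence} (Morse implies $(R;\eps,A;n,\mc P)$--weakly polygonally Morse), which the paper deduces from the Aougab--Durham--Taylor characterisation of Morse quasi-geodesics as recurrent. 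With it, Proposition~\ref{prop:local_poly_P_contracting} finishes the proof. The tightened $(7k,\mc P)$--polygonal line has length at most a constant times $D$ and misses $B(z,D/(4C_0))$, which contradicts weak polygonal Morseness at scale $D$ once $\eps<1/(4C_0)$.
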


Induced group actions on contraction spaces bring substantial information.

\begin{theorem}[{Theorem~\ref{prop:non-uniformly-acylindrical}}]\label{thmli:non-uniformly-acyindrical}
    Let $G$ be a group acting properly and by graph isomorphisms on a graph $X$ equipped with a $G$--invariant path system $\mc P$. If $\mc K$ is large enough, then the induced action of $G$ on $\hat{X}_{\mc K}$ is non-uniformly acylindrical.
\end{theorem}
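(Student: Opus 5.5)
We have to work with the definitions in Section~\ref{sec:generalised_contraction}. We do not have them here, so we assume the $\mc K$--contraction space $\hat{X}_{\mc K}$ is built by coning off. It is obtained from $X$ by adding an edge between $x,y$ whenever some special path (or a $\mc P$--polygon determined by $\mc K$) that is \emph{not} $\mc K$--contracting connects them in a suitable sense. Non-uniform acylindricity asks for the following: for every $\varepsilon>0$ there are $R,N$ such that whenever $\hat d(x,y)\ge R$, the set of $g\in G$ with $\hat d(x,gx)\le\varepsilon$ and $\hat d(y,gy)\le\varepsilon$ is finite. Its cardinality need not be bounded by $N$. We will show this by transferring the problem back to $X$ and using properness there.

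The plan is as follows. First, fix $\varepsilon$ and points $x,y$ with $\hat d(x,y)$ large. Let $\hat\gamma$ be a $\hat{X}_{\mc K}$--geodesic from $x$ to $y$, and let $\spath\in\mc P$ be the special path from $x$ to $y$. By Theorem~\ref{thmi:contraction_sp_hyperbolic} and the hyperbolicity machinery of Section~\ref{sec:hyperbolicity}, if $\hat d(x,y)$ is large then $\spath$ has a long subpath that is $\mc K$--contracting. We expect such a subpath to track $\hat\gamma$ in the following sense: there is a point $z$ on $\spath$, at $\hat{X}_{\mc K}$--distance $\gg \varepsilon+\delta_0$ from both $x$ and $y$, such that $\spath$ is contracting in an $X$--neighbourhood of $z$. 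By this we mean that any special path passing $\hat{X}_{\mc K}$--close to $z$ must pass $X$--close to $z$. This is the key ``bounded geodesic image / coarse injectivity near contracting points'' statement, and we would extract it from the fact that coned-off edges only shortcut non-contracting paths.

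Second, take $g$ moving $x$ and $y$ by at most $\varepsilon$ in $\hat d$. Then $g\spath$ is a special path from $gx$ to $gy$, and the special paths from $gx$ to $x$ and from $gy$ to $y$ have $\hat{X}_{\mc K}$--diameter controlled by $\varepsilon$. Thin quadrilaterals in the $\delta_0$--hyperbolic space $\hat{X}_{\mc K}$ then show that $gz$ lies $\hat{X}_{\mc K}$--close to the portion of $\spath$ around $z$, since $z$ is far from the endpoints. By the first step, $gz$ must then be uniformly $X$--close to $z$, say $d_X(z,gz)\le C(\varepsilon,\mc K)$. Hence every such $g$ lies in $\{g: d_X(z,gz)\le C\}$, which is finite because the action on $X$ is proper.

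The main obstacle is the first step: upgrading $\hat{X}_{\mc K}$--closeness to $X$--closeness near contracting portions of special paths. That the constant $C$ is independent of $x,y$ does not matter here, since only finiteness is required, and this is exactly why we obtain non-uniform rather than uniform acylindricity. I would try to prove this step by contradiction. Suppose a short $\hat{X}_{\mc K}$--path joins $gz$ to a neighbourhood of $z$ while the two points are far apart in $X$. Each coned-off edge on it corresponds to a non-contracting special path. Concatenating these special paths with subpaths of $\spath$ would then produce a $\mc P$--polygon violating the contraction triple $\mc K$ along the contracting segment. Choosing $\mc K$ large enough makes the constants compatible.
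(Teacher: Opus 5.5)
\textbf{Gap in the key step.} Your outline has the right overall shape: find a $\mc K$--midthin piece of $\spath$ that is $\hat{d}_{\mc K}$--far from $x$ and $y$, show that $g\spath$ must pass $X$--near it, and finish with properness. But the step carrying all the weight is misformulated, and the route you propose for it cannot work.

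As stated, ``any special path passing $\hat{X}_{\mc K}$--close to $z$ must pass $X$--close to $z$'' is false. An edge of $\hat{X}_{\mc K}$ joins any $\mc K$--anti-contracting pair, however far apart in $X$, so special paths through $\hat{X}_{\mc K}$--neighbours of $z$ can stay $X$--far from $z$. Near a $\mc K$--midthin subpath $\gamma\subset\spath$, the $X$--information available is the neck condition of size $r$. This $r$ is not bounded in terms of $\eps$ or $\mc K$: the gauge only forces $\abs{\gamma}\geq K(r)$. So nothing in the setup produces a bound $d(z,gz)\leq C(\eps,\mc K)$.

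What one can actually prove is $d(m_\gamma, g\spath)\leq r$, i.e.\ $g^{-1}m_\gamma\in \mc N_r(\spath)$. This is a bounded set depending on $x,y$ through both $r$ and $\spath$, and properness then gives finiteness.

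Your contradiction scheme (concatenating anti-contracting special paths with pieces of $\spath$) yields nothing. Such a polygonal line need not cross the neck of $\gamma$ at all, and if it does, it only has to come within $r$ of $m_\gamma$, which violates no condition. Note also that producing $\gamma$ needs no hyperbolicity. If $z_x,z_y\in\spath$ are at $\hat{d}_{\mc K}$--distance $\lceil\eps+\delta_0\rceil+4$ from $x$ and $y$, then $\hat{d}_{\mc K}(z_x,z_y)\geq 2$. Hence $\psub{\spath}{z_x,z_y}$ is not $\mc K$--anti-contracting and contains a $\mc K$--midthin subpath $\gamma$.

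\textbf{Missing argument.} Apply midthinness of $\gamma$ to the closed $(5,\mc P)$--polygonal line $\psub{\spath}{\gamma^+,y}\ast\spath_y\ast(g\spath)^{-1}\ast\spath_x\ast\psub{\spath}{x,\gamma^-}$. Here $\spath_x,\spath_y$ are special paths from $gx$ to $x$ and from $y$ to $gy$. The outer pieces of $\spath$ miss $B(m_\gamma,r)$ by Lemma~\ref{lemma:continued-special-path-distance}.

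The genuinely hard point, which your plan does not address, is excluding the connectors. $\spath_x$ is $\hat{d}_{\mc K}$--short, but since $r$ is unbounded, hyperbolicity of $\hat{X}_{\mc K}$ alone does not stop it from entering $B(m_\gamma,r)$. The paper handles this as follows.

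\begin{enumerate}
\item Suppose $u\in\spath_x$ with $d(u,m_\gamma)\leq r$.
\item Take a second $\mc K$--midthin subpath $\gamma'$ (necksize $r'$) of $\psub{\spath}{x,m_\gamma}$, located between $z_x$ and the vertex at $\hat{d}_{\mc K}$--distance $\lceil\eps+\delta_0\rceil+2$ from $x$.
\item Apply Proposition~\ref{lem:technical-properties-of-contraction-space} to the triangle with sides $\psub{\spath}{x,m_\gamma}$ and $[m_\gamma,u]_{\mc P}\ast\psub{\spath_x}{u,x}$.
\item Remark~\ref{rem:on_geodesic} keeps $\spath_x$ $\hat{d}_{\mc K}$--far from $\gamma'$, so the vertex produced lies on $[m_\gamma,u]_{\mc P}$.
\item Hence $m_{\gamma'}$ and $m_\gamma$ are $X$--close in terms of $r$ and $r'$. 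Yet they are separated along $\spath$ by domain length at least $(K(r)+K(r'))/2$, a contradiction once $K$ is large.
\end{enumerate}

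This is the step your proposal is missing.
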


For $G$ and $X$ as in Theorem \ref{thmli:non-uniformly-acyindrical} where, moreover, $G$ acts coboundedly on $X$, we prove a strong diameter dichotomy that states that at least one of the generalized contraction spaces of $X$ has diameter either $1$ or $\infty$. 

To prove this dichotomy, we compare two generalized contraction spaces and show that if the first does not have diameter one, the diameter of the second one is $\infty$.
Note that, throughout, by \emph{diameter of the contraction space} we mean the diameter of the set of vertices. This step is also the reason for the `non-virtually cyclic' assumption on $G$: in the comparison of the different generalized contraction spaces we use a combinatorial argument that relies on $G$ having superlinear growth.

\begin{theorem}[{Theorem~\ref{thm:strict-diameter-dichotomy}}]\label{thmi:diameter_dichotomy}
Let $G$ be a non-virtually cyclic group acting properly and coboundedly by graph isomorphisms on a graph $X$ equipped with a $G$--invariant, undirected path system. Then there exists a contraction triple $\mc K$ so that $\hat{X}_{\mc K}$ has either diameter $1$ or $\infty$.
 \end{theorem}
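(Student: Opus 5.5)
The plan is to compare two contraction spaces $\hat{X}_{\mc K}$ and $\hat{X}_{\mc K'}$, with $\mc K$ large enough that Theorem~\ref{thmi:contraction_sp_hyperbolic} and Theorem~\ref{thmli:non-uniformly-acyindrical} apply, and $\mc K'$ chosen much larger than $\mc K$ in terms of the constants of $\mc K$, the coboundedness constant and the growth of $G$. Assume $\hat{X}_{\mc K}$ has vertex diameter at least $2$. We then show that $\hat{X}_{\mc K'}$ has infinite diameter. This gives the statement: either $\hat{X}_{\mc K}$ has diameter $1$, or it does not and $\hat{X}_{\mc K'}$ has diameter $\infty$.

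The first step uses the failure of diameter one. There are vertices $x,y$ of $X$ not joined by an edge of $\hat{X}_{\mc K}$. By definition of the contraction space, this means that some special path $\spath_0 \in \mc P$ of bounded length (bounded in terms of $d(x,y)$, and by coboundedness uniformly up to translation) is $\mc K$-contracting in the local sense used to define edges. In other words, $\spath_0$ is a short segment that no $\mc K$-admissible detour crosses. The $G$-invariance of $\mc P$ makes all translates $g\spath_0$ behave the same way. The second step is to build long special paths that are $\mc K'$-contracting by concatenating many such translates. Since $G$ is not virtually cyclic it has superlinear growth, so there are many choices of group elements $g_1,\dots,g_n$. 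A counting argument should pick a sequence for which the $\mc P$-path from $x$ to $g_1\cdots g_n x$ is forced to pass near each translate of the contracting segment. The same argument should pick the sequence so that no short cut, that is no path of the kind making two vertices adjacent in $\hat{X}_{\mc K'}$, can skip more than boundedly many of these segments.

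Undirectedness of $\mc P$ lets us reverse paths when we check that a detour would have to cross a segment in both directions. We then use the $1$-Lipschitz inclusion of Theorem~\ref{thmi:contraction_sp_hyperbolic} together with the hyperbolicity of $\hat{X}_{\mc K'}$: a long path crossing $n$ "barriers" has $\hat{X}_{\mc K'}$-distance roughly $n/C$ between its endpoints. Letting $n\to\infty$ gives infinite diameter.

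The main obstacle is the combinatorial step. We must show that generic concatenations of contracting pieces cannot be shortcut in $\hat{X}_{\mc K'}$, that is, that edges of the bigger contraction space cannot jump over many barriers. This is exactly where superlinear growth is needed, because in a virtually cyclic group all concatenations are essentially the same line and may become uniformly shortcut. My first attempt would be to count: the number of pairs of vertices at $X$-distance $R$ that are adjacent in $\hat{X}_{\mc K'}$ via a fixed family of detours is at most linear in $R$. Superlinear growth of balls then yields points not adjacent to any far point through such shortcuts, and iterating this produces vertices at arbitrarily large $\hat{X}_{\mc K'}$-distance.
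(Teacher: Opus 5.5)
Your overall architecture (two triples, a midthin path extracted from the failure of completeness, translates chosen using superlinear growth, and an argument that no edge can jump over many barriers) is the paper's. As written, however, the key step points the wrong way.

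\textbf{The two triples are in the wrong order.} Enlarging $n$ or $K$ makes midthinness harder, so for $\mc K'\gg\mc K$ the graph $\hat X_{\mc K'}$ has \emph{more} edges than $\hat X_{\mc K}$. To show $\hat X_{\mc K'}$ is unbounded you must show that a $\mc K'$--anti-contracting special path cannot pass near several barriers. The only available mechanism is Lemma~\ref{lemma:close-to-thin-implies-thin}: such a path would contain a $\mc K'$--midthin subpath. That lemma needs two things from the barrier.
\begin{itemize}
\item It must be thin for $(n'+2k+4,\mc P)$--polygonal lines, since an arbitrary $(n',\mc P)$--detour plus the connecting pieces must still be caught at its midpoint.
\item The surrounding configuration must extend well beyond distance $D_{\mc P}(K'(2r))$ from its midpoint.
\end{itemize}
A $\mc K$--midthin barrier with $\mc K\ll\mc K'$ gives neither. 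In fact your intermediate claim fails whenever $\hat X_{\mc K}$ has finite diameter $\ge 2$. Running the argument in the correct order then shows there are no $\mc K'$--midthin paths for any sufficiently large $\mc K'$, i.e.\ $\hat X_{\mc K'}$ has diameter $1$.

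The paper fixes the small triple $\mc K'=(K',n',\mc P)$ first. It then chooses $\mc K=(K,n,\mc P)$ with $n\ge n'+16$, $n$ large compared with $\Delta$, and $K(r)$ large compared with $Q''D_{\mc P}(K'(2r))$. If $\hat X_{\mc K}$ is not complete, a $\mc K$--midthin $\gamma$ is used to show that the \emph{smaller} triple's space $\hat X_{\mc K'}$ is unbounded. The informal strategy paragraph of Section~\ref{sect:comparing_contraction_spaces} writes $K'\gg K$, $n'\gg n$, but the proof and the lemmas it invokes require the order just described.

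\textbf{The passage to infinity does not work.} Midthinness only controls $(n,\mc P)$--polygonal lines for one fixed $n$. So you cannot force a chain of arbitrarily many $\hat X_{\mc K'}$--edges to visit arbitrarily many barriers; the no-skipping statement, Proposition~\ref{prop:no-skips}, needs $6(j-i)+3+\max(c,6)\le n$. The paper never proves a bound like ``distance $\approx N/C$''. Instead it uses the weak diameter dichotomy, Corollary~\ref{cor:weak-diameter}. Since $G$ acts coboundedly on the $\delta_0$--hyperbolic graph $\hat X_{\mc K'}$, its diameter is at most $\Delta$ or infinite. It therefore suffices to show $\hat d_{\mc K'}(x,g^{2\Delta+4}x)>\Delta$, which is a bounded configuration:
\begin{itemize}
\item a chain $\alpha_0,\dots,\alpha_\Delta$ of $\mc K'$--anti-contracting paths must pass within $r$ of each of $gm,\dots,g^{2\Delta+3}m$;
\item by pigeonhole and no-skipping, one $\alpha_j$ passes near three consecutive midpoints;
\item Lemma~\ref{lemma:close-to-thin-implies-thin-contraction-gauge-version} then produces a $\mc K'$--midthin subpath of $\alpha_j$, a contradiction.
\end{itemize}

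\textbf{Superlinear growth is used for something else.} Your count of pairs ``adjacent via a fixed family of detours'' is not a usable statement. In the paper superlinear growth enters only in Lemma~\ref{lemma:good-connector}. There it yields a single $g$ and a $(5,\mc P)$--connector $\eta$ from $\gamma^+$ to $g\gamma^-$ staying $\abs{\gamma}/Q''$ away from $m$ and $gm$. The counting is as follows:
\begin{itemize}
\item superlinearly many elements bring the ends of two length-$M$ special paths attached near the endpoints of $\gamma$ within $M/K_1$ of each other;
\item only linearly many in $M$ bring a translate of $\gamma$ close to those paths.
\end{itemize}
This is what makes the barriers of $\Lambda=\prod_i g^i\gamma\ast g^i\eta$ impossible to skip.
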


There is a significant qualitative difference between having uniformly bounded diameter and diameter 1. In the latter case, $\hat{X}_{\mc K}$ is a complete graph, meaning that every pair of vertices was either adjacent in $X$, or its connecting special path was deemed to have `no negative curvature along it'. With the extra assumption  of navigability which, as mentioned, we see as a weak version of `non-positive curvature',  there is no negative curvature to be found along any directions of $X$. This has a fundamental impact on the geometry of the space, for instance on the divergence of the space. 

Thus, what we are actually proving, is a stronger version of the alternative first described in Theorem \ref{thmi:dichotomy}, and we remark that the hypotheses of the following theorem are satisfied by all groups of Corollary~\ref{cori:groups_satisfying_WRR}.

\begin{theorem}[{Theorem~\ref{thm:divergence-dichotomy1}}, Corollary \ref{cor:contracting-element}]\label{thmi:extended_dichotomy}
Let $G$ be a non-virtually cyclic group acting properly and coboundedly by graph isomorphisms on a graph $X$ equipped with a $G$--invariant, undirected navigable path system. 

Then the following dichotomy holds.
 \begin{enumerate}
        \item\label{edich1} Either there exists a contraction space with diameter $1$ (equivalently all contraction spaces have finite diameter) and both $X$ and $G$ have linear divergence, or 
        \item\label{edich2} there exists a contraction space with diameter $\infty$ (equivalently all contraction spaces have infinite diameter), $X$ contains an infinite periodic Morse quasi-geodesic, and $G$ is acylindrically hyperbolic.
    \end{enumerate}
 \end{theorem}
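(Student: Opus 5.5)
The plan is to start from Theorem~\ref{thmi:diameter_dichotomy}, which provides a contraction triple $\mc K$ such that $\hat{X}_{\mc K}$ has diameter $1$ or $\infty$. I treat the two cases separately, and then prove the parenthetical equivalences by comparing different contraction triples. For the comparison I will use that enlarging $\mc K$ only adds edges, so the identity on vertices is $1$--Lipschitz from $\hat{X}_{\mc K}$ to $\hat{X}_{\mc K'}$ for $\mc K\le \mc K'$. I will also use the comparison argument underlying Theorem~\ref{thmi:diameter_dichotomy}: if some contraction space does not have diameter one, then a larger one has infinite diameter. Together these give the equivalences. If one space has diameter $1$, then it has no Morse special path of unbounded length by Theorem~\ref{propi:Morseiffqiem}. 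The characterisation ``$\mc P$--contracting iff Morse'' for navigable systems then shows that no contraction space has unbounded quasi-geodesic images of special paths. Since $G$ acts coboundedly, any infinite-diameter contraction space would, via the comparison, produce long contracting special paths. Hence all contraction spaces have finite diameter. Conversely, one infinite-diameter space forces all sufficiently large ones to have infinite diameter, and the smaller ones do as well because the identity map is $1$--Lipschitz.

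Infinite-diameter case. By Theorem~\ref{thmli:non-uniformly-acyindrical}, $G$ acts non-uniformly acylindrically on the $\delta_0$--hyperbolic graph $\hat{X}_{\mc K}$ (Theorem~\ref{thmi:contraction_sp_hyperbolic}). The action is cobounded, because $\iota_{\mc K}$ is $1$--Lipschitz and surjective on vertices and $G$ is cobounded on $X$. A cobounded action with unbounded orbits on a hyperbolic space is not elliptic or parabolic, so by Gromov's classification it contains a loxodromic element $g$. Since $G$ is not virtually cyclic and acts properly on $X$, I will show that the action is not lineal. Then non-uniform acylindricity, applied to $g$, makes $g$ a WPD element, and Osin's theorem gives acylindrical hyperbolicity. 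For the Morse statement, take a special path $\spath$ joining $x$ to $g^n x$. Its image in $\hat{X}_{\mc K}$ fellow-travels a quasi-axis of $g$ uniformly, so the concatenation of translates of $\spath$ along $\langle g\rangle$ is a periodic path whose image is a quasi-geodesic. By Theorem~\ref{propi:Morseiffqiem} (applied to sub-special paths), this periodic quasi-geodesic is Morse in $X$, and $g$ is an infinite order Morse element.

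Diameter-one case, which I expect to be the main obstacle. Here every pair of vertices of $X$ is either adjacent or joined by a special path that is not $\mc K$--contracting. I want linear divergence of $X$, which transfers to $G$ by quasi-isometry invariance. Fix a ball $B(p,r)$ and points $a,b$ at distance about $r$ from $p$. The special path $[a,b]$ fails contraction at scale $\mc K$, which means some ball of radius comparable to the relevant scale is far from $[a,b]$ yet projects widely. Using navigability, I plan to replace $[a,b]$ by a $\mc P$--polygonal line avoiding $B(p,\delta r)$ with length $O(r)$: each failure of contraction yields a detour of length linear in its scale. I will then apply the navigability tightening to iterate this, using cobounded translates to make the constants uniform. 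The delicate points are keeping the number of iterations bounded and keeping the detours out of the ball. If this direct approach stalls, I would instead prove linearity of divergence along a sequence of scales $r_n\to\infty$ (the metric result announced in the abstract) and upgrade it via the sequence-to-linear theorem. Finally, linear divergence excludes Morse quasi-geodesics, which confirms that the two cases are mutually exclusive.
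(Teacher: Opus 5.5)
Your proposal has a genuine gap in the diameter-one case, which is the heart of the statement.

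\textbf{The diameter-one case.} You misread what diameter one gives. It does not say that $[a,b]_{\mc P}$ ``fails contraction''. That would be a statement about projections of some ball somewhere, and it gives no detour around the particular ball $B(p,\delta r)$. What diameter one says is that every pair of vertices is joined by \emph{some} special path with no $\mc K$--midthin subsegment. Consequently every subsegment $\gamma'$ with $\abs{\gamma'}\geq K(s)$ admits an $(n,\mc P)$--polygonal line between its endpoints avoiding the $s$--ball about its midpoint.

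Navigability cannot create detours. It only shortens a bounded-complexity polygonal line that already avoids the ball.

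The missing construction is the one in the proof of Theorem~\ref{thm:divergence-dichotomy1}. Take $c'\in[a,b]_{\mc P}$ near $c$ and $R\asymp\delta\, d(c,\{a,b\})$. Since $K(2R)$ may be far larger than $d(a,b)$, a subsegment centred near $c'$ of a path from $a$ to $b$ is in general too short to use. So proceed as follows:
\begin{enumerate}
\item Use coboundedness and Lemma~\ref{lemma:quasi-geodesic-concatenation} to attach to $a$ and $b$ long special paths avoiding $B(c',R)$.
\item Choose far-out vertices $a',b'$ on them and join these by an anti-contracting special path $\gamma$.
\item Either $\gamma$ misses $B(c',R)$, or the maximal subsegment of $\gamma$ centred at a point near $c'$ has length at least $K(2R)$. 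In the latter case it is not $(2R;n,\mc P)$--midthin, which supplies an $(n,\mc P)$--polygonal detour.
\item This yields an $(n+8,\mc P)$--polygonal line avoiding $B(c',R)$, with endpoints in $B(c',2R)$ but of uncontrolled length.
\item A single application of navigability, the number of pieces being a fixed constant, brings its length down to $O(R)$.
\end{enumerate}
No iteration is needed, and your iterative scheme has no mechanism bounding the number of steps or the lengths. The fallback through Theorem~\ref{thm1:div} only moves the same problem to a sequence of scales.

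\textbf{The infinite-diameter case.} Acylindrical hyperbolicity follows as you say from Corollary~\ref{cor:wpd-elements} and Osin's theorem. That theorem only needs $G$ non-virtually cyclic and one loxodromic WPD element, so you do not need to rule out lineal actions.

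The Morse step, however, fails as written. For $\spath=[x,g^nx]_{\mc P}$, the image $\hat{\spath}_{\mc K}$ being Hausdorff close to a quasi-axis does not make it a parametrised quasi-geodesic. The path $\spath$ may contain arbitrarily long anti-contracting excursions, which have $\hat{d}_{\mc K}$--diameter at most $3$ (Lemma~\ref{lem:diam-anti-contracting}), and nothing in your argument excludes them. Moreover, Theorem~\ref{propi:Morseiffqiem} concerns special paths, not concatenations of translates.

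The paper imports the missing input. By \cite[Theorem~1]{Sisto:quasi-convexity}, via hyperbolic embeddedness of $E(g)$, the axis in $X$ of the loxodromic WPD element $g$ is Morse. Hence special paths between its points stay uniformly close to it, their images are uniform quasi-geodesics, and Proposition~\ref{prop:quasi-geodesic-implies-contracting} makes them $\mc P$--contracting.

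\textbf{The parenthetical equivalences.} The comparison in the proof of Theorem~\ref{thm:strict-diameter-dichotomy} runs opposite to how you use it. It shows that if the \emph{larger} triple does not have diameter one, then the \emph{smaller} one has infinite diameter, so it cannot push infinite diameter up to larger triples. Instead, use the $\mc P$--contracting element together with Proposition~\ref{prop:contr-to-quasi-geo} to get infinite diameter in every large full contraction space. Combine this with the fact that any finite-diameter (large, full) contraction space bounds the length of uniformly Morse special paths.
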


\subsection{Morse property and divergence in the setting of navigability}

We now restrict our attention to the class of spaces that satisfy the non-equivariant version of the hypotheses of our main theorem, namely spaces that admit a navigable path system, and refer to Subsection~\ref{subsec:spaces_with_navigable_PS} for examples.  

Evidence that, from the coarse geometry viewpoint, navigability is the right replacement of non-positive curvature comes also from the fact that it allows for a local definition of rank one/Morse geodesics mirroring the one from Riemannian geometry that is, in a certain sense, optimal. Indeed, in Riemannian manifolds, the rank of a geodesic is the dimension of the space of parallel Jacobi fields along that geodesic \cite[Chapter 4, $\S $4]{Ballmann}. In particular, rank one means that the only parallel Jacobi fields are the multiples of the speed of the geodesic, and that infinitesimally there is no flat strip along the geodesic. This local assumption implies a global feature of the geodesic, i.e. the Morse property.

In the coarse setting, geodesics are replaced by quasi-geodesics - and, in our case, even by special paths, and one is tempted to replace the above by the assumption that, at a fixed but large enough scale, triangles with one edge contained in the given quasi-geodesic are thin. This is, however, insufficient (see Example~\ref{ex-e-not-2}), but it is enough if instead of triangles one considers quadrilaterals. 

Thus, we say that a quasi-geodesic $\qgot$ is \emph{$n$--weakly polygonally Morse} if short $(n+1)$--gons with one side contained in $\qgot$ and paths of $\mc P$ as the other sides are thin (for the precise formulation see Definition~\ref{defn:weakpolymorse}). 

\begin{theorem}[Coarse Jacobi criterion, {see Theorem~\ref{thm:combined-wpm-results}\eqref{P:special-path} for the precise statement}]\label{thmi:effectiveMLTG}
    Let $X$ be a geodesic metric space with a navigable, undirected path system. Then if a path is a special path that is locally $3$--weakly polygonally Morse, then it is a Morse quasi-geodesic.
\end{theorem}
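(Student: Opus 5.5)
The plan is to pass through the generalised contraction spaces: by Theorem~\ref{propi:Morseiffqiem}, a special path $\spath\in\mc P$ is Morse if and only if $\iota_{\mc K}\circ\spath$ is a quasi-geodesic in $\hat X_{\mc K}$ for $\mc K$ large. Since $\hat X_{\mc K}$ is $\delta_0$--hyperbolic by Theorem~\ref{thmi:contraction_sp_hyperbolic}, I will apply the classical local-to-global principle for quasi-geodesics in hyperbolic spaces: a path that is an $L$--local $(\lambda,c)$--quasi-geodesic with $L$ large compared to $\delta_0,\lambda,c$ is a global quasi-geodesic. The task therefore reduces to a local statement: if $\spath$ is locally $3$--weakly polygonally Morse at a fixed large scale $L$, then every subpath of $\spath$ of length at most $L$, viewed in $\hat X_{\mc K}$, is a uniform quasi-geodesic. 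Because $\iota_{\mc K}$ is $1$--Lipschitz, only the lower bound $d_{\hat X_{\mc K}}(\spath(s),\spath(t))\gtrsim |s-t|$ needs proof.

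For the local step, I take a subsegment $\spath|_{[s,t]}$ of length at most $L$ and a geodesic in $\hat X_{\mc K}$ between its endpoints. Each edge of this geodesic is either an edge of $X$ or an added edge, and an added edge joins two points whose connecting special path is ``non-contracting'' in the sense encoded by $\mc K$. I will replace each added edge by its special path, which gives a $\mc P$--polygonal line from $\spath(s)$ to $\spath(t)$. If the $\hat X_{\mc K}$ distance were much smaller than $|s-t|$, this polygon would have few sides. The navigability assumption, meaning the tightening of $\mc P$--polygonal lines that avoid a ball, will then reduce the situation to a short polygon with at most $3$ sides in $\mc P$ and one side on $\spath$. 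I apply the $3$--weakly polygonally Morse hypothesis to that $4$--gon to get thinness. Thinness forces the special paths near the middle of $\spath|_{[s,t]}$ to fellow-travel $\spath$. Since the local hypothesis on $\spath$ in turn gives contraction at scale $L$, these special paths are themselves contracting, which contradicts their being added edges. The undirected hypothesis is used so that reversing orientations along the polygon causes no problem.

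The main obstacle is the reduction from an arbitrary $\mc P$--polygon with boundedly many sides to a quadrilateral. This means controlling how navigability collapses vertices while keeping the polygon short compared to the local scale, and checking that the constants from the contraction triple $\mc K$, the tightening, and the thinness can be chosen compatibly before $L$ is fixed. I would first try an induction on the number of sides: merge two consecutive $\mc P$--sides whenever their common vertex lies far from $\spath$, using navigability around a ball centred on $\spath$ at the midpoint. Once this is done, the hyperbolic local-to-global theorem gives that $\iota_{\mc K}\circ\spath$ is a quasi-geodesic, and Theorem~\ref{propi:Morseiffqiem} yields that $\spath$ is Morse.
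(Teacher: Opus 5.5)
There is a genuine gap, and it sits exactly at the step you flag as the main obstacle. Navigability never decreases the number of sides. By Definition~\ref{defn:navigable} it replaces an $(n,\mc P)$--polygonal line avoiding $B(m,R)$ by a $(kn,\mc P)$--polygonal line with the same endpoints, of length at most $CnR$, avoiding $B(m,R/C)$. Its only job is to make a polygon \emph{short}, so that the length condition in proportional thinness can be met. It gives no way to merge two consecutive special paths into one, so your induction on the number of sides has no mechanism behind it.

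Moreover, the polygon you get from a $\hat X_{\mc K}$--geodesic has $\hat d_{\mc K}(\spath(s),\spath(t))$ sides. Even when this is ``much smaller'' than $\abs{s-t}$, it can be of order $\abs{s-t}/\lambda$, hence unbounded as $L$ grows. Its sides, being anti-contracting special paths, can also be arbitrarily long in $X$. The reduction from $n$--gons to $3$--gons that actually holds, Proposition~\ref{prop:3-implies-n}, does not use navigability: it uses the almost orthogonal projections of Lemma~\ref{lem:almost-ortho-projection}, the linear divergence of Proposition~\ref{prop:path-projections-are-well-defined}, and a pigeonhole argument. Its constants depend on $n$, and nothing in your sketch gives a version uniform in the number of sides.

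Finally, to contradict that an added edge is anti-contracting you must exhibit a $\mc K$--midthin subpath of it. Midthinness (Definition~\ref{def:midthin}) quantifies over all $(n,\mc P)$--polygonal lines with no length bound, while weak polygonal Morseness only controls short ones. Bridging the two is exactly a navigability argument that you have not given.

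More fundamentally, your ``local step'' is not local. The hyperbolic local-to-global principle needs the quasi-geodesic constants of $\hat{\spath}_{\mc K}$ on windows of length $L$ to be independent of $L$. But a window of length $L$ is \emph{globally} $3$--weakly polygonally Morse, so the local step is the full theorem for paths of length at most $L$. The sentence ``the local hypothesis on $\spath$ gives contraction at scale $L$'' assumes the result.

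The missing idea is Proposition~\ref{prop:local_poly_P_contracting}:
\begin{enumerate}
\item Define $\pi(x)$ as a lower projection point of a special path from $x$ to $\gamma$.
\item Suppose a special path $\beta$ from $x$ to $y$ with $d(\pi(x),\pi(y))\ge C$ stays $C$--far from $\pi(x)$. Pick $z\in\gamma$ at distance $C/4$ from $\pi(x)$.
\item The pieces $\beta$, the initial segments of the projecting special paths from $x$ and $y$, short connectors, and two special paths shadowing $\gamma$ form a $(7,\mc P)$--polygonal line. It joins the first and last points of $\gamma$ within $D/2$ of $z$ and avoids $B(z,D/4)$.
\item Navigability shortens it to a $(7k,\mc P)$--polygonal line of length at most $7kC_0D$ avoiding $B(z,D/(4C_0))$.
\item This contradicts $7k$--weak polygonal Morseness on a window of length at most $L$, which follows from the $3$--version by applying Proposition~\ref{prop:3-implies-n} to windows.
\end{enumerate}
This gives global $\mc P$--contraction directly, and Theorem~\ref{theorem:contracting-implies-morse} then gives the Morse property. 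No passage through $\hat X_{\mc K}$ is needed, which is just as well, since that would first require replacing $X$ by a graph.

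Your wrapper would be a valid alternative way to globalize once this core step is supplied for windows: uniform contraction of windows, then Proposition~\ref{prop:contr-to-quasi-geo}, local-to-global in $\hat X_{\mc K}$, then Proposition~\ref{prop:quasi-geodesic-implies-contracting}. It is not circular, because it uses only the navigability-free direction of Theorem~\ref{propi:Morseiffqiem}. But the core step is where all the work lies.
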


It is worth noting that checking the local condition of Theorem~\ref{thmi:effectiveMLTG} is much more feasible than checking that a path is a Morse quasi-geodesic and, if the path is finite or periodic enough, this could be done algorithmically with a computer.


\medskip

A notable consequence of Theorem~\ref{thmi:effectiveMLTG} is that a space equipped with a navigable and undirected path system satisfies the \emph{Morse local-to-global} (MLTG) property as introduced in \cite{RussellSprianoTran:thelocal}. Being Morse local-to-global has several strong consequences, including having stable free subgroups, having $\sigma$-compact Morse boundary, a combination theorem for stable subgroups, a growth gap and excellent algorithmic properties \cite{RussellSprianoTran:thelocal, CordesRussellSprianoZalloum:regularity, AbbottZbinden:sigma-compact,DrutuSprianoZbinden:schism}. For an extensive survey of the consequences of the MLTG property we refer to the appendix of \cite{DrutuSprianoZbinden:schism}.

\begin{corollary}[{Corollary~\ref{cor:MLTG}}]\label{cori:MLTG}
    Let $X$ be a geodesic metric space admitting a navigable, undirected path system $\mc P$. Then $X$ has the Morse local-to-global property.
\end{corollary}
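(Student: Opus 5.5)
The plan is to deduce the Morse local-to-global property directly from the coarse Jacobi criterion, Theorem~\ref{thmi:effectiveMLTG}. Recall the definition from \cite{RussellSprianoTran:thelocal}. $X$ is Morse local-to-global if for every Morse gauge $M$ and every $\lambda\ge 1$, $c\ge 0$ there exist a scale $L$, a Morse gauge $M'$ and constants $\lambda',c'$ such that every $(L;M;\lambda,c)$--local Morse quasi-geodesic is a global $M'$--Morse $(\lambda',c')$--quasi-geodesic. Here a local Morse quasi-geodesic is one whose subpaths of length at most $L$ are $M$--Morse $(\lambda,c)$--quasi-geodesics. The difficulty is that Theorem~\ref{thmi:effectiveMLTG} is stated for special paths, while an arbitrary local Morse quasi-geodesic need not be a path of $\mc P$. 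The proof therefore has three steps: (i) replace the local Morse path by a concatenation of special paths; (ii) show that concatenation is locally $3$--weakly polygonally Morse; (iii) apply the precise version of the criterion, Theorem~\ref{thm:combined-wpm-results}. I expect that result already covers piecewise-special paths, or at least paths at bounded Hausdorff distance from such.

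For step (i), let $\gamma$ be an $(L;M;\lambda,c)$--local Morse quasi-geodesic. Choose points $x_0,x_1,\dots$ along $\gamma$ at parameter spacing comparable to a fixed $\ell\ll L$. Each subpath $\gamma|_{[x_i,x_{i+2}]}$ is $M$--Morse, and special paths are uniform quasi-geodesics. Hence the special path $\mc P(x_i,x_{i+1})$ lies within $M(\lambda_{\mc P},c_{\mc P})$ of $\gamma$, and by the standard Morse argument $\gamma|_{[x_i,x_{i+1}]}$ lies within a uniform distance of it. So the concatenation $\qgot=\mc P(x_0,x_1)\ast\mc P(x_1,x_2)\ast\cdots$ is at uniformly bounded Hausdorff distance from $\gamma$ on every window of length $L$. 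Morseness and the quasi-geodesic constants pass between $\qgot$ and $\gamma$, locally and then globally, with controlled changes. It therefore suffices to prove the conclusion for $\qgot$.

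Step (ii) is the main obstacle. Take a short $4$--gon one of whose sides is a subpath $\qgot'$ of $\qgot$, with the other three sides special paths. If the gon is short relative to $L$, then $\qgot'$ lies in a window where $\gamma$ is $M$--Morse. I would show that the three special sides stay close to $\qgot'$, which is exactly the thinness required by Definition~\ref{defn:weakpolymorse}. To do this, I would split the gon along a diagonal special path into two triangles with special sides. In each triangle I would use that a special path whose endpoints are near the $M$--Morse segment stays in a uniform neighbourhood of it. Doing this twice controls all sides.

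The delicate point is that weak polygonal Morseness must hold at a scale that is fixed independently of how large $L$ is, with thinness constants depending only on $M,\lambda,c$ and $\mc P$. One must check that the quantifiers of Theorem~\ref{thm:combined-wpm-results} are ordered correctly: the scale demanded by the criterion is determined first, and then $L$ is chosen larger than it. Once this is in place, the criterion yields that $\qgot$, hence $\gamma$, is a global Morse quasi-geodesic with gauge and constants depending only on $(M,\lambda,c)$. That is the Morse local-to-global property.
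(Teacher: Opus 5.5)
Your step (iii) fails, and this is where the real content of the Morse local-to-global property sits. Theorem~\ref{thm:combined-wpm-results} does not cover piecewise-special paths under the hypotheses of this corollary:
\begin{itemize}
\item case~\eqref{P:special-path} needs $\gamma\in\mc P$, but your concatenation $\qgot=[x_0,x_1]_{\mc P}\ast[x_1,x_2]_{\mc P}\ast\cdots$ is not a special path;
\item case~\eqref{P:sub-bounded} needs $\mc P$ to be sub-bounded, which is not assumed;
\item case~\eqref{P:globally-wpm} needs $\qgot$ to be globally a quasi-geodesic and globally weakly polygonally Morse, which is what you are trying to prove.
\end{itemize}
All three cases go through Propositions~\ref{prop:3-implies-n} and~\ref{prop:local_poly_P_contracting}. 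Their hypothesis, that every subpath be uniformly Hausdorff close to a special path between its endpoints, is global. Deriving that global fellow-travelling from local data is precisely the local-to-global step. The paper's internal tools for it either use a bounded support (the continuity argument of Proposition~\ref{prop:globalisation_QG}) or assume global quasi-geodesicity (Lemma~\ref{lem:close-to-special-paths-globally}).

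The missing idea is the reduction of \cite[Theorem~3.12 and Remark~3.14]{AbbottZbinden:sigma-compact}, which the paper uses. To verify MLTG it suffices to find, for each Morse gauge $M$, a scale $L$ and a gauge $M'$ such that every $L$--locally $M$--Morse \emph{special path} is $M'$--Morse. For special paths the subpath hypothesis is automatic, because $\mc P$ is closed under subpaths. So Corollary~\ref{cor:morse-implies-mccontracting} (Lemma~\ref{lemma:localmltg-implies-path-avoidence} plus Proposition~\ref{prop:local_poly_P_contracting}) gives $\mc P$--contraction, and Theorem~\ref{theorem:contracting-implies-morse} gives Morseness. No approximation of arbitrary paths by concatenations is needed.

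Your step (ii) is also wrong as stated, although it is repairable. Thinness in Definition~\ref{defn:weakpolymorse} is the inclusion $\qgot'\subseteq\mc N_{\eps d}(\pgot)$ with $d=d(\qgot'^-,\qgot'^+)$. It is not that the other sides stay uniformly close to $\qgot'$; in fact a polygonal line of length at most $Ad$ may detour a distance comparable to $d$ away from $\qgot'$.

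The diagonal argument does not work either. The two free vertices of the quadrilateral need not lie near $\qgot'$. The Morse property only controls quasi-geodesics with endpoints on the Morse path, so it says nothing about either triangle.

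The correct input is Lemma~\ref{lemma:localmltg-implies-path-avoidence}, which rests on the recurrence characterisation of Morse quasi-geodesics. Applied to windows of length $L$, it gives local weak polygonal Morseness for every $n$ at once. Its $R$ depends only on $M$, the quasi-geodesic constants and $(\eps,A,n)$, which also settles the order of quantifiers you were worried about.
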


We already mentioned that admitting a navigable path system has an impact on the behaviour of divergence in the setting of groups and equivariant path systems. A similar phenomenon occurs for metric spaces. In this generality, the divergence can be a rather complicated function. We show that, for spaces admitting a navigable path system, there are significant restrictions on it. 

\begin{theorem}[{Theorem~\ref{thm1:div}}]\label{thmi:divergence}
  Let $X$ be a geodesic metric space equipped with a bounded, navigable and undirected path system, and assume that $X$ admits a cobounded group action.
    
    Assume moreover that there exists a sequence $n_k\to \infty$ and $\delta>0$ such that $\Dv (n_k, \delta )\leq C n_k$, where $C>0$ is independent of $k$. Then there exists $C'>0$ such that $\dv' (n,2, \delta )\leq C'n$ for every $n$. 
\end{theorem}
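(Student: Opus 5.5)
We aim to show that linear divergence along a sequence of scales $n_k$ spreads to linear divergence at every scale, using navigability to rescale detours. We will use the divergence functions of \cite{DrutuMozesSapir}. $\Dv(n,\delta)$ is the supremum, over points $a,b,c$ with $d(c,\{a,b\})\geq n$, of the length of the shortest path from $a$ to $b$ avoiding $B(c,\delta n)$. The function $\dv'(n,2,\delta)$ is the variant where $a,b$ lie at distance about $n$ from $c$, with $d(a,b)\le 2n$, and the detour must avoid $B(c,\delta n)$.

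\textbf{Step 1 (reduction to a comparison between scales).} Fix $n$ large and choose $k$ with $n_k\ge n$ minimal up to a bounded factor. If the $n_k$ grow too fast, $n_k/n$ is unbounded and the hypothesis alone gives nothing at scale $n$. So the main task is a \emph{scale-descent} estimate: from linear bounds at scale $N=n_k$, deduce a linear bound at any scale $n\le N$, with constants independent of $N/n$. Coboundedness lets us move $c$ to a fixed basepoint up to bounded error, so all configurations can be assumed to be centred at a compact region.

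\textbf{Step 2 (blowing a small configuration up to scale $N$).} Take $a,b,c$ as in $\dv'(n,2,\delta)$. Extend the special paths from $c$ through $a$ and through $b$ to points $a',b'$ at distance about $N$ from $c$. Bounded path systems give coarse existence of such extensions, and coboundedness lets us take them as long as needed, at worst by concatenating translates. By hypothesis, $a'$ and $b'$ are joined by a path $\gamma$ of length $\le CN$ avoiding $B(c,\delta N)$. Form the closed loop made of the $\mc P$-segment from $a$ to $a'$, then $\gamma$, then the $\mc P$-segment from $b'$ to $b$. This is a $\mc P$-polygonal line avoiding $B(c,\delta n)$, provided special paths from $c$ remain far from $c$ after distance $n$, which follows because special paths are quasi-geodesics.

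\textbf{Step 3 (tightening via navigability).} This is the key step and the main obstacle. The line from Step 2 has length about $N$, not $n$. Navigability (Definition~\ref{defn:navigable}) is exactly the statement that a $\mc P$-polygonal line avoiding a ball can be tightened, until its size is comparable to the ball's radius, to a line still avoiding a comparable ball. We apply it to the polygon $a\to a'\to\gamma\to b'\to b$ with respect to $B(c,\delta n)$. This yields a path from $a$ to $b$ of length $\le C'' n$ avoiding $B(c,\delta' n)$. The difficulty is to check that the number of $\mc P$-segments stays bounded. The path $\gamma$ is arbitrary, so we first replace it by a $\mc P$-polygonal approximation with about $CN/(\delta N)$ pieces, a number independent of $N$: we subdivide $\gamma$ into pieces of length $\asymp \delta N$ and use boundedness of $\mc P$ so these pieces stay outside $B(c,\delta N/2)$. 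We also need to track the loss in the avoidance constant, which forces the $\delta\to\delta'$ and the $2$ in $\dv'$. We then convert back from $\delta'$ to $\delta$ using the standard fact that divergence functions for different $\delta$ below a threshold are equivalent, up to passing to the primed version \cite{DrutuMozesSapir}.

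\textbf{Step 4.} The result is $\dv'(n,2,\delta)\le C'n$ for all large $n$. Small $n$ are handled by coboundedness and geodesicity, which give a bounded detour, so enlarging $C'$ covers them.
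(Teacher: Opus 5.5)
Your overall strategy matches the paper's proof of Theorem~\ref{thm1:div}. You go out to scale $n_k$ and use the hypothesis there. You then replace the resulting detour by a number of special paths independent of $k$, roughly $C/\delta$ of them; these stay away from $c$ because of the quasi-geodesic constants alone, not because of boundedness of $\mathcal P$. Finally you tighten back with navigability.

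\textbf{The gap in Step~3.} As written, Step~3 does not satisfy the hypotheses of navigability. Definition~\ref{defn:navigable} applies only to a polygonal line that avoids $B(m,R)$ \emph{and} has both endpoints in $B(m,2R)$. You apply it to a line from $a$ to $b$ with $m=c$ and $R\approx\delta n$, but $d(c,\{a,b\})=n>2\delta n$. No other choice of $R$ works either. Since $a$ lies on the line, you need $R\le d(a,c)$. Meanwhile $d(b,c)$ can be close to $3n$, and your outward pieces are only guaranteed to stay a small multiple of $n$ away from $c$.

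\textbf{The missing step.} What is needed, and what the paper does, is to truncate at the annulus.
\begin{enumerate}
\item Take a special path $\gamma$ from $a$ to $b$. If it avoids the required ball you are done. Otherwise it has a point $m$ close to $c$; this also gives $d(c,\{a,b\})\lesssim d(a,b)$, which you need for the final linear bound.
\item Let $a'',b''$ be the first and last points of $\gamma$ in $B(m,2R)$.
\item Apply navigability to $\psub{\gamma}{a,a''}^{-1}\ast(\text{path out})\ast(\text{discretised detour})\ast(\text{path back})\ast\psub{\gamma}{b'',b}^{-1}$. This line does avoid $B(m,R)$, and its endpoints are at distance $2R$ from $m$.
\item Reattach $\psub{\gamma}{a,a''}$ and $\psub{\gamma}{b'',b}$. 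The resulting path from $a$ to $b$ has length at most $\norm{\gamma}+O(R)$.
\end{enumerate}

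\textbf{Step~2 also needs repair.} A special path from $c$ to $a$ need not extend beyond $a$, and boundedness gives nothing of the kind. If you concatenate translates instead, the quasi-geodesic argument for staying away from $c$ no longer applies. The paper uses two ingredients here. First, the visibility hypothesis, which coboundedness provides, gives a long special path passing within $\kappa_0$ of $a$. Second, Lemma~\ref{lemma:quasi-geodesic-concatenation} shows that one half of that path stays at distance comparable to $d(a,c)$ from $c$.

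\textbf{Calibration at scale $n_k$.} The paper stops at points $x,y$ at distance $n_k/3$ from $a,b$. Then $d(x,y)\le n_k$, so the hypothesis at scale $n_k$ actually applies, while $d(c,\{x,y\})\ge n_k/6$. Points at distance about $N$ from $c$, as in your Step~2, may be $2N$ apart, so the hypothesis at scale $N$ does not cover them.

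\textbf{Step~4.} Step~4 only works with an additive constant in the avoidance radius. At small scales a local cut point, such as the base of a pendant edge, admits no detour at all. This is why the paper states its conclusion for $\Dv_{\epsilon'}(n,\delta')$.
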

In fact, we prove a more general version, namely Theorem~{\ref{thm1:div}}, that does not require a cobounded group action, but what sometimes is called a \emph{visibility condition}, namely the requirement that every point is close to the midpoint of a sufficiently long special path. 

Results such as Theorem \ref{thmi:divergence}, deducing global behaviour from controlled behaviour on a sequence, are known, under appropriate assumptions, for few quasi-isometry invariants, to our knowledge only for the Dehn function and the growth function. Same as for these other two invariants, the above theorem translates into a result of the type: a property known for one asymptotic cone propagates to all asymptotic cones, see Theorem \ref{thm2:div} for details.

Finally, we provide a sufficient criterion for a space to be \emph{Morse dichotomous} \cite{Zbinden:hyperbolic}. A space is \emph{Morse dichotomous} if all of its Morse quasi-geodesics are strongly contracting. Strongly contracting geodesics are significantly better behaved than Morse geodesics. For instance, a group acting geometrically on a (non-elementary) space with a strongly contracting geodesic is acylindrically hyperbolic, while there are examples of groups with a Morse geodesic (even a Morse element) that are not acylindrically hyperbolic \cite{OOS}. The property of being Morse dichotomous is not invariant under quasi-isometries and hence requires finer tools to be established. 
Our result is the following. 

\begin{theorem}[Corollary~{\ref{cor:geodesic-combing-implies-morse-dichotomous}}]\label{thmi::geodesic-combing-implies-morse-dichotomous}
    Let $X$ be a geodesic metric space whose set of geodesics forms a navigable path system. Then $X$ is Morse dichotomous.
\end{theorem}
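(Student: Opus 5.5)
The statement to prove is Theorem~\ref{thmi::geodesic-combing-implies-morse-dichotomous}: if the set $\mc P$ of all geodesics of $X$ is a navigable path system, then every Morse quasi-geodesic is strongly contracting. The path system of all geodesics is automatically undirected, since the reverse of a geodesic is a geodesic. So the machinery for navigable, undirected path systems applies.

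\textbf{Step 1: from Morse to $\mc P$--contracting.} Let $\qgot$ be a Morse quasi-geodesic. I first replace it by geodesics. By the Morse property, any geodesic joining two points of $\qgot$ stays uniformly close to $\qgot$. Since strong contraction is stable under bounded Hausdorff perturbation, with controlled change of constants, it is enough to show that geodesics with endpoints on $\qgot$ are uniformly strongly contracting. Each such geodesic $\spath\in\mc P$ is a special path, and it is Morse with uniform gauge. By Theorem~\ref{propi:Morseiffqiem} together with Theorem~\ref{thmi:contraction_sp_hyperbolic}, this implies that $\spath$ is $\mc P$--contracting, with constants depending only on the Morse gauge. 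The uniformity should come from the quantitative form of Theorem~\ref{prop:morse-quasi-geo-equivalence}.

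\textbf{Step 2: from $\mc P$--contracting to strongly contracting.} Here I use that $\mc P$ consists of all geodesics. $\mc P$--contraction, as in Section~\ref{sec:generalised_contraction}, controls how special paths far from $\spath$ interact with $\spath$. Take a ball $B=B(x,r)$ disjoint from $N_C(\spath)$, and let $y,y'\in B$. The geodesics $[x,y]$ and $[x,y']$ are elements of $\mc P$ that avoid the $C$--neighbourhood of $\spath$. The contraction condition should then force the projections of $y$ and $y'$ to $\spath$ to be at bounded distance. For a ball of radius $r$ at distance at least $r$ from $\spath$, I would apply the contraction condition to a geodesic from the centre to each point of the ball. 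This gives bounded projection diameter, which is exactly strong contraction in the classical sense.

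\textbf{Main obstacle.} The delicate point is matching the paper's notion of $\mc P$--contraction, which is presumably phrased using the contraction triple $\mc K$ (an annulus- or polygon-type condition), with the classical ball-projection definition. I expect two difficulties.
\begin{enumerate}
\item The classical closest-point projection must be compared with whatever projection is built into the definition of $\mc P$--contracting.
\item If $\mc P$--contraction only controls paths that stay outside a neighbourhood, I need the geodesics $[x,y]$ inside $B$ to stay uniformly away from $\spath$.
\end{enumerate}
For the second point, I would first try shrinking the ball to $B(x,r/2)$ and using the triangle inequality. I would treat the navigability hypothesis as needed only through Theorem~\ref{propi:Morseiffqiem}, so the remainder of the argument is this translation between the two contraction notions.
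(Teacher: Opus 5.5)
Your two-step architecture (Morse $\Rightarrow$ $\mc P$--contracting $\Rightarrow$ strongly contracting) matches the paper's. Step~1 is essentially fine, though you reach it by a detour. Theorems~\ref{propi:Morseiffqiem} and~\ref{thmi:contraction_sp_hyperbolic} are proved in Section~\ref{sec:generalised_contraction}, where $X$ is assumed to be a graph, whereas here $X$ is an arbitrary geodesic space. The input you actually need is Corollary~\ref{cor:morse-implies-mccontracting}, which is also what Theorem~\ref{prop:morse-quasi-geo-equivalence} itself rests on. That corollary is Lemma~\ref{lemma:localmltg-implies-path-avoidence} (an $M$--Morse geodesic is $(R;\eps,A;7k,\mc P)$--weakly polygonally Morse) followed by Proposition~\ref{prop:local_poly_P_contracting}. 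Both hold in any geodesic space, and subpaths of geodesics are again special paths. Your reduction from Morse quasi-geodesics to geodesics is unnecessary, since Definition~\ref{def:morse-dichotomous} only concerns geodesics.

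The genuine gap is Step~2. This is the only new content in the proof of this statement, and you leave it as ``should''. First, $\mc P$--contraction is not phrased via $\mc K$. It is Definition~\ref{def:contracting}: a map $\pi$ to $\gamma$ that moves points of $\gamma$ by at most $C$, such that whenever $d(\pi(x),\pi(y))\geq C$, every special path from $x$ to $y$ passes within $C$ of both $\pi(x)$ and $\pi(y)$. Two observations then finish the proof, and both use that every geodesic lies in $\mc P$.

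(a) Let $\tau$ be the closest-point projection. Apply the condition to $x,\tau(x)$ and the geodesic $[x,\tau(x)]$. Either $d(\pi(x),\pi(\tau(x)))<C$, which gives $d(\pi(x),\tau(x))\leq 2C$. Or some $y\in[x,\tau(x)]$ has $d(y,\pi(x))\leq C$; since $\tau(x)$ is then also a closest point to $y$, we get $d(y,\tau(x))\leq d(y,\pi(x))\leq C$. So $\pi$ and $\tau$ are pointwise $2C$--close.

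(b) Suppose $y\in B(x,r)$ with $r\leq d(x,\gamma)$ and $d(\tau(x),\tau(y))\geq 7C+1$. Then $d(\pi(x),\pi(y))\geq 3C+1$, so the geodesic $[x,y]$ contains points $z,z'$ within $C$ of $\pi(y),\pi(x)$ respectively, with $d(z,z')\geq C+1$. Hence $d(x,\gamma)\leq\min\{d(x,z),d(x,z')\}+C<d(x,y)$, a contradiction.

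Note that (b) handles the full ball without needing $[x,y]$ to avoid any neighbourhood of $\gamma$. Your proposed shrinking to $B(x,r/2)$ only controls half-balls, and ``balls disjoint from $N_C(\spath)$'' is the wrong class. Definition~\ref{def:strongly_contracting} requires all balls disjoint from $\gamma$, i.e.\ radius up to $d(x,\gamma)$. To close that gap you would additionally need something like the coarse Lipschitz property of $\pi$ (Lemma~\ref{lem:projection_is_Lipschitz}) to absorb the outer shell of width about $C$. Combined with (a), that would work, but (a) is itself absent from your proposal, and without it you only ever control $\pi$, not the classical projection.
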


\subsection{Metric spaces with navigable path systems}\label{subsec:spaces_with_navigable_PS}

We already mentioned some of the most notable examples of spaces and groups admitting navigable path systems. We now provide a more extensive discussion as well as sufficient criteria for navigability. 

A first source of examples comes from spaces endowed with (bi-)combing. See Section~\ref{sect:combings} for definitions. Given a (bi)-combing, we can define a \emph{combing path system} essentially by taking all subpaths of the (bi-)combing (Definition~\ref{defn:combing_path_system}). Combing path systems are undirected even if the (bi)-combing is not reversible. It suffices to have very mild conditions to obtain navigability.

\begin{proposition}[{Proposition~\ref{prop:combing-navigability}}]\label{propi:combing-navigability}
    Let $X$ be a geodesic metric space equipped with a bounded geodesic combing. The set of all geodesics in $X$ is a navigable path system. 
\end{proposition}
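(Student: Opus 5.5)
The plan is to build the tightening demanded by Definition~\ref{defn:navigable} directly from the fellow-travelling property of a bounded geodesic combing. Fix a bounded geodesic combing $\{\sigma_{xy}\}$. Each $\sigma_{xy}$ is a geodesic, so all combing lines and all their subpaths belong to the path system $\mc P$ of all geodesics. The only quantitative input is the bounded-combing inequality
\[
d\bigl(\sigma_{xy}(t),\sigma_{x'y'}(t)\bigr)\le C\max\{d(x,x'),d(y,y')\}+C,
\]
with parametrisations extended constantly past the endpoints. I would first record two consequences.

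\textbf{(a) Fans vary slowly.} If $x$ is fixed and $y,y'$ are close, then $\sigma_{xy}$ and $\sigma_{xy'}$ are uniformly Hausdorff close.

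\textbf{(b) Distant lines avoid a ball.} A combing line whose endpoints are far from a ball $B(o,r)$ may still enter it. However, if a neighbouring line in a fan avoids $B(o,r)$, then this line avoids $B(o,r-C\cdot\text{step}-C)$.

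Next I would check undirectedness, which is immediate: the reverse of a geodesic is a geodesic.

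For navigability, take a $\mc P$-polygonal line $\gamma=[p_0,p_1]\cup\dots\cup[p_{n-1},p_n]$ of geodesic segments avoiding $B(o,r)$.

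\textbf{Step 1: subdivide.} Replace $\gamma$ by points $q_0=p_0,q_1,\dots,q_m=p_n$ lying on $\gamma$ at mutual distance at most $s$. Here $s$ is a small multiple of $r/C$. The resulting geodesic polygon still avoids $B(o,r-s)$, since each short geodesic $[q_i,q_{i+1}]$ stays within $s$ of $\gamma$.

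\textbf{Step 2: sweep a fan and cut.} Consider the fan of combing lines $\sigma_{q_0q_i}$ for $i=0,\dots,m$. By (a), consecutive lines are $(Cs+C)$--close. Let $i_1$ be the first index at which $\sigma_{q_0q_{i_1}}$ comes within $r/2$ of $o$. Then $\sigma_{q_0q_{i_1-1}}$ avoids $B(o,r/2)$ and replaces the whole subpath of $\gamma$ from $q_0$ to $q_{i_1-1}$ by one segment. I then restart the fan at $q_{i_1-1}$ and iterate.

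This produces a polygonal line whose vertices lie on $\gamma$, which avoids $B(o,r/2-Cs-C)$, and which has the same endpoints.

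\textbf{Step 3: count the pieces.} This is the step I expect to be the main obstacle: showing that the number of pieces produced is bounded in terms of the length of $\gamma$ divided by $r$, or by whatever comparison with the size of the ball Definition~\ref{defn:navigable} requires. The intended argument is as follows. Each time a fan is stopped, the segment $\sigma_{q_0q_{i_1}}$ passes through $B(o,r/2)$ while its endpoints lie outside $B(o,r)$. Since it is a geodesic, both endpoints are then at distance at least $r/2$ from the point where it enters the ball, which forces $d(q_0,q_{i_1})\ge r$. The successive vertices therefore march along $\gamma$ at distance at least about $r$ from each other. Comparing this to the diameter of the sphere-like region around $B(o,r)$ that they traverse should give the count.

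If a linear bound in the length of $\gamma$ is not what the definition asks for, I would instead first project $\gamma$ radially to the sphere $S(o,r)$ using combing lines $\sigma_{o q_i}$, and apply the argument above to the projected polygon.
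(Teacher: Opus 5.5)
There is a genuine gap. Your construction does not target what Definition~\ref{defn:navigable} requires, and it cannot achieve it.

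\textbf{What navigability asks for.} Given an $(n,\mc P)$--polygonal line $\alpha$ with $d(\alpha,m)\ge R$ and $d(\alpha^{\pm},m)\le 2R$, you must produce a $(kn,\mc P)$--polygonal line with the same endpoints, of length at most $CnR$, avoiding $B(m,R/C)$. The constants $C,k$ must not depend on $\alpha$, in particular not on its length. You never use the hypothesis $d(\alpha^\pm,m)\le 2R$. Both the number of pieces and the length must be bounded in terms of $n$ and $R$ alone, not by the length of $\gamma$ divided by $r$.

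\textbf{Why vertices on $\gamma$ fail.} Keeping the output vertices on $\gamma$ rules this out. In $\R^2$ with the straight-segment combing (which is bounded), take $o=0$ and the $4$--piece line $(2r,0)\to(2r,Nr)\to(-Nr,Nr)\to(-Nr,0)\to(-2r,0)$. It avoids $B(0,r)$ and its endpoints are at distance $2r$ from $0$. The fan from $q_0=(2r,0)$ first meets $B(0,r/2)$ only when $\gamma$ has descended to height about $(N+2)r/\sqrt{15}$ on the third side. So your first output vertex $q_{i_1-1}$ lies on the line $x=-Nr$, and the output has length at least $2(N-2)r$, with $n=4$ and $N$ arbitrary.

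\textbf{Why the count in Step~3 fails.} The separation $\ge r$ of consecutive vertices only bounds the number of pieces by the length of $\gamma$ divided by $r$. There is no bounded ``sphere-like region'' to compare with, since $\gamma$ may wander arbitrarily far. Even inside a ball of radius $2r$ there is no packing bound uniform in $r$ (for instance in hyperbolic space or Hilbert space, both of which have bounded geodesic combings). The radial-projection fallback has the same problem: the projected polygon has as many vertices as your subdivision. You would still need to show that each \emph{input segment} costs only $O(1)$ fan stops, and that is exactly the missing ingredient.

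\textbf{The missing idea.} You need a segment-by-segment procedure whose checkpoints lie within $O(R)$ of $m$. The paper proceeds in three steps.
First, by the claim on left and right points in Proposition~\ref{prop:combing-navigability}, every geodesic avoiding $B(m,R)$ splits at some point into two geodesics of the combing heap avoiding $B(m,R/4-2\kao)$. So one may work with combing geodesics.
Second, by Proposition~\ref{prop:bounded-avoidant-implies-navigable}, boundedness of the combing heap lets one slide the break point of each segment until each of the two pieces comes within $R$ of $m$ while still avoiding $B(m,R-\kao)$. On each piece one picks a checkpoint at distance in $(R-\kao,R]$ from $m$.
Third, by Lemma~\ref{lem:combing-avoidance}, consecutive checkpoints $z_1,z_2$ lie on geodesics ending at a common point $y$, and they are joined by a $5$--piece path of length $O(R)$ avoiding $B(m,2R/C)$. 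Go $5R$ along each geodesic to $x_i$. Then go $20\kao R$ towards $y$ along the combing line from $y$ to $x_i$, reaching a point $u_i$. Finally join $u_1$ to $u_2$; these are $O(\kao R)$--close because the two combing lines share the origin $y$.
This gives $O(n)$ pieces of total length $O(nR)$.

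(A minor point: the inequality you record lets both endpoints vary, which is a bicombing condition. The hypothesis only provides it for a common origin, though that is all your fans use.)
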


The above combined with Theorem \ref{thmi::geodesic-combing-implies-morse-dichotomous} yields the following. 

\begin{corollary}[Corollary~\ref{cor:geodesic-combing-implies-morse-dichotomous}]\label{cori:geodesiccombing-is-md}
    A geodesic metric space equipped with a bounded geodesic combing is Morse dichotomous.
\end{corollary}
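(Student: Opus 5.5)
The corollary should follow by combining Proposition~\ref{propi:combing-navigability} with Theorem~\ref{thmi::geodesic-combing-implies-morse-dichotomous}. Let $X$ be a geodesic metric space with a bounded geodesic combing. Proposition~\ref{propi:combing-navigability} says that the family $\mc P$ of \emph{all} geodesics of $X$ is a navigable path system. This is exactly the hypothesis of Theorem~\ref{thmi::geodesic-combing-implies-morse-dichotomous}: a geodesic metric space whose set of geodesics forms a navigable path system. That theorem then gives that $X$ is Morse dichotomous, i.e.\ every Morse quasi-geodesic of $X$ is strongly contracting.

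Only two small points need checking. First, the set of all geodesics must really be a path system in the paper's sense. It consists of quasi-geodesics with uniform constants (indeed $(1,0)$), and since $X$ is geodesic it connects every pair of points. If undirectedness is part of the hypothesis, it holds because the reverse of a geodesic is again a geodesic; the theorem as stated only asks for navigability. Second, the boundedness condition on the combing must be the one required in Proposition~\ref{propi:combing-navigability}. This is automatic, since the hypotheses of the corollary are literally those of that proposition.

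I expect no real obstacle here: all the work sits in the two cited results. The substantive step is inside Proposition~\ref{propi:combing-navigability}. There, the coarse continuity of a bounded combing is used to tighten polygonal paths that avoid a ball, replacing combing lines by arbitrary geodesics via the fellow-travelling estimate. In this proof we only invoke it.
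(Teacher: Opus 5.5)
Your proposal is correct and is the paper's argument: the set of all geodesics contains the combing heap, so it is navigable by Proposition~\ref{prop:combing-navigability}, and Proposition~\ref{lem:geodesic-path-navigable-implies-morse-dichotomous} (whose content is Theorem~\ref{thmi::geodesic-combing-implies-morse-dichotomous}) then yields Morse dichotomy in the sense of Definition~\ref{def:morse-dichotomous}, i.e.\ $M$--Morse geodesics are uniformly $C(M)$--contracting. (A side remark: inside Proposition~\ref{prop:combing-navigability} it is arbitrary geodesics that get replaced by pairs of combing lines, not the reverse, but this does not affect your use of the proposition.)
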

Requiring the combing to be geodesic might be too strong a condition. We relax it to admit quasi-geodesics, with the trade-off of requiring a \textit{bi}combing and quasi-consistency.  

\begin{proposition}[{Proposition~\ref{prop:bicombing-is-navigable}}]\label{propi:bicombing-is-navigable}
    The combing path system induced by a quasi-consistent, bounded, quasi-geodesic bicombing on a geodesic metric space is navigable. 
\end{proposition}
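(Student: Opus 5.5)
The proof proceeds by verifying the defining property of navigability (Definition~\ref{defn:navigable}) directly for the combing path system $\mc P$ from Definition~\ref{defn:combing_path_system}. Fix the bicombing $\{\gamma_{x,y}\}$, which is $(\lambda,\mu)$--quasi-geodesic, $C$--bounded (fellow-travelling constant for paths with nearby endpoints) and quasi-consistent (subpaths of $\gamma_{x,y}$ are uniformly Hausdorff close to the combing paths between their endpoints). First I would check that $\mc P$ is a path system with uniform constants. It is undirected because reversals and subpaths of combing paths are allowed by definition. It is uniformly quasi-geodesic because subpaths of $(\lambda,\mu)$--quasi-geodesics are $(\lambda,\mu)$--quasi-geodesics.

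The core step is tightening. We are given a $\mc P$--polygonal line $p_1 * \dots * p_m$ avoiding a ball $B(z,r)$, with endpoints $a,b$ at distance comparable to $r$. The aim is to replace it by a shorter polygonal line with a controlled number of sides that still avoids a ball $B(z, r/K)$.

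\emph{Step 1.} Using quasi-consistency, replace each $p_i$ by the genuine combing path between its endpoints. This moves it by a uniformly bounded Hausdorff distance, so it still avoids $B(z,r-D)$.

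\emph{Step 2.} Coarsely interpolate using boundedness of the bicombing. Parametrise the concatenation by the vertices $x_0=a, x_1,\dots,x_m=b$ and consider the combing paths $\gamma_{a,x_j}$. Boundedness gives $d(\gamma_{a,x_j}(t),\gamma_{a,x_{j+1}}(t))\le C\,d(x_j,x_{j+1})+C$, after suitable reparametrisation. Since we need the path lengths to stay comparable to $r$, I would first subdivide the polygonal line into pieces of length roughly $\epsilon r$ and set the $x_j$ to be these subdivision points. The family $\gamma_{a,x_j}$ then sweeps across the region between the polygonal line and $\gamma_{a,b}$.

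\emph{Step 3.} Run a discrete continuity argument. Either $\gamma_{a,b}$ already avoids $B(z, r/K)$, in which case we are done with a single side. Otherwise there is a first index $j$ at which $\gamma_{a,x_j}$ meets $B(z,r/K)$ while $\gamma_{a,x_{j-1}}$ does not. At this index the two-sided polygon $\gamma_{a,x_{j-1}}*[x_{j-1},x_j]$ avoids a ball of radius comparable to $r$ (by boundedness, $\gamma_{a,x_{j-1}}$ is near $\gamma_{a,x_j}$). Iterating from the other endpoint $b$ with bicombing paths $\gamma_{x_j,b}$ produces a polygonal line with at most three or four $\mc P$--sides. Its length is bounded by a linear function of $r$, since each combing path between points at distance $O(r)$ has length $O(r)$. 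It avoids $B(z,r/K')$ for a constant $K'$ depending only on $\lambda,\mu,C$ and the quasi-consistency constant.

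The main obstacle is Step 3. Boundedness only controls synchronous distances, and these can be multiplied by $C$ at each step. The constants must not accumulate along the sweep, and the ball to avoid must shrink only by a fixed factor, not by a factor depending on the number of sides $m$. I would try to handle this by choosing the subdivision scale $\epsilon r$ with $\epsilon\ll 1/C$, so that a single sweep step moves paths by at most $r/(2K)$. I would also use quasi-consistency to restart from the first bad index rather than chaining estimates, so that each estimate involves only consecutive paths.
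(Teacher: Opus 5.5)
\textbf{Genuine gap.} Step 3 cannot work, because the conclusion it aims at is false. You want a polygonal line with three or four sides and length $O(r)$ avoiding $B(z,r/K)$, independently of the number of sides of the input (your $m$, the $n$ of Definition~\ref{defn:navigable}). Navigability only asks for $kn$ sides and length at most $Cnr$, and the dependence on $n$ cannot be removed.

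Here is a counterexample to your stronger target. Take $\mathbb{H}^2$ with its geodesic bicombing, which is consistent and bounded since $\mathbb{H}^2$ is CAT(0). Let $a,b$ be antipodal points of the circle of radius $\tfrac32 r$ about $z$. A polygonal line with sufficiently many geodesic sides inscribed in half of this circle avoids $B(z,r)$. On the other hand, nearest-point projection onto the convex ball $B(z,r/K)$ is $1$--Lipschitz. It maps every path from $a$ to $b$ avoiding that ball to a path joining antipodal points of the circle of radius $r/K$. So every such path has length at least $\pi\sinh(r/K)$, which is not $O(r)$.

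Concretely, your sweep fails in two places.
\begin{itemize}
\item The paths $\gamma_{a,x_j}$ have length comparable to $d(a,x_j)$. This is not controlled by $r$, because the input may travel arbitrarily far from $z$.
\item ``Iterating from $b$'' has no bound on the number of restarts, and each restart adds sides. The number of subdivision points is governed by the length of the input, not by $n$.
\end{itemize}
Step 1 also overlooks orientation. A special path from $u$ to $v$ that is a subpath of an inverse combing line is close to $\gamma_{v,u}^{-1}$, not necessarily to $\gamma_{u,v}$, and the bicombing is not assumed reversible.

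\textbf{Missing idea: work locally, one side at a time, at cost $O(r)$ per side.} The paper proves that the combing path system is bounded in the sense of Definition~\ref{defn:bounded_path_system} (Lemma~\ref{lem:bicombng-is-bounded}) and avoidant (Lemma~\ref{lem:bicombing-is-avoidant}). It then applies Proposition~\ref{prop:bounded-avoidant-implies-navigable}.

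In that proposition, boundedness drives a discrete continuity argument like yours, but aimed at the centre rather than along the polygonal line.
\begin{itemize}
\item Each side is split into two special paths, and their common endpoint is moved in unit steps.
\item It moves first towards $z$, and then along the piece that has already reached the annulus $\{r-\kappa_0<d(\cdot,z)\le r\}$, until both pieces meet this annulus.
\item Each step moves the pieces by Hausdorff distance at most $\kappa_0$, so they never enter $B(z,r-\kappa_0)$.
\end{itemize}

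Consecutive touch points then lie on two special paths with a common endpoint. Avoidance joins them by a detour of length $O(r)$ avoiding $B(z,2r/C)$, as follows.
\begin{itemize}
\item Two combing lines starting within $4r$ of $z$ and ending at the same point fellow-travel at distance $O(r)$, by boundedness.
\item Cut both at parameter $Kr$, with $K$ large, and connect the cut points by a special path of length $O(r)$; this connector stays outside $B(z,r)$.
\item When the two paths have opposite orientations, an extra intermediate combing line handles the mismatch.
\end{itemize}
Concatenating at most $2n$ such detours gives a $(2kn)$--polygonal line of length $O(nr)$ avoiding $B(z,r/C)$. This is exactly what navigability requires.
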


A second source of examples comes from (spaces quasi-isometric to) \emph{median spaces}. Median spaces have been studied first from an algebraic viewpoint in in \cite{VandeVel:book,BandeltHedlikova,Isbell:median,Sholander1,Sholander2}. Their investigation from a geometric viewpoint can be found in \cite{Roller:median,NicaMaster,Bow14,Bow16,Bowditch:injective}. Examples of median metric spaces include trees, $\R^n$ with the $\ell^1$ metric for any $n\geq 1$, and CAT(0) cube
complexes with the Euclidean metric on cubes replaced by the $\ell^1$ metric. According to Chepoi, Gerasimov and Roller \cite{Chepoi:graphs,Gerasimov:semisplittings,Gerasimov:fixedpoint}, the
class of $1$-skeleta of CAT(0) cube complexes coincides with the class of {\it median graphs}, i.e. simplicial graphs whose $0$-skeleton with the combinatorial distance is median. General median spaces can be thought of as non-discrete versions of $0$-skeleta of CAT(0) cube complexes. This analogy is further emphasized by work of Bowditch \cite{Bow14,Bow16} who proved that the metric of a complete connected finite rank median metric space has a bi-Lipschitz deformation that is CAT(0). Median graphs are relevant in graph theory, computer science \cite{ChepoiBandelt}, and optimization theory \cite{MMR,Wildstrom}. Kazhdan's Property (T) and a-T-menability can be defined in terms of actions on median spaces \cite{CDH-T}.

\begin{proposition}[{Proposition~\ref{prop:median-implies-navigable}}]\label{propi:median-implies-navigable} The path system consisting of all geodesics in a median geodesic metric space is navigable.     
\end{proposition}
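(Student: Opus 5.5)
We do not yet have the precise definition of navigability (Definition~\ref{defn:navigable}). We work with the description from the introduction: given a $\mc P$--polygonal line that avoids a ball $B(p,r)$, we must be able to replace it by a $\mc P$--polygonal line with controlled endpoints that still avoids a comparable ball, and whose number of sides (or total length) is comparable to $r$. The plan is to exploit median geometry directly, using the fact that geodesics in a median space behave like $\ell^1$ paths in cube complexes. The strategy is to reduce navigability to a statement about single geodesics and gate maps.

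First, recall the standard facts we will use: in a median space $X$, for any $x,y,z$ the median $m(x,y,z)$ lies on every geodesic-interval concatenation $[x,y]$, the intervals $I(x,y)=\{z: d(x,z)+d(z,y)=d(x,y)\}$ are convex, and every geodesic from $x$ to $y$ lies in $I(x,y)$. For a convex subset $C$ (for example a closed ball's complement is not convex, but intervals and balls around gates are), the gate map $\pi_C$ is $1$--Lipschitz, and $\pi_C(z)\in I(z,c)$ for all $c\in C$. Combined, these give the key \emph{tightening lemma}: if a geodesic $\gamma$ from $x$ to $y$ passes through $B(p,r)$, then the median $m=m(x,y,p)$ satisfies $d(p,m)\le r$. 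Conversely, if $d(p,m)>r$, then \emph{every} geodesic from $x$ to $y$... is not guaranteed to avoid the ball, but the particular geodesic $[x,m]\cup[m,y]$ does, up to a constant. Indeed, any $z\in I(x,m)$ has $m(z,p,\cdot)$ controlled, which yields $d(z,p)\ge d(m,p)$.

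Second, we use this to tighten polygonal lines. Given a $\mc P$--polygonal line $x_0,\dots,x_n$ outside $B(p,r)$, we inductively combine consecutive sides. For consecutive vertices $x_{i-1},x_i,x_{i+1}$, we test whether the median $m(x_{i-1},x_{i+1},p)$ is at distance $>r/2$ from $p$. If it is, the side is shortcut by a single geodesic through that median, and the number of sides decreases. If it is not, the three points are "on opposite sides" of $p$ in a hyperplane sense. Counting hyperplanes (or walls, via the wall-space structure of median spaces) separating $p$ from the $x_i$, we expect the number of surviving corners to be bounded in terms of the diameter data, giving the comparable-size conclusion. A cleaner alternative is to project the whole line via the gate map onto the sphere-like convex hull of $B(p,r)$: the $1$--Lipschitz property controls lengths, and convexity of intervals keeps the projected geodesics outside $B(p,r/2)$.

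The main obstacle is the second step, namely producing a replacement line whose size is comparable to $r$ rather than merely shorter, since balls in median spaces are not convex (consider $\ell^1$ balls). We would first try the gate projection onto the convex hull of $B(p,r)$, which in a median space lies in $B(p,2r)$ when the rank is finite. This finite-rank assumption may not be available, so we would fall back on the wall-counting argument in that case. Finally, we verify the remaining axioms (quasi-geodesic constants and undirectedness), which are trivial since $\mc P$ consists of all geodesics.
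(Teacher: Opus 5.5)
There is a genuine gap. It sits exactly at the step you call the main obstacle, and that step is the entire content of navigability. Definition~\ref{defn:navigable} requires the following: for $R\ge C$, a point $m$ and an $(n,\mc P)$--polygonal line $\alpha$ with $d(\alpha,m)\ge R$ and $d(\alpha^\pm,m)\le 2R$, produce a $(kn,\mc P)$--polygonal line with the \emph{same endpoints}, of length at most $CnR$, avoiding $B(m,R/C)$, with $C,k$ uniform.

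Your tightening step only merges consecutive sides and never moves vertices, so it cannot bound the length when $\alpha$ wanders far from $m$. Concretely, work in $\ell^1$ on $\R^2$ with $m=0$. Let $x_0=(-2R,0)$, $x_1=(0,L)$, $x_2=(2R,0)$, with sides $x_0\to(-2R,L)\to x_1$ and $x_1\to(2R,L)\to x_2$. Both sides are geodesics avoiding $B(0,R)$. However, writing $\mu$ for the median, $\mu(x_0,x_2,m)=m$, so your test keeps the corner $x_1$. The output then has length $2L+4R$, which is unbounded in terms of $R$. The hyperplane-counting fallback is only an expectation, and even a bound on the number of surviving corners would not bound length.

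Two smaller points. First, $\mu(x,y,p)$ lies in $I(x,y)$, not on every geodesic from $x$ to $y$. Second, $\mu(x,y,p)$ is the gate of $p$ on $I(x,y)$, i.e. $d(p,z)=d(p,\mu(x,y,p))+d(\mu(x,y,p),z)$ for $z\in I(x,y)$. So the condition $d(p,\mu(x,y,p))>r$ \emph{does} force every geodesic from $x$ to $y$ to avoid $B(p,r)$.

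Your gate-projection alternative can be repaired, but only with rank-dependent constants. Done carefully, you would project $\alpha$ onto the closed convex hull $H$ of $B(m,2R)$, which fixes the endpoints. Gate maps are $1$--Lipschitz median morphisms, so they send geodesics to reparametrized geodesics. If $d(q,m)\ge R$, then $d(\pi_H(q),m)\ge R$, because $\pi_H(q)\in I(q,m)$ while $H$ contains the point of a geodesic $[m,q]$ at distance $\min\{d(q,m),2R\}$ from $m$. The length bound then needs $\diam H\le cR$.

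Your claim that the hull lies in $B(p,2r)$ is false. In $\ell^1$ on $\R^\nu$ the hull of a ball of radius $r$ is the cube $[-r,r]^\nu$, reaching distance $\nu r$. In $\ell^1(\N)$ the hull is unbounded. So this route cannot give uniform constants for all median spaces.

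The paper needs no rank. It proves that the geodesic path system is $2$--bounded: for a geodesic from $x$ to $y$ and $d(y,y')\le 1$, there is a geodesic from $x$ to $y'$ within Hausdorff distance $2$, shown by induction using medians. It also proves the system is $(28,3)$--avoidant. It then applies Proposition~\ref{prop:bounded-avoidant-implies-navigable}.

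In that proposition, boundedness slides each side of $\alpha$ into two geodesics with the same outer endpoints, each at distance from $m$ in $[R-2,R]$. This manufactures the new vertices near the sphere that your argument lacks. Avoidance then joins consecutive such points, through their shared junction, by paths of length $O(R)$.

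The median enters through the far endpoint, not the centre. Take geodesics $[z_i,y]$ avoiding $B(m,R)$ with $d(z_i,m)\le 4R$ and $d(z_i,y)>5R$. Pick $x_i\in[z_i,y]$ with $d(z_i,x_i)=5R$, and set $u=\mu(x_1,x_2,y)$. Then $[z_i,x_i]\ast[x_i,u]$ is an initial segment of a geodesic from $z_i$ to $y$, hence stays at distance at least $R$ from $m$. Finally, $d(x_1,x_2)\le 18R$ bounds the length.
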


The previous results combine to yield the following. 

\begin{theorem}[{Corollary~\ref{cor:median-implies-morse-dichotomous}}]\label{cori:median-is-md}
    Every median geodesic metric space is Morse dichotomous.
\end{theorem}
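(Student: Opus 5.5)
The statement is a direct combination of two results already announced: Proposition~\ref{propi:median-implies-navigable} and Theorem~\ref{thmi::geodesic-combing-implies-morse-dichotomous}. I would therefore argue in two steps, and essentially no new geometry is needed.

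First, let $X$ be a median geodesic metric space and let $\mc P$ be the collection of all geodesics of $X$. Since $X$ is geodesic, every pair of points is joined by some geodesic, so $\mc P$ is a path system. It is undirected, because the reverse of a geodesic is again a geodesic. By Proposition~\ref{propi:median-implies-navigable}, $\mc P$ is navigable. In the median setting this is where the work lies: one uses that intervals are convex and that median projections onto intervals, or gate maps, are $1$--Lipschitz. These facts let one tighten a $\mc P$--polygonal line avoiding a ball while keeping it outside a comparable ball. I take this proposition as given.

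Second, $X$ is a geodesic metric space whose set of geodesics forms a navigable path system. Theorem~\ref{thmi::geodesic-combing-implies-morse-dichotomous} (Corollary~\ref{cor:geodesic-combing-implies-morse-dichotomous}) then applies verbatim and gives that every Morse quasi-geodesic in $X$ is strongly contracting. This is exactly the statement that $X$ is Morse dichotomous.

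The only point needing care is checking that the hypotheses match exactly. Specifically, Corollary~\ref{cor:geodesic-combing-implies-morse-dichotomous} must require only that the full set of geodesics be navigable; it must not require a bounded combing, nor any local finiteness or properness that a general median space may lack. If that corollary secretly needed properness, for instance to extract limits of geodesics, I would fall back on its proof. There I would replace compactness arguments by the explicit convexity of median intervals, which already provides the needed uniform control.
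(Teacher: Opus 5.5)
Your proposal is correct and is the paper's argument: Proposition~\ref{prop:median-implies-navigable} shows that the path system of all geodesics is navigable, and the result that navigability of all geodesics implies Morse dichotomy finishes the proof. The body statement to cite is Proposition~\ref{lem:geodesic-path-navigable-implies-morse-dichotomous}, which assumes only that the geodesics form a navigable path system (no combing, no properness); Corollary~\ref{cor:geodesic-combing-implies-morse-dichotomous} itself does assume a bounded geodesic combing, so citing the proposition instead resolves your caveat.
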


\medskip

The following metric spaces satisfy the coarse Jacobi criterion for recognition of Morse quasi-geodesics Theorem \ref{thmi:effectiveMLTG}, the Morse local-to-global property and the criterion for linear divergence Theorem \ref{thmi:divergence} and are, for the most part, Morse dichotomous. Note that, for the last property, the assumption needed is that the set of geodesics forms a navigable path system. We refer to Section \ref{sec:metric-spaces} for details on which spaces satisfy this assumption, as well as for a longer list of examples.  

\begin{itemize}
    \item Properly convex cones endowed with the Thompson's metric and projectivizations of such cones endowed with the Hilbert metric;
    \item Injective metric spaces and their $\sigma$-convex subspaces; in particular spaces of probability measures endowed with the $1$-Kantorovich-Wasserstein distance and Teichmüller spaces with the Weil-Petersson metric;
    \item Systolic simplicial complexes;
    \item Hierarchically hyperbolic spaces;
    \item Helly graphs.
\end{itemize}

\subsection*{Outline}Section~\ref{sect:prelims} contains standard definitions and can mainly be used for reference. 

In Section~\ref{sect:path_systems} we recall path systems and introduce properties that play a central part in our results, first and foremost navigability (see Definition~\ref{defn:navigable}), and negative curvature along path systems (see Definition~\ref{defn:weakpolymorse}). We also define projection points and discuss their properties in Section~\ref{sec:projections}. In Section~\ref{sec:globalizations} we prove metric results about path systems including various local-to-global results such as Theorem~\ref{thmi:effectiveMLTG}, Corollary~\ref{cori:MLTG} and the propagation of linearity from a sequence to the entire divergence function (Theorem~\ref{thmi:divergence}).

In Section~\ref{sec:finding_navigable} we prove that various spaces admit navigable path systems (Propositions~\ref{prop:combing-navigability}, \ref{propi:bicombing-is-navigable}, and \ref{propi:median-implies-navigable}). We also show that navigability is invariant under quasi-isometries (Lemma~\ref{lem:navigable-push-forward}) and follows from a weak version of the classical boundedness of bicombings, when coupled with another property called \emph{avoidance} (Proposition~\ref{prop:bounded-avoidant-implies-navigable}). Further, we prove Proposition~\ref{propi:Morseiffqiem} and Corollaries~\ref{cori:median-is-md} and~\ref{cori:geodesiccombing-is-md}.

Section~\ref{sec:generalised_contraction} contains the construction of the generalized contraction space and the proof of Theorems~\ref{thmi:contraction_sp_hyperbolic}, ~\ref{propi:Morseiffqiem},~\ref{thmli:non-uniformly-acyindrical}. 

In Section~\ref{sect:comparing_contraction_spaces} we prove the diameter dichotomy (Theorem~\ref{thmi:diameter_dichotomy}). In Section~\ref{sect:the_dichotomy} we prove the main Theorem~\ref{thmi:dichotomy}. Finally, in Section~\ref{sec:examples} we list examples to which our theorems apply.

\section*{Acknowledgments} 
We are thankful to Jingyin Huang for providing many examples of spaces and groups to which our results apply, to Macarena Arenas, Giuliano Basso, Victor Chepoi, Thomas Haettel, Urs Lang, Damian Osajda, Harry Petyt and Cormack Walsh for useful conversations on bicombings and their applications, and to Stephan Stadler for interesting inputs on metric geometry. 

The authors are grateful to Max Planck Institute for Mathematics in Bonn for its hospitality while work at this paper has been carried out. They would also like to thank the Isaac Newton Institute for Mathematical Sciences, Cambridge, for support and hospitality during the programme Operators, Graphs, Groups, where work on this paper was undertaken. This work was supported by EPSRC grant EP/Z000580/1 and by Christ Church Oxford's research centre. The second author is supported by a Royal Society
University Research Fellowship URF\textbackslash R1\textbackslash 251707.  The third author is supported by the Postdoc Mobility grant \#P500PT\_230322 of the Swiss National Science Foundation, the European Union (ERC, SATURN, 101076148), and the Deutsche Forschungsgemeinschaft (DFG, German Research Foundation) under Germany's Excellence Strategy - EXC-2047/1 - 390685813.

\section{Preliminaries}\label{sect:prelims}

\begin{cvn}\label{conv:geo-iso}
Throughout the paper, unless otherwise stated, all metric spaces are geodesic and $X$ denotes a geodesic metric space; all actions of groups on metric spaces are by isometries.

\end{cvn}

\subsection{Quasi-geodesics}

\begin{definition}
    A function $f: X \to Y$ between geodesic metric spaces $(X, d_X)$ and $(Y, d_Y)$ is a $C$--quasi-isometry if $\dist_{\mathrm{Haus}}(f(X), X) \leq C$  and for all $x, y\in X$
    \begin{align*}
        \frac{d_X(x, y)}{C}-C \leq d_Y(f(x), f(y))\leq Cd_X(x, y)+C
    \end{align*}
\end{definition}

\begin{definition}
A \emph{$(\la, \ka)$--quasi-geodesic in $X$}, for $\la \geq 1$ and $\ka \geq 0$, is a map $\gamma \colon I \to X$, where $I\subseteq {\mathbb{R}}$ is an interval, such that \[\frac{1}{\la}\vert t_1 - t_2 \vert  - \ka \leq \dist (\gamma(t_1), \gamma(t_2)) \leq \la \vert t_1- t_2\vert + \ka.\]

We call a pair of constants $(\la, \ka)$, with $\la \geq 1$ and $\ka \geq 0$, a \emph{quasi-geodesic pair of constants}, or simply a \emph{quasi-geodesic pair}. Further we call a $(\la, \la)$--quasi-geodesic a $\la$--quasi-geodesic. 
\end{definition}

Given $r>0$, the {\em $r$--neighbourhood} of
a subset $A$, i.e. $\{x\in X: \dist (x, A)\leq r\}$, is denoted by $\nn_r(A)$.  In particular, if $A=\{a\}$ then $\nn_r(A)=B(a,r)$
is the {\em closed $r$--ball centred at $a$}.

 \begin{notation}
  Consider a path $\pgot \colon [a,b] \to X$. Given a point $x\in X$ and a subset $A\subseteq X$, we write, by abuse of notation, $x\in \pgot$ to mean that $x$ belongs to the image of $\pgot$, and $\pgot \subseteq A$ to mean that the image of $\pgot$ is contained in $A$.
 \end{notation}
 
For all $x, y\in X$, we denote by $[x, y]$ a choice of geodesic between $x$ and $y$. We assume that the choice of geodesic is invariant under the action of the isometry group of $X$. For a path $\pgot \colon [a,b] \to X$ we denote by $\pgot^{-1}$ the path $\pgot^{-1} : [a, b]\to X$ defined as $\pgot^{-1}(t) = \pgot(b+a - t)$, and we denote $\pgot(a)$ by $\pgot^-$ and $\pgot(b)$ by $\pgot^+$. We denote the length of a rectifiable path $\pgot$ by $\length {\pgot}$ and the length of its domain, $\abs{b-a}$, by $\dlength{\pgot}$. We call $\pgot((b-a)/2)$ the \emph{midpoint} of $\pgot$. With an abuse of notation, we denote by $\isub{\pgot}{s,t}$ the restriction of $\pgot$ to the interval $[s,t]$. 
Let $u, v\in \pgot$. We say that $x\in \pgot$ \emph{lies between} $u$ and $v$ if there are $s_1\leq s_2\leq s_3$ in the domain of $\pgot$ such that $\pgot(s_1) = u$, $\pgot(s_2) = x$, $\pgot(s_3)= v$. If $s_1$ is the smallest parameter so that $\pgot(s_1) = u$ and $s_3$ is the largest parameter so that $\pgot(s_3) = v$ we denote by $\psub{\pgot}{u,v}$ the restriction of $\pgot$ to $[s_1, s_3]$ (i.e. the maximal sub-path of $\pgot$ with image composed of points that lie between $u$ and $v$). For any point $x\in \pgot$ we denote define the \emph{prefix and suffix of $\pgot$ up to $x$} as $\pref{\pgot}{x} = \isub{\pgot}{a, s_1}$ and $\suf{\pgot}{x}=\isub{\pgot}{s_2,b}$, where $s_1, s_2\in [a, b]$ are the smallest and largest index respectively with $\pgot(s_i) = x$. With this notation, for any $u, v\in \pgot$ (with at least one point $x$ that lies between $u$ and $v$) we have $\pgot = \pref{\pgot}{u}\ast \psub{\pgot}{u, v}\ast \suf{\pgot}{v}$.

\begin{lemma}[Improved quasi-geodesics \cite{Bridson-Haefliger}, Lemma 1.11, \cite{Burago-Ivanov}, Proposition 8.3.4]\label{lemma:tamingqg}
Let $X$ be a geodesic metric space. For every  $(\la, \ka)$-quasi-geodesic $\gamma: [a, b]\to X$ there exists a continuous $(\la, \ka')$-quasi-geodesic $\bar{\gamma} : [a, b]\to X$ such that 
\begin{enumerate}
    \item $\gamma(a) = \bar{\gamma}(a)$ and $\gamma(b) = \bar{\gamma}(b)$;
    \item $\ka' = 2(\la + \ka)$;
    \item $\length {\isub{\bar{\gamma}}{t, t'}}\leq k_1 \dist (\bar{\gamma}(t), \bar{\gamma}(t'))+k_2$ for all $t, t'\in [a, b]$, where $k_1 = \la(\la+\ka)$ and $k_2 = (\la\ka' + 3)(\la+\ka)$;
    \item the Hausdorff distance between the images of $\gamma$ and $\bar{\gamma}$ is less than $\la+\ka$.
\end{enumerate}
\end{lemma}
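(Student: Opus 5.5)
The plan is the standard ``taming'' construction: replace $\gamma$ by a piecewise geodesic path through a discrete set of sample points, then check each of the four properties in turn.

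\emph{Construction.} Subdivide $[a,b]$ by points $a=t_0<t_1<\dots<t_n=b$ with $t_{i+1}-t_i\le 1$, and with all gaps equal to $1$ except possibly the last one. On each $[t_i,t_{i+1}]$ define $\bar\gamma$ to be a constant-speed parametrisation of a geodesic $[\gamma(t_i),\gamma(t_{i+1})]$. Then $\bar\gamma$ is continuous and has the same endpoints as $\gamma$, which gives (1).

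\emph{Properties (4) and (2).} Each segment has length
\[
\dist(\gamma(t_i),\gamma(t_{i+1}))\le \la+\ka .
\]
Any $t$ lies within $1/2$ of some $t_i$ in parameter, and both $\gamma(t)$ and $\bar\gamma(t)$ are within roughly $(\la+\ka)/2$ of $\gamma(t_i)$ or $\gamma(t_{i+1})$. This gives the Hausdorff bound $<\la+\ka$ in (4) and the pointwise bound $\dist(\gamma(t),\bar\gamma(t))\le \la+\ka$. The quasi-geodesic inequalities for $\bar\gamma$ with additive constant $\ka'=2(\la+\ka)$ then follow from those of $\gamma$ by the triangle inequality, using these pointwise errors at both endpoints $t,t'$.

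\emph{Length bound (3).} Given $t<t'$, let $t_i\le t<t_{i+1}\le\dots\le t_j\le t'<t_{j+1}$. The length of $\isub{\bar\gamma}{t,t'}$ is at most
\[
(j-i+1)(\la+\ka)\le (|t-t'|+2)(\la+\ka).
\]
By the lower quasi-geodesic bound for $\bar\gamma$,
\[
|t-t'|\le \la\bigl(\dist(\bar\gamma(t),\bar\gamma(t'))+\ka'\bigr).
\]
Substituting gives
\[
\length{\isub{\bar\gamma}{t,t'}}\le \la(\la+\ka)\,\dist(\bar\gamma(t),\bar\gamma(t'))+(\la\ka'+2)(\la+\ka),
\]
which is within the stated constants $k_1=\la(\la+\ka)$ and $k_2=(\la\ka'+3)(\la+\ka)$.

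\emph{Main obstacle.} The only delicate point is the exact bookkeeping of constants, especially at the possibly shorter last subinterval and in the degenerate case $b-a<1$. That case is handled by taking a single geodesic. Since this is a cited standard lemma, I would simply follow Bridson--Haefliger III.H.1.11 for the constants.
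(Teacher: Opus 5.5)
Your construction (piecewise-geodesic interpolation through the sample points $a,a+1,\dots,b$) is the Bridson--Haefliger construction; the paper gives no proof of its own and only cites that source. Your arguments for (1), (4) and the piece count in (3) are fine. The step that fails is (2), for two reasons.

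First, $\gamma(t)$ is only within $\lambda/2+\kappa$ of the nearest sample point $\gamma(t_i)$, not within $(\lambda+\kappa)/2$. So the pointwise estimate you actually get is $\dist(\gamma(t),\bar\gamma(t))\le \lambda/2+\kappa+(\lambda+\kappa)/2=\lambda+\tfrac32\kappa$, and for $\kappa>0$ this bound is attained. To see this, take a cycle made of a segment $[A,B]$ of length $\lambda+\kappa$ and an arc from $A$ to $B$ of length $\lambda+2\kappa$. Let $\gamma(0)=A$, $\gamma(1)=B$, and let $\gamma(t)$, for $0<t<1$, be the point of the arc at distance $\kappa+\lambda t$ from $A$. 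This $\gamma$ is a $(\lambda,\kappa)$--quasi-geodesic and $\bar\gamma$ is the segment $[A,B]$, yet $\dist(\gamma(1/2),\bar\gamma(1/2))=\lambda+\tfrac32\kappa>\lambda+\kappa$.

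Second, even granting a pointwise error $E=\lambda+\kappa$, going through $\gamma(t)$ and $\gamma(t')$ with the triangle inequality only yields the additive constant $\kappa+2E=2\lambda+3\kappa$. This exceeds $\kappa'=2(\lambda+\kappa)$. Your proof of (3) plugs $\kappa'$ into the lower quasi-geodesic bound, so it inherits the same error.

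The fix is to compare only at sample points, never with $\gamma$ at intermediate parameters. For $t\in[a,b]$ let $[t]$ be a nearest sample point. Then $|t-[t]|\le 1/2$, and $[t]$ is the nearer endpoint of the piece containing $t$. Since $\bar\gamma$ has constant speed on that piece, $\dist(\bar\gamma(t),\gamma([t]))\le(\lambda+\kappa)/2$. This gives the upper bound $\dist(\bar\gamma(t),\bar\gamma(t'))\le \lambda(|t-t'|+1)+\kappa+(\lambda+\kappa)=\lambda|t-t'|+2(\lambda+\kappa)$. It also gives the lower bound $\dist(\bar\gamma(t),\bar\gamma(t'))\ge \frac1\lambda(|t-t'|-1)-\kappa-(\lambda+\kappa)\ge \frac1\lambda|t-t'|-2(\lambda+\kappa)$, using $1/\lambda\le\lambda$. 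With this value of $\kappa'$, your estimate $\length{\isub{\bar\gamma}{t,t'}}\le(|t-t'|+2)(\lambda+\kappa)$ yields (3) exactly as you wrote it.
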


We call such a quasi-geodesic $\bar{\gamma}$ an {\em improvement} of $\gamma$. A $(\la, \ka)$--quasi-geodesic is \emph{improved} if it is continuous and condition $(3)$ of Lemma~\ref{lemma:tamingqg} holds.

\begin{cvn}
From now on, all quasi-geodesics are assumed to be improved.
\end{cvn}

Note that the remark above only holds because of our convention that all quasi-geodesics are improved and hence we can use \ref{lemma:tamingqg} to estimate the arc length.

\begin{lemma}\label{lemma:quasi-geodesic-concatenation}
   Let $Q\geq 1$ and $\rho\geq 0$ be constants. Let $\gamma, \eta$ be $Q$-quasi-geodesic segments with $d(\gamma^-, x)\leq \rho$ for some $x\in \eta$. If $\abs{\gamma}\geq (16Q^4 + Q\rho)$, then 
   \begin{align}\label{eq:far-away-line}
       \max\left\{d(\gamma^+,\psub{\eta}{\eta^-, x}),  d(\gamma^+, \psub{\eta}{x, \eta^+}) \right\}\geq \frac{\abs{\gamma}}{Q'},
   \end{align}
   where $Q' = 32Q^3$.  
\end{lemma}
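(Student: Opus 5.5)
We argue by contradiction, using only the quasi-geodesic inequalities for $\gamma$ and $\eta$ (both are $(Q,Q)$--quasi-geodesics). Suppose both quantities in \eqref{eq:far-away-line} are $<\abs{\gamma}/Q'$. Then there are points $y_1\in\psub{\eta}{\eta^-,x}$ and $y_2\in\psub{\eta}{x,\eta^+}$ with $\dist(y_i,\gamma^+)<\abs{\gamma}/Q'$. (If the distance to a subpath is only an infimum, pick $y_i$ within an arbitrarily small error; this does not affect the strict inequalities below.) Hence $\dist(y_1,y_2)<2\abs{\gamma}/Q'$.

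\textbf{Upper bound on $\dist(x,\gamma^+)$.} Write $y_1=\eta(s_1)$, $x=\eta(s)$, $y_2=\eta(s_2)$ with $s_1\le s\le s_2$. The lower quasi-geodesic inequality for $\eta$ gives
\[
\abs{s_2-s_1}\le Q\bigl(\dist(y_1,y_2)+Q\bigr).
\]
Since $\abs{s-s_1}\le\abs{s_2-s_1}$, the upper inequality gives
\[
\dist(x,y_1)\le Q\abs{s-s_1}+Q\le Q^2\dist(y_1,y_2)+Q^3+Q.
\]
Therefore
\[
\dist(x,\gamma^+)\le \dist(x,y_1)+\dist(y_1,\gamma^+)<(2Q^2+1)\frac{\abs{\gamma}}{Q'}+Q^3+Q\le \frac{3\abs{\gamma}}{32Q}+2Q^3 .
\]

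\textbf{Lower bound on $\dist(x,\gamma^+)$.} The lower quasi-geodesic inequality for $\gamma$ gives $\dist(\gamma^-,\gamma^+)\ge \abs{\gamma}/Q-Q$. With $\dist(\gamma^-,x)\le\rho$ this yields
\[
\dist(x,\gamma^+)\ge \frac{\abs{\gamma}}{Q}-Q-\rho .
\]

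\textbf{Comparing the bounds.} The two estimates are contradictory as soon as
\[
\frac{29\abs{\gamma}}{32Q}\ge 2Q^3+Q+\rho .
\]
The hypothesis $\abs{\gamma}\ge 16Q^4+Q\rho$ gives $29\abs{\gamma}/(32Q)\ge 14.5\,Q^3+\tfrac{29}{32}\rho$. This is ample for the $Q^3$ and $Q$ terms.

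\textbf{Main obstacle: the $\rho$ term.} The crude bound above leaves a coefficient $29/32<1$ in front of $\rho$, so it does not by itself absorb $\rho$. I expect this bookkeeping to be the only real difficulty. I would first try to tighten the upper bound on $\dist(x,y_1)$: use $\min(\abs{s-s_1},\abs{s_2-s})\le\abs{s_2-s_1}/2$ and pass through whichever of $y_1,y_2$ is closer to $x$ in the parameter. This halves the $\abs{\gamma}$ term to roughly $\abs{\gamma}/(32Q)$, so the lower bound carries coefficient $31/32$ in front of $\abs{\gamma}/Q$. If that still falls short, I would use the spare $Q^4$ slack, noting that $16Q^4$ dominates all additive constants by a wide margin. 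Alternatively, I would split into the cases $\rho\le Q^3$, where the slack absorbs $\rho$, and $\rho>Q^3$. In the second case I would compare $\dist(x,\gamma^+)$ with $\dist(\gamma^-,\gamma^+)$ via the upper bound $\dist(\gamma^-,\gamma^+)\le Q\abs{\gamma}+Q$, adjusting which point plays the role of the base.
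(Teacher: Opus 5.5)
The obstacle you flag at the end is a genuine gap, and no refinement of the bookkeeping can close it: the statement is false when $\rho$ is large compared with $Q^3$. Take $X=\mathbb{R}$, $Q=1$, $\eta(t)=t$ on $[-L,L]$ with $L\ge 16$, $x=0$, and $\gamma(t)=t-\rho$ on $[0,16+\rho]$. Both are geodesics, hence $1$--quasi-geodesics. Moreover $d(\gamma^-,x)=\rho$ and $\abs{\gamma}=16+\rho=16Q^4+Q\rho$, so every hypothesis holds. But $\gamma^+=16$ lies at distance $16$ from $\psub{\eta}{\eta^-,x}=[-L,0]$ and at distance $0$ from $\psub{\eta}{x,\eta^+}=[0,L]$. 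Meanwhile $\abs{\gamma}/Q'=(16+\rho)/32>16$ as soon as $\rho>496$. (For any $Q$, the $(Q,Q)$--quasi-geodesic $\gamma(t)=t/Q-\rho$ on $[0,16Q^4+Q\rho]$ ends at $16Q^3$ and violates the conclusion once $\rho>512Q^5$.)

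This is also why your estimate stalls. The lower bound $\abs{\gamma}/Q-Q-\rho$ must beat an upper bound containing a positive multiple of $\abs{\gamma}/Q$. The hypothesis only guarantees $\abs{\gamma}/Q\ge 16Q^3+\rho$, so the net coefficient of $\rho$ stays below $1$ however much you shrink the constant $3/32$ (e.g.\ by passing through the nearer of $y_1,y_2$). The $Q^4$ slack handles exactly the bounded regime: your displayed inequalities already give a contradiction whenever $\rho\le 100Q^3$. But the separate treatment of large $\rho$ that you propose cannot succeed.

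The paper's own proof contains the same slip. It asserts $\abs{\gamma}/(4Q)\ge Q+\rho$, whereas the hypothesis only gives $\abs{\gamma}/(4Q)\ge 4Q^3+\rho/4$. The fix is to strengthen the hypothesis, e.g.\ to $\abs{\gamma}\ge 16Q^4+2Q\rho$. Then $29\abs{\gamma}/(32Q)\ge 14.5Q^3+\tfrac{29}{16}\rho\ge 2Q^3+Q+\rho$, and your argument becomes a complete proof. (With $16Q^4+4Q\rho$, the paper's inequality holds verbatim.) Nothing downstream seems affected, since the applications take $\abs{\gamma}$ to be a large multiple of $\rho$ plus a constant; for instance, Lemma~\ref{lemma:attached-quasi-geo} assumes $\abs{\gamma}\ge (Q'')^2D_{\mc P}(2\rho)$.

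Up to this correction, your route is the paper's, arranged contrapositively. The paper shows that points $z_1,z_2$ on the two halves within $\ell$ of $\gamma^+$ are far from $x$. Hence they lie on opposite sides of it with parameter gaps of order $\abs{\gamma}/Q^2$, so $d(z_1,z_2)$, and therefore $\ell$, is of order $\abs{\gamma}/Q^3$. You instead use that $y_1,y_2$ are close to each other to force $x$, which lies between them in the parameter, to be close to $\gamma^+$. Both arguments hinge on the single comparison with $d(\gamma^-,\gamma^+)\ge\abs{\gamma}/Q-Q$, which is precisely where $\rho$ has to be absorbed.
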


\begin{proof}
    Let $s$ be such that $\eta(s) = x$. Let $z_1 = \eta(t_1)$ and $z_2 = \eta(t_2)$ be points on $\psub{\eta}{\eta^-, x}$ and $\psub{\eta}{x, \eta^+}$ respectively. Assume that for $i = 1$ and $i=2$,  $d(z_i, \gamma^+)\leq \ell$ for some $\ell\leq \abs{\gamma}/(4Q)$. For $i = 1, 2$, by the triangle inequality, 
    \begin{align*}
        d(z_i, x)\geq  d(\gamma^-, \gamma^+) - d(\gamma^-, x) - d(\gamma^+, z_i)\geq \frac{\abs{\gamma}}{Q} - Q -  \rho - \ell \geq \frac{\abs{\gamma}}{8Q} \geq 2Q^2.
    \end{align*}
    For the last step, we used that $\ell\leq\abs{\gamma}/(4Q)$ and $\abs{\gamma}/(4Q)\geq Q +\rho$. It follows that $t_1\leq s \leq t_2$ and for $i = 1, 2$
    \begin{align*}
        \abs{s - t_i}\geq \frac{\abs{\gamma}}{8Q^2} - 1\geq \frac{\abs{\gamma}}{16Q^2}.
    \end{align*}
    Here we used that $\abs{\gamma}\geq 16Q^2$. Hence
    \begin{align*}
        2\ell \geq d(z_1, z_2)\geq \frac{2\abs{\gamma}}{16Q^3} - Q\geq \frac{\abs{\gamma}}{16Q^3}.
    \end{align*}
    Here, we used that $\abs{\gamma}/(8Q^3)\geq Q$. Hence $2\ell\geq \abs{\gamma}/(16Q^3)$ and the statement follows.
\end{proof}

\subsection{Quasi-geodesic combings}\label{sect:combings}
Consider a metric space $X$ and two constants $\lao \geq 1$ and $\kao \geq 0$. 
A \emph{$(\lao,\kao)$--quasi-geodesic (bi)-combing} is a way of assigning to  every ordered pair of points $(x, y)\in X\times X$, a $(\lao,\kao)$--quasi-geodesic $\col_{xy}$ connecting them. The quasi-geodesics $\col_{xy}$ are called \emph{combing lines}. When needed, we assume the  quasi-geodesics to be extended to $\R$ by constant maps. 

We say that a $(\lao, \kao)$--quasi-geodesic combing is \emph{bounded} if 
\begin{equation}\label{eq:dist1}
 \dist (\col_{xy_1}(t), \col_{xy_2}(t))\leq \kao\dist (y_1,y_2)+\kao,    
\end{equation} for all $t\in \R$ and $x, y_1,y_2$ in $X$.

Similarly, a $(\lao, \kao)$--quasi-geodesic bicombing is \emph{$\kao$--bounded} if
\begin{equation}\label{eq:dist2}
 \dist (\col_{x_1y_1}(t), \col_{x_2y_2}(t))\leq \kao\left[\dist (x_1,x_2)+\dist (y_1,y_2)\right]+\kao,    
\end{equation} for all $t\in \R$ and $(x_1,y_1), (x_2,y_2)$ in $X\times X$.

A $(\lao, \kao)$--quasi-geodesic bicombing is called \emph{$\kao$--quasi-consistent} if for any combing line $\col_{xy}$ and points $z_1, z_2$ in the image of it, we have \[\dist_{\mathrm{Haus}}(\psub{\col_{xy}}{z_1z_2}, \col_{z_1z_2})\leq \kao.\]

\subsection{Morse properties}

\begin{definition}[Morse gauge]
    A \emph{Morse gauge} is a function $M \colon \mathbb{R}_{\geq1} \times \mathbb{R}_{\geq0} \to \mathbb{R}_{\geq 0}$ that is non-decreasing in each of the two variables, and is continuous in the second variable. 
 \end{definition}

As shown in \cite{CordesSistoZbinden:corrigendum}, the continuity in the second variable ensures that the $M$--Morse stratum $\mb ^M X$ is compact in the Morse boundary for all Morse gauges $M$. 

\begin{definition}[Morse quasi-geodesic]\label{defn:Morse_QG}
A quasi-geodesic $\gamma$ is \emph{$M$--Morse}, for a Morse gauge $M$, if any $(Q, q)$--quasi-geodesic $\eta$ with endpoints $\gamma(s)$ and $\gamma(t)$ satisfies 
    \[
    \eta \subseteq N_{M(Q,q)}(\isub{\gamma}{s,t}).
    \]
\end{definition}

One of the goals of this paper is to determine when and to what extent local-to-global properties hold for Morse quasi-geodesics, in the sense of the following definition, first formulated in \cite{RussellSprianoTran:thelocal}. 

\begin{definition}
    We say that a path $\pgot \colon [a, b] \to X$ \emph{satisfies a property $P$ at scale $L$} (or that \emph{$\pgot$ $L$--locally satisfies $P$}, or \emph{$\pgot$ is $L$--locally $P$}) if for every $t_1, t_2 \in [a,b]$ with $\vert t_1- t_2 \vert \leq L$ the restriction $\isub{\pgot}{t_1, t_2}$ has the property $P$. The quantity $L$ is called the \emph{scale}. We say that a path \emph{is locally $P$} when it is $L$--locally $P$ for some scale $L$.  
\end{definition}

A `local-to-global property' for paths is requiring that whenever a path is $L$--locally $P$ for $L$ large enough then it is globally $P'$. For instance, if we take $P=P'$ to be the property of being a geodesic, then CAT(0) spaces (or in general Busemann spaces) satisfy the local-to-global property for all $L>0$. Another notable example is a theorem of Gromov stating that a geodesic space is hyperbolic if any only if for each $(\lao, \kao)$ there exists $L, (\la, \ka)$ so that each path that $L$--locally is a $(\lao, \kao)$--quasi-geodesic is globally a quasi-geodesic \cite{Gromov(1983)}.

We will focus on local-to-global properties that govern how paths that are locally Morse quasi-geodesics are globally Morse quasi-geodesics. The most studied such relation is the Morse local-to-global property, which is discussed in length in \cite[Appendix~A]{DrutuSprianoZbinden:schism}.

\begin{definition}\label{defn:MLTG}
    A metric space $X$ satisfies the \emph{Morse local-to-global} (\emph{MLTG} for short) \emph{property} if the following holds. For any quasi-geodesic pair $(\la, \ka)$ and Morse gauge $M$ there exists a scale $L$, a quasi-geodesic pair $(\la', \ka')$ and a Morse gauge $M'$ such that every path that is $L$--locally an $M$--Morse $(\la, \ka)$--quasi-geodesic is globally an $M'$--Morse, $(\la', \ka')$--quasi-geodesic. 
\end{definition}

We conclude this subsection by recording the notion of strongly contracting quasi-geodesic, which is a strong version of being Morse.

\begin{definition}\label{def:strongly_contracting}
    A quasi-geodesic $\qgot$ is called \emph{$C$--strongly contracting} if for each ball $B$ disjoint from $\qgot$ we have 
    \[\mathrm{diam}(\pi_\qgot(B)) \leq C,\]
    where $\pi_\qgot \colon X \to 2^\qgot$ denotes the closest point projection.
\end{definition}

All strongly contracting geodesics are Morse (\cite[Theorem~1.3]{ACHG:contraction_morse_divergence}, see also \cite{Sultan:CAT0}) and a space is called \emph{Morse dichotomous}, if the converse holds in a quantitative fashion. 

\begin{definition}\label{def:morse-dichotomous}
    A geodesic metric space $X$ is called Morse dichotomous if for every Morse gauge $M$ there exists a constant $C$ such that all $M$--Morse geodesics are $C$--contracting. 
\end{definition}

\subsection{Divergence}\label{sec:div}

The divergence is a function that expresses the minimal length of a path joining two points while staying away from a ball around a third point. 

Let $(X, d)$ be a geodesic metric space, let $0< \delta  < 1$ and let $\eps> 0$. For points $a, b, c\in X$ we define $\dv_{\eps}(a, b,c; \delta)$ as the infimum of $\norm{\pgot}$ for all paths $\pgot$ from $a$ to $b$ with $d(\pgot, c) \geq \delta d(c, \{a, b\}) - \eps$. 

The  \emph{$(\eps, \delta)$--divergence function} $\Dv_\eps(n ,\delta)$ of the space $X$ is defined as the supremum of $\dv_\eps (a,b,c;\delta)$ for all $a, b, c, \in X$ with $d(a,b)\leq n$. We say that two functions $f, g$ are equivalent if there exists a constant $C$ such that $g(n/C)/C-n-1\leq f(n)\leq Cg(Cn) + Cn + C$ for all $n$. 

\begin{lemma}[{\cite{DrutuMozesSapir}}]\label{lemma:divergence-def}
    Let $G$ be a finitely generated group acting properly and coboundedly on a geodesic metric space $X$. Then there exists $\delta_0, \eps_0 > 0$ such that for all $\delta \leq \delta_0$ and $\eps \geq \eps_0$ the equivalence class of $\Dv_\eps(n, \delta)$ only depends on $G$. 
\end{lemma}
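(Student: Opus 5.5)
We reduce the statement to the Cayley graph and show that $\Dv_\eps$ is coarsely preserved, in both directions, by an orbit quasi-isometry. Fix a finite generating set $S$ of $G$ and the word metric $d_S$. By the Švarc–Milnor lemma, applicable since the action is proper and cobounded on the geodesic space $X$, an orbit map $g\mapsto gx_0$ is a $C$--quasi-isometry $f\colon (G,d_S)\to X$. It therefore suffices to show that for $\delta$ small and $\eps$ large, $\Dv^X_\eps(n,\delta)$ is equivalent to $\Dv^{\mathrm{Cay}(G,S)}_{\eps'}(n,\delta')$ for suitable $\delta',\eps'$. We also need that the latter class does not depend on $\delta'\le\delta_0,\ \eps'\ge\eps_0$, nor on $S$. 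Independence of $S$ follows from the same comparison applied to the quasi-isometry between two Cayley graphs.

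\textbf{Transport of paths under a quasi-isometry.} Let $\pgot$ be a path in $X$ from $a$ to $b$ avoiding the ball $B(c,\delta d(c,\{a,b\})-\eps)$. Subdivide $\pgot$ into pieces of length at most $1$ and pick group elements whose orbit points lie within $C$ of the subdivision points. Consecutive elements are at $d_S$--distance at most a constant $C_1$. Join them by geodesics in the Cayley graph. The resulting path has length at most $C_1\norm{\pgot}+C_1$, and its endpoints are within bounded distance of preimages of $a$ and $b$. It also stays at distance at least $\delta d(c,\{a,b\})/C-\eps/C-C_2$ from a preimage $c'$ of $c$.

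Next compare with $\delta' d(c',\{a',b'\})$. Since $d(c',\{a',b'\})\le C d(c,\{a,b\})+C$, choosing $\delta'=\delta/C^2$ and $\eps'$ large gives $\dv_{\eps'}(a',b',c';\delta')\le C_1\dv_\eps(a,b,c;\delta)+C_1$. We also need $d_S(a',b')\le Cn+C$. Taking suprema yields $\Dv^{G}_{\eps'}(n,\delta')\le C_1\Dv^X_\eps(Cn+C,\delta)+C_1$. The reverse inequality is obtained symmetrically, using a quasi-inverse and coboundedness so that every triple in $X$ is close to an orbit triple. The required form $g(n/C)/C-n-1\le f(n)\le Cg(Cn)+Cn+C$ absorbs the additive errors, including the $+Cn$ coming from connecting endpoints.

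\textbf{Independence of the parameters.} What remains, and is the main obstacle, is to show that within a single space the class of $\Dv_\eps(n,\delta)$ does not depend on $\delta\le\delta_0$ and $\eps\ge\eps_0$. Independence of $\eps$ for large $\eps$ is easy: changing $\eps$ only shifts the radius of the avoided ball by a constant. Independence of $\delta$ is the real content of \cite{DrutuMozesSapir}. The plan is to prove it in the group, where homogeneity is available, and then transfer it back to $X$ by the comparison above.

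Given a path avoiding a ball of radius $\delta r$ around $c$, we aim to produce a path avoiding the larger ball of radius $\delta_0 r$ at comparable cost. The idea is to cover the sphere-like region by translates, using that $G$ acts transitively on itself. One then chains detours around smaller balls, at linearly many scales, to push the path outward.

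If this direct argument becomes too technical, the fallback is to cite the statement of \cite{DrutuMozesSapir} for Cayley graphs, where they show the equivalence class is independent of $\delta\le 1/2$ and of $\eps$ large. The Švarc–Milnor comparison above then does all the remaining work.
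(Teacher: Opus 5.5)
This is correct and is essentially the paper's route: the paper gives no proof beyond citing \cite{DrutuMozesSapir}. In your outline the \v{S}varc--Milnor transport is only the routine quasi-isometry step (it compares the two divergence functions with $\delta$ rescaled by $C^{\pm 2}$), while the real content, independence of the class from small $\delta$, is exactly what you also take from \cite{DrutuMozesSapir}. Note that your ``direct'' argument for $\delta$-independence is only a heuristic, and that $\eps$-independence is not separately easy: a constant shift of the radius has to be traded for a change of $\delta$ (for $\eps'\ge\eps$ one has $\dv_\eps(a,b,c;\delta)\le\max\{d(a,b),\dv_{\eps'}(a,b,c;\delta\eps'/\eps)\}$), so the citation does all the non-routine work, just as in the paper.
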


For $X$ (and $G$) as in Lemma~\ref{lemma:divergence-def}, we say that the \emph{divergence} of $X$ (or $G$) is the equivalence class of $\Dv_{\eps_0}(n, \delta_0)$.

There are other ways to define the divergence function (\cite{Gersten:divergence, Gersten:divergence3, DrutuMozesSapir}) and those are shown to be equivalent to the above definition by \cite{DrutuMozesSapir}.

\subsection{Asymptotic cones}

For the notion of asymptotic cone, we refer to M. Gromov \cite{Gromov:Asymptotic}, further properties and open questions can be found in \cite{Drutu:survey} and in  \cite{DrutuSapir:TreeGraded}. 

Given a metric space  $(X, \dist )$, a sequence $(o_n)$ of basepoints in it, and $(d_n)$ a sequence of positive numbers diverging to $\infty$, when building the corresponding asymptotic cone, the goal is to build a complete metric space that appears as limit of the sequence of rescaled metric spaces  $(X, \frac{1}{d_n}\dist )$ with basepoints $o_n$. This is done \textit{via} the choice of a non-principal ultrafilter which, in some sense, selects a converging subsequence from the given sequence.   The limit thus obtained is denoted by  $\Con^\omega(X, (o_n),(d_n))$.

In what follows, by cut-point we always mean global cut-point. 

Recall that, with the terminology of \cite{DrutuMozesSapir}, a metric space $X$ is called {\em wide} if none of its asymptotic cones has a
cut-point; it is called \emph{unconstricted} if one of its asymptotic cones does not have cut-points.

\begin{proposition}[\cite{DrutuMozesSapir}, Lemma 3.17]\label{prop2:one-ended}
 Let $X$ be a geodesic metric space whose isometry group acts coboundedly on $X$. The following are equivalent:
 \begin{enumerate}
     \item $X$ is wide;
     \item $\Dv_\eps (n;\delta)$ is bounded by a linear function for small enough $\eps$ and $\delta$;
 \end{enumerate}
    \end{proposition}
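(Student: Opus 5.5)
We prove the two implications separately, working throughout with the definition of $\Dv_\eps(n,\delta)$ from Subsection~\ref{sec:div} and with asymptotic cones $\Con^\omega(X,(o_n),(d_n))$. Coboundedness lets us move any chosen points near a fixed basepoint. So we may take all basepoints equal to $o$, up to isometries whose translates stay in a bounded set.

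\emph{(2) $\Rightarrow$ (1).} Assume $\Dv_\eps(n,\delta)\le Cn+C$. Suppose some cone has a cut-point $c_\omega$ separating two points $a_\omega,b_\omega$. We may arrange that both $a_\omega$ and $b_\omega$ lie at distance at least $r>0$ from $c_\omega$. This is done by replacing them with points close to $c_\omega$ in their components, using that asymptotic cones are geodesic. Choose representatives $a_n,b_n,c_n$, so that $d(a_n,b_n)\approx d_n d(a_\omega,b_\omega)$ and $d(c_n,\{a_n,b_n\})\ge r d_n/2$. By hypothesis there is a path $\pgot_n$ from $a_n$ to $b_n$ of length at most $C'd_n$ avoiding the ball $B(c_n,\delta r d_n/2-\eps)$. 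Parametrise $\pgot_n$ proportionally to arc length on $[0,1]$. Then the maps are uniformly $C'd_n$-Lipschitz, so their ultralimit is a Lipschitz path in the cone from $a_\omega$ to $b_\omega$. This limit path avoids $B(c_\omega,\delta r/2)$, contradicting that $c_\omega$ is a cut-point.

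\emph{(1) $\Rightarrow$ (2).} We argue by contrapositive. Suppose that for every choice of small $\eps,\delta$ the divergence is not linearly bounded. Fix such $\eps,\delta$. Then there are triples $a_k,b_k,c_k$ with $d(a_k,b_k)\le n_k$ and $\dv_\eps(a_k,b_k,c_k;\delta)\ge k n_k$. Set $d_k=n_k$, which tends to infinity, since for bounded $n$ the divergence is bounded: in a geodesic space we may go around a bounded region. Also set $r_k=d(c_k,\{a_k,b_k\})$.

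\emph{Normalisation.} This is the first delicate point. We need $r_k$ comparable to $d_k$. If $r_k\gg d_k$, then the geodesic $[a_k,b_k]$ already avoids the ball, so the divergence is small; hence $r_k\lesssim d_k$ up to constants. If $r_k\ll d_k$, we replace the scale by $r_k$. To do so we use the standard reduction of \cite{DrutuMozesSapir}: we subdivide, replacing $a_k,b_k$ by points on a geodesic at distance about $r_k$ from $c_k$, and pass to a subsequence. We use coboundedness to translate $c_k$ to within bounded distance of $o$.

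In the limit cone we obtain points $a_\omega,b_\omega,c_\omega$ with $c_\omega$ at positive distance from $a_\omega$ and $b_\omega$. We claim that $c_\omega$ is a cut-point separating them. Suppose a path in the cone joined $a_\omega$ to $b_\omega$ avoiding a ball $B(c_\omega,\rho)$. Discretise it into finitely many points at mutual distance less than $\rho/10$, and lift these to $X$. Join consecutive lifts by geodesics, which have length $O(d_k)$ and stay outside $B(c_k,\rho d_k/2)$. This yields a path of length $O(d_k)$ avoiding a ball of radius proportional to $r_k$, contradicting $\dv\ge k d_k$ provided $\delta$ is chosen less than $\rho/2$.

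\emph{Main obstacle.} The radius $\rho$ is not a priori controlled, since it depends on the limit path. The fix is to use the hypothesis for \emph{all} small $\delta$ and diagonalise over $\delta=1/m$, obtaining points with no avoiding path at any radius. The resulting point is then a genuine cut-point, possibly after shrinking via the standard fact that a point with small-radius non-avoidance in a geodesic cone is a cut-point.
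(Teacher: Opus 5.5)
Your proof of (2)$\Rightarrow$(1) is correct, and it does not use coboundedness. The problem is in (1)$\Rightarrow$(2) as you first set it up, with $\eps$ and $\delta$ fixed in advance. There you assert that $n_k\to\infty$ because ``for bounded $n$ the divergence is bounded'', and this is false. Let $X$ be $\mathbb{R}^2$ with a segment of length $1$ glued at each lattice point. Then $\mathbb{Z}^2$ acts coboundedly and $X$ is quasi-isometric to $\mathbb{R}^2$, so $X$ is wide. But take $a$ the tip of a hair, $c$ its base, and $b$ a point of the plane with $d(a,b)=n\ge 2$. Then $d(c,\{a,b\})=1$ and every path from $a$ to $b$ passes through $c$, so $\Dv_\eps(n,\delta)=\infty$ for all $n\ge2$ as soon as $\eps<\delta$. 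So with $\eps,\delta$ fixed beforehand the implication itself can fail. The statement has to be read with $\delta$ small in terms of a fixed $\eps>0$, or with $\eps$ large as in Lemma~\ref{lemma:divergence-def}. Moreover, as you note yourself, for fixed $\delta$ the cone argument only produces a separating ball of radius comparable to $\delta$, not a cut-point.

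The diagonalisation in your last paragraph is the right repair. Written out, it is a complete proof, and it also removes the false step.

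Fix $\eps>0$ and suppose $\Dv_\eps(\cdot,1/m)$ is not linearly bounded for any $m$. Choose $a_m,b_m,c_m$ with $\dv_\eps(a_m,b_m,c_m;1/m)>m\ell_m$, where $\ell_m=d(a_m,b_m)$, and say $r_m=d(c_m,\{a_m,b_m\})=d(c_m,a_m)$. A geodesic $[a_m,b_m]$ is then not admissible, so it contains a point $x$ with $d(x,c_m)<r_m/m-\eps$. This gives $r_m>m\eps\to\infty$, which is the actual reason the scales diverge. It also gives $r_m\le d(c_m,x)+\ell_m/2$, hence $r_m<\ell_m$ for $m\ge2$.

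For the normalisation, only the far endpoint should move. If $\ell_m>3r_m$, replace $b_m$ by the point $b_m'\in[a_m,b_m]$ with $d(a_m,b_m')=3r_m$. The discarded piece $[b_m',b_m]$ stays at distance at least $2r_m$ from $c_m$, so admissible paths for the new triple extend to admissible paths for the old one. Hence the new triple still has divergence $>3mr_m=m\,d(a_m,b_m')$, and after renaming we may assume $r_m\le\ell_m\le 3r_m$.

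In $\Con^\omega(X,(c_m),(\ell_m))$ the points $a_\omega,b_\omega$ are then at distance at least $1/3$ from $c_\omega$. A path between them missing $c_\omega$ misses some $B(c_\omega,\rho)$ by compactness. Your discretisation then gives, for $\omega$-almost every $m$, paths in $X$ of length $O_\rho(\ell_m)$ avoiding $B(c_m,\rho\ell_m/2)\supseteq B(c_m,r_m/m-\eps)$. This contradicts the lower bound $m\ell_m$ on the divergence. Since geodesic spaces are locally path connected, $c_\omega$ is a genuine cut-point. No further ``shrinking'' and no coboundedness is needed.

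What this yields is: for every $\eps>0$ some $\delta_0(\eps,X)>0$ works. The paper gives no argument of its own here, only the citation to \cite{DrutuMozesSapir}, and this non-uniform conclusion is all the statement asks for. A range of $\delta$ independent of $X$, as in the sharper form of the result in that paper, is beyond a pure diagonal argument.
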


For a more detailed discussion relating existence of cut-points, divergence and existence of Morse geodesics we refer to \cite{DrutuMozesSapir}.

\section{Path systems}\label{sect:path_systems}

In this section, we recall the notion of path system, introduced in \cite{Sisto:contracting}, and a few of its immediate properties. We also define new properties, either referring to a given path system (such as {\em contracting} or {\em midthin}), or satisfied by a path system (such as {\em navigable} and {\em avoidant}). This will allow us to isolate a minimal set of assumptions necessary for our theorems. Examples of path systems with the latter properties are provided in Section \ref{sec:finding_navigable}. Roughly, the main sources of examples come from bounded (bi)combings and quasi-isometries with median geodesic spaces.  

\subsection{Definitions, notation, basic properties}

Recall that we are assuming that all quasi-geodesics are improved (see Convention~\ref{conv:geo-iso}).

\begin{definition}\label{def:heap}
    Let $X$ be a geodesic metric space. A \emph{heap of paths} $\mc Q$ on $X$ is a set of uniform quasi-geodesic segments in $X$ containing the trivial path from $x$ to itself for all $x\in X$.
\end{definition}

 Let $X$ be a geodesic metric space and $\mc Q$ a heap of paths on $X$. We say that $\mc Q$ is \emph{$(\lao, \kao)$--quasi-geodesic} if all paths in $\mc Q$ are $(\lao, \kao)$--quasi-geodesics. In this case, we call $(\lao, \kao)$ the quasi-geodesic constants of $\mc Q$. We call paths in $\mc Q$ \emph{special paths}. We denote by $\overline{\mc Q} \subseteq \mc P$ the set containing $\mc Q$ and all of its subsegments, and call it the \emph{consistent closure} of $\mc Q$. A heap of paths $\mc Q$ is called \emph{undirected} if for each special path $\spath \in \mc Q$, its inverse $\spath^{-1}$ is in $\mc Q$. Further, we denote by $\mc Q^{\pm}$ the undirected heap of paths it induces, that is, the heap of paths which contains all paths of $\mc Q$ and their inverses. We call $\mc Q^\pm$ the \emph{directional closure of~$\mc Q$}. We say that a path $\pgot$ is an $(n, \mc Q)$--polygonal line if it is the concatenation of up to $n$ paths $\spath_i\in \mc Q$.

We use the following to ease computations later in the paper.

\begin{remark}\label{remark:definition-of-dp}
    Given a heap of paths $\mc Q$, there exists a constant $C_{\mc Q}\geq 1$ and a linear function $D_{\mc Q}(r) = C_{\mc Q}r+C_{\mc Q}$ only depending on the quasi-geodesic constants of $\mc Q$ satisfying the following. Let $\spath\in \mc Q$ be a special path and let $A = \{\abs{\spath}, \norm{\spath}, \diam(\spath), d(\spath^-, \spath^+)\}$. Then $D_{\mc Q}(\min(A))\geq \max(A)$.
\end{remark}

If a heap of paths is especially nice, we call it a \emph{path system}, this notion was introduced in \cite[Definition~2.1]{Sisto:contracting}.

\begin{definition}\label{def:path syst}
    A heap of paths $\mc P$ on $X$ is called a \emph{path system} if it is equal to its consistent closure and for every pair $x, y\in X$ there exists a path in $\mc P$ from $x$ to $y$.
\end{definition}

Given a path system $\mc P$ on $X$ and a pair of points $x, y\in X$, we denote by $[x, y]_{\mc P}\in \mc P$ a choice of special path from $x$ to $y$.

Let $(X, d_X)$ and $(Y, d_Y)$ be geodesic metric spaces and let $f : X\to Y$ be a $C$--quasi-isometry from $X$ to $Y$. Further, let $\mc P$ be a path system in $X$. We want to use $f$ and $\mc P$ to define a path system $\mc P_{f}$ in $Y$, which we will call a {\emph{$C$--push-forward of}} $\mc P$ by $f$. Given a path $\spath \in \mc P$, the composition $f\circ \spath$ is not necessarily continuous (or improved). However, we can use \emph{rectifications of paths} (see below) to get a path $\spath'$ which is at bounded Hausdorff distance from $f\circ \spath$. 

\begin{definition}
    Let $\pgot : [a, b]\to X$ be a not necessarily continuous path and let $k = \ceil{b-a}$. We say that $\qgot$ is a \emph{rectification of $\pgot$} if $\qgot = \gamma_0\ast \ldots \ast \gamma_k$ such that for $0\leq i < k$, $\gamma_i$ is a geodesic from $\pgot(a+i)$ to $\pgot(a+i+1)$ and $\gamma_k$ is a geodesic from $\pgot(a+k-1)$ to $\pgot (b)$.
\end{definition}

We are now ready to define the push-forward of a path system.

\begin{definition}
    Let $f: (X, d_X) \to (Y, d_Y)$ be a $C$--quasi-isometry. The \emph{$C$--push forward $\mc P_f$ by $f$ of $\mc P$} is obtained as follows. Let $\mc{P}_f'$ be the set of paths $[y, f(x)]\ast \spath' \ast[f(x'), y']$ where $\spath\in \mc{P}$ is a special path between $x$ and $x'$, $\spath'$ is the rectification of $f \circ \spath$, and $y,y'$ are such that $d_Y(f(x), y)\leq C$ and $d_Y(f(x'), y')\leq C$. Then $\mc{P}_f$ is the set of all subsegments of paths in $\mc{P}_f'$.
\end{definition}

\begin{remark}\label{rem:push-forward-properties}
   The following properties of the $C$--push-forward $\mc P_f$ follow directly from the definition.
    \begin{enumerate}
        \item \label{prop:push-forward:quasi-geo-constants} If $\mc P$ is a $(\lao, \kao)$--quasi-geodesic path system, then $\mc P_f$ is a $(\lao', \kao')$--quasi-geodesic path system where $\lao', \kao'$ only depend on $\lao, \kao$ and $C$.
        \item \label{prop:equiv-push-forward} Let $G$ be a group acting both on $X$ and $Y$. If $f$ is $G$--equivariant and $\mc P$ is $G$--invariant, then $\mc P_f$ is $G$--invariant.
       \item \label{prop:hausdorff-push-forward} For every special path $\spath\in \mc P$ there exists a special path $\spath '\in \mc P_{f}$ which has the same endpoints as $f( \spath )$ and such that $d_{\mathrm{Haus}}(f(\spath), \spath')\leq C_f$, where $C_f$ is a constant depending only on $C$ and the quasi-geodesic constants of $\mc P$. 
       \item \label{prop:hausdorff-pull-back}For every special path $\spath'\in \mc P_{f}$ there exists a special path $\spath \in \mc P$ such that $d_{\mathrm{Haus}}(f(\spath), \spath')\leq C_f$ and the endpoints of $f(\spath)$ are at distance at most $C$ from the respective endpoints of $\spath'$. Again, $C_f$ is a constant only depending on $C$ and the quasi-geodesic constants of $\mc P$.
       \item \label{prop:unidrected-push-forward} If $\mc P$ is undirected, then Properties \eqref{prop:push-forward:quasi-geo-constants} - \eqref{prop:hausdorff-pull-back} also hold after replacing $\mc P_f$ with $\mc (P_f)^\pm$.
    \end{enumerate}
\end{remark}

\subsection{Morse and contraction in terms of path systems}
We recall the definition of {\em contraction} with respect to a path system, as introduced in \cite{Sisto:contracting}. We then introduce the notion of \emph{polygonally Morse} geodesics. Both represent weak versions of the notion of geodesics along which the curvature is negative (e.g. geodesics for which the space of parallel Jacobi fields is of dimension one).

\begin{definition}[{\cite[Definition 2.2]{Sisto:contracting}}]\label{def:contracting} Let $\mc P$ be a path system. A subset $A \subseteq X$ is called $\mathcal{P}$--{\emph{contracting with constant $C$}} if there
exists a map $\pi_A \colon X \to A$ such that
\begin{enumerate}
    \item $\dist (x, \pi_A(x)) \leq C$  for every $x \in  A$,\label{5.2.1}
    \item for every $x, y \in  X$, if $\dist (\pi_A(x), \pi_A(y))\geq C$ then for every special path $[x, y]_{\mc P}$ from $x$ to $y$ we have $\dist ([x, y]_{\mc P}, \pi_A(x))\leq C$, $\dist ([x, y]_{\mc P}, \pi_A(y))\leq C$.\label{5.2.2}
\end{enumerate}
\end{definition}

The notion of $\mc P$--contracting is very reminiscent of the notion of strongly contracting (Definition~\ref{def:strongly_contracting}), where (\ref{5.2.2}) plays the role of the  \emph{bounded geodesic image property} (see \cite[Corollary~7.2]{ACHG:contraction_morse_divergence}). The next two results is the link between being $\mc P$--contracting, which is a property that \textit{a priori} only concerns the path system with the geometry of the whole space.

\begin{lemma}[{\cite[Lemma~2.4 (2)]{Sisto:contracting}}]\label{lem:projection_is_Lipschitz}
    If $A$ is $\mc P$--contracting with constant $C$ the map $\pi_A \colon X \to A$ is coarsely Lipschitz, where the constants only depend on $C$ and the quasi-geodesic constants of the path system.
\end{lemma}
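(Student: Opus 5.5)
We want to show: if $A$ is $\mc P$--contracting with constant $C$, then there are constants $K$, depending only on $C$ and the quasi-geodesic constants $(\lao,\kao)$ of $\mc P$, such that $\dist(\pi_A(x),\pi_A(y))\leq K\dist(x,y)+K$ for all $x,y\in X$. Since $X$ is geodesic, it is enough to prove a \emph{local} bound: there is a threshold $r_0$ and a constant $K_0$ with
\[
\dist(\pi_A(x),\pi_A(y))\leq K_0 \quad\text{whenever } \dist(x,y)\leq r_0 .
\]
Given arbitrary $x,y$, we subdivide a geodesic $[x,y]$ into $\lceil \dist(x,y)/r_0\rceil$ pieces of length at most $r_0$. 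Summing the local bound over the pieces gives the linear estimate with $K=\max(K_0/r_0,K_0)$.

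For the local bound, take $x,y$ with $\dist(x,y)\leq r_0$. We may assume $\dist(\pi_A(x),\pi_A(y))\geq C$, otherwise there is nothing to prove. Property (\ref{5.2.2}) of Definition~\ref{def:contracting} then applies to the special path $[x,y]_{\mc P}$. This path is a $(\lao,\kao)$--quasi-geodesic with endpoints at distance at most $r_0$, so by Remark~\ref{remark:definition-of-dp} its diameter is at most $D_{\mc P}(r_0)$. Property (\ref{5.2.2}) says that both $\pi_A(x)$ and $\pi_A(y)$ lie within $C$ of this path. Hence
\[
\dist(\pi_A(x),\pi_A(y))\leq 2C+D_{\mc P}(r_0).
\]
We can fix $r_0=1$ and take $K_0=\max\{C,\,2C+D_{\mc P}(1)\}$, which depends only on $C$ and $(\lao,\kao)$.

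No step here looks like a real obstacle; the argument is essentially a case split plus the chaining step. The one point needing care is that the dichotomy in property (\ref{5.2.2}) is applied correctly, i.e.\ either the projections are already $C$--close or the special path passes $C$--close to both of them. We also note that property (\ref{5.2.1}) is not needed for this Lipschitz statement.
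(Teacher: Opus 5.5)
Your proof is correct. The local bound for $\dist(x,y)\leq 1$ comes from the case split in property~\eqref{5.2.2} together with the uniform bound $D_{\mc P}(1)$ on the diameter of a special path between points at distance at most $1$. Chaining along a geodesic then gives $\dist(\pi_A(x),\pi_A(y))\leq K_0\,\dist(x,y)+K_0$ with $K_0=2C+D_{\mc P}(1)$. The paper does not reprove this lemma and simply cites \cite[Lemma~2.4]{Sisto:contracting}; to my recollection the proof there is this same local-bound-plus-chaining argument.
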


\begin{theorem}[{\cite[Lemma~2.8~(1)]{Sisto:contracting}}]\label{theorem:contracting-implies-morse}
    Let $\mc P$ be a path system on a metric space $X$. For every $C$ there exists $D$ such that if $A\subset X$ is $\mc P$--contracting with constant $C$, then any $C$--quasi-geodesic with endpoints in $A$ is contained in $\mc N_D (A)$. In particular, if $\gamma$ is a quasi-geodesic which is $\mc P$--contracting with constant $C$, then $\gamma$ is $M$--Morse for some Morse gauge $M$ only depending on $C$, $\mc P$ and $X$. 
\end{theorem}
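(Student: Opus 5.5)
The plan is to prove the first statement directly; the "in particular" clause then follows by taking $A=\gamma$. Let $A$ be $\mc P$--contracting with constant $C$, with projection $\pi_A$. By Lemma~\ref{lem:projection_is_Lipschitz}, $\pi_A$ is $(L,L)$--coarsely Lipschitz with $L$ depending only on $C$ and the quasi-geodesic constants $(\lao,\kao)$. Fix a $C$--quasi-geodesic $\eta$ with endpoints in $A$, and a point $p\in\eta$ at distance $R$ from $A$. We want to bound $R$ by a constant $D$ depending only on $C$ and $(\lao,\kao)$.

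\textbf{Step 1: one special path is close to a projection.} The basic tool is the following observation. Let $x,y\in X$ with $\dist(x,A)\ge r$ and $\dist(\pi_A(x),\pi_A(y))\geq C$. By property (2), the special path $[x,y]_{\mc P}$ passes within $C$ of $\pi_A(x)$. Since $[x,y]_{\mc P}$ is a $(\lao,\kao)$--quasi-geodesic, its length is then at least about $\dist(x,\pi_A(x))/\lao$. On the other hand, that length is at most linear in $\dist(x,y)$, by Remark~\ref{remark:definition-of-dp}. So if $\dist(x,y)$ is much smaller than $\dist(x,A)$, say $\dist(x,y)\le \dist(x,A)/K$ with $K$ large in terms of $C_{\mc P}$, we get a contradiction. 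Hence
\[
\dist(\pi_A(x),\pi_A(y))<C \quad\text{whenever}\quad \dist(x,y)\le \dist(x,A)/K,
\]
for $\dist(x,A)$ large.

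\textbf{Step 2: the standard contraction argument along $\eta$.} Let $[s,t]$ be a maximal subinterval around $p$ on which $\eta$ stays at distance at least $R/2$ from $A$. Near its endpoints $\eta$ is within about $R/2$ of $A$, because the endpoints of $\eta$ lie in $A$ and $\eta$ moves continuously. Cut $\eta|_{[s,t]}$ into pieces of length about $R/(2K)$; there are roughly $2KC\,|t-s|/R$ of them. By Step 1, each piece moves the projection by less than $C$, so
\[
\dist(\pi_A(\eta(s)),\pi_A(\eta(t)))\lesssim C\cdot\frac{|t-s|}{R}.
\]
From this, the triangle inequality gives
\[
\dist(\eta(s),\eta(t))\le R + \frac{|t-s|}{R}\,O(1) + 2C.
\]
Since $\eta$ is a $C$--quasi-geodesic, $\dist(\eta(s),\eta(t))\ge |t-s|/C-C$. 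Also $|t-s|\gtrsim R$, since the path travels from distance $\sim R/2$ out to distance $R$ and back. Combining these, once $R$ exceeds a threshold the inequalities become incompatible. That threshold, depending only on $C$, $K$ and $L$, is our $D$.

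\textbf{Main obstacle.} The delicate point is Step 1: property (2) only controls special paths, not $\eta$ itself, so the quasi-geodesic comparison must be made through the special path $[x,y]_{\mc P}$ between nearby points of $\eta$. The constants also need care: the distance from $\pi_A(x)$ to $x$ has to be compared with $\dist(x,A)$ using property (1) and the coarse Lipschitz bound. Once Step 1 is in place, Step 2 is the usual argument from the proof that strongly contracting geodesics are Morse. For the "in particular" clause, apply the statement with $A=\gamma$ and constant $\max(C,Q,q)$. This gives that a $(Q,q)$--quasi-geodesic with endpoints on $\gamma$ stays in $\mc N_D(\gamma)$. Since $\gamma$ is a quasi-geodesic, the bound also holds relative to the subsegment $\isub{\gamma}{s,t}$ after enlarging $D$. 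This defines the Morse gauge $M(Q,q)$, which can be chosen monotone and continuous.
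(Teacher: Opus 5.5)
Your Step~1 is correct, and the reduction of the ``in particular'' clause is fine. Step~1 is even simpler than you suggest: $\pi_A(x)\in A$ gives $d(x,\pi_A(x))\ge d(x,A)$ directly, so $d(\pi_A(x),\pi_A(y))\ge C$ forces $d(x,A)\le C+D_{\mc P}(d(x,y))$.

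\textbf{The gap is the last sentence of Step~2.} With the excursion threshold set at $R/2$, the inequalities you collect are \emph{not} incompatible for any $R$.
\begin{itemize}
\item The endpoint terms $d(\eta(s),\pi_A(\eta(s)))$ and $d(\eta(t),\pi_A(\eta(t)))$ are themselves of order $R$. At best they are $R/2$; in general you only get $\lesssim (L+1)R/2$, since $\pi_A$ is not a closest-point projection.
\item Once $R$ is large enough that the projection term $2KC^2|t-s|/R$ is negligible against $|t-s|/C$, all the triangle inequality gives is $|t-s|\lesssim C(L+1)R$.
\item Your lower bound is $|t-s|\gtrsim R/C$.
\end{itemize}
Both bounds are linear in $R$, and the upper constant exceeds the lower one, so nothing contradicts. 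What actually rules out long excursions is that an excursion starts and ends at \emph{bounded} distance from $A$. A threshold proportional to $R$ throws that information away.

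\textbf{The fix is a fixed threshold.} The paper gives no proof of its own; it quotes the statement from Sisto.
\begin{itemize}
\item Choose $R_0$ depending only on $C$ and $C_{\mc P}$, large enough for Step~1 and with $R_0\ge 4KC^3$.
\item Let $[s,t]$ be a maximal interval on which $d(\eta(\cdot),A)\ge R_0$. Since $\eta$ is continuous with endpoints in $A$, $d(\eta(s),A)=d(\eta(t),A)=R_0$.
\item The endpoint terms are now bounded by the constant $E=(L+1)(R_0+1)+L+C$.
\item Subdividing $[s,t]$ into domain steps of length $R_0/(2KC)$ and applying Step~1 to each piece gives $d(\pi_A(\eta(s)),\pi_A(\eta(t)))\le 2KC^2|t-s|/R_0+C$.
\end{itemize}
Hence
\[
\frac{|t-s|}{C}-C\le 2E+\frac{2KC^2}{R_0}\,|t-s|+C\le 2E+\frac{|t-s|}{2C}+C ,
\]
so $|t-s|\le 4C(E+C)$. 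Every point of $\eta$ then lies within $D=R_0+C+4C^2(E+C)$ of $A$.

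Alternatively, you can keep a threshold proportional to $R$, but it must be $R/N$ with $N>2C^2(L+1)+1$ (and $R$ large in terms of $N,K,C$). Then the upper bound $\approx 4C(L+1)R/N$ drops below the lower bound $\approx 2R/C$.
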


Note that a subset that is $\mathcal{P}$--contracting with constant $C$ is also $\mathcal{P}$--contracting with constant $C'$, for every $C'\geq C$. We record the following lemma. 

\begin{lemma}\label{lem:contracting_preserved_Hdistance}
    Let $\mc P$ be a path system on a geodesic metric space $X$. If $\eta$ is a quasi-geodesic which is $\mc P$--contracting with constant $C$ and $\gamma$ is a quasi-geodesic with endpoints at distance at most $E$ from $\eta$, then $\gamma$ is $\mc P$--contracting with constant $C'$, where $C'$ only depends on $C$ and the quasi-geodesic constants of $\gamma$ and $\eta$. 
\end{lemma}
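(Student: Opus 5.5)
We take $\pi_\gamma$ to be a composition of $\pi_\eta$ with a coarse closest-point map $\eta\to\gamma$, and check the two conditions of Definition~\ref{def:contracting} for it.

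\emph{Step 1 (Hausdorff closeness).} Let $\gamma$ have endpoints $\gamma^\pm$ with $d(\gamma^\pm,\eta)\le E$, and pick $e^\pm\in\eta$ with $d(\gamma^\pm,e^\pm)\le E$. Let $\eta'=\psub{\eta}{e^-,e^+}$.
\begin{itemize}
\item By Theorem~\ref{theorem:contracting-implies-morse}, $\eta$ is $M$--Morse for a gauge $M$ depending only on $C$ and $\mc P$.
\item The concatenation $[e^-,\gamma^-]\ast\gamma\ast[\gamma^+,e^+]$ is a quasi-geodesic with constants depending on $E$ and the constants of $\gamma$.
\item Hence $\gamma\subseteq\mc N_{D_1}(\eta')$.
\item A standard argument for Morse quasi-geodesics (continuity of $\gamma$ plus coarse connectivity of projections to the subpath) then gives $d_{\mathrm{Haus}}(\gamma,\eta')\le D$, with $D$ depending only on $C$, $E$ and the quasi-geodesic constants.
\end{itemize}

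\emph{Step 2 (restriction to a subpath).} We show $\eta'$ is $\mc P$--contracting with constant $C_1$. Define $\pi_{\eta'}$ by sending $x$ to $\pi_\eta(x)$ when this lies in $\eta'$, and otherwise to the endpoint $e^\pm$ on the side where $\pi_\eta(x)$ lies.
\begin{itemize}
\item Condition~(1) holds: for $x\in\eta'$, the only change from $\pi_\eta$ is a move toward $x$ along $\eta$, controlled by the quasi-geodesic constants.
\item For condition~(2), suppose $d(\pi_{\eta'}(x),\pi_{\eta'}(y))$ is large. Then $d(\pi_\eta(x),\pi_\eta(y))\ge C$, so any $[x,y]_{\mc P}$ passes $C$--close to $\pi_\eta(x)$ and to $\pi_\eta(y)$.
\item If $\pi_\eta(x)\notin\eta'$, then $e^-$ lies on $\eta$ between $\pi_\eta(x)$ and $\pi_\eta(y)$.
\item The subpath of $[x,y]_{\mc P}$ between the two near points is a special path whose endpoints are $C$--close to points of $\eta$. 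By Theorem~\ref{theorem:contracting-implies-morse}, applied with a slightly larger constant, it fellow-travels the corresponding subsegment of $\eta$.
\item A continuity argument then gives a point of $[x,y]_{\mc P}$ near $e^-$.
\end{itemize}

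\emph{Step 3 (transfer to $\gamma$).} Set $\pi_\gamma(x)$ to be a point of $\gamma$ within $D$ of $\pi_{\eta'}(x)$.
\begin{itemize}
\item Condition~(1): for $x\in\gamma$ there is $y\in\eta'$ with $d(x,y)\le D$. By Lemma~\ref{lem:projection_is_Lipschitz}, $\pi_{\eta'}(x)$ is close to $\pi_{\eta'}(y)$, which is $C_1$--close to $y$, hence to $x$.
\item Condition~(2): if $d(\pi_\gamma(x),\pi_\gamma(y))\ge C'$ with $C'\ge C_1+2D$, then $d(\pi_{\eta'}(x),\pi_{\eta'}(y))\ge C_1$. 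So special paths pass within $C_1$ of $\pi_{\eta'}(x)$, and therefore within $C_1+D$ of $\pi_\gamma(x)$.
\end{itemize}

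The main obstacle is Step 2, showing that $[x,y]_{\mc P}$ passes near the cut point $e^\pm$. If the continuity argument there proves awkward, the fallback is to exploit that $\pi_\eta$ is coarsely Lipschitz (Lemma~\ref{lem:projection_is_Lipschitz}). Points of $[x,y]_{\mc P}$ near $\pi_\eta(x)$ and near $\pi_\eta(y)$ project close to themselves. Since $\pi_\eta$ is coarsely Lipschitz along the continuous path $[x,y]_{\mc P}$, its projection moves in bounded jumps from near $\pi_\eta(x)$ to near $\pi_\eta(y)$, so some point $z$ of the path projects near $e^-$. Applying condition~(2) for $\eta$ to the subpath from $x$ to $z$ (or from $z$ to $y$), whichever has projections at distance at least $C$, then shows that a point of the path lies near $\pi_\eta(z)$, hence near $e^-$.
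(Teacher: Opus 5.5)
Your proposal is correct, and its skeleton matches the paper's. The paper likewise extends $\gamma$ by geodesics of length at most $E$, uses Theorem~\ref{theorem:contracting-implies-morse} to put the extension near $\eta$, and sets $\pi_\gamma=p_\gamma\circ\pi_\eta$ with $p_\gamma\colon\eta\to\gamma$ a coarse nearest-point map. It then derives (1) and (2) in one line from coarse Lipschitzness and bounded displacement of $p_\gamma$.

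The real difference is your Step~2, which closes a gap in that one-line justification. Bounded displacement of $p_\gamma$ only holds on the part of $\eta$ near $\gamma$, essentially your $\eta'$. Even there it needs the inclusion $\eta'\subseteq\mc N_D(\gamma)$, which you prove in Step~1 and the paper does not state. Moreover, Lemma~\ref{lem:projection_is_Lipschitz} says nothing about $p_\gamma$. When $\eta$ runs far past $e^\pm$ and $\pi_\eta(x)$ lands there, verifying (2) requires showing that $[x,y]_{\mc P}$ passes near $e^\pm$. That is exactly what your clamped projection $\pi_{\eta'}$ and its verification supply.

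This case is not hypothetical. In Proposition~\ref{prop:contr-to-quasi-geo} the lemma is applied to a subpath of a special path whose endpoints are near a contracting path that may be much longer. So your version costs one extra step but gives a complete argument in the stated generality.

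Of your two arguments for the cut point, the fallback is the cleaner one. It needs only coarse Lipschitzness of $\pi_\eta$ along the continuous path $[x,y]_{\mc P}$ plus condition (2) for $\eta$ applied to a subpath, with no Morse fellow-travelling.

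Finally, you are right to let your constants depend on $E$. The statement omits this dependence, but it cannot be avoided: glue a Euclidean plane at one point to a contracting line; a segment of length $E$ in the plane starting at that point is contracting only with constant growing in $E$. The paper's proof also uses $E$, through the additive constant $2E$.
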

\begin{proof}
    By appending a geodesic segment of length at most $E$ to the endpoints of $\gamma$, we can assume that $\gamma$ is a subsegment of a quasi-geodesic $\gamma'$ that has endpoints on $\eta$, the same multiplicative constant of $\gamma$, and additive constant increased by $2 E$. By Theorem~\ref{theorem:contracting-implies-morse}, there is $D$ so that $\gamma'$ is in the $D$--neighbourhood of $\eta$. Thus we can define a map $p_\gamma \colon \eta \to \gamma$ so that for each $x\in \eta$ we have $d(x, \gamma) \leq d(x, p_\gamma(x)) + 1$. Define $\pi_\gamma = p_\gamma \circ \pi_\eta$.  Since both $p_\gamma$ and $ \pi_\eta$ are uniformly coarsely Lipschitz by Lemma~\ref{lem:projection_is_Lipschitz} and the former has bounded displacement, \eqref{5.2.1} and \eqref{5.2.2} hold.  
\end{proof}

We now introduce notions that, in the context of coarse geometry, describe a rank one quasi-geodesic. Recall that, in a Riemannian manifold, a geodesic is rank one when its space of parallel Jacobi fields is of dimension one, which roughly says that infinitesimally there is no flat strip along the geodesic \cite[Chapter 4, $\S $4]{Ballmann}. 

\begin{definition}[Weakly polygonally Morse]\label{defn:weakpolymorse}
Let $X$ be a geodesic metric space with a path system $\mc P$. A quasi-geodesic $\gamma$ is \emph{$(\epsilon, A;  n, \mc P)$--proportionally thin}, for some constants $A\geq 1, \epsilon \geq 0$ and $n\in \N $, if the $\eps \dist (\gamma^-, \gamma^+)$--neighbourhood of every $(n, \mc P)$--polygonal line $\pgot$ from $\gamma^+$ to $\gamma^-$ with $\length{\pgot} \leq A \dist (\gamma^-, \gamma^+)$, contains $\gamma$.

A quasi-geodesic $\gamma$ is \emph{$(R; \epsilon,A; n, \mc P)$--weakly polygonally Morse} if each subsegment $\gamma'$ of $\gamma$ with $\abs{\gamma'}\geq R$ is $(\epsilon, A;  n, \mc P)$--proportionally thin. 
\end{definition}

The above definition is only meaningful if the constant $\epsilon$ is small enough and $\abs{\gamma} \geq R$. For instance, any geodesic $\gamma$ is $(R;\frac{1}{2}, A; n, \mc P)$--weakly polygonally Morse and any quasi-geodesic $\gamma$ is $(R; \epsilon, A; n, \mc P)$--weakly polygonally Morse if $R > \abs{\gamma}$. 

Below we show that being weakly polygonally Morse is indeed a weak notion of negative curvature, as it follows from being Morse.

\begin{lemma}[Morse implies weakly polygonally Morse]\label{lemma:localmltg-implies-path-avoidence}
Let $X$ be a geodesic metric space with a $(\lao, \kao)$--quasi-geodesic path system $\mc{P}$. Then for all constants $\la ,\ka, n, A, \epsilon$ and Morse gauges $M$, there exists $R$ so that any $(\la, \ka)$--quasi-geodesic $\gamma$ which is $M$--Morse is $(R; \epsilon,A; n, \mc P)$--weakly polygonally Morse.
\end{lemma}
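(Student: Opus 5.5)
The plan is to fix a subsegment $\gamma'$ of $\gamma$ with $\abs{\gamma'}\geq R$ and an $(n,\mc P)$--polygonal line $\pgot$ from $\gamma'^+$ to $\gamma'^-$ with $\length{\pgot}\leq A\,\dist(\gamma'^-,\gamma'^+)$. We need to show $\gamma'\subseteq \nn_{\eps d}(\pgot)$, where $d=\dist(\gamma'^-,\gamma'^+)$. Since $\gamma$ is $(\la,\ka)$--quasi-geodesic, $d\geq \abs{\gamma'}/\la-\ka$, so $d\to\infty$ as $R\to\infty$. The idea is to turn $\pgot$ (reversed) into a single quasi-geodesic with endpoints on $\gamma$, whose constants depend only on $n,A,\lao,\kao$ up to an additive error that is small compared with $d$, and then apply the Morse property.

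First I would use the Morse property on each special side. Write $\pgot=\spath_1\ast\cdots\ast\spath_k$ with $k\leq n$ and vertices $x_0=\gamma'^+,x_1,\dots,x_k=\gamma'^-$. The cleanest route is not to reparametrize $\pgot$ globally, since a polygonal line can backtrack and fail to be a quasi-geodesic. Instead I would argue by induction on $n$ with a ``thin polygon'' statement: \emph{for every $\eps'>0$ there is $R'$ such that every point of $\gamma'$ lies within $\eps' d$ of $\pgot$}. The base case $n=1$ is immediate: $\spath_1^{-1}$ is a $(\lao,\kao)$--quasi-geodesic with endpoints on $\gamma$, so $\spath_1\subseteq \nn_{M(\lao,\kao)}(\gamma')$. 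The reverse inclusion $\gamma'\subseteq \nn_{D}(\spath_1)$, with $D$ depending only on $M,\la,\ka,\lao,\kao$, follows by the standard continuity/connectedness argument, since $\spath_1$ is continuous. For $d$ large, $D\leq \eps d$.

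For general $n$, I would use the length bound $\length{\pgot}\leq Ad$, which is where $A$ enters. The path $\pgot$ is a continuous path of length at most $Ad$ from $\gamma'^+$ to $\gamma'^-$. I would subdivide it into consecutive pieces of length at most $\eps d/2$, so there are at most roughly $2A/\eps$ pieces. Replacing each piece by a geodesic between its endpoints gives a piecewise geodesic path $\pgot'$ at Hausdorff distance at most $\eps d/2$ from $\pgot$. The task is then to show $\gamma'\subseteq\nn_{\eps d/2}(\pgot')$, and this is the main obstacle. I would handle it with a rescaling argument: $\pgot'$, viewed at scale $d$, is a path whose length is bounded by $A$ times the endpoint distance. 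Such a path, after taking shortcuts (replacing maximal backtracking loops by geodesics, using the bounded length) contains a $(Q,q)$--quasi-geodesic from $\gamma'^+$ to $\gamma'^-$ with $Q$ depending on $A,\eps$ and $q$ a multiple of $\eps d$.

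This would be Morse only with an additive constant proportional to $d$, which is useless directly. So I would instead argue by contradiction with asymptotic cones. Suppose there are sequences with $d_m\to\infty$ and points of $\gamma'_m$ at distance at least $\eps d_m$ from $\pgot_m$. In the cone rescaled by $d_m$, the limit of $\gamma'_m$ is a bi-Lipschitz arc that is Morse, hence its interior points are cut points separating its endpoints in the cone (Morse geodesics give cut points, as in \cite{DrutuMozesSapir}). The limit of $\pgot_m$ is a rectifiable path of length at most $A$ joining the same endpoints and avoiding a ball of radius $\eps$ around some point of the limit arc. This is a contradiction, since the limit path must pass through that cut point. The constants $n,\lao,\kao$ only guarantee that the limit of $\pgot_m$ is a genuine path, and the uniformity in $\gamma$ comes from fixing $M,\la,\ka$ throughout the contradiction sequence. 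Choosing $R$ from this contradiction yields the lemma.
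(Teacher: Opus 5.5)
Your final argument (contradiction sequence, asymptotic cone, cut point) is sound in outline, but it takes a genuinely different route from the paper. Your first three paragraphs (the induction on $n$, the base case, the subdivision and shortcutting) are never used. Like the paper, you only need that the polygonal line is a continuous path of length at most $A\,d$.

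The paper argues directly in $X$. Given $x\in\gamma_1$ with $d(x,\mathfrak{q})>\epsilon D$, it takes the maximal subpath $\gamma_1[s,t]$ through $x$ inside $B(x,\epsilon D/2)$. It closes $\mathfrak{q}$ up with the two outer pieces of $\gamma_1$, giving a path from $\gamma_1(t)$ to $\gamma_1(s)$ of length at most $A'\,d(\gamma_1(s),\gamma_1(t))$. It then applies the theorem of \cite{aougabdurhamtaylor:pulling} that $M$--Morse quasi-geodesics are $K$--recurrent, with $K$ depending only on $M$. So this path must pass within $K(A')$ of the middle third of $\gamma_1[s,t]$. For large $R$, points of $\mathfrak{q}$ cannot do this (they are too far from $x$), and neither can points of the outer pieces of $\gamma_1$ (they are too far from the middle third).

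The localisation to $\gamma_1[s,t]$ is needed because recurrence only forces the path near \emph{some} middle point, not near $x$. Your cut-point property is pointwise, so you skip this step. The price is that your $R$ is non-effective, whereas the paper's $R$ is explicit in terms of the recurrence function.

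The point you must shore up is the black box. As usually formulated, the cut-point characterisation in \cite{DrutuMozesSapir} concerns the ultralimit of one fixed Morse quasi-geodesic. Your contradiction sequence consists of different $\gamma_m$ that share only the gauge $M$. So you need the version for sequences of uniformly $M$--Morse $(\lambda,\kappa)$--quasi-geodesics.

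That version is true, but it does not follow from the limit arc being ``Morse'' in the cone: a geodesic in $\mathbb{H}^2$ is Morse, yet none of its points separates $\mathbb{H}^2$. Proving it amounts to showing that paths of length $O(d_m)$ between points of $\gamma_m$ cannot avoid a ball of radius comparable to $d_m$ around an interior point. That is a uniform recurrence/superlinear-divergence statement, of the kind the paper imports from \cite{aougabdurhamtaylor:pulling}, or the uniform divergence characterisation of Arzhantseva--Cashen--Gruber--Hume. Cite such a uniform result explicitly. With that, your cone argument repackages the paper's key input rather than replacing it.
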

\begin{proof} Assume towards a contradiction that for $R$ large enough and determined later, there exists a $(\la, \ka)$--quasi-geodesic $\gamma$ that is $M$--Morse but not $(R; \epsilon, A; n, \mc P)$--weakly polyognally Morse. That is, $\gamma$ contains a sub-path $\gamma_1$ with $\abs{\gamma_1}\geq R$ such that for $D = d(\gamma_1^-, \gamma_1^+)$ there exists a $(n, \mc P)$--polygonal line $\qgot$ with $\norm{\qgot}\leq A D$ from $\gamma_1^-$ to $\gamma_1^+$ and a point $x\in \gamma_1$ such that $\qgot$ is disjoint from a ball $B(x,\epsilon D)$ centred in some point $x\in \gamma_1$.

Let $\gamma_1[s, t]$ be a maximal subpath of $\gamma_1$ which contains $x$ and is contained in $B(x, \epsilon D/2)$. Due to the running assumption of continuity of quasi-geodesics, $d(\gamma(s), x) = d(\gamma(t), x) ) =\epsilon D/2$. The path $\qgot' = \suf{\gamma_1}{\gamma_1(t)}\ast \qgot \ast \pref{\gamma_1}{\gamma_1(s)}$ satisfies $\norm{\qgot'}\leq\norm{\qgot}+\norm{\gamma_1}\leq AD + D_{\mc P}(D)$. While $d(\gamma_1(s), \gamma_1(t))\geq D_{\mc P}^{-2}(\epsilon D/2)$. Thus, if $R$ is large enough, then there exists a constant $A'$ only depending on $\kao, \lao, \ka, \la, \epsilon$ and $A$ such that $\norm{\qgot'}\leq A'd(\gamma_1(t), \gamma_1(s))$. 

However, the quasi-geodesic $\gamma_2 = \gamma_1[s, t]$ is $M$--Morse therefore, according to \cite[Theorem 1.2]{aougabdurhamtaylor:pulling} (where $M$--Morse quasi-geodesics are called $M$--stable), it is $K$--recurrent, for some recurrence function $K$ that can be bounded from above only in terms of $M$. In particular, there exists a point $a\in \qgot'$ at distance at most $K(A')$ from some point `in the mid portion of $\gamma_2$`, that is for some point $y\in \gamma_2$ with $d(y, \gamma_1(s)\cup \gamma_1(t)) \geq \frac{1}{3}d(\gamma_1(s), \gamma_1(t))$. For $R$ large enough, $K(A') < D\epsilon/2$. Thus, $a$ cannot lie on $\qgot$ -- otherwise, by the triangle inequality, its distance to $x$ would be less than $D\epsilon$, a contradiction. Hence $a$ has to lie on $\gamma_1$ and either $\gamma_1(s)$ or $\gamma_1(t)$ has to lie between $a$ and $y$. But then, $K(A') \geq d(a, y) \geq D_{\mc P}^{-1}\left(\frac{D\epsilon}{6} \right)$, also a contradiction for large enough $R$.
\end{proof}

\subsection{Bounded, avoidant and navigable path systems}\label{sect:bded-avoidant-navigable}

The goal of this section is to introduce a framework which covers important known classes of spaces such as injective metric spaces and median spaces (see Section~\ref{sec:examples} for more). The unifying notion we introduce will be spaces admitting a \emph{navigable} path system. Roughly, a path system is navigable if a polygonal line avoiding a ball can be `tightened' to a polygonal line avoiding a similar ball and whose length is comparable to the size of the ball.

To show navigability, we introduce the notion of an \emph{avoidant and bounded} path system, which are easier condition to verify. We prove in Section \ref{sec:finding_navigable} that a bounded and avoidant path system is navigable. Sometimes it is sufficient to verify the above notions only for a subset of a path system, so we state definitions in terms of heaps of paths. 

\begin{definition}[(Sub-)bounded]\label{defn:bounded_path_system}
    A heap of paths $\mc Q$ is called $\kao$--bounded if for every special path $\spath \in \mc Q$, with endpoints $x$ and $y$, and for every $y'$ with $d(y, y')\leq 1$ there exists a special path $\spath'\in \mc Q$ from $x$ to $y'$ with 
    \begin{align*}
        d_{\mathrm{Haus}}(\spath, \spath')\leq \kao.
    \end{align*}
    A $(\kao, \lao)$--quasi-geodesic path system $\mc P$ is called \emph{$\kao$--sub-bounded} if there is a heap of paths $\mc Q\subset \mc P$ which is $\kao$--bounded and such that for every pair of points $x, y\in X$ there exists a path in $\mc Q$ from $x$ to $y$. We call such a heap of paths $\mc Q$ a \emph{$\kao$--bounded support of $\mc P$}.
\end{definition}

The main differences by comparison with the (bi)combing case are that we allow for several special paths to join the same two points, and in the boundedness condition we only require that a single special path joining $x$ and $y'$ be close to a given path joining $x$ and $y$. 

\medskip

We continue with the definition of \emph{avoidant} heap of paths. Morally, this says that if the space is locally looking one ended, then you can avoid a point efficiently. 

\begin{definition}[Avoidant heap of paths]\label{defn:avoidant}
    Let $C> 1$ be a constant and let $k\geq 1$ be an integer. A heap of paths $\mc Q$ on a geodesic metric space $X$ is called \emph{$(C,k)$--avoidant} if for all $R\geq C/2$ the following holds. If for $i\in \{1, 2\}$, $y, z_i, m\in X$ are points with $d(m, z_i)\leq 4R$ and $\spath_i\in \mc Q$ are special paths from $z_i$ to $y$ such that
    \begin{align*}
        d(\spath_i, m)>R,
    \end{align*}
    then there exists a $(k, \overline{\mc Q})$--polygonal line $\pgot$ from $z_1$ to $z_2$ such that 
    \begin{align*}
        d(m, \pgot) > 2R/C \qquad \text{and} \qquad \norm{\pgot}\leq CR.
    \end{align*}
    A heap of paths is called \emph{avoidant} if it is $(C, k)$--avoidant for some $C$ and $k$.
\end{definition}

\begin{definition}[Navigable heap of paths]\label{defn:navigable} 
    Let $C\geq 1$ be a constant and $k\geq 1$ an integer. A heap of paths $\mc P$ on a geodesic metric space $X$ is \emph{$(C, k)$--navigable} if the following holds for any $R\geq C$, point $m\in X$ and $(n,\mc Q)$--polygonal line $\alpha$ with $d(\alpha, m)\geq R$, $d(\alpha^-, m)\leq 2R$ and $d(\alpha^+, m)\leq 2R$.  There exists a $(kn,\overline{\mc Q})$-polygonal line $\pgot$ which has the same endpoints as $\alpha$, satisfies $\norm{\pgot}\leq CnR$ and avoids $B(m, R/C)$.
\end{definition}

As mentioned, we will show in Section~\ref{sec:finding_navigable}, that a heap of paths that is bounded and avoidant is also navigable. 

\subsection{Projections}\label{sec:projections}
We define projection points from a special paths onto another special path. We show that if we project onto a weakly polygonally Morse special path, then the two paths diverge linearly after the projection point.

\begin{definition}[Path projections]
    Let $X$ be a geodesic metric space equipped with a path system $\mc P$ and let $\gamma, \spath\in \mc P$ be special paths with $\spath^+\in \gamma$. 
    We call the first point $\spath(t)$ on $\spath$ in the $R$--neighbourhood of $\gamma$ the \emph{$R$--upper projection point of $\spath$ onto $\gamma$}. We call any point on $\gamma$ in the $R$--neighbourhood of the $R$--upper projection point $\spath(t)$ a \emph{$R$--lower projection point of $\spath$ onto $\gamma$}. 
\end{definition}

In the case where $\mc P$ is the set of all geodesics, there is always at least one special path $\spath^+$ from $x$ to $\gamma$ which is `orthogonal' to $\gamma$ in the sense that $\spath^+$ and any $R$--lower projection point of $\spath$ onto $\gamma$ have distance at most $2R$ from each other and $\spath$ diverges linearly from $\gamma$. 

Due to the uncontrolled behaviour of quasi-geodesics (think for example of the log-spiral quasi-geodesic in the plane) any analogue we can prove in the general case necessarily has to be weaker. We will show that if $\gamma$ is weakly-polygonally Morse, then for any point $x\in X$ there exists an `almost orthogonal' special path $\spath$ from $x$ onto $\gamma$ (see definition below) and after the projection point, the special path $\spath$ diverges linearly from $\gamma$.

\begin{definition}[Almost orthogonal]
     Let $X$ be a geodesic metric space equipped with a path system $\mc P$. Let $\gamma, \spath\in \mc P$ be a special paths with $\spath^+\in \gamma$ and let $\gamma(t)$ be an $R$--lower projection point of $\spath$ onto $\gamma$. We say that $\spath$ is \emph{$C$--almost $R$--orthogonal to $\gamma$} if $d(\gamma(t), \spath^+)\leq d(\spath^-, \spath^+)/C +4D_{\mc P}(R+1)$.
\end{definition}

We now show that after exiting a small neighbourhood, special paths diverge linearly from weakly polygonally Morse special paths. 

\begin{proposition}\label{prop:path-projections-are-well-defined}
    Let $X$ be a geodesic metric space equipped with an undirected path system $\mc P$. There exist constants $A\geq 1$, $\eps > 0$ such that for all $R\geq D_{\mc P}(1)$ the following holds. Let $\gamma \in \mc P$ be an $(R; \eps, A; 3, \mc P)$--weakly polygonally Morse special path. Let $\spath\in \mc P$ be a special path with $d(\spath^+, \gamma(t_0)) = R$ for some $t_0$ and $d(\spath, \gamma)\geq R$. Then 
    \begin{align*}
        d(\spath, \gamma(t)) > \eps \abs{t_0 - t},
    \end{align*}
    for all $t$ in the domain of $\gamma$. 
\end{proposition}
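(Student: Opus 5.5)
We argue by contradiction, using a short $(3,\mc P)$--polygonal line to violate proportional thinness of a subsegment of $\gamma$. The constants $A$ and $\eps$ are fixed at the end and depend only on the quasi-geodesic constants of $\mc P$ (through $D_{\mc P}$). Suppose there are $t$ and a point $x=\spath(s)$ with $d(x,\gamma(t))\le \eps\abs{t_0-t}$. Since $d(\spath,\gamma)\ge R$, we get $\abs{t-t_0}\ge R/\eps$, so the subsegment $\gamma'=\gamma[t_0,t]$ (or $\gamma[t,t_0]$) has $\abs{\gamma'}\ge R/\eps\ge R$. Hence $\gamma'$ is $(\eps,A;3,\mc P)$--proportionally thin. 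Set $D=d(\gamma(t_0),\gamma(t))$. Because $\gamma$ is a special path, $D\ge D_{\mc P}^{-1}(\abs{t-t_0})$, so $D$ is bounded below by a fixed linear function of $\abs{t-t_0}$ up to an additive constant.

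Next build the polygonal line. Let $\pgot$ be the concatenation of three special paths: a special path $[\gamma(t_0),\spath^+]_{\mc P}$, which has length at most $D_{\mc P}(R)$; the reversed subpath $\psub{\spath}{\spath^+,x}$, which lies in $\mc P$ because $\mc P$ is undirected and consistently closed; and a special path $[x,\gamma(t)]_{\mc P}$, of length at most $D_{\mc P}(\eps\abs{t-t_0})$. To bound the middle piece, note $d(\spath^+,x)\le R+\abs{t-t_0}\max(\lao,1)+\eps\abs{t-t_0}$, so $\norm{\psub{\spath}{\spath^+,x}}\le D_{\mc P}(d(\spath^+,x))$. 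Altogether $\norm{\pgot}$ is at most a linear function of $\abs{t-t_0}+R$, with constants depending only on $\mc P$. Since $\abs{t-t_0}\ge R/\eps$, this is at most $A D$ for a suitable $A$ depending only on $\mc P$, once $\eps\le 1$ and $R\ge D_{\mc P}(1)$ absorb the additive constants. Proportional thinness now says that every point of $\gamma'$ lies within $\eps D$ of $\pgot$.

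To reach a contradiction, choose a point $\gamma(t_1)$ on $\gamma'$ that is far from both ends. One option is the midpoint. A better option is a point at parameter distance about $R$ or a fixed fraction from $t_0$, so that it is far from the third piece but still near the part of $\gamma$ that $\spath$ avoids. The third piece lies within $D_{\mc P}(\eps\abs{t-t_0})$ of $\gamma(t)$, and the first piece lies within $D_{\mc P}(R)$ of $\gamma(t_0)$. A midpoint of $\gamma'$ is at distance roughly $D/D_{\mc P}$-comparable to both ends, so for $\eps$ small it is more than $\eps D$ away from the first and third pieces. It must therefore be within $\eps D$ of $\spath$.

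This is the main obstacle: being within $\eps D$ of $\spath$ is not immediately forbidden, since $d(\spath,\gamma)\ge R$ only rules out distance less than $R$, while $\eps D$ may be much larger than $R$. I would resolve this by induction, or a minimal-counterexample argument, on $\abs{t-t_0}$. Pick the counterexample with the smallest $\abs{t-t_0}$ (up to a factor 2). The near point $\gamma(t_1)$, with $\abs{t_1-t_0}\approx\abs{t-t_0}/2$, is then within $\eps D\le \eps' \abs{t_1-t_0}\cdot$(const) of $\spath$. Shrinking the $\eps$ used in the thinness step by a $D_{\mc P}$-dependent factor makes this a counterexample with parameter roughly half the size, contradicting minimality. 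The base case is $\abs{t-t_0}<R/\eps$, which holds automatically because $d(\spath,\gamma)\ge R$. Concretely, I would prove the stronger statement $d(\spath,\gamma(t))>\eps\abs{t-t_0}$ with $\eps$ chosen so that $\eps_{\mathrm{thin}}\cdot C_{\mc P}\le \eps/2$, and close the loop by iterating halvings until reaching the base case.
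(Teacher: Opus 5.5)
Your proposal is correct and is essentially the paper's proof. The paper likewise fixes a conclusion constant $\theta$ with $1\gg\theta\gg\eps$ (your split $\eps_{\mathrm{thin}}\ll\eps$), takes the violating parameter $t_1$ minimising $\abs{t_1-t_0}$, and builds the same $(3,\mc P)$--polygonal line: a short special path to $\spath^+$, a subpath of $\spath$, and a short special path back to $\gamma(t_1)$. It then uses thinness at the midpoint $\gamma((t_0+t_1)/2)$ to produce a violation with half the parameter, contradicting minimality. The infimum is actually attained, since $t\mapsto d(\spath,\gamma(t))-\eps\abs{t-t_0}$ is continuous, so your `up to a factor 2' hedge is unnecessary.
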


\begin{proof}
    Assume that $A$ is large enough and $1\gg\theta \gg \eps$ are small enough to be determined later. Let $\gamma, \spath, t_0$ be as in the statement. Assume by contradiction that there exists at least one index $t$ with 
    \begin{align}\label{eq:lin}
         d(\spath, \gamma(t)) \leq \theta \abs{t_0 - t} \leq \eps \abs{t_0 - t}.
    \end{align}
    Define $t_1$ as such an index which minimizes $\abs{t_0 - t_1}$ among those indices and denote $d(\gamma(t_0), \gamma(t_1))$ by $\ell$. Let $\spath(s_1)$ be a point on $\spath$ closest to $\gamma(t_1)$. Further, let $\alpha_0, \alpha_1\in \mc P$ be special paths from $\spath^+$ to $\gamma(t_0)$ and from $\gamma(t_1)$ to $\spath(s_1)$ respectively. This is depicted in Figure~\ref{fig:projections}. Finally, define a $(3, \mc P)$--polygonal line as $\alpha = \alpha_1\ast \psub{\spath}{\spath(s_1), \spath^+}\ast\alpha_0$. Observe the following.
    
    \begin{figure}
        \centering
        \includegraphics[width=0.5\linewidth]{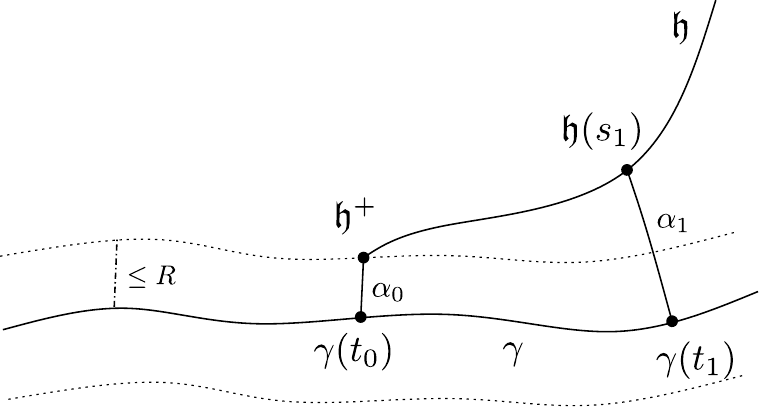}
        \caption{The points $\gamma(t_0)$ and $\spath(s_0)$ are the $R$--lower and $R$--upper projection points of $\spath$ onto $\gamma$, the point $\gamma(t_1)$ falsifies \eqref{eq:lin}.}
        \label{fig:projections}
    \end{figure}

    \smallskip
    {\sc{Property 1.:}} $\abs{t_1 - t_0}\geq R/\theta \geq R$. This follows from \eqref{eq:lin} combined with $d(\spath, \gamma)\geq R$. In particular, since $R\geq D_{\mc P}(1)\geq 1$, we obtain $\abs{t_0 - t_1}\geq 1/\theta$ and $\ell\geq 1$.
    \smallskip

    \smallskip

    {\sc{Property 2.:}} $\norm{\alpha}\leq A\ell$. Property~1 yields that $d(\alpha_i^-, \alpha_i^+)\leq \theta \abs{t_1 - t_0}$ for $i = 0, 1$.We have $d(\alpha_i^-, \alpha_i^+)\leq \theta\abs{t_1-t_0}\leq \abs{t_1 -t_0}$ for $i = 0, 1$. Hence $d(\spath^+, \spath(s_1))\leq \ell + 2\abs{t_1-t_0}$. Since all quasi-geodesics are improved and $\ell\geq 1$, we obtain $\norm{\alpha}\leq A\ell$ for $A$ large enough compared to the quasi-geodesic constants of $\mc P$. 
    \medskip

    Since $\gamma$ is $(R; \eps, A; n, \mc P)$--weakly polygonally Morse, the above properties imply that $d(m, \alpha) \leq \eps \ell$ for $m = \gamma((t_1 +t_0)/2)$.
    
    \begin{claim}\label{claim:at-least-one-far}
        If $\eps, \theta$ are small enough, then $d(m, \alpha_i) > \eps \ell$ for $i = 0, 1$.
    \end{claim}
    \textit{Proof of Claim.}
    Since $\gamma$ is a special path and $\ell \geq 1$, we have $d(m, \gamma(t_0))\geq \abs{t_0 - t_1}/C - C\geq \ell/C' - C'\geq \ell/C''$ for large enough $C, C'$ and $C''$ only depending on the quasi-geodesic constants of $\mc P$. Moreover, for $i = 0, 1$ we have $\diam(\alpha_i)\leq K\theta\abs{t_0 - t_1}+K\leq 2K\theta\abs{t_0-t_1} \leq \theta(K'\ell+K')\leq \theta2K'\ell$ for large enough $K, K'$ only depending on the quasi-geodesic constants of $\mc P$. Thus, for $\eps < \frac{1}{3C''}$ and $\theta < \frac{1}{3C''K'}$, we have by the triangle inequality that $d(\alpha_i, m)\geq \ell/C' -\theta K'\ell > \eps \ell$.
    \hfill$\blacksquare$

    \smallskip
    
    Consequently, we obtain
    \begin{align}\label{eq:i1}
        d(m, \psub{\spath}{\spath(s_1), \spath^+})\leq \eps \ell.
    \end{align}
    However, by \eqref{eq:lin} (and using that $\abs{t_0 - t_1}\geq 1/\theta$) we obtain for $1\gg \eps \gg \theta$ that 
    \begin{align}\label{eq:i2}
        d(m,\spath)\geq \theta \frac{\abs{t_1 - t_0}}{2} > \eps \ell,
    \end{align}
    a contradiction to \eqref{eq:i1}.
\end{proof}

\begin{corollary}\label{cor:close-before-endpoints}
    Let $X$ be a geodesic metric space equipped with an undirected path system $\mc P$. Let $\eps, A$ be the constants from Proposition~\ref{prop:path-projections-are-well-defined} and let $R\geq D_{\mc P}(1)$. There exists a constant $K\geq 1$ such that the following holds. Let $\gamma\in \mc P$ be a $(R; \eps, A; 3, \mc P)$--weakly polygonally Morse special path. Let $\spath\in \mc P$ be a special path with $\spath^+ = \gamma(t)$ and let $x$ and $\gamma(t')$ be an $R$--upper and $R$--lower projection point of $\spath$ onto $\gamma$. Then $\suf{\spath}{x}$ is contained in the $K\cdot R$--neighbourhood of $\isub{\gamma}{t, t'}$.
\end{corollary}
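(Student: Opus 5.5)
The plan is to apply Proposition~\ref{prop:path-projections-are-well-defined} to the suffix of $\spath$ after the upper projection point $x$. We then argue that this suffix, being a quasi-geodesic that ends on $\gamma$ and starts near $\gamma(t')$, cannot wander far from the segment $\isub{\gamma}{t,t'}$.

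Write $x=\spath(s)$, so $d(x,\gamma(t'))\le R$ and $d(\spath(u),\gamma)>R$ for $u<s$ (if $s$ is the initial time, the prefix before $x$ is empty). The suffix $\suf{\spath}{x}$ runs from $x$ to $\spath^+=\gamma(t)$. The key observation is that $x$ itself is the point where $\spath$ first enters the $R$--neighbourhood of $\gamma$, so the prefix $\pref{\spath}{x}$ stays at distance $\ge R$ from $\gamma$.

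First, I would show that $d(x,\gamma(t))$ is comparable to $\abs{t-t'}$. Since $\gamma$ is a special path, $d(\gamma(t'),\gamma(t))\le D_{\mc P}(\abs{t-t'})$, and therefore $d(x,\gamma(t))\le R+D_{\mc P}(\abs{t-t'})$. Because $\suf{\spath}{x}$ is a special path (by consistency), Remark~\ref{remark:definition-of-dp} bounds its diameter by $D_{\mc P}\big(R+D_{\mc P}(\abs{t-t'})\big)$. This bound is linear in $\abs{t-t'}$ plus a term linear in $R$.

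Second, I would bound how far points of the suffix can be from $\isub{\gamma}{t,t'}$. Take $y$ on the suffix. If $\abs{t-t'}\lesssim R$, the diameter bound already places $y$ within $K R$ of $\gamma(t')$, and we are done. Otherwise the concern is that $y$ lies near $\gamma$ only at points $\gamma(r)$ with $r$ outside $[t,t']$, or lies far from $\gamma$ altogether. Using the diameter bound, $y$ lies within $C\abs{t-t'}+CR$ of $\gamma(t')$. The real danger is a suffix that makes a long excursion away from $\gamma$, say of size $\sim\abs{t-t'}$, in the middle.

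To rule this out, I would use the weakly polygonally Morse property of $\gamma$ directly, in the same spirit as the proof of Proposition~\ref{prop:path-projections-are-well-defined}. Consider the $(3,\mc P)$--polygonal line $\alpha$ formed by a special path from $\gamma(t')$ to $x$, then $\suf{\spath}{x}$, then the trivial path. This gives a path from $\gamma(t')$ to $\gamma(t)$ of length at most $A\,d(\gamma(t),\gamma(t'))$ once $\abs{t-t'}\gg R$. Applying this to every subsegment of $\isub{\gamma}{t,t'}$ of length $\ge R$ is not immediately enough: the weak polygonal Morse property says $\gamma$ lies near $\alpha$, not conversely.

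For the converse I would argue as follows. Let $y$ be a point of the suffix at maximal distance $D$ from $\isub{\gamma}{t,t'}$. Split the suffix at $y$ and project the two pieces (reversed as needed, using undirectedness) back onto $\gamma$. Proposition~\ref{prop:path-projections-are-well-defined} forces each piece, once it leaves the $R$--neighbourhood, to diverge linearly from $\gamma$. Combined with the quasi-geodesic bound on the piece length, this shows the excursion has size $O(R)$, giving $D\le KR$ with $K$ depending only on $\eps$, $A$ and the constants of $\mc P$.

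The main obstacle is this converse step: the excursion points near $\gamma$ but outside $[t,t']$, and the bookkeeping of which projection points to apply Proposition~\ref{prop:path-projections-are-well-defined} to. I would first try choosing, for each subpath of the suffix, its own $R$--upper projection point, and iterating.
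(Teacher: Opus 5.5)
There is a genuine gap at exactly the step you flag as the main obstacle, and the route you sketch would not close it. Splitting the suffix at a farthest point $y$ and applying Proposition~\ref{prop:path-projections-are-well-defined} to the two halves separately gives no bound on $d(y,\gamma)$. Take a half running from $y$ to a point at distance $R$ from $\gamma(t_1)$ and staying outside the $R$--neighbourhood. The proposition only says that its distance to $\gamma(s)$ exceeds $\eps\abs{s-t_1}$. That is compatible with $d(y,\gamma)$ being arbitrarily large, for instance if the half leaves $\gamma$ orthogonally. The proposition does not say that a path diverges from $\gamma$ as one moves along it. Also, the only length bound you have, from your first step, is in terms of $\abs{t-t'}$ rather than $R$.

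The missing idea, which is what the paper does, is to apply the proposition to a whole excursion at once. Let $\spath'$ be a maximal subpath of $\suf{\spath}{x}$ with $d(\spath',\gamma)\geq R$. Since the suffix starts and ends in the closed $R$--neighbourhood of $\gamma$, the endpoints $x_0,x_1$ of $\spath'$ satisfy $d(x_i,\gamma(t_i))=R$ for some $t_0,t_1$. By consistency $\spath'$ is special. Reverse it (undirectedness) so that it ends at $x_0$, and evaluate the proposition at $t_1$:
\[
R=d(x_1,\gamma(t_1))\geq d(\spath',\gamma(t_1))>\eps\abs{t_1-t_0}.
\]
So exit and re-entry occur within parameter distance $R/\eps$, giving $d(x_0,x_1)\leq 2R+D_{\mc P}(R/\eps)$. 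Since $\spath'$ is a uniform quasi-geodesic, $\diam(\spath')$ is linear in $R$ (use $R\geq D_{\mc P}(1)\geq 1$ to absorb additive constants). Every point of the suffix lies either in the $R$--neighbourhood of $\gamma$ or on such an excursion, so $\suf{\spath}{x}\subseteq \mc N_{K'R}(\gamma)$. No iteration over projection points is needed.

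You also leave open the second step, passing from $\mc N_{K'R}(\gamma)$ to $\mc N_{KR}(\isub{\gamma}{t,t'})$. The paper cites \cite[Lemma~4.1]{DrutuSprianoZbinden:schism}; it is a standard coarse-continuity argument. For each point $z$ of the suffix choose a parameter $\tau(z)$ with $d(z,\gamma(\tau(z)))\leq K'R$. Along the continuous suffix, $\tau$ changes in jumps of size $O(R)$. It starts within $O(R)$ of $t'$, because $d(x,\gamma(t'))\leq R$, and it ends at $t$. Suppose $\tau(z)$ lies on the far side of $t'$ from $t$ by more than this $O(R)$. Then some later point $w$ has $\tau(w)$ within $O(R)$ of $t'$. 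So $z$ lies on the special path $\psub{\spath}{x,w}$, whose endpoints are $O(R)$ apart, and hence $d(z,x)=O(R)$. The far side of $t$ is handled symmetrically, using a point before $z$ together with $\spath^+=\gamma(t)$. In the remaining case $z$ is already $O(R)$--close to $\isub{\gamma}{t,t'}$.
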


\begin{proof}
    It suffices to show that any subpath $\spath'\subset \spath$ with $d(\spath', \gamma)\geq R$ and whose endpoints are at distance $R$ from $\gamma$ is contained in the $K'R$--neighbourhood of $\gamma$ for some $K'\geq 1$ not depending on $\spath$ or $\gamma$. Then, $\suf{\spath}{x}$ is contained in the $K'R$--neighbourhood of $\gamma$ and the result then follows from \cite[Lemma~4.1]{DrutuSprianoZbinden:schism}.

    Let $\spath'$ be as defined above with endpoints $x_0$ and $x_1$ at distance $R$ from $\gamma(t_0)$ and $\gamma(t_1)$ respectively. By Proposition~\ref{prop:path-projections-are-well-defined}, we have $ R = d(x_1, \gamma(t_1)) > \eps \abs{t_1 - t_0}$, providing a linear upper bound (in terms of $R$) on $\abs{t_1-t_0}$ and hence on $\diam(\spath')$. 
\end{proof}

\begin{lemma}\label{lem:almost-ortho-projection}
    Let $X$ be a geodesic metric space equipped with an undirected path system $\mc P$. For every constant $C > 0$ there exist constants $\epsilon, A$ such that for all $R\geq D_{\mc P}(1)$ the following holds. Let $\gamma\in \mc P$ be a $(R; \epsilon, A; 3, \mc P)$--weakly polygonally Morse special path and let $x\in X$. There exists a special path $\spath\in \mc P$ from $x$ to $\gamma$ which is $C$--almost $R$--orthogonal to $\gamma$.
\end{lemma}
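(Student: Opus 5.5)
Fix $x\in X$ and write $D_0=d(x,\gamma)$. If $D_0\le R$ (or more generally $D_0$ is bounded by a constant multiple of $R$), any special path from $x$ to a point of $\gamma$ nearest $x$ works: its $R$--lower projection points lie within a bounded multiple of $D_{\mc P}(R+1)$ of its endpoint. So assume $D_0$ is large. The plan is an iterative ``descent'' argument. Start with a point $\gamma(t^*)$ realising (up to $1$) $d(x,\gamma)$, and let $\spath_0=[x,\gamma(t^*)]_{\mc P}$. Let $y_0$ be its $R$--upper projection point and $\gamma(t_0)$ an $R$--lower projection point. If $d(\gamma(t_0),\gamma(t^*))\le D_0/C+4D_{\mc P}(R+1)$ we are done. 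Otherwise we replace the target point by $\gamma(t_0)$: take $\spath_1=[x,\gamma(t_0)]_{\mc P}$ and repeat. The key point is to show each step, or some modification of it, makes definite progress, or that the first step already works once $\epsilon$ is small and $A$ large in terms of $C$.

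In fact I expect a direct argument suffices. Consider the prefix $\pref{\spath_0}{y_0}$, which stays at distance $\ge R$ from $\gamma$ (except at its endpoint $y_0$), and the suffix $\suf{\spath_0}{y_0}$. By Corollary~\ref{cor:close-before-endpoints} the suffix lies in the $KR$--neighbourhood of $\isub{\gamma}{t^*,t_0}$. Hence points of $\gamma$ between $t^*$ and $t_0$ are shadowed by the suffix. Now apply Proposition~\ref{prop:path-projections-are-well-defined} to $\pref{\spath_0}{y_0}$ (a special path, by consistency, ending at distance $R$ from $\gamma(t_0)$ and staying $R$--far from $\gamma$). This gives $d(\pref{\spath_0}{y_0},\gamma(t))>\eps|t-t_0|$ for all $t$, in particular at $t=t^*$. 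On the other hand $d(x,\gamma(t^*))\le D_0+1$, and $\gamma(t^*)$ is a nearest point. I want to compare $d(x,y_0)$ with $D_0$: since $y_0$ is within $R$ of $\gamma$, we get $d(x,y_0)\ge D_0-R$, while the suffix from $y_0$ to $\gamma(t^*)$ is long if $|t^*-t_0|$ is large. The total length of $\spath_0$ is at most $D_{\mc P}(D_0+1)$, so $d(y_0,\gamma(t^*))\lesssim D_{\mc P}(D_0+1)-(D_0-R)$. Because $D_{\mc P}$ has multiplicative constant $C_{\mc P}>1$, this bound is not of the form $D_0/C$. So the naive bound fails, and a genuinely Morse-type input is needed. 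Therefore, rather than rely on length counting, I would use weak polygonal Morseness on the triangle $x$, $y_0$, $\gamma(t^*)$.

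Concretely, suppose $|t_0-t^*|$ is large compared to $D_0/C$. Form the $(3,\mc P)$--polygonal line $[\gamma(t_0),y_0]_{\mc P}\ast\pref{\spath_0}{y_0}^{-1}\ast[x,\gamma(t^*)]_{\mc P}$. Its length is $O(D_0+R)$, and it joins the endpoints of $\isub{\gamma}{t_0,t^*}$, whose separation $\ell$ is at least of order $D_0/C$. Choosing $A$ larger than a multiple of $C$ makes the length at most $A\ell$, so by $(\eps,A;3,\mc P)$--proportional thinness, the midpoint $m$ of $\isub{\gamma}{t_0,t^*}$ is within $\eps\ell$ of this line. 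The main obstacle is ruling out each piece. The first piece is short, of diameter $O(R)$. The middle piece is handled by Proposition~\ref{prop:path-projections-are-well-defined}, giving distance $>\eps|t-t_0|\gtrsim \eps\ell$. The last piece is the one originally chosen, and its points near $\gamma$ lie close to $\gamma(t^*)$ by nearest-point minimality. To make the last piece work I would instead choose $\spath_0$ as a special path from $x$ to $\gamma$ minimising the endpoint distance among those whose upper projection phenomena are controlled, or iterate. Iterating replaces $t^*$ by $t_0$ and shrinks $d(x,\gamma(\cdot))$ by a definite factor each time. Thus, after boundedly many steps (in terms of $C,\eps$), or via a minimality choice, the resulting path is $C$--almost $R$--orthogonal. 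The constants $\epsilon,A$ are then fixed as functions of $C$ and the constants of $\mc P$.
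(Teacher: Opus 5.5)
There is a genuine gap: the step meant to produce the contradiction does not produce one, and the fallback (iterate, or a minimality choice) is not an argument.

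\textbf{Why the thinness step fails.} In your $(3,\mc P)$--polygonal line $[\gamma(t_0),y_0]_{\mc P}\ast\pref{\spath_0}{y_0}^{-1}\ast[x,\gamma(t^*)]_{\mc P}$ the third side is $\spath_0$ itself. The first two sides are far from the middle $m$ of $\isub{\gamma}{t^*,t_0}$. For the third side, however, Corollary~\ref{cor:close-before-endpoints} says that $\suf{\spath_0}{y_0}$ runs from near $\gamma(t_0)$ to $\gamma(t^*)$ inside the $KR$--neighbourhood of exactly $\isub{\gamma}{t^*,t_0}$. So it passes close to $m$, and proportional thinness is satisfied rather than violated.

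Your claim that the points of this side near $\gamma$ lie close to $\gamma(t^*)$ ``by nearest-point minimality'' is essentially the statement to be proved. Your own length count shows that minimality does not give it. The available tools also fall short: Proposition~\ref{prop:path-projections-are-well-defined} evaluated at $t=t^*$ only yields $\abs{t_0-t^*}<D_0/\eps_0$, with a fixed $\eps_0$, whereas you need a bound of order $D_0/C$ for arbitrary $C$.

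\textbf{Why the iteration fails.} Since $\gamma(t^*)$ already nearly minimises $d(x,\gamma(\cdot))$, we have $d(x,\gamma(t_0))\ge d(x,\gamma(t^*))-1$. So that quantity cannot ``shrink by a definite factor''. You give no monovariant and no termination argument, so the projection points could bounce back and forth. The ``minimality choice among paths whose projection phenomena are controlled'' is never defined.

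\textbf{The missing idea.} The paper avoids nearest points and uses a discrete intermediate value argument. For each integer $i$ in the domain $[0,T]$ of $\gamma$, take $\spath_i$ from $x$ to $\gamma(i)$, with upper projection point $u_i$ and lower projection point $\gamma(s_i)$. Call $\spath_i$ left leaning if $s_i\le i$ and right leaning otherwise.

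At the ends of $\gamma$ the leaning is forced. If every $\spath_i$ is left leaning then $s_0=0$, so $\spath_0$ is orthogonal; if every $\spath_i$ is right leaning then $\spath_{\floor{T}}$ is. Otherwise there is a switch between some $i$ and $i+1$. If $\spath_i$ were not $C$--almost orthogonal at such a switch, the line $[\gamma(s_i),u_i]_{\mc P}\ast\pref{\spath_i}{u_i}^{-1}\ast\spath_{i+1}$ would join the endpoints of $\isub{\gamma}{s_i,i+1}$ with length at most $A\ell$.

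Now the third side is harmless. Its prefix diverges linearly by Proposition~\ref{prop:path-projections-are-well-defined}. Its suffix shadows $\isub{\gamma}{i+1,s_{i+1}}$, which lies on the \emph{other} side of $\gamma(i+1)$ from the middle of $\isub{\gamma}{s_i,i+1}$. What kills the third side is a second path, to a neighbouring point, whose projection leans the opposite way. A single path to a nearest point cannot supply this.
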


\begin{proof}
    Let $\eps_0, A_0$ be the constants from Proposition~\ref{prop:path-projections-are-well-defined} and let $\eps \ll \eps_0$ and $A\gg A_0$ be determined later. Let $\gamma : [0, T]\to X$ be as in the statement and let $x\in X$. For $0\leq i \leq T$, denote by $\spath_i\in \mc P$ a special path from $x$ to $\gamma(i)$. Moreover, let $u_i$ and $y_i = \gamma(s_i)$ be the $R$--upper and a $R$--lower projection points of $\spath_i$ onto $\gamma$. We say that $\spath_i$ is \emph{left leaning} if $s_i\leq i$ and we call it \emph{right leaning} otherwise.

    Now we want to argue the following: if there exists $i$ such that $\spath_i$ is left leaning while $\spath_{i+1}$ is right leaning, then $\spath_i$ is $C$--almost $R$--orthogonal.

    \begin{claim}\label{claim:switch-implies-orthogonal}
        If $\spath_i$ is left leaning and $\spath_{i+1}$ is right leaning or vice versa, then $\spath_i$ is $C$--almost $R$--orthogonal.
    \end{claim}
    \textit{Proof of Claim.}
        We assume that $\spath_{i+1}$ is right leaning and $\spath_{i}$ is left leaning, but not $C$--almost $R$--orthogonal and show that this yields a contradiction. The case were $\spath_{i+1}$ is left leaning but not $C$--almost $R$--orthogonal goes analogously by simply swapping $\spath_i$ and $\spath_{i+1}$. 

        \begin{figure}
            \centering
            \includegraphics[width=0.65\linewidth]{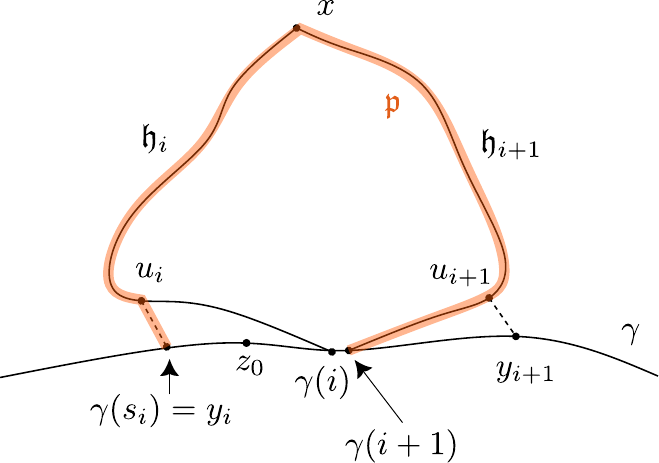}
            \caption{Proof of Claim~\ref{claim:switch-implies-orthogonal}. The path $\pgot$ has to come close to $z_0$.}
            \label{fig:left-leaning}
        \end{figure}
        
        Consider the following $(3, \mc P)$--polygonal line $\pgot = [y_i, u_i]_{\mc P} \ast \pref{\spath_i}{u_i}^{-1}\ast \spath_{i+1}$. This is depicted in Figure~\ref{fig:left-leaning}. Since $\spath_i$ is not $C$--almost $R$--orthogonal, we know that $d(\gamma(i), \gamma(s_i))\geq d(x, \gamma(i))/C + 4D_{\mc P}(R+1)$, implying that $\abs{s_i -(i+1)}\geq R$. Thus, for $A$ large enough compared to $C$ and the quasi-geodesic constants of $\mc P$, we have that $\norm{\pgot}\leq A \ell$, where $\ell = d(\gamma(s_i), \gamma(i+1))$. Hence we can use that $\gamma$ is $(R; \epsilon, A; 3, \mc P)$--weakly polygonally Morse to obtain $d(z, \pgot)\leq \eps \ell$ for all $z\in \isub{\gamma}{s_i, i+1}$. Let $z_0 = \gamma(t)$ be a point on $\isub{\gamma}{s_i, i+1}$ at distance at least $\ell/2$ from both of $\gamma(s_i)$ and $\gamma(i+1)$.

        We want to find a contradiction by showing that $d(z_0, \pgot) > \eps \ell$. First observe that $\ell\geq 4D_{\mc P}(R)$. Thus, by the triangle inequality and as long as $\eps < 1/4$, we have that $d([y_i, u_i]_{\mc P}, z_0) > \ell/2 - D_{\mc P}(R)\geq  \eps \ell$. Moreover, since $\eps\ll \eps_0$ and $A\gg A_0$, we have $d(\pref{\spath_i}{u_i}, z_0)\geq \eps_0\abs{t - s_i} > \eps\ell$. Similarly, $d(\pref{\spath_{i+1}}{u_{i+1}}, z_0) > \eps \ell$. Finally, Corollary~\ref{cor:close-before-endpoints} shows that $\suf{\spath_{i+1}}{u_{i+1}}$ is contained in the $KR$--neighbourhood of $\isub{\gamma}{i+1, s_{i+1}}$. For $\eps$ small enough compared to the quasi-geodesic constants of $\mc P$, this also yields $d(z_0, \suf{\spath_{i+1}}{u_{i+1}}) > \eps \ell$. Hence $d(z_0, \pgot) > \eps\ell$, a contradiction.
    \hfill $\blacksquare$

    If there exists at least one left leaning and one right leaning $\spath_n$, then there have to be consecutive ones, in which case Claim~\ref{claim:switch-implies-orthogonal} shows that there exists a $C$--almost $R$--orthogonal one. 

    The only other cases to consider is if for all integers $i$, the $\spath_i$ are left leaning or if they are all right leaning. Assume they are all left leaning. Consequently, $\spath_0$ is left leaning, implying that $s_0 =0$. Hence $\spath_0$ is $C$--almost $R$--orthogonal. Analogously, if they are all right leaning $\spath_{\floor{T}}$ is $C$--almost $R$--orthogonal. 
\end{proof}

\section{Globalization theorems. First theorem on linear divergence}\label{sec:globalizations}

This section contains our first collection of consequence of properties of path systems.  The first three subsections are concerned in determining whether a path is a Morse quasi-geodesic from local information. A first consequence of our results is that the spaces in question will satisfy the MLTG property, which implies many consequences and we refer to the appendix of \cite{DrutuSprianoZbinden:schism} for a survey. In fact, we show a stronger property than the MLTG property, namely that for a quasi-geodesic $\qgot$ the property of being Morse can be verified locally by checking that all quadrilaterals where one side is a uniformly short subsegment of $\qgot$ and the other three sides are uniformly short special paths are thin. It is quite remarkable that one can check Morseness only on sort quadrilateral, instead of having to consider all possible quasi-geodesics. In particular, if the space and the quasi-geodesic $\qgot$ are regular enough, this provides an algorithmic way to determine whether a quasi-geodesic is Morse.

The last subsection is concerned with the stability of asymptotic properties under change of scale. Specifically, we show that if the divergence is linear for a certain sequence, then it is always linear, and similarly that if the asymptotic cone does not have global cut-points for a scaling sequence, then it all asymptotic cones are without cut-points.

\subsection{Locally weakly polygonally Morse special paths are $\mc P$--contracting}\label{sec:wpm-implies-morse}

In this section we prove Proposition~\ref{prop:local_poly_P_contracting}, which states that in a geodesic metric space $X$ equipped with a navigable path system $\mc P$, special paths which are locally weakly polygonally Morse are $\mc P$--contracting. As a consequence, we obtain that such a space $X$ is MLTG (see Corollary~\ref{cor:MLTG}) and that all Morse special paths are $\mc P$--contracting (see Corollary~\ref{cor:morse-implies-mccontracting}). 

\begin{proposition}[Locally weakly polygonally Morse implies globally $\mc P$--contracting]\label{prop:local_poly_P_contracting}
    Let $X$ be a geodesic metric space equipped with a $(C_0, k)$--navigable undirected quasi-geodesic path system $\mc P$. There are constants $A, \epsilon$ so that for every $R\geq 0$ there exists $L, C\geq 0$ satisfying the following. Let $\gamma$ be a $(\la, \ka)$--quasi-geodesic which is $L$--locally $(R; \epsilon, A; 7k, \mc{P})$--weakly polygonally Morse and such that every subpath of $\gamma$ has Hausdorff distance at most $C_0$ from a special path between its endpoints. Then $\gamma$ is $\mc P$--contracting with constant $C$. 
\end{proposition}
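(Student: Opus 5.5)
We define the projection $\pi_\gamma$ via almost orthogonal special paths and verify Definition~\ref{def:contracting} by contradiction, using navigability to shorten the resulting polygonal lines. Fix $x\in X$. Since every subpath of $\gamma$ is $C_0$--Hausdorff close to a special path, the proof of Lemma~\ref{lem:almost-ortho-projection} should adapt to $\gamma$ (with the $3$--gon thinness applied only on subsegments of length at most $L$). It then gives a special path $\spath_x$ from $x$ to a point of $\gamma$ that is $C'$--almost $R$--orthogonal. We set $\pi_\gamma(x)$ to be an $R$--lower projection point of $\spath_x$. Condition~\eqref{5.2.1} is immediate. The divergence estimate of Proposition~\ref{prop:path-projections-are-well-defined} holds locally: a special path exiting the $R$--neighbourhood of $\gamma$ at $\gamma(t_0)$ stays at distance at least $\eps\abs{t-t_0}$ from $\gamma(t)$, at least for $\abs{t-t_0}\le L/2$.

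\textbf{Upgrading local divergence to global divergence.} Suppose $\spath_x$ re-approaches $\gamma$ at a parameter far from $t_0$. Take the first parameter $t_1$ where $d(\spath_x,\gamma(t_1))\le \theta\abs{t_1-t_0}$; by the local estimate, $\abs{t_1-t_0}>L/2$. Form the polygonal line $\alpha_1\ast\spath_x|\ast\alpha_0$ as in Proposition~\ref{prop:path-projections-are-well-defined}. This line has endpoints near $\gamma(t_0)$ and $\gamma(t_1)$ and avoids a ball of radius comparable to $\theta\abs{t_1-t_0}$ around an intermediate point $\gamma(s)$ with $\abs{s-t_0}\approx L/4$.

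We then apply $(C_0,k)$--navigability to the ball $B(\gamma(s),r)$ with $r\approx L$, choosing $s$ so that the relevant endpoints lie in $B(\gamma(s),2r)$. Concretely, we cut the concatenation with $\gamma$-subsegments, replace those by special paths at distance $C_0$, and pick the first and last points within $2r$ of $\gamma(s)$. This yields a $(7k,\mc P)$--polygonal line of length at most $C_0\cdot 7\cdot r$ avoiding $B(\gamma(s),r/C_0)$, with endpoints on $\gamma$ at parameter distance $\asymp r\le L$ apart. This contradicts $(\eps,A;7k,\mc P)$--proportional thinness of that short subsegment, provided $A\ge 7C_0\cdot(\text{const})$ and $\eps<1/(C_0\cdot\text{const})$. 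These choices fix $A$ and $\eps$ independently of $R$, and then $L$ is chosen large in terms of $R$. The count $7k$ should come from at most seven pieces: $\alpha_0$, $\alpha_1$, the subpath of $\spath_x$, a few connectors, and the $\gamma$-replacements, each multiplied by $k$.

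\textbf{Condition~\eqref{5.2.2}.} Given $x,y$ with $d(\pi(x),\pi(y))\ge C$, consider the polygonal line $\spath_x^{-1}\ast[x,y]_{\mc P}\ast\spath_y$. If $[x,y]_{\mc P}$ avoided a large ball around $\pi(x)$, we would again have a polygonal line with endpoints near $\gamma$ that stays far from a point of $\gamma$. The same navigability shortening, combined with the global divergence of $\spath_x$ and $\spath_y$ from $\gamma$ established above, contradicts local thinness. The argument is the same as in the previous step, with one more side. Since the quasi-geodesic constants enter only through $D_{\mc P}$ and the Hausdorff assumption, the resulting contraction constant $C$ is uniform.

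\textbf{Main obstacle.} The main obstacle is the globalization step: arranging the ball for navigability so that its hypotheses hold (every point of the polygonal line at distance at least $r$ from the center, with endpoints within $2r$) while the tightened line has endpoints on a subsegment of $\gamma$ of length at most $L$. I would first try taking the center on $\gamma$ at parameter distance about $L/4$ from the exit point, and trimming the polygonal line at its first and last entries into $B(\cdot,2r)$.
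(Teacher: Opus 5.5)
There is a genuine gap, in two places. First, the almost-orthogonal projection. The proof of Lemma~\ref{lem:almost-ortho-projection} does not localise. Claim~\ref{claim:switch-implies-orthogonal} applies thinness to $\isub{\gamma}{s_i,i+1}$, whose size is at least of order $d(x,\gamma(i))/C$ and hence unbounded in $x$. It also uses Proposition~\ref{prop:path-projections-are-well-defined} and Corollary~\ref{cor:close-before-endpoints} along all of $\gamma$. With only $L$--local thinness none of this is available, so ``should adapt'' is not justified.

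Second, and more seriously, your check of condition~\eqref{5.2.2} uses the whole of $\spath_x$ and $\spath_y$. Your divergence estimate concerns only the part of $\spath_x$ before its exit point. The tail after it is uncontrolled and ends at $\spath_x^+$. Even with almost-orthogonality, the distance from $\spath_x^+$ to $\pi(x)$ is bounded only by $d(x,\spath_x^+)/C'+4D_{\mc P}(R+1)$, which is not uniform. So the tail may run along $\gamma$ through any point $z\in\psub{\gamma}{\pi(y),\pi(x)}$ at which you want to test thinness. Then $\spath_x^{-1}\ast[x,y]_{\mc P}\ast\spath_y$ need not avoid any ball of radius $\gg R$ around $z$, and the contradiction is not there.

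A smaller slip: in the globalisation step, minimality of $t_1$ only gives $d(\spath_x,\gamma(s))>\theta\abs{s-t_0}\approx\theta L/4$. So navigability must be applied at radius $\approx\theta L$, not $\approx L$ or $\approx\theta\abs{t_1-t_0}$; with that correction the step works.

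The missing idea is to truncate at the exit point and to take the exit radius large. This makes almost-orthogonality (which you never actually use) and all divergence estimates unnecessary. Choose $R\ll D\ll L$ and $D\ll C$. Take an arbitrary special path $\spath_x$ from $x$ to $\gamma$, let $\tau(x)$ be its first point in the $D$--neighbourhood of $\gamma$, and let $\pi(x)$ be a point of $\gamma$ within $D$ of $\tau(x)$. Then $\pref{\spath_x}{\tau(x)}$ misses the $D$--ball around every point of $\gamma$ at distance $\geq 2D$ from $\pi(x)$, for free. With exit radius $R$, the ball you can avoid has radius only about $R$, below the scale at which thinness applies; that is what forced your detour.

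Now suppose a special path $\beta$ from $x$ to $y$ has $d(\beta,\pi(x))\geq C$ while $d(\pi(x),\pi(y))\geq C$. Pick $z\in\psub{\gamma}{\pi(y),\pi(x)}$ with $d(z,\pi(x))=C/4$. Let $z_2,z_1$ be the first and last points of this subpath in the $D/2$--neighbourhood of $z$. Consider the $(7,\mc P)$--polygonal line
\[
[z_1,\pi(x)]_{\mc P}\ast[\pi(x),\tau(x)]_{\mc P}\ast\pref{\spath_x}{\tau(x)}^{-1}\ast\beta\ast\pref{\spath_y}{\tau(y)}\ast[\tau(y),\pi(y)]_{\mc P}\ast[\pi(y),z_2]_{\mc P},
\]
with the outer two pieces chosen $C_0$--close to subpaths of $\gamma$. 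Every piece misses $B(z,D/4)$. One application of navigability at scale $D/4$ then gives a $(7k,\mc P)$--polygonal line from $z_1$ to $z_2$ of length at most $7C_0D/4$ avoiding $B(z,D/(4C_0))$. This contradicts thinness of $\psub{\gamma}{z_2,z_1}$, whose length lies between $R$ and $L$, once $\eps<1/(4C_0)$; this is also where the count $7k$ comes from.

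This is the paper's proof. Your route could be repaired the same way: drop almost-orthogonality, truncate the $\spath$'s, and place $z$ at distance of order $L$ from $\pi(x)$ so that your divergence estimate yields a ball of radius $\gg R$. The globalisation of divergence is then an avoidable detour.
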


In the cases where we want to apply Proposition~\ref{prop:local_poly_P_contracting} we usually get the condition that subpaths of $\gamma$ have to be close to special paths for free. Namely, we either apply it if $\gamma$ is a special path, or if the path system is sub-bounded. In the latter case, the condition follows from Proposition~\ref{prop:globalisation_QG}.

\begin{proof}
    Let $C\gg D \gg R$ be large enough constants to be determined later and let $\gamma$ be as in the statement. We now construct a projection $\pi : X\to \gamma$ and show that $\gamma$ is $\mc P$--contracting with constant $C$.

    \smallskip
    
    \textbf{Choice of projection.} For $x\in X$, choose any special path $\spath_x\in \mc P$ from $x$ to $\gamma$ and define $\tau(x)$ and $\pi(x)$ as upper and $D$--lower projection points of $\spath_x$ onto $\gamma$.

\smallskip

        If $x\in \gamma$, then $x = \tau(x)$, implying that Condition \eqref{5.2.1} of Definition~\ref{def:contracting} of being $\mc P$--contracting holds. 

        It remains to show Condition~\eqref{5.2.2} about being $\mc P$--contracting. Assume it does not hold. That is, assume that there are points $x,y \in X$ and a special path $\beta\in \mc P$ from $x$ to $y$ such that $\dist (\pi(x), \pi(y))\geq C$ and such that without loss of generality $\dist (\beta, \pi(x))\geq C$.
        
        \smallskip
        
        Let $z$ be a point on $\gamma' = \psub{\gamma}{\pi(y), \pi(x)}$ with $\dist (z, \pi(x)) = C/4$. Further let $z_2, z_1$ be the first and last point on $\gamma'$ in the $D/2$--neighbourhood of $z$. Define the $(7, \mc P)$--polygonal line $\alpha = \alpha_1\ast \cdots \ast \alpha_7$ as follows, this is depicted in Figure~\ref{fig:proof-contracting}. 
        
        \begin{figure}
            \centering
            \includegraphics[width=\linewidth]{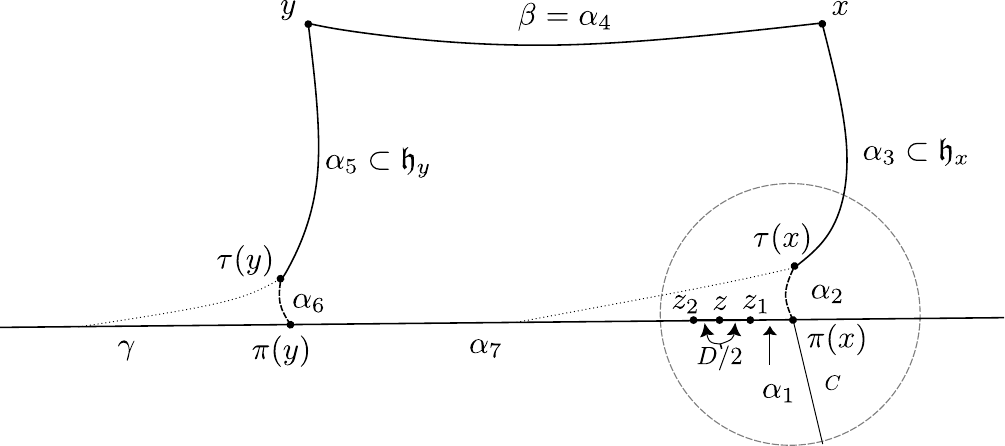}
            \caption{The path $\alpha$ does not intersect the $D$--ball around $z$.}
            \label{fig:proof-contracting}
        \end{figure}
        \begin{align*}
            \alpha_1 &= [z_1, \pi(x)]_{\mc P},&
            \alpha_2 = [\pi(x), \tau(x)]_{\mc P},\quad
            \alpha_3 &= \pref{\spath_x}{\tau(x)}^{-1},\quad
            \alpha_4 = \beta,\\
             \alpha_5 &= \pref{\spath_y}{\tau(y)},&
            \alpha_6  = [\tau(y),\pi(y)]_{\mc P},\quad
            \alpha_7 &= [\pi(y), z_2]_{\mc P}.\quad&
        \end{align*}
        
        By setting $D\geq 4C_0$, we can assume that $\alpha_1$ and $\alpha_7$ have Hausdorff distance at most $D/4$ from $\suf{\gamma'}{z_1}$ and $\pref{\gamma'}{z_2}$ respectively.
        
        \begin{claim}\label{claim:alpha-far-from-z}
            We have that $d(\alpha, z) > D/4$.
        \end{claim}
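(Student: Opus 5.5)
We have to show that each of the seven pieces $\alpha_1,\dots,\alpha_7$ misses the closed ball $B(z,D/4)$. The constants are ordered $C\gg D\gg R$. We treat the pieces in three groups, in each case exploiting that $z$ lies well inside $\gamma'$ and far from every endpoint of the construction.

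\textbf{The pieces $\alpha_1$ and $\alpha_7$.} By the choice $D\geq 4C_0$, $\alpha_1$ lies in the $D/4$--neighbourhood of $\suf{\gamma'}{z_1}$. The point $z_1$ is the last point of $\gamma'$ in the $D/2$--neighbourhood of $z$, so every point of $\suf{\gamma'}{z_1}$ after $z_1$ is at distance more than $D/2$ from $z$.

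The statement requires a strict inequality $d(\alpha,z)>D/4$, while a point within $D/4$ of a point at distance exactly $D/2$ can be at distance exactly $D/4$. I would handle this by taking the Hausdorff bound strictly smaller, using $D>4C_0$, or by absorbing a constant. The same argument applies to $\alpha_7$ and $\pref{\gamma'}{z_2}$.

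\textbf{The pieces $\alpha_2,\alpha_6$ and the suffixes.} The endpoints $\tau(x)$ and $\pi(x)$ are within $D$ of each other, since they are the upper and $D$--lower projection points. So $\alpha_2$ has diameter at most $D_{\mc P}(D)$ and stays within that distance of $\pi(x)$. Since $d(z,\pi(x))=C/4$ and $C\gg D_{\mc P}(D)$, this piece misses the ball. The same holds for $\alpha_6$ near $\pi(y)$, because $d(z,\pi(y))\geq d(\pi(x),\pi(y))-C/4\geq 3C/4$.

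\textbf{The pieces $\alpha_3$ and $\alpha_5$.} The prefix $\pref{\spath_x}{\tau(x)}$ stops at the first point of $\spath_x$ in the $D$--neighbourhood of $\gamma$, so every point of it is at distance at least $D$ from $\gamma$. Since $z\in\gamma$, we get $d(\alpha_3,z)\geq D>D/4$. The same argument applies to $\alpha_5$.

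\textbf{The piece $\alpha_4=\beta$.} We know $d(\beta,\pi(x))\geq C$, but $\beta$ could in principle approach $z$. This is the main obstacle. The proposal is to argue by contradiction: suppose $\beta$ passes within $D/4$ of $z$.

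First, a local step. The subpath of $\gamma$ around $z$ is weakly polygonally Morse at scale $L$, which we choose with $L\gg C$. So Proposition~\ref{prop:path-projections-are-well-defined} and Corollary~\ref{cor:close-before-endpoints} apply to special paths near $z$. Applying them to the subpath of $\beta$ from $x$ to its point near $z$, projected onto that local piece of $\gamma$, shows that $\spath_x$ and this subpath of $\beta$ fellow travel once close to $\gamma$.

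Second, the contradiction. Build a short polygonal line through $\tau(x)$, part of $\beta$, and short special paths. Its length is comparable to $C$, and it misses a proportional ball around a point of $\gamma$ between $z$ and $\pi(x)$. This uses $d(\beta,\pi(x))\geq C$ together with the linear divergence of $\spath_x$ from $\gamma$ given by Proposition~\ref{prop:path-projections-are-well-defined}. That contradicts proportional thinness of the local piece of length of order $C$, which holds because $L\gg C$ and $7k\geq 3$.

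The delicate point is making these quantifiers consistent: $D$ is fixed first, then $C$, then $L$. Alternatively, one could replace $z$ by a nearby point where $\beta$ provably stays far away.
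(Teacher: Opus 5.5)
Your handling of $\alpha_1,\alpha_2,\alpha_3,\alpha_5,\alpha_6,\alpha_7$ is correct and matches the paper's argument. This includes the strictness fix for $\alpha_1$: $d(\alpha_1,z)\geq d(\suf{\gamma'}{z_1},z)-C_0\geq D/2-C_0>D/4$.

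The gap is the case $\alpha_4=\beta$. There you see an obstacle that is not there, and you offer a sketch in place of a one-line argument. The point $z$ was chosen with $d(z,\pi(x))=C/4$, which you already use for $\alpha_2$. The path $\beta$ was assumed to satisfy $d(\beta,\pi(x))\geq C$. Hence for every $b\in\beta$,
\[
d(b,z)\geq d(b,\pi(x))-d(\pi(x),z)\geq C-\tfrac{C}{4}=\tfrac{3C}{4}>\tfrac{D}{4},
\]
because $C\gg D$. This triangle inequality is all the paper does for this piece.

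The contradiction argument you propose in its place is not a proof as it stands, for two reasons:
\begin{itemize}
\item Proposition~\ref{prop:path-projections-are-well-defined} and Corollary~\ref{cor:close-before-endpoints} are stated for a special path $\gamma\in\mc P$ that is globally $(R;\eps,A;3,\mc P)$--weakly polygonally Morse. Here $\gamma$ is only a $(\la,\ka)$--quasi-geodesic that is $L$--locally weakly polygonally Morse and $C_0$--close to special paths, so you cannot invoke them directly.
\item The polygonal line you want to build is never specified. You do not give its pieces, its length bound, the ball it is supposed to miss, or why its endpoints cut out a subpath of $\gamma$ of length between $R$ and $L$.
\end{itemize}
Replace that paragraph with the triangle inequality above and the claim is proved. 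No choice of $L\gg C$ or reordering of constants is needed.
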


        \textit{Proof of Claim.}
         For $i = 3, 5$, we have $\dist (z, \alpha_i)\geq D$ by the definition of the projection $\pi$. For $i = 4$, $d(\alpha_i, z)\geq d(\beta, \pi(x)) - d(\pi(x), z) > D/4$. For $i=1$ we have $d(\alpha_i, z)\geq d(\pref{\gamma'}{z_1}, z) - C_0 > D/4$. A similar argument works for $i = 7$. Lastly, for $i = 2, 6$ it follows from $\diam(\alpha_i)\leq D_{\mc P}(D)$, $C\gg D$ and the triangle inequality.
            \hfill $\blacksquare$
        \smallskip
        
        Using navigability on $\alpha$ for $D/4$ and $z$, we obtain a $7k$--polygonal line $\pgot$ from $z_1$ to $z_2$ with $\norm{\pgot}\leq 7kC_0D$ and $d(\pgot, z) \geq D/(4C_0)$.
        
        For $A\geq 7kC_0D$ and $L \gg D \gg R$, we obtain $\length{\pgot} \leq A d(z_1, z_2)$ and $R\leq \dlength{\psub{\gamma}{z_1, z_2}}\leq L$. Using that $\gamma$ is $L$--locally $(R; \eps, A; 7k, \mc P)$--weakly-polygonally Morse, we obtain $d(z, \pgot)\leq \eps D$, a contradiction for $\eps < \frac{1}{4C_0}$. 
\end{proof}

\begin{corollary}\label{cor:morse-implies-mccontracting}
    Let $X$ be a geodesic metric space equipped with a navigable undirected path system $\mc P$. For each Morse gauge $M$ there exist constants $C, L$ such that any $L$--locally $M$--Morse special path $\spath\in \mc P$ is $\mc P$--contracting with constant $C$.
\end{corollary}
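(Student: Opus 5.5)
The plan is to feed Proposition~\ref{prop:local_poly_P_contracting} with the weak polygonal Morse hypothesis, producing that hypothesis locally from Lemma~\ref{lemma:localmltg-implies-path-avoidence}. Say $\mc P$ is $(C_0,k)$--navigable and $(\lao,\kao)$--quasi-geodesic. Let $A,\epsilon$ be the constants that Proposition~\ref{prop:local_poly_P_contracting} provides. Apply Lemma~\ref{lemma:localmltg-implies-path-avoidence} with quasi-geodesic constants $(\lao,\kao)$, with $n=7k$, and with these $A$, $\epsilon$ and the given gauge $M$. It returns a scale $R$ such that every $M$--Morse $(\lao,\kao)$--quasi-geodesic is $(R;\epsilon,A;7k,\mc P)$--weakly polygonally Morse. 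Feeding this $R$ back into Proposition~\ref{prop:local_poly_P_contracting} yields constants $L$ and $C$. We take these as the constants of the corollary.

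Now let $\spath\in\mc P$ be $L$--locally $M$--Morse. Any subpath $\spath'$ of $\spath$ with $\dlength{\spath'}\le L$ is an $M$--Morse $(\lao,\kao)$--quasi-geodesic, so by the choice of $R$ it is $(R;\epsilon,A;7k,\mc P)$--weakly polygonally Morse. Hence $\spath$ is $L$--locally $(R;\epsilon,A;7k,\mc P)$--weakly polygonally Morse. Note that subpaths of special paths are special paths, because $\mc P$ equals its consistent closure, so every subpath of $\spath$ stays in the same quasi-geodesic class.

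The remaining hypothesis of Proposition~\ref{prop:local_poly_P_contracting} is that every subpath of $\spath$ is within Hausdorff distance $C_0$ of a special path between its endpoints. Again because $\mc P$ is consistently closed, each subpath of $\spath$ is itself such a special path, so the required distance is $0$. The proposition then says $\spath$ is $\mc P$--contracting with constant $C$.

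The only delicate point is the order of quantifiers. The constants $A$ and $\epsilon$ of Proposition~\ref{prop:local_poly_P_contracting} must be fixed before Lemma~\ref{lemma:localmltg-implies-path-avoidence} is applied, so that $R$ depends only on $M$ and $\mc P$. Only after that are $L$ and $C$ chosen, depending on $R$. This order is consistent with how both results are stated, so I expect no real obstacle.
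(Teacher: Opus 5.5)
Your proposal is correct and is essentially the paper's proof: Lemma~\ref{lemma:localmltg-implies-path-avoidence} gives local weak polygonal Morseness at a scale $R$ fixed by the constants $A,\epsilon$ of Proposition~\ref{prop:local_poly_P_contracting}, and that proposition then produces $L$ and $C$. (The paper's proof cites Lemma~\ref{lem:combing-avoidance} at this point, which is evidently a slip for Lemma~\ref{lemma:localmltg-implies-path-avoidence}.) Your check of the Hausdorff-distance hypothesis, via $\mc P$ being its own consistent closure, fills in a step the paper leaves implicit.
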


\begin{proof}
Lemma~\ref{lem:combing-avoidance} states that an $M$--Morse special path $\gamma$ is $(R; \eps, A; n, \mc P)$--weakly polygonally Morse for any $\eps, n, A$ as long as $R= R(\eps, n, A, M)$ is suitably large. Proposition~\ref{prop:local_poly_P_contracting} concludes the proof.
\end{proof}

\begin{corollary}\label{cor:MLTG}
    Let $X$ be a geodesic metric space equipped with a navigable undirected path system $\mc P$. Then $X$ is Morse local-to-global. 
\end{corollary}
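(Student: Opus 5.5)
The plan is to verify Definition~\ref{defn:MLTG} directly. Fix a quasi-geodesic pair $(\la,\ka)$ and a Morse gauge $M$, and let $\gamma$ be a path that is $L$--locally an $M$--Morse $(\la,\ka)$--quasi-geodesic, with $L$ to be chosen. The strategy has three steps: transfer local Morseness into the local weakly polygonally Morse condition, apply Proposition~\ref{prop:local_poly_P_contracting} to get $\mc P$--contraction, and then conclude with Theorem~\ref{theorem:contracting-implies-morse}. The only hypothesis of Proposition~\ref{prop:local_poly_P_contracting} that is not immediate is the requirement that subpaths of $\gamma$ be $C_0$--Hausdorff close to special paths. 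So the plan reduces this case to the case of special paths, where that hypothesis holds automatically because $\mc P$ equals its consistent closure.

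\textbf{Reduction to special paths.} First, by Lemma~\ref{lemma:localmltg-implies-path-avoidence}, every local piece of $\gamma$ is $(R;\eps,A;n,\mc P)$--weakly polygonally Morse, with $R$ depending only on $\la,\ka,M,\eps,A,n$. Here $A,\eps$ are the constants of Proposition~\ref{prop:local_poly_P_contracting} and $n=7k$. Next, I would discretize $\gamma$ at points $\gamma(t_i)$ spaced by roughly $L/10$ and join consecutive points by special paths $\spath_i$. Since $\isub{\gamma}{t_{i-1},t_{i+1}}$ is $M$--Morse, each $\spath_i$ lies within $M(\lao,\kao)$ of the corresponding subsegment of $\gamma$. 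Conversely, that subsegment lies near $\spath_i$, using that a Morse quasi-geodesic and a quasi-geodesic with the same endpoints are at bounded Hausdorff distance. The concatenation $\spath_0\ast\spath_1\ast\cdots$ is therefore at uniformly bounded Hausdorff distance from $\gamma$ on every window of length $L/2$.

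\textbf{Main obstacle.} The difficulty is that this concatenation is only a polygonal line, not a special path, so Proposition~\ref{prop:local_poly_P_contracting} cannot be applied to it directly. I would handle this by an induction that doubles the length of the window. Suppose $\gamma$ is already known to be $\mc P$--contracting with constant $C$ on windows of length $L'$. A special path joining the endpoints of a window of length $2L'$ is then a quasi-geodesic with endpoints near a contracting set. By Theorem~\ref{theorem:contracting-implies-morse} and Lemma~\ref{lem:contracting_preserved_Hdistance}, it is Hausdorff close to $\gamma$ on that window, with a constant that must \emph{not} grow with the number of doubling steps. This is the delicate point. To avoid the growth, I would apply the argument of the proof of Proposition~\ref{prop:local_poly_P_contracting} directly to $\gamma$ at each stage. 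That argument only uses closeness to special paths on subpaths of length at most $L$: the points $z_1,z_2$ are near $z$, and $\alpha_1,\alpha_7$ can be replaced by concatenations of two special paths that track $\gamma$ near $\pi(x)$ and $\pi(y)$. Handling $\alpha_1,\alpha_7$ in this way yields a uniform constant $C$.

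\textbf{Conclusion.} With uniform $\mc P$--contraction of $\gamma$ in hand, Theorem~\ref{theorem:contracting-implies-morse} shows that $\gamma$ is $M'$--Morse, with $M'$ depending only on $C$ and $\mc P$. Finally, $\gamma$ is a global quasi-geodesic with controlled constants $(\la',\ka')$. To see this, note that $\pi$ is coarsely Lipschitz by Lemma~\ref{lem:projection_is_Lipschitz} and restricts to roughly the identity on $\gamma$. Hence $\gamma$ cannot backtrack by more than a bounded amount, and local quasi-geodesicity at scale $L$ then upgrades to global quasi-geodesicity.
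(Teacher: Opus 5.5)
There is a genuine gap, and it sits exactly at the point you call delicate.

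\textbf{The fix for uniform constants fails.} Your fix rests on the claim that the argument of Proposition~\ref{prop:local_poly_P_contracting} only uses closeness to special paths on subpaths of length at most $L$; this is false. In that proof $z$ is chosen at distance $C/4$ from $\pi(x)$, but $\pi(y)$ is only known to satisfy $d(\pi(x),\pi(y))\geq C$. So $\alpha_7=[\pi(y),z_2]_{\mc P}$ joins two points of $\gamma$ that may be arbitrarily far apart. The estimate $d(\alpha_7,z)>D/4$ in Claim~\ref{claim:alpha-far-from-z} needs $\alpha_7$ to be $D/4$--Hausdorff close to the unboundedly long subpath $\pref{\gamma'}{z_2}$. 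You cannot replace $\alpha_7$ by two special paths tracking $\gamma$ near $\pi(y)$ and near $z$. A path from $\pi(y)$ to $z_2$ that tracks $\gamma$ needs a number of special pieces growing with $d(\pi(y),z_2)$. Then neither navigability nor the weakly polygonally Morse hypothesis gives the contradiction, since both are formulated for a bounded number of sides and a length bound linear in $d(z_1,z_2)$.

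\textbf{The doubling step has the same problem.} Knowing that two overlapping halves of a window are $\mc P$--contracting does not place a special path between the window's endpoints near $\gamma$. Theorem~\ref{theorem:contracting-implies-morse} and Lemma~\ref{lem:contracting_preserved_Hdistance} need both endpoints near a single contracting set. Moreover, any Hausdorff constant you do obtain must be fed back in via $D\geq 4C_0$ and $C\gg D$, so the constants grow at each doubling.

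\textbf{The final step is circular.} Proposition~\ref{prop:local_poly_P_contracting} assumes $\gamma$ is \emph{globally} a $(\la,\ka)$--quasi-geodesic. This is what guarantees $\dlength{\psub{\gamma}{z_1,z_2}}\leq L$, so that the local hypothesis applies. Deducing global quasi-geodesicity afterwards from the coarsely Lipschitz projection therefore uses what you are trying to prove.

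\textbf{What the paper does instead.} The missing idea is the passage from arbitrary locally Morse paths to special paths, which the paper does not prove but imports. By \cite[Theorem~3.12 and Remark~3.14]{AbbottZbinden:sigma-compact}, $X$ is Morse local-to-global as soon as, for every Morse gauge $M$, there are $L$ and $M'$ such that every $L$--locally $M$--Morse special path is $M'$--Morse. For special paths both extra hypotheses of Proposition~\ref{prop:local_poly_P_contracting} are automatic: they are uniform quasi-geodesics, and their subpaths are special paths because $\mc P$ equals its consistent closure. So Corollary~\ref{cor:morse-implies-mccontracting} and Theorem~\ref{theorem:contracting-implies-morse} finish at once. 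Inside the paper, control of general locally Morse paths is only obtained under sub-boundedness (Proposition~\ref{prop:globalisation_QG}), which is not assumed here. You must either cite that reduction or genuinely reprove it, and your sketch does neither.
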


\begin{proof}
    By \cite[Theorem~3.12 and Remark~3.14]{AbbottZbinden:sigma-compact}, it suffices to prove that for any Morse gauge $M$, there exists a Morse gauge $M'$ and a scale $L$ such that any special path $\gamma \in \mc P$ which is $L$--locally $M$--Morse is $M'$--Morse. Since any $\mc P$--contracting path with constant $C$ is $N$--Morse for some Morse gauge $N$ only depending on $C$ (see Theorem~\ref{theorem:contracting-implies-morse}), Corollary~\ref{cor:morse-implies-mccontracting} concludes the proof.
\end{proof}

\subsection{Upgrading from $(n, \mc P)$--polygonal lines to $(3, \mc P)$--polygonal lines}\label{sec:3-to-n}

In Section~\ref{sec:wpm-implies-morse} we have seen that if $\mc P$ is an undirected $(C_0, k)$--navigable path system, then to check whether a special path $\gamma$ is $\mc P$--contracting, it suffices to show that all `short' $(7k, \mc P)$--polygonal lines with endpoints on $\gamma$ are `thin'. In this section, we show that it in fact suffices to check $(3, \mc P)$--polygonal lines. This is optimal: in Example~\ref{ex-e-not-2} we construct a sequence of special path of an undirected navigable path system which are $(0; 0, A, 2, \mc P)$--weakly polygonally Morse for all $A$, but not uniformly Morse.

\begin{proposition}\label{prop:3-implies-n}
    Let $X$ be a geodesic metric space and let $\mc P$ be an undirected path system on $X$ and let $D$ be a constant. For every $n\geq 3, \epsilon'> 0, A'\geq 1$  there exists $\epsilon > 0, A\geq 1$ such that for all $R\geq 0$ there exists $R'\geq 0$ satisfying the following. Let $\gamma\in \mc P$ be a $(R; \epsilon, A; 3, \mc{P})$--weakly polygonally Morse quasi-geodesic such that every subpath of $\gamma$ has Hausdorff distance at most $D$ from a special path between its endpoints. Then $\gamma$ is $(R'; \epsilon', A'; n, \mc{P})$--weakly polygonally Morse.
\end{proposition}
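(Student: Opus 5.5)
The idea is to reduce an $(n,\mc P)$--polygonal line to a single "bad" side, and then to close that side up into a $(3,\mc P)$--polygonal line using the almost orthogonal special paths of Lemma~\ref{lem:almost-ortho-projection}. Those orthogonal paths diverge linearly from $\gamma$ by Proposition~\ref{prop:path-projections-are-well-defined}, and this linear divergence is what makes the reduction work.

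\emph{Setup.} Fix $n,\eps',A'$. We will choose $\eps\ll\eps'/(nA')$ and $A\gg nA'$. We also take the constant $C$ of Lemma~\ref{lem:almost-ortho-projection} large in terms of $A'$, and shrink $\eps$, enlarge $A$ further so that Proposition~\ref{prop:path-projections-are-well-defined} and Corollary~\ref{cor:close-before-endpoints} apply to $\gamma$. We may assume $R\ge D_{\mc P}(1)$, and we take $R'\gg R$.

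Argue by contradiction. Suppose there is a subsegment $\gamma'$ with $\abs{\gamma'}\ge R'$ and $L:=d(\gamma'^-,\gamma'^+)$. Suppose further there is an $(n,\mc P)$--polygonal line $\pgot=\spath_1\ast\cdots\ast\spath_n$ from $\gamma'^+$ to $\gamma'^-$ with $\norm{\pgot}\le A'L$, and a point $z\in\gamma'$ with $d(z,\pgot)>\eps' L$.

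Let $x_0,\dots,x_n$ be the vertices of $\pgot$. For each $x_j$ choose a $C$--almost $R$--orthogonal special path $\mf o_j$ to $\gamma$, with upper projection point $u_j$ and lower projection point $y_j$; for $j=0,n$ take these to be the trivial paths. Since $\gamma$ is a special path, it is parametrised proportionally to distance up to constants. The $n+1$ points $y_j$ therefore cannot fill the whole subarc of $\gamma'$ of length $\sim\eps' L/2$ around $z$. By pigeonhole we can replace $z$ by a point $z'$ of that subarc such that:
\begin{itemize}
\item[(i)] $d(z',y_j)\ge c\,\eps' L/n$ for all $j$, with $c$ depending only on the quasi-geodesic constants;
\item[(ii)] $d(z',\pgot)>\eps' L/2$.
\end{itemize}
Since $y_0$ and $y_n$ lie on opposite sides of $z'$ along $\gamma$, some side $\spath_j$ has $y_{j-1}$ and $y_j$ on opposite sides of $z'$. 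Set $\ell=d(y_{j-1},y_j)$; by (i), $\ell\gtrsim \eps' L/n$.

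\emph{Closing up the bad side.} Consider $\mf q=\pref{\mf o_{j-1}}{u_{j-1}}^{-1}\ast\spath_j\ast\pref{\mf o_j}{u_j}$. This runs from $u_{j-1}$ to $u_j$, and each $u_i$ lies within $R$ of $y_i$. The ends of $\mf q$ are off $\gamma$, so we adjust them. Using the hypothesis that subpaths of $\gamma$ are $D$--close to special paths, together with Corollary~\ref{cor:close-before-endpoints}, we move the endpoints onto $\gamma$ near $y_{j-1}$ and $y_j$ at bounded cost, keeping three special pieces (undirectedness lets us reverse pieces freely). This is a $(3,\mc P)$--polygonal line between points of $\gamma$ at distance about $\ell$. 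Its length is at most $\norm{\pgot}+d(x_{j-1},y_{j-1})+d(x_j,y_j)+O(R)$, which is $O(A'L)\le A\ell$ for $A\gg nA'/\eps'$.

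By $3$--weak polygonal Morseness, some point of the closed-up line is within $\eps\ell\le \eps A'L$ of $z'$. We rule out each part:
\begin{itemize}
\item The orthogonal prefixes stay at distance $\gtrsim \eps_0\, d(z',y_i)\gtrsim \eps_0\eps'L/n$ from $z'$, by Proposition~\ref{prop:path-projections-are-well-defined} combined with (i).
\item The short connectors are near $y_i$, hence also far from $z'$.
\item Therefore $\spath_j$ must pass within $\eps A' L$ of $z'$. This contradicts (ii) once $\eps<\eps'/(2A')$ and $\eps<c\eps_0\eps'/(nA')$.
\end{itemize}

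\emph{Main obstacle.} The delicate step is the closing-up. First, the connectors from upper to lower projection points must not increase the number of special sides beyond three; I would merge them with the orthogonal prefixes via the $D$--closeness hypothesis and the path-system closure. Second, the length bound needs the almost orthogonality constant $C$: it bounds $d(x_j,y_j)$ by a multiple of $\norm{\pgot}$, so that the total length is comparable to $\ell$. I expect the constant bookkeeping (the order of choosing $C$, $\eps$, $A$, $R'$) to be the main technical difficulty.
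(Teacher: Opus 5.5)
Your strategy matches the paper's. You attach $C$--almost $R$--orthogonal special paths (Lemma~\ref{lem:almost-ortho-projection}) at the vertices of $\pgot$. You then use pigeonhole to find a side whose two feet on $\gamma$ bracket a point near $z$ and stay far from it, close that side into a $(3,\mc P)$--polygonal line, and rule out the attached paths with Proposition~\ref{prop:path-projections-are-well-defined}.

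The closing-up step, however, fails as you describe it. Truncating $\mf o_j$ at $u_j$ leaves the endpoints off $\gamma$. Adding connectors $[u_j,y_j]_{\mc P}$ gives a $(5,\mc P)$--polygonal line, to which the $3$--weakly polygonally Morse hypothesis does not apply. And you cannot ``merge'' a connector with a prefix: $\mc P$ is closed under taking subpaths, not concatenations. The $D$--closeness hypothesis concerns only subpaths of $\gamma$, so it says nothing about $\mf o_j$.

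The fix is simply not to truncate. Each $\mf o_j$ is a single special path ending on $\gamma$, so $\mf o_{j-1}^{-1}\ast\spath_j\ast\mf o_j$ is already a $(3,\mc P)$--polygonal line between points of $\gamma$; this is exactly the $3$--gon used in the paper. Your ``short connectors'' become the tails $\psub{\mf o_j}{u_j,\mf o_j^+}$, and controlling them is precisely what almost orthogonality is for:
\begin{itemize}
\item almost orthogonality gives $d(y_j,\mf o_j^+)\le d(x_j,\mf o_j^+)/C+4D_{\mc P}(R+1)$;
\item by Corollary~\ref{cor:close-before-endpoints}, with $K$ its constant, the tail lies in the $KR$--neighbourhood of the part of $\gamma$ between $y_j$ and $\mf o_j^+$;
\item hence, for $C\gg nA'/\eps'$ and $R'\gg R$, the tail has diameter much smaller than $\eps' L/n$ and stays far from $z'$.
\end{itemize}

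Correspondingly, the role you give $C$ is misplaced. The estimate $d(x_j,\mf o_j^+)\lesssim A'L$ needed for the length bound comes from $d(x_j,\gamma)\le\norm{\pgot}$ together with the linear divergence of Proposition~\ref{prop:path-projections-are-well-defined} applied to $\pref{\mf o_j}{u_j}$. The constant $C$ is what makes the tails short compared to $\ell$.

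Finally, the endpoints of the $3$--gon are the feet $\mf o_j^+$, not the $y_j$. Either run the pigeonhole on the feet, as the paper does with the parameters $t_i$, or observe that each foot is close to $y_j$ and hence on the same side of $z'$. With these changes your outline goes through.
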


\begin{proof}[Proof of Proposition~\ref{prop:3-implies-n}]

Let $\eps_0, A_0$ be the constants from Proposition~\ref{prop:path-projections-are-well-defined}. Let $C\geq 1$ be large enough compared to $\eps', A'$ to be determined later. Let $\eps\ll \eps'$ small enough and $A\gg A'$ large enough compared to $C$ to be determined later. In particular, we can assume that $\eps\leq \eps_0$ and $A\geq A_0$ are small/large enough to apply Proposition~\ref{prop:path-projections-are-well-defined} and Lemma~\ref{lem:almost-ortho-projection} for the constant $C$. Further, let $R\geq 0$ and let $R'\gg R$ to be determined later. Assume that $\gamma$ is $(R; \eps, A; 3, \mc{P})$--weakly polygonally Morse but not $(R', \eps', A'; n, \mc{P})$--weakly polygonally Morse.

In particular, there exists an an $(n, \mc{P})$--polygonal line $\pgot_1\ast \ldots \ast \pgot_n = \pgot$ with endpoints on $\gamma$ contradicting $\gamma$ being $(R'; \eps', A'; n, \mc P)$--weakly polygonally Morse, that is, with $\norm{\pgot}\leq A'\ell'$ and a point $z'' = \gamma(s'')$ on $\gamma$ satisfying $d(z'', \pgot) >  \eps' \ell'$, where $\ell' = d(\pgot^-, \pgot^+)$. Let $\gamma'$ be a special path between $\pgot^-$ and $\pgot^+$. By choosing $R'$ large enough compared to $D, \eps'$, we know that there is a point $z' = \gamma'(s')$ on $\gamma'$ with $d(z', \pgot) > \eps'\ell' - D > \eps'\ell'/2$. 

For $1\leq i \leq n$, let $\spath_i\in \mc P$ be a $C$--almost $R$--orthogonal special path from $\pgot_i^+$ to $\gamma$. Such a path exists by Lemma~\ref{lem:almost-ortho-projection}. Denote the trivial path from $\pgot_0^-$ to itself by $\spath_0$ and denote $\spath_i^+$ by $x_i = \gamma'(t_i)$ for all $i$.

Recall that $C_{\mc P}$ is a constant such that $D_{\mc P}(r)\leq C_{\mc P}r+C_{\mc P}$ as outlined in Remark~\ref{remark:definition-of-dp}.

\begin{claim}\label{claim:good-i-exists}
    There exists an index $1\leq i \leq n$ and a point  $z$ on $\isub{\gamma'}{t_{i-1}, t_i}$ such that
        \begin{enumerate}[label = \roman*)]
            \item  $d(\gamma'(t_{i-1}), \gamma'(t_{i}))\geq \frac{\eps' \ell'}{16C_{\mc P}n}\geq D_{\mc P}(R)$,\label{p:far-ends}
            \item  $d(z, \gamma'(t_{i}))\geq \frac{\eps'\ell'}{16C_\mc Pn}$ and  $d(z, \gamma'(t_{i-1}))\geq \frac{\eps'\ell'}{16C_\mc Pn}$,\label{p:to-endps}
            \item $d(z, z')\leq \frac{\eps'\ell'}{4}$.\label{p:close-to-z'}
        \end{enumerate}
\end{claim}

\textit{Proof of Claim.}
Let $t'\geq s'$ be maximal such that $\isub{\gamma'}{s', t'}$ is contained in the $\frac{\eps' \ell'}{4}$ neighbourhood of $z' = \gamma'(s')$. Further for $0\leq i \leq 2n$ define $s' = r_0\leq r_1\leq \ldots \leq r_{2n} = t'$ such that $d(\gamma'(r_j), \gamma'(r_{j+1}))\geq \frac{\eps'\ell'}{8n}$. By the pigeon hole principle, there has to exist $i$ and $k$ such that $t_{i-1}\leq r_{k-1}\leq r_k\leq t_i$. Define $s = \frac{r_{k-1}+r_k}{2}$ and $z = \gamma'(s)$. With this definition, Item~\ref{p:close-to-z'} holds by construction. Further using $R'\gg 1$, we obtain $d(\gamma'(t_{i-1}), z)\geq D_{\mc P}^{-1}\left(\frac{\eps'\ell'}{8n}\right)\geq \frac{\eps'\ell'}{16C_{\mc P}n}$, yielding the second part of Item~\ref{p:to-endps}. The first part of Item~\ref{p:to-endps} is obtained analogously and for Item~\ref{p:far-ends}  we use $R'\gg R$ which gives $\ell' \gg R$.
\hfill$\blacksquare$
\smallskip

Now let $i$ be an index satisfying the conditions of Claim~\ref{claim:good-i-exists} for some $z = \gamma'(s)$ and define $\ell = d(\gamma'(t_{i-1}), \gamma'(t_i))$. Also observe that Item~\ref{p:to-endps} implies $d(z, \gamma'(t_i)\cup \gamma'(t_{i-1}))\geq \ell /Q$ for some constant $Q\geq 1$ only depending on $\eps', n$ and the quasi-geodesic constants of $\mc P$. We will choose $\eps \ll \frac{1}{2Q}$. We want to argue that the $(3, \mc P)$--polygonal line $\qgot = \spath_{i-1}^{-1}\ast\pgot_i\ast \spath_i$ contradicts $\gamma'$ being $(R; \eps, A; 3, \mc{P})$--weakly polygonally Morse. Claim~\ref{claim:good-i-exists}\ref{p:far-ends} yields $\abs{t_{i-1}- t_i}\geq R$.  Moreover, by simple calculations, $\norm{\pgot}\leq A'\ell'$ yields $\norm{\qgot}\leq KA'\ell'$ for some $K$ large enough only depending on the quasi-geodesic constants of $\mc P$. Combined with Item~\ref{p:far-ends}, this yields $\norm{\qgot}\leq A\ell$ for $A\geq A'KC_{\mc P}16n/\eps'$. Hence, using that $\gamma'$ is $(R; \eps, A; 3, \mc P)$--weakly polygonally Morse, we obtain $d(z, \qgot)\leq \eps \ell$.

\begin{claim}\label{claim:not-sides}
    $d(z, \spath_j) > \eps \ell$ for $j = i$ and $j = i-1$.
\end{claim}
\textit{Proof of Claim.}
Let $x_j$ and $\gamma'(s_j)$ be an $R$--upper and $R$--lower projection point of $\spath_j$ onto $\gamma'$. We can write $\spath_j = \eta_1\ast \eta_2$, where $\eta_2 = \psub{\spath_j}{x_j, \spath_j^+}$. For $C$ large enough compared to $\eps', A'$, and hence large enough compared to $Q$, and $R'\gg R$ we have $\diam(\eta_2) < \frac{\ell}{3Q}$ because $\spath_j$ is $C$--almost $R$--orthogonal. Consequently, $d(\eta_2, z)\geq d(\gamma'(t_j), z) - \frac{\ell}{3Q}>\eps \ell$.

For $\eta_1$ we observe the following: By Proposition~\ref{prop:path-projections-are-well-defined}, $d(\eta_1, z')\geq \eps_0\abs{s_j - s}$. Since $d(z, \gamma'(t_j))\geq \frac{\ell}{Q}$, this implies $d(\eta_1, z') > \eps \ell$ as long as $\eps$ is small enough compared to $\eps_0, Q$ and the quasi-geodesic constants of $\mc P$.
\hfill$\blacksquare$

\smallskip

Claim~\ref{claim:not-sides} implies that $d(z, \pgot_i)\leq \eps\ell\leq \frac{\eps'\ell'}{6}$. In particular, $d(z', \pgot_i)\leq d(z', z) + d(z, \pgot_i)<\frac{\eps'\ell'}{2}$. This is a contradiction to the definition of $z'$, concluding the proof.
\end{proof}

\begin{example}\label{ex-e-not-2}
   Consider the Cayley graph of $\Z^2$ with respect to the standard generating set, and consider the bicombing consisting of geodesics that first try to go as close as possible to the $x$ axis, then move in the $x$ direction, and then finish with the reminder of the $y$ direction. Formally, given $a = (x_a, y_a)$ and $b= (x_b, y_b)$ let $z = \mathrm{sgn}(y_a) \max \{ 0, \mathrm{sgn}(y_a) y_b \}$ and define $\eta_{ab}$ as the concatenation of geodesics between $(x_a, y_a) - (x_a, z)- (x_b, z) - (x_b, y_b)$. It is a straightforward computation to show that this is a bounded and consistent (namely restrictions of combing lines are combing lines) combing and hence the set of all combing lines is an undirected path system, which is navigable by Proposition~\ref{prop:bicombing-is-navigable}. Note that the $x$--axis is $(R; \epsilon, A; 2, \mc{P})$-polygonally Morse in the strongest possible sense, namely setting the constants $R= 0, \epsilon= 0, A= \infty$, meaning that any 2-polygonal path based on the $x$--axis contains the restriction of the $x$--axis between its endpoints. However, it is easily seen that the $x$--axis is not Morse or $(R; \epsilon, A; 3, \mc{P})$--polygonally Morse for any $\epsilon \in \left(0,\frac{1}{2}\right)$ as long as $A \geq 3$. Indeed, the sequence of 3-polygonal paths $[(0,0), (0,n)]\ast [(0,n),(n,n)] \ast [(n,n),(n,0)]$ does not contain the $\epsilon$--neighbourhood of its endpoints for any $\epsilon\in \left(0, \frac{1}{2}\right)$.
\end{example}

\subsection{Globalising the quasi-geodesic property and consequences}

\begin{proposition}[Globalization of the quasi-geodesicness]\label{prop:globalisation_QG}
    Let $X$ be a geodesic metric space equipped with an undirected $\kao$--sub-bounded $(\lao, \kao)$--quasi-geodesic path system $\mc P$ and let $(\la, \ka)$ be a quasi-geodesic pair. Then there exist $\epsilon_0, A_0$ so that for all $R\geq 0$ there are constants $L, \ka', \la'$ so that any path $\pgot$ which is $L$--locally a $(R;\epsilon_0,A_0;3, \mc P)$--weakly polygonally Morse $(\la, \ka)$--quasi-geodesic is globally a $(\la', \ka')$--quasi-geodesic.
    
    Moreover, there is $D_0= D_0(\la, \ka, \lao, \kao, R, \epsilon_0, A_0)$ so that the Hausdorff distance between $\pgot$ and some special path between its endpoints is at most $D_0$.
\end{proposition}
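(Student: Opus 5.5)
The plan is a discretise-and-glue induction. Cut $\pgot$ into consecutive subpaths of domain length roughly $L/2$ (so that any two consecutive pieces lie in a single window of length $L$), and compare $\pgot$ with special paths between the cut points. The goal is to show that the concatenation of these special paths stays uniformly close to a single special path $[\pgot^-,\pgot^+]_{\mc P}$, with all constants independent of the number of pieces. The argument has three steps.

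\emph{Step 1 (local closeness to special paths).} On each window of length $\le L$, the path $\pgot$ is a $(\la,\ka)$--quasi-geodesic that is $(R;\epsilon_0,A_0;3,\mc P)$--weakly polygonally Morse. I would first show that it lies at Hausdorff distance $D_1=D_1(\la,\ka,\lao,\kao,R,\epsilon_0,A_0)$ from any special path $\spath$ between its endpoints. For $\spath\subseteq\nn_{D_1}(\pgot)$, apply proportional thinness to the $(1,\mc P)$--polygonal line $\spath^{-1}$: its length is at most $D_{\mc P}(d)\le A_0 d$ once $A_0$ exceeds $2C_{\mc P}$. The fact that $\pgot$ is only a quasi-geodesic and not special is handled by a standard ``no long detours'' argument: a maximal excursion of $\spath$ outside the $D_1$--neighbourhood of $\pgot$ can be closed up into a short $3$--polygonal line. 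The reverse inclusion $\pgot\subseteq\nn_{D_1}(\spath)$ is then standard.

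\emph{Step 2 (inductive gluing).} Let $x_0,\dots,x_N$ be the cut points and set $\gamma_j=[x_0,x_j]_{\mc P}$. I would prove by induction on $j$ that $\gamma_j$ is $D_0$--Hausdorff close to $\pgot|_{[x_0,x_j]}$. Sub-boundedness of $\mc P$ is used here: by iterating the $\kao$--bounded support $\mc Q$ along a geodesic from $x_j$ to $x_{j+1}$, we can choose $\gamma_{j+1}$ to be a bounded perturbation of $\gamma_j$ on its initial part. This is the crucial point where the choice of special path does not drift.

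To keep $D_0$ from growing with $j$, the key is a projection argument like the one in Proposition~\ref{prop:path-projections-are-well-defined} and Lemma~\ref{lem:almost-ortho-projection}. These do not formally apply, since $\pgot$ is not special; I would instead apply them locally, to the special path that Step~1 provides on the last window, which is itself weakly polygonally Morse by Proposition~\ref{prop:3-implies-n}. Concretely, $\gamma_{j+1}$ must enter a $D$--neighbourhood of the window special path near $x_j$, and after that it fellow-travels it. If it left earlier and came back, we would get a short $3$--polygonal line (piece of $\gamma_{j+1}$, short connector, window path) contradicting thinness. This yields a constant $D_0$ independent of $j$, once $L$ is large relative to $D_1$ and $R$.

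\emph{Step 3 (conclusion).} With $d_{\mathrm{Haus}}(\pgot,[\pgot^-,\pgot^+]_{\mc P})\le D_0$ and the parametrisation of $\pgot$ monotone at scale $L$ (which follows from local quasi-geodesicity together with Step~2 preventing backtracking), the quasi-geodesic constants of the special path transfer to $(\la',\ka')$ for $\pgot$.

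The main obstacle is Step~2, namely preventing the accumulation of errors. That requires a genuinely global argument, namely the linear divergence away from projection points, and choosing $L\gg D_0\gg R$ consistently. If that step fails, the fallback is to argue by contradiction: take a first index where the closeness fails and build a $3$--polygonal line from $\gamma_j$, $\gamma_{j+1}$ and the last window path that violates proportional thinness.
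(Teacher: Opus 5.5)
Your Step~2 has a genuine gap, and it sits at the core of the statement. Sub-boundedness does not make $\gamma_{j+1}$ ``a bounded perturbation of $\gamma_j$ on its initial part''. Each move of the endpoint by at most $1$ costs Hausdorff distance $\kao$ on the \emph{whole} path. So sliding from $x_j$ to $x_{j+1}$ only gives $d_{\mathrm{Haus}}(\gamma_j,\gamma_{j+1})\lesssim \kao\, d(x_j,x_{j+1})\approx\kao\la L$. That error is linear in the window size, while you need $L\gg D_0$.

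Nothing else links $\gamma_{j+1}$ to $\gamma_j$. Local weak polygonal Morseness only says that a subsegment of $\pgot$ of domain length in $[R,L]$ is close to short polygonal lines with the \emph{same endpoints}. It says nothing about the long triangle $\gamma_j,[x_j,x_{j+1}]_{\mc P},\gamma_{j+1}$. Proposition~\ref{prop:path-projections-are-well-defined} and Lemma~\ref{lem:almost-ortho-projection} also do not help. They require a weakly polygonally Morse \emph{special} path, so you would first have to transfer the property from $\pgot$. More importantly, they only describe a special path after it has come near the target; they do not force $\gamma_{j+1}$ to approach the window path near $x_j$. Your fallback has the same defect. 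At the first bad window index there is no a priori bound on $d(\gamma_{j+1},\pgot)$, so the connectors of your $3$--polygonal line need not be short, and the contradiction cannot be localised in a window of length $\le L$.

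Step~1 is also off. Thinness applied to $\spath^{-1}$ gives $\pgot\subseteq\nn_{\epsilon_0 d}(\spath)$: that is the other inclusion, and only a bound proportional to $d$. The uniform bound is Lemma~\ref{lem:close-to-special-paths-globally}. Its zoom-in at the point of maximal distance $\mu$ needs thinness at scale $\mu/\epsilon_0$, which is exactly what you lack globally.

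The missing idea is to run the induction at unit scale, with a self-improving bound, so that a single local contradiction suffices. This is how the paper argues.
\begin{itemize}
\item Using the $\kao$--bounded support $\mc Q$, slide the endpoint of a special path from $\pgot(S)$ along $\pgot$ in steps of size at most $1$.
\item Fix $D$ depending only on $R,\la,\ka$; the paper takes $D=\mu(R+\mu)$ with $\mu\approx\la+\ka$. Suppose $\pgot$ is not in the $D$--neighbourhood of any special path between its endpoints. Then at the first failure time $t$ there is $\spath\in\mc Q$ from $\pgot(S)$ to $\pgot(t)$ with $\isub{\pgot}{S,t}\subseteq\nn_{D+\kao}(\spath)$, but $d(\pgot(s),\spath)>D$ for some $s$.
\item The overshoot is now only $\kao$, independent of $L$. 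Take $s_1,s_2=s\mp cD$, truncated at $S$ and $t$, where the connectors become trivial. Join $\pgot(s_i)$ to points $x_i\in\spath$ by special paths $\eta_i$ of length $\lesssim D+\kao$.
\item The $(3,\mc P)$--polygonal line $\eta_1\ast\psub{\spath}{x_1,x_2}\ast\eta_2$ avoids $B(\pgot(s),D)$ and has length at most $A_0\,d(\pgot(s_1),\pgot(s_2))$. It sits over a window of domain length between $R$ and $2cD\le L$. This contradicts thinness for $\epsilon_0,A_0$ suitable powers of $\la+\ka$.
\end{itemize}
Your Step~3 then applies as is: a path that is locally a quasi-geodesic at a large scale and lies in a bounded neighbourhood of a quasi-geodesic with the same endpoints is a global quasi-geodesic. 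Neither Step~1 nor the projection machinery is needed.
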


\begin{proof} We may assume that $\la \geq \lao$ and $\ka \geq \kao$. 
    We will show that the result holds for $\epsilon_0 = \frac{1}{\mu^6}$ and $A_0 = 8\mu^8$, where $\mu = (\la+\ka)$. Let $\mc Q\subset \mc P$ be a $\kao$--bounded support of $\mc P$.
    We first prove that $\pgot$ is contained in the $D = \mu(R+\mu)$--neighbourhood of a special path joining its endpoints, which, by \cite[Lemma~2.8]{DrutuSprianoZbinden:schism} implies that $\pgot$ is a $(\la', \ka')$--quasi-geodesic for constants $\la', \ka'$ depending only on $D$ and $\lao, \kao, \la, \ka$. A standard argument (for instance \cite[Lemma 2.6]{DrutuSprianoZbinden:schism}) yields the desired estimate on the Hausdorff distance. 
    
    Assume by contradiction that $\pgot \colon [S, T]\to X$ is not contained in the $D$--neighbourhood of any special path between its endpoints. Boundedness and a (coarse) continuity argument implies that there exist $t\in [S, T]$ and a special path $\spath\in \mc Q$ from $\pgot(S)$ to $\pgot(t)$ whose $D$--neighbourhood does not contain $\isub{\pgot}{S,t}$ but whose $(D+\kao)$--neighbourhood contains $\isub{\pgot}{S, t}$. 
    
    Let $\pgot(s)\in \isub{\pgot}{S, t}$ be a point with $\dist (\pgot(s), \spath)> D$ and let $\mu = 2(\la + \ka)$. Let $s_1 = \max\{S, s - \mu^5 D\}$ and let $s_2= \min \{t, s+ \mu^5 D\}$. Let $x_1, x_2\in \spath$ be such that $\dist (x_i, \pgot(s_i))\leq D + \kao$, and let $\eta_1, \eta_2\in \mc P$ be special paths from $\pgot(s_1)$ to $x_1$ and from $x_2$ to $\pgot(s_2)$. If $x_i = \pgot(s_i)$, we assume that $\eta_i$ is the trivial path. The concatenation $\qgot = \eta_1\ast \psub{\spath}{x_1, x_2}\ast \eta_2$ is a $(3, \mc P)$--polygonal line from $\pgot(s_1)$ to $\pgot(s_2)$ which does not intersect the $D$--neighbourhood of $\pgot(s)$. Indeed, by the choice of $\pgot(s)$, the path $\spath$, and hence $\psub{\spath}{x_1, x_2}$, does not intersect the $D$--neighbourhood of $\pgot(s)$. Furthermore, if $\eta_i$ is non-trivial, we have $\abs{s - s_i} = \mu^5 D$. Hence, a simple calculation shows $\diam(\eta_i) <  \dist (\pgot(s_i), \pgot(s)) - D$. As $\dist (\pgot(s_1), \pgot(s_2)) \leq \mu^6D$, the choice of $\eps_0$ guarantees that $\qgot$ does not intersect the $\eps_0 \dist (\pgot(s_1), \pgot(s_2))$--neighbourhood of $\pgot(s)$. 

    Another calculation shows that $\norm{\qgot}\leq A_0 \dist (\pgot(s_1), \pgot(s_2))$. Since $d(\pgot(s), \spath) > D$, and by the choice of $D$, we have $R\leq \abs{s - s_1}\leq \abs{s_2 - s-1}$. Finally, choosing $L \geq 2\mu^5D$ ensures that $\abs{s_1 - s_2}\leq L$. Therefore, the existence of $\qgot$ yields a contradiction to $\pgot$ being $L$--locally $(R; \eps_0, A_0; 3, \mc P)$--polygonally Morse. Hence, $\pgot$ is contained in the $D$--neighbourhood of a special path between its endpoints, concluding the proof. 
\end{proof}

If we remove the boundedness assumption from Proposition~\ref{prop:globalisation_QG} the moreover part still holds, as long as the path $\pgot$ was a quasi-geodesic and globally $(3; \eps_0, A_0; 3, \mc P)$--weakly polygonally Morse.

\begin{lemma}\label{lem:close-to-special-paths-globally}
    Let $X$ be a geodesic metric space equipped with a path system $\mc P$ and let $(\la, \ka)$ be a quasi-geodesic pair. There exist constants $\eps, A$ such that for all $R$, there exists a constant $D$ for which the following holds. Let $\gamma$ be a $(R; \eps, A; 3, \mc P)$--weakly polygonally Morse $(\la, \ka)$--quasi-geodesic. Then $d_{\mathrm{Haus}}(\gamma, [\gamma^-, \gamma^+]_{\mc P})\leq D$.
\end{lemma}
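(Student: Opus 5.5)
Let $\spath = [\gamma^-, \gamma^+]_{\mc P}$, a $(\lao,\kao)$--quasi-geodesic. The plan has two halves. First show that $\spath$ lies in a uniform neighbourhood of $\gamma$ by applying the weak polygonal Morse property directly. Then show the reverse containment by the same mechanism as in the proof of Proposition~\ref{prop:globalisation_QG}, but applied at the global scale, so no boundedness is needed.

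\textbf{Step 1: $\gamma \subseteq \nn_{D_1}(\spath)$.} Suppose some point $\gamma(s)$ satisfies $d(\gamma(s),\spath) > D_1$, where $D_1 = \mu(R+\mu)$ and $\mu$ depends on $\la,\ka,\lao,\kao$. Let $s_1 = \max\{S, s-\mu^5 D_1\}$ and $s_2 = \min\{T, s+\mu^5 D_1\}$.
\begin{itemize}
\item If $s_i$ is an endpoint of $\gamma$, set $x_i = \gamma(s_i)$.
\item Otherwise let $x_i \in \spath$ be a point nearest to $\gamma(s_i)$.
\end{itemize}
Here we cannot use the nearest-point distance bound $D+\kao$ that boundedness supplied in Proposition~\ref{prop:globalisation_QG}. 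Instead, choose $s$ to (nearly) maximise $d(\gamma(\cdot),\spath)$, say with maximal value $\Delta > D_1$, and work at scale $\Delta$ in place of $D_1$. Then every $\gamma(s_i)$ is within $\Delta$ of $\spath$. With $\eta_i$ special paths joining $\gamma(s_i)$ and $x_i$, the line $\qgot = \eta_1 \ast \psub{\spath}{x_1,x_2} \ast \eta_2$ is a $(3,\mc P)$--polygonal line. The same computation as in Proposition~\ref{prop:globalisation_QG} shows that $\qgot$ has length at most $A\,d(\gamma(s_1),\gamma(s_2))$ and avoids the $\eps\, d(\gamma(s_1),\gamma(s_2))$--ball about $\gamma(s)$, for $\eps = \mu^{-6}$ and $A = 8\mu^8$. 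The only input is the improved-quasi-geodesic length bound on the arc of $\spath$ between $x_1$ and $x_2$. Moreover $|s_2-s_1| \ge R$ once $\Delta \ge D_1$. This contradicts the $(R;\eps,A;3,\mc P)$--weakly polygonally Morse property of $\gamma$. Hence $\Delta \le D_1$, up to a harmless additive constant coming from the near-maximiser. If $\Delta$ is infinite this cannot happen, since $\gamma$ is compact.

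\textbf{Step 2: $\spath \subseteq \nn_{D_2}(\gamma)$.} Since $\gamma \subseteq \nn_{D_1}(\spath)$ and $\gamma$ is continuous, a standard connectivity argument (e.g.\ \cite[Lemma~2.6]{DrutuSprianoZbinden:schism}) shows that $\spath$ is in a $D_2$--neighbourhood of $\gamma$, with $D_2$ depending only on $D_1$ and the quasi-geodesic constants. Concretely, the nearest-point map $\gamma \to \spath$ has coarse jumps of size at most $2D_1 + 1$ and hits both endpoints, so it is coarsely onto. Setting $D = \max\{D_1, D_2\}$ finishes the proof.

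\textbf{Main obstacle.} The delicate point is Step 1. Without boundedness we lack the coarse-continuity device that produced an "almost tight" special path. Replacing it by maximising the distance to the single fixed path $\spath$ requires care: the legs $\eta_i$ have length up to $\Delta$, so the window $\mu^5\Delta$ must be large enough that $\eta_i$ stays away from $\gamma(s)$. It must also keep $d(\gamma(s_1),\gamma(s_2))$ comparable to $\Delta$, so that the $\eps$--ball contains $B(\gamma(s),\Delta/2)$. This is exactly the bookkeeping in Proposition~\ref{prop:globalisation_QG} with $D$ replaced by $\Delta$. The requirement $|s_1-s_2| \ge R$ then holds automatically because $\Delta > D_1 \ge R$.
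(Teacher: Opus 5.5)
Your proposal is correct and is essentially the paper's argument. You take a point of $\gamma$ that (nearly) maximises the distance $\Delta$ to $[\gamma^-,\gamma^+]_{\mc P}$, choose a window around it at a scale proportional to $\Delta$, and contradict the $(R;\eps,A;3,\mc P)$--weakly polygonally Morse condition. The contradiction comes from the $(3,\mc P)$--polygonal line made of the two short legs to $[\gamma^-,\gamma^+]_{\mc P}$ and the arc of that special path between them. The reverse inclusion is the same standard connectivity argument the paper cites.

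The only difference is in calibrating the window. The paper requires $d(\gamma(t_1),\gamma(t_2))$ to be comparable to $\Delta/\eps$ for a small $\eps$, whereas you fix $\eps=\mu^{-6}$ and use a parameter window of length $\mu^5\Delta$, as in Proposition~\ref{prop:globalisation_QG}.
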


\begin{proof}
    Let $\gamma :[0, T]\to X$ be as in the statement for $\eps$ small and $A, D$ large enough to be determined later. Let $\spath = [\gamma^-, \gamma^+]_{\mc P}$. It suffices to prove that for large enough $D'$ not depending on $\gamma$, the quasi-geodesic $\gamma$ is contained in the $D'$ neighbourhood of $\spath$. A standard argument, for instance \cite[Lemma~2.6]{DrutuSprianoZbinden:schism}, then yields the desired bound on the Hausdorff distance. 

    Let $t$ be the index which maximizes $d(\gamma(t), \spath)$ and assume by contradiction that $\mu = d(\gamma(t), \spath) > D'$. Let $t_1\leq t \leq t_2$ be indices such that $\frac{\mu}{4\eps}\leq d(\gamma(t_1), \gamma(t_2))$. Further, if $d(\gamma(t), \gamma(t_1))\leq 10D_{\la, \ka}(\mu)$, then we replace $t_1$ with some $t_1'< t_1$ such that $d(\gamma(t_1'), \gamma(t)) = 10D_{\la, \ka}(\mu)$ or with $0$ if such a $t_1'\leq t_1$ does not exist. We similarly replace $t_2$. With this for $\eps$ small enough compared to $\la, \ka$ we have $\frac{\mu}{8\eps}\leq \ell := d(\gamma(t_1), \gamma(t_2))\leq \frac{\mu}{2\eps}$. For $i = 1, 2$, let $\spath(s_i)$ be a point on $\spath$ closest to $\gamma(t_i)$. Recall that by the definition of $t$, we have $d(\gamma(t_i), \spath(s_i))\leq \mu$. Define the $(3, \mc P)$--polygonal line $\pgot$ as $[\gamma(t_1), \spath(s_1)]_{\mc P} \ast \isub{\spath}{s_1, s_2}\ast [\spath(s_2), \gamma(t_2)]$. For $A$ large enough compared to $\la, \ka$ we have that $\norm{\pgot}\leq A\ell$ and for $D'$ large enough compared to $\eps, \la, \ka$ we can use the lower bound on $\ell$ to deduce $\abs{t_1 - t_2}\geq R$. Using that $\gamma$ is $(R; \eps, A; 3, \mc P)$--weakly polygonally Morse we obtain $d(\gamma(t),\pgot)\leq \eps \ell < \mu$. However, $t_1, t_2$ were chosen specifically that $d(\gamma(t), [\gamma(t_i), \spath(s_i)]_{\mc P}) \geq \mu$ for $i = 1, 2$, implying $d(\isub{\spath}{s_1, s_2},\gamma(t)) \leq \eps \ell < \mu$, a contradiction.
\end{proof}

\begin{theorem}\label{thm:combined-wpm-results}
    Let $X$ be a geodesic metric space equipped with an undirected navigable path system $\mc P$. Let $\la, \ka, R$ be constants. There exist constants $\eps_0, A_0, L, \la', \ka' ,C$ such that the following holds. Assume we have one of the following:
    \begin{enumerate}
        \item $\gamma$ is a path which is $L$--locally a $(R; \eps_0, A_0; 3, \mc P)$--weakly polygonally Morse $(\la, \ka)$--quasi-geodesic and $\mc P$ is $\kao$--sub-bounded;\label{P:sub-bounded}
        \item $\gamma\in \mc P$ is a special path which is $L$--locally $(R; \eps_0, A_0; 3, \mc P)$--weakly polygonally Morse;\label{P:special-path}
        \item $\gamma$ is a $(\la, \ka)$--quasi-geodesic which is globally $(R; \eps_0, A_0; 3, \mc P)$--weakly polygonally Morse.\label{P:globally-wpm}
    \end{enumerate}
    Then $\gamma$ is a $(\la', \ka')$--quasi-geodesic and $\mc P$--contracting with constant $C$.
\end{theorem}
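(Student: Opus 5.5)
The plan is to reduce all three cases to Proposition~\ref{prop:local_poly_P_contracting}, whose hypotheses ask for three things. First, $\gamma$ must be a quasi-geodesic. Second, every subpath of $\gamma$ must be within Hausdorff distance $C_0$ of a special path between its endpoints. Third, $\gamma$ must be $L$--locally $(R'; \epsilon, A; 7k, \mc P)$--weakly polygonally Morse, where $\epsilon, A$ are the constants of that proposition. The tool for the third requirement is Proposition~\ref{prop:3-implies-n}, applied with $n = 7k$, $\epsilon' = \epsilon$ and $A' = A$. It returns constants $\eps_0, A_0$ (these will be the ones in the statement) such that $(R;\eps_0,A_0;3,\mc P)$--weakly polygonally Morse paths which are uniformly close to special paths on subpaths are $(R';\epsilon,A;7k,\mc P)$--weakly polygonally Morse. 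I would also shrink $\eps_0$ and enlarge $A_0$ so that Proposition~\ref{prop:globalisation_QG} and Lemma~\ref{lem:close-to-special-paths-globally} apply.

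\textbf{Case \eqref{P:special-path}.} Here $\gamma \in \mc P$, so every subpath of $\gamma$ is itself a special path, since $\mc P$ is consistently closed; the Hausdorff condition holds with constant $0$. Its quasi-geodesic constants are those of $\mc P$. Each window of length $L$ is a special path that is $(R;\eps_0,A_0;3,\mc P)$--weakly polygonally Morse. Proposition~\ref{prop:3-implies-n}, applied to each window, upgrades it to $(R';\epsilon,A;7k,\mc P)$. Proposition~\ref{prop:local_poly_P_contracting}, with $R'$ in place of $R$, then gives a scale $L'$. Choose $L \ge L'$ in the hypothesis, so that the $L'$--windows are subwindows of $L$--windows; the weak polygonal Morse property passes to subpaths by definition. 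This yields $\mc P$--contraction with constant $C$.

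\textbf{Case \eqref{P:sub-bounded}.} Proposition~\ref{prop:globalisation_QG} first makes $\gamma$ globally a $(\la',\ka')$--quasi-geodesic. Its moreover part shows that every subpath of $\gamma$ is within $D_0$ of a special path: each subpath is again $L$--locally weakly polygonally Morse, so the proposition applies to it too. From here I would argue as in Case~\eqref{P:special-path}, applying Proposition~\ref{prop:3-implies-n} with $D = D_0$. One must enlarge the Hausdorff constant allowed in Proposition~\ref{prop:local_poly_P_contracting} from $C_0$ to $\max(C_0,D_0)$. Inspecting its proof, this only requires $D \ge 4\max(C_0,D_0)$ and changes nothing else.

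\textbf{Case \eqref{P:globally-wpm}.} The weak polygonal Morse property passes to subpaths, so Lemma~\ref{lem:close-to-special-paths-globally} applies to every subpath of $\gamma$, giving the uniform Hausdorff bound $D$. Quasi-geodesicity is assumed. The rest is as before, with the scale $L$ being arbitrary because the hypothesis is global.

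\textbf{Main obstacle.} Proposition~\ref{prop:3-implies-n} is stated for $\gamma \in \mc P$, which is a problem in cases \eqref{P:sub-bounded} and \eqref{P:globally-wpm}. However, its proof only uses that $\gamma$ is a quasi-geodesic uniformly close to special paths on subpaths: it transfers immediately to the special path $\gamma'$ and applies Lemma~\ref{lem:almost-ortho-projection} and Proposition~\ref{prop:path-projections-are-well-defined} there. The catch is that $\gamma'$ must itself be $(R;\eps,A;3,\mc P)$--weakly polygonally Morse. I would transfer this property from $\gamma$ to nearby special paths at Hausdorff distance $D_0$ by appending short connectors to polygonal lines, which turns a $3$--gon into a $5$--gon. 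The resulting circularity with Proposition~\ref{prop:3-implies-n} is the delicate point to resolve carefully. The first thing I would try is to run the argument of Proposition~\ref{prop:3-implies-n} directly on $\gamma$ itself, using projections to $\gamma$ up to the bounded error $D_0$.
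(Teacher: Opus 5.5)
Your proposal is correct and is the paper's argument. In each case the paper feeds $\gamma$ through Proposition~\ref{prop:3-implies-n} into Proposition~\ref{prop:local_poly_P_contracting}, obtaining closeness to special paths from Proposition~\ref{prop:globalisation_QG} in case~\eqref{P:sub-bounded}, trivially in case~\eqref{P:special-path}, and from Lemma~\ref{lem:close-to-special-paths-globally} in case~\eqref{P:globally-wpm}.

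The paper simply applies Proposition~\ref{prop:3-implies-n} to non-special $\gamma$; its Hausdorff-$D$ hypothesis is only meaningful for such paths. It also passes over the windowing and the enlarged Hausdorff constant that you treat explicitly. The fix you suggest for the obstacle you flag does work, because Proposition~\ref{prop:path-projections-are-well-defined}, Corollary~\ref{cor:close-before-endpoints} and Lemma~\ref{lem:almost-ortho-projection} only use that $\gamma$ is an improved quasi-geodesic whose $(3,\mc P)$--polygonal lines are thin. So you can project onto $\gamma$ itself, with constants now also depending on $(\la,\ka)$ as the statement allows. You then never need the nearby special path $\gamma'$ or the circular transfer.
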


\begin{proof}
   Assume that $A_0, L, \la', \ka',C$ are large enough and $\eps_0$ is small enough. In all three cases, we want to use Proposition~\ref{prop:3-implies-n} to show that $\gamma$ satisfies the assumptions of Proposition~\ref{prop:local_poly_P_contracting}. The latter shows that $\gamma$ is $\mc P$--contracting with constant $C$ as desired.

   For \ref{P:sub-bounded}, we can do this due to Proposition~\ref{prop:globalisation_QG}, for \ref{P:special-path} it is immediate and for \ref{P:globally-wpm} it follows from Lemma~\ref{lem:close-to-special-paths-globally}.

    If $\mc P$ is $\ka$--sub-bounded, then by Proposition~\ref{prop:globalisation_QG}, $\gamma$ is a $(\la', \ka')$--quasi-geodesic and there exists a constant $D$ such that any subpath of $\gamma$ has Hausdorff distance at most $D$ from a special path between its endpoints.

    Thus, in either case, Proposition~\ref{prop:3-implies-n} shows that $\gamma$ satisfies the assumptions of Proposition~\ref{prop:local_poly_P_contracting}. The latter shows that $\gamma$ is $\mc P$--contracting with constant $C$ as desired.
\end{proof}

\subsection{Linearity of divergence. Global cut-points in asymptotic cones}

Another consequence of navigability is that linearity of the divergence on a sequence diverging to infinity implies linearity of the divergence. 

\begin{theorem}\label{thm1:div}
    Let $X$ be a geodesic metric space equipped with a $(\lao, \kao)$--quasi-geodesic $(C_0, k)$--navigable undirected path system $\mc P$. Assume that for every point $x\in X$ the ball $B(x, \kappa_0)$ intersects a sequence $\spath_{a_n, b_n}$ of special paths between points $a_n, b_n$ such that the distances from $x$ to $a_n$, respectively to $b_n$, diverge to~$\infty$.
    
    Assume that there exists $\delta, \eps$ and a sequence $n_k\to \infty$ such that $\Dv_\eps (n_k, \delta )\leq C n_k$, where $C>0$ is independent of $k$. Then there exists $C'>0, \delta', \eps'$ such that $\Dv_{\eps'} (n, \delta' )\leq C'n$ for every $n$.
\end{theorem}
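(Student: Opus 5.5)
To prove Theorem~\ref{thm1:div}, I will fix an arbitrary triple $a,b,c$ with $d(a,b)\leq n$ and build a path from $a$ to $b$ of length $O(n)$ that avoids a ball around $c$ of radius $\delta' d(c,\{a,b\})-\eps'$. First, if $d(c,\{a,b\})$ is small compared to $n$, nothing needs doing. Indeed, if $r:=d(c,\{a,b\})$ and $\delta' r-\eps'$ is below a fixed constant, any path of length about $n$ works after a bounded detour. So the interesting case is $r$ large, and since $d(a,b)\le n$ we may assume $r\lesssim n$. If instead $r\gg n$, a special path $[a,b]_{\mc P}$ already stays far from $c$, because its diameter is at most $D_{\mc P}(n)$.

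The key step is to reduce the general problem to a single scale $n_k$ comparable to $r$, using the visibility hypothesis. Choose $k$ with $n_k\in[Kr,K^2 r]$ for a large fixed $K$. This is not quite possible for an arbitrary sequence, so I would instead pick $k$ minimal with $n_k\ge Kr$. The point where the argument must absorb gaps in the sequence is this: navigability produces a path whose length is linear in the radius of the ball, not in $n_k$. Thus I only need some triple at scale $n_k$ whose divergence detour at $c$ can be cut down by navigability.

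Concretely, by the visibility assumption pick a special path $\spath_{a_j,b_j}$ passing within $\kappa_0$ of $c$, with endpoints far away. Then pick points $a',b'$ on it at distance about $n_k/2$ from $c$, on either side, so that $d(a',b')\le n_k$ and $d(c,\{a',b'\})\gtrsim n_k/D_{\mc P}$. The hypothesis $\Dv_\eps(n_k,\delta)\le Cn_k$ gives a path $\beta$ from $a'$ to $b'$ that avoids $B(c,\delta n_k/D-\eps)\supseteq B(c,r)$, since $n_k\ge Kr$. Approximating $\beta$ by a polygonal line is the delicate point. I would replace $\beta$ by a $(m,\mc P)$--polygonal line by subdividing it into pieces of length $\sim r/4$ and joining consecutive points by special paths; each such piece lies within $D_{\mc P}(r/4)$ of $\beta$. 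The number of pieces $m$ is then about $Cn_k/r$, which is not bounded. To avoid this, I would subdivide at scale $\theta n_k$ instead, which gives boundedly many pieces, $m\approx C/\theta$. The resulting polygonal line still avoids $B(c,\delta n_k/(2D))$ for small $\theta$.

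Next I add connecting special paths from $a$ to a point of $\spath_{a_j,b_j}\cap\partial B(c, 2\rho)$ and similarly from $b$. Here $\rho$ is the common avoidance radius, chosen comparable to $r$: I set $\rho=r/2$ and keep $n_k$ comparable to $r$, so that the polygonal approximation at scale $\theta n_k$ also works. This produces an $(m+O(1),\mc P)$--polygonal line $\alpha$ with $d(\alpha,c)\ge\rho'$ and endpoints within $2\rho'$ of $c$, where $\rho'\asymp r$. The delicate point is that the connecting special paths from $a$ and $b$ must themselves avoid the ball. This is where the undirected hypothesis and a choice of $\rho'$ small relative to $r$, namely $\rho'=\delta r/(10 D_{\mc P}^2)$, help: the segments from $a$ to $\spath_{a_j,b_j}$ may be chosen as special paths of length $O(r)$ with distance $\ge\rho'$ from $c$, provided we go around using pieces of the given path. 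Applying navigability (Definition~\ref{defn:navigable}) with radius $\rho'$ and center $c$ gives a path of length $\le C_0(m+O(1))\rho'$ avoiding $B(c,\rho'/C_0)$.

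Finally, I concatenate. The path from $a$ to the start of $\alpha$ and from the end of $\alpha$ to $b$ has length $O(r)\le O(n)$, and the navigable tightening has length $O(r)$. Together these give $\dv_{\eps'}(a,b,c;\delta')\le C'n$ with $\delta'=\rho'/(C_0 r)$.

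The main obstacle is the scale mismatch. Since $n_k$ may be very sparse, the scale $n_k\ge Kr$ can be far larger than $r$. Then the hypothesis path $\beta$ has huge length, and its polygonal approximation, while boundedly many pieces, has endpoints at distance $\sim n_k$ from $c$, not within $2\rho'$ as navigability requires. I would handle this by iterating navigability: the polygonal line at scale $n_k$ avoiding $B(c,\delta n_k/2D)$ is first tightened, and then truncated and re-entered along $\spath_{a_j,b_j}$ toward $c$. Navigability is then applied again at a radius half as large, repeating $\log(n_k/r)$ times. Each step keeps the number of pieces bounded by a fixed factor times a constant, and lengths sum geometrically. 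The number of polygonal pieces grows like $k^{\text{steps}}$, though, so I would instead reset at each stage: the tightened path at radius $\rho_i$ together with two pieces of $\spath_{a_j,b_j}$ forms a bounded polygonal line for radius $\rho_{i+1}=\rho_i/2$.
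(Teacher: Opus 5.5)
Your proposal has a genuine gap, and it is not the one you identify. You build the detour around $c$ between points of a visibility path $\spath_{a_j,b_j}$ through $c$, but nothing connects $a$ and $b$ to that detour. The claim that the segments from $a$ to $\spath_{a_j,b_j}$ ``may be chosen as special paths of length $O(r)$ with distance $\ge\rho'$ from $c$, provided we go around using pieces of the given path'' is unjustified. A special path from $a$ to a point of $\spath_{a_j,b_j}$ near $\partial B(c,2\rho')$ can pass straight through $c$. Finding a short path from $a$ to such a point that avoids $B(c,\rho')$ is itself a divergence problem at scale $r$, which is exactly what you are trying to prove. Visibility at $c$ alone cannot supply this.

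The missing idea is to apply the hypothesis at $a$ and $b$ instead. Let $\gamma=[a,b]_{\mc P}$ and pick $m\in\gamma$ with $d(m,c)\le\delta' r$. Take a visibility path through $B(a,\kao)$ and split it at the point near $a$. By Lemma~\ref{lemma:quasi-geodesic-concatenation}, applied to a special path from $a$ to $m$ (of length $\gtrsim r$), one half $\spath_a$ stays at distance $\gtrsim r$ from $m$; obtain $\spath_b$ the same way. Choose $x\in\spath_a$ and $y\in\spath_b$ with $d(x,a)=d(y,b)=n_k/3$. Then $d(x,y)\le n_k$ and $d(c,\{x,y\})\ge n_k/6$, so $\Dv_\eps(n_k,\delta)\le Cn_k$ can be applied to the triple $(x,y,c)$. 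The far path is now attached to $a$ and $b$ from the start.

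Your ``main obstacle'' is not an obstacle. Navigability (Definition~\ref{defn:navigable}) places no constraint on the length of the input polygonal line. It only requires a bounded number of pieces, avoidance of $B(m,R)$, and endpoints within $2R$ of the centre, and its output has length $\le C_0 nR$ regardless. Take $R\asymp\delta' r$, and let $a'',b''$ be the first and last points of $\gamma$ in $B(m,2R)$. Consider the polygonal line that goes back along $\gamma$ from $a''$ to $a$, out along $\spath_a$ to $x$, through a polygonal approximation of the divergence path with $M\approx C/\delta$ pieces (your scale-$\theta n_k$ subdivision is the right one), and back via $\spath_b$ and $\gamma$ to $b''$. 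It has $M+6$ pieces however large $n_k/r$ is. One application of navigability then gives a detour from $a''$ to $b''$ of length $O(r)$ avoiding $B(m,R/C_0)$, and prepending and appending the corresponding pieces of $\gamma$ finishes the proof. Even in your $c$-centred construction, prepending and appending the pieces of $\spath_{a_j,b_j}$ between $\partial B(c,2\rho')$ and $a'$, $b'$ already puts the endpoints within $2\rho'$ of $c$ in one step.

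So your iteration is unnecessary, and as written it would not close. Each navigability step multiplies the number of pieces by $k$, and adding two pieces of $\spath_{a_j,b_j}$ does not reset that count. Each step can also shrink the radius only by a factor of $C_0$, not $2$. After $\sim\log(n_k/r)$ steps the piece count, and hence the output length, is not bounded by a fixed multiple of $r$, because $n_k/r$ is unbounded for a sparse sequence.

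A minor point: there is no ``bounded detour'' case. If $\delta' r-\eps'<0$ the constraint is vacuous and $\gamma$ itself works; otherwise $r\ge\eps'/\delta'$ is large.
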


\begin{proof} 
We will show that the statement holds for $C', \eps'$ large enough and $\delta'$ small enough to be determined later. Let $a, b, c\in X$ be points with $n = d(a, b)$ and $r = d(c, \{a, b\})$. Let $\rho = \delta'r - \eps'$. Firstly, if there is a special path $\gamma$ from $a$ to $b$ with $d(\gamma, c)> \rho$, we are done. So we may assume from now on that there exists a point $m\in \gamma$ with $d(m, \gamma)\leq \rho$ and $\rho \geq 0$. With this, $r(1-\delta')\leq r/2\leq  D_{\mc P}(n)$, giving a lower bound on $n$ in terms of $r$.

\begin{figure}
    \centering
    \includegraphics[width=0.7\linewidth]{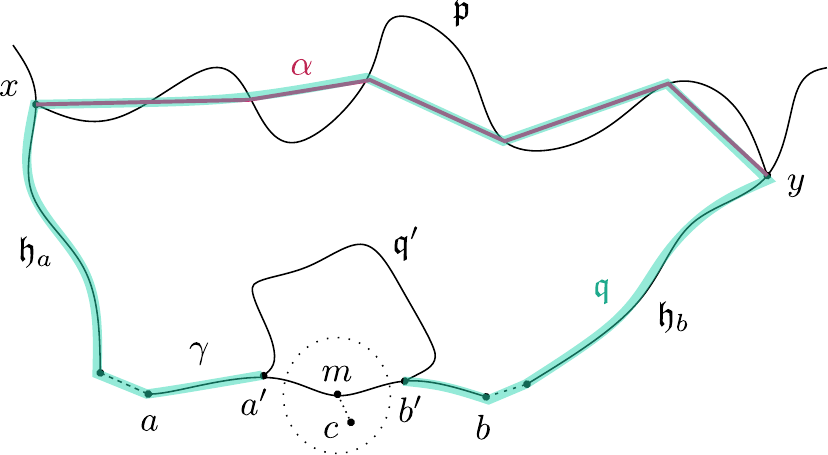}
    \caption{Proof of Theorem~\ref{thm1:div}. The path $\qgot$ stays far from $m$.}
    \label{fig:space-divergence}
\end{figure}

Let $k$ be such that $n_k$ is much larger than all other distances involved. Using the assumption on $X$ we can find special paths $\spath_a, \spath_b\in \mc P$ such that for $u = a, b$, we have $d(\spath_u^-, u)\leq \kao$ and $d(\spath_u, m)\geq 2\delta' r C_0$ and $\diam(\spath_u)\geq n_k$, cf Lemma~\ref{lemma:quasi-geodesic-concatenation} (we choose $\eps'\gg 1/\delta'$ to obtain the lower bound in the assumption of Lemma~\ref{lemma:quasi-geodesic-concatenation}).

Let $x, y$ be points on $\spath_a, \spath_b$ with $d(x, a) = n_k/3$ and $d(y, b) = n_k/3$. Since $n_k$ is much larger than any of the other distances involved, we have $d(x, c)\geq n_k/6$ and $d(y, c)\geq n_k/6$ and hence $r ' = d(c, \{x,y\})\geq n_k/6\gg r$. Since $\Dv_{\eps} (n_k, \delta )\leq C n_k$, we can find a path $\pgot:[0, T]\to X$ of length at most $Cn_k$ from $x$ to $y$ with $d(\pgot, c)\geq \delta r' - \eps \geq 4\delta'r C_0$.

Let $M$ be an integer constant. For $1\leq i \leq M$, let $\alpha_i\in \mc P$ be a special path from $\pgot((i-1)T/M)$ to $\pgot(iT/M)$. Define the $(M, \mc P)$--polygonal line $\alpha$ as $\alpha_1\ast \ldots \ast \alpha_M$. For $M$ large enough compared to the quasi-geodesic constants of $\mc P$, we have by the triangle inequality that $d(\alpha, m) >d(\pgot, m)/2\geq 2\delta'rC_0$.

Let $a', b'$ be the first and last point on $\gamma$ in the $4\delta'rC_0$--neighbourhood of $m$. Consider the $(M+6, \mc P)$--polygonal line 
\begin{align*}
    \qgot = \pref{\gamma}{a'}^{-1}\ast [a, \spath_a^-]_{\mc P}\ast \pref{\spath_{a}}{x}\ast \alpha \ast  \pref{\spath_{b}}{y}^{-1} \ast [\spath_b^-, b]_{\mc P}\ast \suf{\gamma}{b'}^{-1},
\end{align*}
as depicted in Figure~\ref{fig:space-divergence}. 

We have $d(\qgot, m) > 2\delta'rC_0$, (for this we use that $\delta'$ is much smaller than the quasi-geodesic constants of $\mc P$). Hence by navigability there exists a $(k(M+6), \mc P)$--polygonal line $\qgot'$ from $a'$ to $b'$ with $\norm{\qgot''}\leq 2\delta'rC_0(M+6)$ with $d(\qgot', m) > 2\delta'r$. 

Hence, the path $\qgot'' = \pref{\gamma}{a'}\ast \qgot''\ast \suf{\gamma}{b'}$ satisfies $d(\qgot'', c) \geq \delta'r $ and has length at most $\norm{\gamma}+2\delta'rC_0(M+6)$. Since $M$ is a constant and $r$ has a linear upper bound in terms of $n$, we have that for large enough $C'$, $\norm{\qgot''}\leq C'n$, concluding the proof.
 \end{proof}

\begin{theorem}\label{thm2:div}
    Let $X$ be a geodesic metric space as in Theorem \ref{thm1:div}, moreover with a cobounded action of its group of isometries, and let $x_0\in X$ be an arbitrary basepoint.  If there exists a scaling sequence $(\alpha_n)$ diverging to $\infty$, such that for all non-principal ultrafilters $\omega$, ${\mathrm{Cone}}_\omega (X, x_0, \alpha_n)$ has no global cut-points then all the asymptotic cones of $X$ are without global cut-points. 
\end{theorem}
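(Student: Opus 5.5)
The plan is to turn the hypothesis on the single scaling sequence $(\alpha_n)$ into linear divergence along a sequence, apply Theorem~\ref{thm1:div} to obtain linear divergence everywhere, and then use Proposition~\ref{prop2:one-ended} to conclude that $X$ is wide, i.e.\ that no asymptotic cone has a global cut-point. The cobounded action is what makes Proposition~\ref{prop2:one-ended} available. Coboundedness also guarantees the visibility hypothesis of Theorem~\ref{thm1:div}: if $X$ is unbounded, translates of a single long special path pass within bounded distance of every point. We may enlarge $\kao$ for this if necessary.

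The main step is to show that there exist $\delta,\eps$, a constant $C$, and a subsequence $n_k$ comparable to $\alpha_{n_k}$ with $\Dv_\eps(n_k,\delta)\le Cn_k$. I would argue by contradiction, following the standard argument of \cite{DrutuMozesSapir} (Lemma 3.17 and its surrounding results), which works one scale at a time. Suppose that for every $C$ there are infinitely many $n$ with $\Dv_\eps(\alpha_n,\delta)>C\alpha_n$; it suffices to treat scales comparable to $\alpha_n$, say $d(a,b)\le\alpha_n$. Then we can select $C_n\to\infty$ and triples $a_n,b_n,c_n$ with $d(a_n,b_n)\le\alpha_n$ such that every path from $a_n$ to $b_n$ avoiding the ball of radius $\delta d(c_n,\{a_n,b_n\})-\eps$ around $c_n$ has length greater than $C_n\alpha_n$. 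Using coboundedness, translate so that $c_n$ lies within bounded distance of $x_0$. Choose an ultrafilter $\omega$ containing this infinite set of indices and pass to ${\mathrm{Cone}}_\omega(X,x_0,\alpha_n)$.

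There are two cases. If $d(c_n,\{a_n,b_n\})/\alpha_n$ stays bounded, then the limit points $a_\omega,b_\omega,c_\omega$ exist. Since there is no global cut-point and the cone is geodesic, hence locally path connected, there is a path joining $a_\omega$ to $b_\omega$ that avoids a ball around $c_\omega$. The standard discretisation argument (\cite[Lemma 3.14--3.17]{DrutuMozesSapir}) turns this into paths in $X$ of length $O(\alpha_n)$ avoiding the required balls, for $\omega$-almost every $n$. This contradicts $C_n\to\infty$. The only work is to make the implied constant uniform; this would be handled by a compactness or diagonal argument over $\omega$, or by simply extracting a single good subsequence, which is all Theorem~\ref{thm1:div} needs. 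If instead $d(c_n,\{a_n,b_n\})/\alpha_n\to\infty$, the points $a_n,b_n$ lie far from $c_n$ compared with $\alpha_n$, and the geodesic $[a_n,b_n]$ of length at most $\alpha_n$ already avoids the ball, so this case cannot occur.

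Once linear divergence holds along $n_k$, Theorem~\ref{thm1:div} gives $\Dv_{\eps'}(n,\delta')\le C'n$ for all $n$. Since the equivalence class of the divergence function does not depend on the choice of small $\delta$ and large $\eps$, Proposition~\ref{prop2:one-ended} then shows that $X$ is wide. I expect the main obstacle to be the uniformity in the first step: "no cut-point in the cone" is a qualitative statement, and it has to be turned into a single linear constant valid for a whole sequence of scales. This is exactly the point where the hypothesis must hold for all ultrafilters $\omega$, so that no bad subsequence can escape.
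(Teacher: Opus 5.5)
Your reduction (linear divergence along a subsequence of $(\alpha_n)$, then Theorem~\ref{thm1:div}, then Proposition~\ref{prop2:one-ended}) is a legitimate alternative route, and its last two steps are fine. The gap is in Step~1, which you defer as a ``uniformity'' issue; it does not go through as written, for two concrete reasons.

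First, you fix $\delta$ and let only $C$ vary. Absence of a cut-point in $\mathrm{Cone}_\omega(X,x_0,\alpha_n)$ only gives a path from $a_\omega$ to $b_\omega$ missing \emph{some} ball $B(c_\omega,\eta)$, with $\eta$ depending on the path. But $\dv_\eps(a_n,b_n,c_n;\delta)$ requires missing a ball of radius roughly $\delta\, d(c_\omega,\{a_\omega,b_\omega\})$ in the cone, and the absence of cut-points does not control this. For example, $\R\times S^1$ with a circle of circumference less than $\delta$ has no cut-points, yet the ball of radius $\delta/2$ about $(0,\theta)$ separates $(\pm 1/2,\theta)$. So with $\delta$ fixed you get no contradiction with $C_n\to\infty$. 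The fix is to negate the statement with $\delta$ quantified as well. Suppose that for every $\delta,C$ one had $\Dv_\eps(\alpha_n,\delta)>C\alpha_n$ for all large $n$. Choose $\delta_k\to0$, $C_k\to\infty$, indices $n_k$ with bad triples, and a single $\omega$ concentrated on $\{n_k\}$. The required radius, measured in the cone, then tends to $0$ and eventually drops below $\eta/2$. Since Theorem~\ref{thm1:div} accepts any $\delta>0$, this suffices.

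Second, your case split omits the case $d(c_n,\{a_n,b_n\})/\alpha_n\to 0$ along $\omega$, which is hidden inside ``bounded''. Then $c_\omega\in\{a_\omega,b_\omega\}$, so no path from $a_\omega$ avoids a neighbourhood of $c_\omega$. The obstruction lives at scale $o(\alpha_n)$ and is invisible in the cone, yet such triples do contribute to $\Dv_\eps(\alpha_n,\delta)$. You need the escape step from the proof of Theorem~\ref{thm1:div}. Using the visibility hypothesis (which is assumed, not derived from coboundedness) and Lemma~\ref{lemma:quasi-geodesic-concatenation}, leave $a_n$ and $b_n$ along special paths staying at distance $\gtrsim d(c_n,\{a_n,b_n\})$ from $c_n$ until you reach distance $\asymp\alpha_n$. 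Only then apply the cone argument, to the new endpoints. Alternatively, note that the proof of Theorem~\ref{thm1:div} only uses the bound for triples with $d(c,\{x,y\})\ge n_k/6$, so you could establish just that weaker bound.

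The paper avoids all of this by never uniformizing. It assumes a cut-point in one cone $\mathrm{Cone}_\omega(X,x_0,\beta_n)$ and escapes from $a_n,b_n$ along special paths of length $\asymp\alpha_n$. It then uses the absence of cut-points in $\mathrm{Cone}_\omega(X,x_0,\alpha_n)$, for that same $\omega$, to get a detour made of boundedly many special paths. Navigability then shrinks this detour back to scale $\beta_n$. When $\alpha_n=o(\beta_n)$, it first passes to $\alpha_{\varphi(n)}\ge\beta_n$ and to the ultrafilter $\nu$ with $\nu\circ\varphi=\omega$; this is where ``for all $\omega$'' enters. With only one ultrafilter and one configuration in play, the non-uniform constants coming from the cone are harmless. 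Your route is more modular, but the step you postponed is exactly where the work has to be done.
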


\begin{proof} Since $X$ has a cobounded action of its group of isometries, every asymptotic cone is isometric to an asymptotic cone with sequence of observation points constant equal to $(x_0)$. Assume that some asymptotic cone ${\mathrm{Cone}}_\omega (X, x_0, \beta_n )$ has a global cut-point $\lim_\omega c_n$. In particular, let $\lim_\omega a_n$ and  $\lim_\omega b_n$ be two points that are in two distinct connected components of the complementary of the singleton set $\{ \lim_\omega c_n \}$. This implies that the limit set of the geodesics $[a_n, b_n]$ must contain $\lim_\omega c_n$, thus there exists $c_n'\in [a_n,b_n]$ such that $\lim_\omega c_n'=\lim_\omega c_n$. By potentially replacing either $a_n$ or $b_n$ (and hence their limit point) with points on the geodesic that are closer to $c_n'$ we may also assume that $c_n'$ is the 
midpoint of  $[a_n,b_n]$.   

As in the proof of Theorem \ref{thm1:div}, we now construct appropriate $(\lao, \kao)$-quasi-geodesics $\spath_n$ and $\spath_n'$ emanating from $a_n$ and $b_n$ respectively, and long enough. The `long enough' will be defined in what follows, depending on the case we are in.

\medskip

{\sc{Case 1.}} \quad Assume that $\lim_\omega \frac{\alpha_n}{\beta_n}>0$. In this case, we choose $\spath_n$ and $\spath_n'$ with lengths of order $\alpha_n$. With an argument similar to the one in the proof of Theorem \ref{thm1:div}, we can produce a polygonal path composed by a uniformly bounded number of special paths, each of length of order $\alpha_n$ or less and avoiding a ball $B(c_n, \alpha_n \delta C_0-\epsilon)$. We then use navigability to obtain a polygonal line but with $\alpha_n$ replaced everywhere with $\beta_n$. In the asymptotic cone ${\mathrm{Cone}}_\omega (X, x_0, \beta_n )$, this contradicts the fact that $\lim_\omega c_n$ separates $\lim_\omega a_n$ and $\lim_\omega b_n$.  

\medskip

{\sc{Case 2.}}\quad Assume that $\lim_\omega \frac{\alpha_n}{\beta_n}=0$. Since $\alpha_n \to \infty$, for every $n\in \N$ there exists a $\varphi (n)$ such that for every $k\geq \varphi (n)$, $\alpha_k \geq \beta_n$. We can pick the function $\varphi:\N \to \N$ to be strictly increasing. We define the ultrafilter $\nu $ with support $\{ \varphi (n) \mid n\in \N \}$ and such that $\nu \circ \varphi = \omega$, where by abuse of notation we denote also by $\varphi$ the map defined by $\varphi$ on the set of subsets of $\N$. The asymptotic cone  ${\mathrm{Cone}}_\nu (X, x_0, \alpha_n )$ can be identified with ${\mathrm{Cone}}_\omega (X, x_0, \alpha_{\varphi (n)})$. Our assumption implies that ${\mathrm{Cone}}_\omega (X, x_0, \alpha_{\varphi (n)})$ has no cut-points. Since we have that $\lim_\omega \frac{\alpha_{\varphi (n)}}{\beta_n}\geq 1$, we can repeat the argument from Case 1. 
\end{proof}

The following corollary follows directly from Theorem~\ref{thm2:div}.

\begin{corollary}
    Let $G$ be a finitely generated group equipped with a navigable, undirected path system.  
    If there exists a scaling sequence $(\alpha_n)$ diverging to $\infty$, such that for all non-principal ultrafilters $\omega$, ${\mathrm{Cone}}_\omega (G, \alpha_n)$ has no global cut-points then all the asymptotic cones of $G$ are without global cut-points. 
\end{corollary}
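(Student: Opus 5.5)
The corollary should follow by specialising Theorem~\ref{thm2:div} to $X$ a Cayley graph of $G$. Interpret the hypothesis as: $G$ acts properly, coboundedly and by isometries on a geodesic space $X$ (a Cayley graph for a finite generating set, up to the push-forward below) carrying a $G$--invariant, undirected, $(C_0,k)$--navigable $(\lao,\kao)$--quasi-geodesic path system $\mc P$. If the path system is given on $G$ or on some quasi-isometric model, transport it to a Cayley graph $X$ via the $C$--push-forward. Lemma~\ref{lem:navigable-push-forward} gives that navigability survives. Remark~\ref{rem:push-forward-properties} gives the quasi-geodesic constants, $G$--invariance and, after passing to $(\mc P_f)^\pm$, undirectedness. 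Asymptotic cones of $G$ are by definition those of $X$, and the cobounded action required in Theorem~\ref{thm2:div} is the action of $G$ itself.

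The one hypothesis of Theorem~\ref{thm1:div} that is not immediate is the visibility condition. For every $x\in X$, the ball $B(x,\kao)$ must meet special paths $\spath_{a_n,b_n}$ with $d(x,a_n),d(x,b_n)\to\infty$. My plan is to use homogeneity. Since $G$ is infinite (the finite case is trivial, as all cones are points), pick $a_n,b_n$ with $d(a_n,b_n)\to\infty$ and any special paths $[a_n,b_n]_{\mc P}$. Let $m_n$ be their midpoints. By coboundedness there is $g_n\in G$ with $d(g_nm_n,x)$ uniformly bounded by the coboundedness constant. By $G$--invariance, $g_n[a_n,b_n]_{\mc P}$ is again a special path, it passes near $x$, and its endpoints lie at distance roughly $d(a_n,b_n)/(2\lao)\to\infty$ from $x$.

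The main obstacle is that the constant in Theorem~\ref{thm1:div} is $\kao$, whereas homogeneity only yields the coboundedness constant. If $X$ is a Cayley graph, the vertex set is a single orbit and the bound becomes $1/2$ plus the discretisation error, which is fine after enlarging $\kao$. Enlarging $\kao$ is harmless, since quasi-geodesic constants may always be increased. With all hypotheses verified, apply Theorem~\ref{thm2:div} with basepoint $x_0=e$ to conclude that every asymptotic cone of $G$ is without global cut-points.
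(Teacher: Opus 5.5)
Your proposal is correct and is essentially the paper's argument: the paper simply states that the corollary follows directly from Theorem~\ref{thm2:div}, applied to a Cayley graph of $G$ with its vertex-transitive, hence cobounded, isometric action. What you add is an explicit check of the visibility hypothesis inherited from Theorem~\ref{thm1:div}: you translate midpoints of long special paths next to $x$ and enlarge $\kao$ to absorb the discretisation constant. The paper leaves this check implicit, and it is exactly where the path system has to be read as $G$--invariant, so you were right to make that reading explicit.
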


\section{Spaces with a navigable path system}\label{sec:finding_navigable}

A central hypothesis in the previous two sections is the presence of a navigable path system. The goal of this section is to show that many naturally occurring metric spaces admit a navigable path system. 

Firstly, we show that a bounded and avoidant path system is navigable, providing a criterion to check navigability. Then we show that the push forward of a navigable path system is navigable. Lastly, we show that the following spaces admit navigable path system: geodesic median spaces, spaces admitting a bounded geodesic combing and spaces admitting a bounded and quasi-consistent bicombing. In the first two cases, the navigable path system can be taken as the set of all geodesics in the space and is in particular $G$--invariant for any group $G$ acting on the space. In the case of a bounded and quasi-consistent bicombing, the navigable path system is the so called `combing path system' induced by the combing lines and its subsegments. As a corollary, we obtain that geodesic median spaces and spaces admitting a geodesic combing are so called Morse-dichotomous (see Definition~\ref{def:morse-dichotomous}), that is, all Morse geodesics are strongly contracting.

\subsection{Navigability from boundedness and avoidance}

We start by showing that a bounded and avoidant path system is navigable. At core, the proof uses sub-boundedness to slide the starting polygonal line until each of its segments intersects the correct annulus, and then uses avoidance to control the length of the resulting curve, following the blueprint of \cite[Lemma~7.1]{PetytSprianoZalloum:hyperbolic} for CAT(0) spaces. However, the fact that the hypotheses are so much weaker requires upgrades of the sliding techniques. 

\begin{proposition}[Bounded and avoidant imply navigable]\label{prop:bounded-avoidant-implies-navigable}
    Let $\mc{Q}$ be a $\kao$--bounded undirected heap of paths on a geodesic metric space $X$ such that $\overline{\mc Q}$ is $(C, k)$--avoidant for some $C\geq 3\kao\geq 3$. Then $\mc Q$ is $(C, 2k)$--navigable.
\end{proposition}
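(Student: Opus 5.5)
The plan is to treat the special paths of $\alpha$ one at a time. For each one we use boundedness of $\mc Q$ to slide its endpoints toward $m$ until it is anchored near the sphere of radius comparable to $R$ about $m$. Avoidance then joins consecutive anchored pieces by short polygonal lines. Fix $R\geq C$, $m$, and an $(n,\mc Q)$--polygonal line $\alpha=\spath_1\ast\cdots\ast\spath_n$ with $d(\alpha,m)\geq R$ and $d(\alpha^\pm,m)\leq 2R$. Write $v_0=\alpha^-,\dots,v_n=\alpha^+$ for the vertices.

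\textbf{Step 1 (sliding).} Take a special path $\spath\in\mc Q$ from $v_{i-1}$ to $v_i$ with $d(\spath,m)\geq R$. Pick a geodesic from $v_i$ to $m$ and move its endpoint along it in unit steps. At each step, $\kao$--boundedness produces a new path in $\mc Q$ from $v_{i-1}$ to the new endpoint, at Hausdorff distance at most $\kao$ from the previous one. The Hausdorff error accumulates, so I would redo the slide from the current path at every step. We stop at the first time the path comes within about $R$ of $m$; because of the unit steps its distance from $m$ is still at least $R-\kao$. Since $\mc Q$ is undirected, we can reverse the path and slide the other endpoint $v_{i-1}$ in the same way.

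The outcome is, for each vertex $v_i$, a point $w_i$ with $d(w_i,m)\leq 2R$ and a special path $\spath_i'\in\mc Q$ from $w_{i-1}$ to $w_i$ avoiding $B(m,R-\kao)$. One could instead stop sliding at distance $\leq 4R$, which is all that avoidance requires. The intended picture is that interior vertices far from $m$ get replaced by points at distance between $R$ and $4R$ from $m$.

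\textbf{Step 2 (avoidance).} Apply $(C,k)$--avoidance of $\overline{\mc Q}$ with radius $R'\approx R-\kao\geq C/2$ (use $C\geq 3\kao$). For the input, take two special paths from $w_{i-1}$ and from $w_i$ to a common point. The natural choice for the common point is a far endpoint $y$, using the original paths reversed, or the paths before and after sliding that end at the same far vertex. Avoidance then yields a $(k,\overline{\mc Q})$--polygonal line $\pgot_i$ from $w_{i-1}$ to $w_i$ of length at most $CR'$ that avoids $B(m,2R'/C)$. The sides at the two ends also need handling: $\alpha^\pm$ are already within $2R$ of $m$, so nothing has to slide there.

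Concatenating the $\pgot_i$ gives a $(kn,\overline{\mc Q})$--polygonal line from $\alpha^-$ to $\alpha^+$. Its length is at most $nCR$, and it avoids $B(m,2(R-\kao)/C)\supseteq B(m,R/C)$ when $R\geq C\geq 3\kao$. The factor $2k$ in the statement leaves room for extra pieces, for instance two avoidance applications per side, one on each side of a far vertex.

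\textbf{Main obstacle.} The hard step is Step 1. Boundedness only controls paths with a \emph{fixed} starting point, and Hausdorff errors add up over many slides. The slide must therefore be organized so that the anchor point of the avoidance input is the same for both paths, and so that the avoidance input paths stay at distance $>R'$ from $m$. I would first try always comparing against the previous path, and stopping the slide at the first time the distance to $m$ drops below $R+\kao$. This works because each slide moves the path by at most $\kao$, so the distance to $m$ decreases by at most $\kao$ per step, which yields the needed avoidance estimate.
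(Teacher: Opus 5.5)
There is a genuine gap, and it is in Step~1, which you yourself flag. Sliding the \emph{vertices} $v_i$ of $\alpha$ does not produce the configuration you claim, namely points $w_i$ with $d(w_i,m)\le 2R$ joined by special paths avoiding $B(m,R-\kao)$.

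\begin{itemize}
\item Your stopping rule fires when the slid path comes within $R$ of $m$. This can happen at an interior point of the path while the moving endpoint is still arbitrarily far from $m$. At that moment you have no $w_i$ near $m$. You also cannot keep sliding: boundedness controls each step only up to Hausdorff distance $\kao$, so over many steps the path may drift deep into $B(m,R)$.
\item Your proposed remedy (stop once the distance drops below $R+\kao$) secures the lower bound on $d(\cdot,m)$. It does not give the upper bound $d(z_j,m)\le 4R'$ that avoidance requires.
\item Each interior vertex $v_i$ is slid twice, once as the terminal point of $\spath_i$ and once as the initial point of $\spath_{i+1}$. The two slides stop at different places, so the slid paths no longer concatenate.
\item When you turn to the second endpoint of a path, it is typically already within $R$ of $m$, so that slide stops at once.
\item Step~2 inherits the problem. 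The avoidance inputs you suggest (original versus slid path) share the far vertex $v_{i-1}$ and have $v_i$ as a free endpoint, which need not lie within $4R'$ of $m$.
\end{itemize}

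The missing idea is to leave the vertices of $\alpha$ fixed and instead split each $\alpha_i$ as $\beta_i\ast\gamma_i$ at an auxiliary junction.

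\begin{enumerate}
\item Start with $\beta_i=\alpha_i$ and $\gamma_i$ trivial.
\item Move the junction $\beta_i^+=\gamma_i^-$ toward $m$ in unit steps along a geodesic, replacing both halves via boundedness. Undirectedness lets you treat $\gamma_i$ as based at the fixed point $\alpha_i^+$. Stop when one half, say $\beta_i$, meets $B(m,R)$ at a point $y$.
\item Move the junction back along $\beta_i$ toward $y$, truncating $\beta_i$ to a prefix that still contains $y$, and replacing $\gamma_i$ via boundedness. Stop when $\gamma_i$ also meets $B(m,R)$. This is why one works in $\overline{\mc Q}$ for $\beta_i$, while $\gamma_i$ stays in $\mc Q$ so that boundedness keeps applying.
\end{enumerate}

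Each of the resulting $2n$ pieces avoids $B(m,R-\kao)$ and contains an anchor $z$ with $R-\kao<d(z,m)\le R$. For the first and last pieces you may simply use $\alpha^-$ and $\alpha^+$, which lie within $2R\le 4R'$ of $m$. Consecutive pieces share an endpoint, and that endpoint may be far from $m$. This is exactly the configuration of Definition~\ref{defn:avoidant}: only $z_1,z_2$ must lie within $4R'$ of $m$, and the common point $y$ is unrestricted.

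Apply avoidance with $R'=R-\kao\ge C/2$ to each pair of consecutive anchors, using the sub-pieces running to the shared endpoint as input. Concatenating gives a $(2kn,\overline{\mc Q})$--polygonal line from $\alpha^-$ to $\alpha^+$ avoiding $B(m,2R'/C)\supseteq B(m,R/C)$. Each of its blocks has length at most $CR$. This is where the factor $2k$ comes from.
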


\begin{proof}
Let $R\geq C$, $m\in X$ and $\alpha = \alpha_1\ast \ldots \ast\alpha_n$ be an $(n, \mc Q)$--polygonal line as in the definition of navigability.  For $1\leq i \leq n$, let $\beta_i, \gamma_i\in \overline{\mc Q}$ be paths with $R - \kao \leq d(\beta_i, m) \leq R$, $R - \kao \leq d(\gamma_i, m)\leq R$ and such that $\beta_i^- = \alpha_i^-$, $\beta_i^+=\gamma_i^-$ and $\gamma_i^+ = \alpha_i^+$. This is depicted in Figure~\ref{fig:navigabiblity-replacement}.

	\begin{figure}[h]
		\begin{tikzpicture}
			\node[anchor=south west,inner sep=0] (image) at (0,0) {\includegraphics[width=\textwidth]{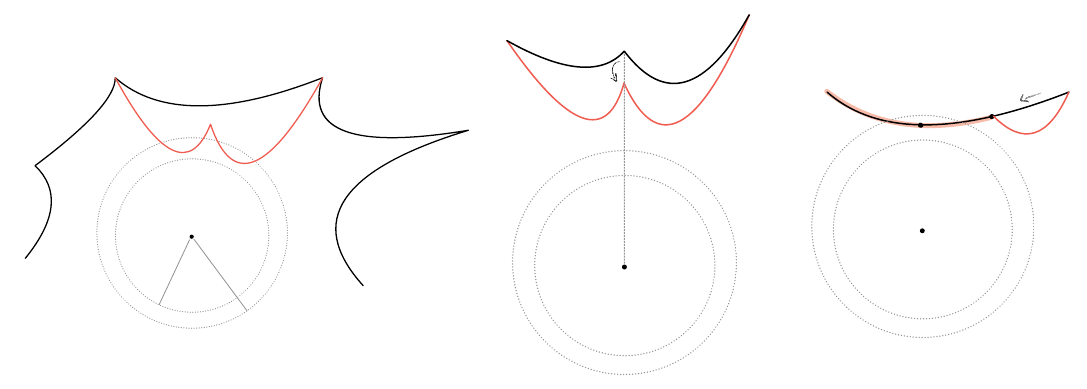}};
			\begin{scope}[x={(image.south east)},y={(image.north west)}]
				\node at (0.18, 0.43) {$m$};
				\node at (0.22, 0.3) {$R$};
				\node at (0.12, 0.32) {$R - \kao$};
				\node at (0.19, 0.78) {$\alpha_i$};
				\node at (0.12, 0.63) {$\beta_i$};
				\node at (0.24,0.65) {$\gamma_i$};
                \node at (0.5, 0.71) {$\beta_i$};
                \node at (0.66, 0.72) {$\gamma_i$};
                \node at (0.585, 0.788) {$x$};
                \node at (0.56, 0.28) {$m$};
                \node at (0.84, 0.37) {$m$};
                \node at (0.845, 0.72) {$y$};
                \node at (0.91, 0.73) {$x$};
                \node at (0.78, 0.67) {$\beta_i$};
                \node at (0.98, 0.64) {$\gamma_i$};
			\end{scope}
    \end{tikzpicture}\caption{\textbf{Left:} we replace $\alpha_i$ with two special paths that have the correct distance from $m$. \textbf{Center:} a depiction of the central slide. \textbf{Right:} a depiction of the side slide. }\label{fig:navigabiblity-replacement}
	\end{figure}

Such paths $\beta_i, \gamma_i$ always exist: start with $\beta_i = \alpha_i$ and $\gamma_i$ a trivial path from $\alpha_i^+$ to itself. With this definition, we satisfy all the requirements except perhaps the upper bound on $d(\beta_i, m)$ and $d(\gamma_i, m)$. We now  `move $\beta_i^+ = \gamma_i^-$ to $m$', that is, we repeat the following process, called \emph{central slide}, until $d(\beta_i, m)\leq R$ or $d(\gamma_i, m)\leq R$: Let $x$ be a point on $[m, \beta_i^+]$ at distance $1$ from $\beta_i^+$. Replace $\beta_i$ by $[\beta_i^-, x]_{\mc Q}$ and $\gamma_i$ by $[x, \gamma_i^+]_{\mc Q}$. By boundedness of $\mc Q$ (and undirectedness), we may assume that the new and old paths have Hausdorff distance at most $\kao$ from each other. Thus, $R - \kao < d(\beta_i, m)$ and $R - \kao < d(\gamma_i, m)$. At each step, the distance from $\beta_i^+$ to $m$ decreases by $1$. Thus after at most $d(\alpha_i^+, m)$ steps, $d(\beta_i, m)\leq d(\beta_i^+, m) < 1\leq \kao$, implying that the central slide has to finish after finitely many steps.

\smallskip

After repeating the central slide finitely many times, we will have $d(\beta_i, m)\leq R$ or $d(\gamma_i, m)\leq R$. We assume we are in the former case, as the latter is completely analogous. So, we have $d(\beta_i, m)\leq R$. Next we `slide the endpoint of $\gamma_i$ along $\beta_i$', that is, we repeat the following process, called \emph{side slide}, until $d(\gamma_i, m)\leq R$: Let $y = \beta_i(t)$ be a point on $\beta_i$ with $d(y, m)\leq R$. Let $s\geq t$ be minimal such that $d(\beta_i(s), \beta_i^+)\leq 1$. Denote $\beta_i(s)$ by $x$. Replace $\beta_i$ by $\pref{\beta_i}{x}$ and $\gamma_i$ by $[x,\gamma_i^+]_{\mc Q}$. Again by boundedness and undirectedness, we may assume that the Hausdorff distance between the old and new $\gamma_i$ is at most $\kao$. Thus, we know that $R - \kao < d(\gamma_i, m)$. Moreover, since $s\geq t$, we know that $R- \kao < d(\beta_i, m)\leq R$. When the side slide process ends, $\beta_i$ and $\gamma_i$ satisfy all desired conditions. The side slide process has to end after a finite number of steps: if $x = \beta(t)$, then $d(\gamma_i, m)\leq d(x, m)\leq R$. If $x\neq \beta(t)$, then by minimality of $s$, for the old $\beta_i$, $d(x, \beta_i^+) = 1$. Since quasi-geodesics are continuous, the latter can only happen finitely many times.

It is important to note that in the side slide process, $\beta_i$ is in $\overline{\mc Q}$ but not necessarily in $\mc Q$ while $\gamma_i$ is in $\mc Q$, allowing us to do the sliding. If the central slide ends with $d(\gamma_i, m)\leq R$, we do the side slide but reverse the roles of $\beta_i$ and $\gamma_i$. 

\smallskip

Define $\alpha'_{2i-1} = \beta_i$ and $\alpha'_{2i} = \gamma_i$ for all $i$. Let $z_1 = {\alpha_1'}^-$, $z_{2n} = {\alpha'}_{2n}^+$. Further, for $1 < i < 2n$, let $z_i$ be a point on $\alpha_i'$ with $R - \kao < d(z_i,  m) \leq  R$. Those exist by the definition of $\beta_i$ and $\gamma_i$. Define $R ' = R - \kao$. Observe that $2R'\geq R$. Hence, for $1\leq i \leq 2n$, we can use that $\overline{\mc Q}$ is $(C, k)$--avoidant, to get $(k, \overline{\mc Q})$--polygonal lines $\pgot_i$ from $z_i$ to $z_{i+1}$ which are disjoint from $B(m, 2R'/C)\supseteq B(m, R/C)$ and satisfy $\norm{\pgot_i}\leq C R'$. Consequently, $\pgot = \pgot_1\ast \ldots \ast \pgot_{2n}$ is a $(2kn, \overline{\mc Q})$--polygonal line from $\alpha^-$ to $\alpha^+$ which does not intersect $B(m, R/C)$.
\end{proof}

\subsection{Navigability of the push-forward}

We now record an important tool, namely that being navigable is a property that is preserved by quasi-isometries.

\begin{lemma}\label{lem:navigable-push-forward}
    Let $f : X \to Y$ be a $C$--quasi-isometry between geodesic metric spaces $(X, d_X)$ and $(Y, d_Y)$. Let $\mc P$ be an undirected $(C', k)$--navigable path system on $X$. Then $(\mc P_f)^{\pm}$, the directional closure of the $C$--push-forward $\mc P_f$, is a $(C'', 3k+4)$--navigable path system, for a constant $C''$ only depending on $C,C',k$ and the quasi-geodesic constants of $\mc P$. 
\end{lemma}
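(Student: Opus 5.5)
The plan is to pull the polygonal line back to $X$, apply navigability of $\mc P$ there, and push the result forward again. The constant $3k+4$ should come from accounting for the geodesic pieces at the ends of each push-forward path and for the pieces joining consecutive pulled-back paths.

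Fix $R\geq C''$, a point $m\in Y$ and an $(n,(\mc P_f)^\pm)$--polygonal line $\alpha=\alpha_1\ast\cdots\ast\alpha_n$ with $d(\alpha,m)\geq R$ and $d(\alpha^\pm,m)\leq 2R$. Choose a quasi-inverse $g$ and a point $m_X\in X$ with $d(f(m_X),m)\leq C$.

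\textbf{Pullback.} By Remark~\ref{rem:push-forward-properties}\eqref{prop:hausdorff-pull-back} and \eqref{prop:unidrected-push-forward}, each $\alpha_i$ has a special path $\spath_i\in\mc P$ with $d_{\mathrm{Haus}}(f(\spath_i),\alpha_i)\leq C_f$, and the endpoints of $f(\spath_i)$ lie within $C$ of those of $\alpha_i$. Consecutive $\spath_i^+$ and $\spath_{i+1}^-$ are then at distance bounded in terms of $C$. We join them by a special path of $\mc P$ of uniformly bounded length, and do the same at the two ends, connecting to points $a,b$ with $f(a),f(b)$ near $\alpha^\pm$. This gives a $(2n+1,\mc P)$--polygonal line $\beta$ in $X$.

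\textbf{Navigability in $X$.} Since $f$ is a $C$--quasi-isometry, $d_X(\beta,m_X)\geq R/C-C_1\geq R_X:=R/(2C)$ once $C''$ is large, and $d(\beta^\pm,m_X)\leq 2CR+C_1$. This does not match the requirement $\leq 2R_X$ of Definition~\ref{defn:navigable}, so the radii must be adjusted.

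\textbf{Main obstacle: matching radii.} The annulus condition is not preserved by a quasi-isometry. Definition~\ref{defn:navigable} needs the endpoints within twice the avoidance radius, but after pulling back the ratio becomes roughly $4C^2$. My first attempt is to cut off $\beta$ near its ends: let $a'$ (resp.\ $b'$) be the first (resp.\ last) point of $\beta$ within $2R_X$ of $m_X$. If no such point exists, instead move the radius up to $\rho=d(\beta,m_X)$, replacing $R_X$ by $\max(R_X,\rho')$ with $\rho'$ chosen so the $2\rho'$ condition holds. Since $d(\beta,m_X)\ge R_X$ and the endpoints are within $\sim 4C^2R_X$, one can take $\rho'\in[R_X,4C^2R_X]$ with $d(\beta,m_X)\geq\rho'$ failing only if some point of $\beta$ is closer. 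The cleanest route is to choose $\rho'=d(\beta,m_X)$ when this is at most $2C^2R_X$, and then cut $\beta$ at radius $2\rho'$. The pieces of $\beta$ outside the $2\rho'$--ball, which run from $a$ to $a'$ and from $b'$ to $b$, are the hard part. We handle them by replacing each with the original tails, so the final output uses $\alpha$'s own end segments. The total length of the output must still be $\lesssim nR$, so I would instead shorten these tails using geodesics inside the annulus between radii $\rho'$ and $2\rho'$. If that fails, I would verify navigability with a sharper version in which the factor $2$ is replaced by any constant, by iterating Definition~\ref{defn:navigable} along a chain of radii $2^jR_X$. This costs only a bounded number of iterations, depending on $C$, which explains why the constants depend only on $C$, $C'$ and $k$.

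\textbf{Push forward.} Applying navigability gives a $(k(2n+1),\mc P)$--polygonal line $\pgot$ from $a$ to $b$ with $\norm{\pgot}\leq C'(2n+1)R_X$ avoiding $B(m_X,R_X/C')$. We replace each special path of $\pgot$ by the corresponding path of $\mc P_f$ given by Remark~\ref{rem:push-forward-properties}\eqref{prop:hausdorff-push-forward}. Concatenating, and attaching short geodesic pieces to $\alpha^\pm$ inside the push-forward paths at each end, gives a $((3k+4)n,(\mc P_f)^\pm)$--polygonal line after absorbing the connectors. It has length $\leq C''nR$ by the improved-quasi-geodesic length bound, and it avoids $B(m,R/C'')$, since $f$ moves points by at most a factor $C$ plus the additive errors $C_f$ and $C$. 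Choosing $C''$ large relative to these errors completes the argument.
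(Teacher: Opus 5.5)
There is a genuine gap, located exactly at the step you call the main obstacle. Your final paragraph proceeds as if that step had been resolved. It applies navigability to $\beta$ from $a$ to $b$ at radius $R_X$ with $k(2n+1)$ output pieces. You had already shown this application is not allowed, since the endpoints of $\beta$ are only within about $4C^2R_X$ of $m_X$. None of your remedies supplies it:
\begin{itemize}
\item Cutting $\beta$ at the first and last points within $2\rho'$ of $m_X$ leaves two tails. These are subpaths of $\beta$ and so have uncontrolled length: the input of Definition~\ref{defn:navigable} may be arbitrarily long, and only the output is bounded by a multiple of $nR$.
\item Shortening the tails by geodesics inside the annulus has no justification. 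A geodesic between two points of an annulus can pass through its centre, and geodesics are not special paths anyway. Producing short detours around a ball is precisely what navigability is for.
\item The dyadic iteration can be made to work: cut at the first and last points within $2^{j+1}R_X$, and apply navigability at radius $2^jR_X$ to each annular piece, until the radius exceeds the distance of the endpoints. But each of the roughly $2J$ annular pieces, with $J\approx\log_2(4C^2)$, needs its own application. You end up with about $k(2n+2J+1)$ pieces, i.e.\ a multiplier of order $k(2J+3)$ depending on $C$, not the stated $3k+4$.
\end{itemize}

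The missing idea is to lengthen the pulled-back line rather than cut it.
\begin{itemize}
\item Let $\beta'$ be the pulled-back line without your end connectors, so it has $2n-1$ special paths.
\item Take a special path from $m_X$ to $\beta'^-$. If $d(\beta'^-,m_X)>2R_X$, let $\sigma_-$ be its suffix after the last point in $B(m_X,\tfrac32R_X)$; otherwise let $\sigma_-$ be trivial. Then $\sigma_-$ ends at $\beta'^-$, starts within $2R_X$ of $m_X$, misses $B(m_X,R_X)$, and has length $O(R)$ because $d(m_X,\beta'^-)=O(R)$.
\item Define $\sigma_+$ symmetrically, from $\beta'^+$ to a point within $2R_X$ of $m_X$.
\item Now $\sigma_-\ast\beta'\ast\sigma_+$ is a $(2n+1,\mc P)$--polygonal line satisfying the hypotheses of Definition~\ref{defn:navigable} at radius $R_X$ directly.
\item Push forward the resulting $k(2n+1)$ special paths. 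Reconnect to $\alpha^\pm$ through the push-forwards of $\sigma_-^{-1}$ and $\sigma_+^{-1}$, together with two connectors of bounded length. All four avoid a ball of radius comparable to $R$ about $m$ and have length $O(R)$.
\end{itemize}
This yields $k(2n+1)+4\le(3k+4)n$ pieces, which is where the constant $3k+4$ comes from; it is the argument given in the paper.
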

\begin{proof}
   We will show that this holds for $C''$ large enough. Let $R\geq C''$ be a constant, $m\in Y$ a point and $\alpha = \alpha_1\ast \ldots\ast \alpha_n$ be a $(n, \mc P_f^\pm)$--polygonal line disjoint from $B_Y(m, R)$ but whose endpoints have distance $2R$ from $m$. We now want to `transport' this picture into $X$, which is depicted in Figure~\ref{fig:navigable-in-x}. So we define the following. Let $m'\in X$ be a point such that $d(f(m'), m)\leq C$. By Remark~\ref{rem:push-forward-properties}, there exist special paths $\alpha_i'\in \mc P^\pm$ with $d_{\mathrm{Haus}}(f(\alpha_i'), \alpha_i)\leq C_f$ and such that the endpoints of $f(\alpha_i')$ have distance at most $C$ from the endpoints of $\alpha_i$ respectively. 
        \begin{figure}
        \centering
        \includegraphics[width=.9\linewidth]{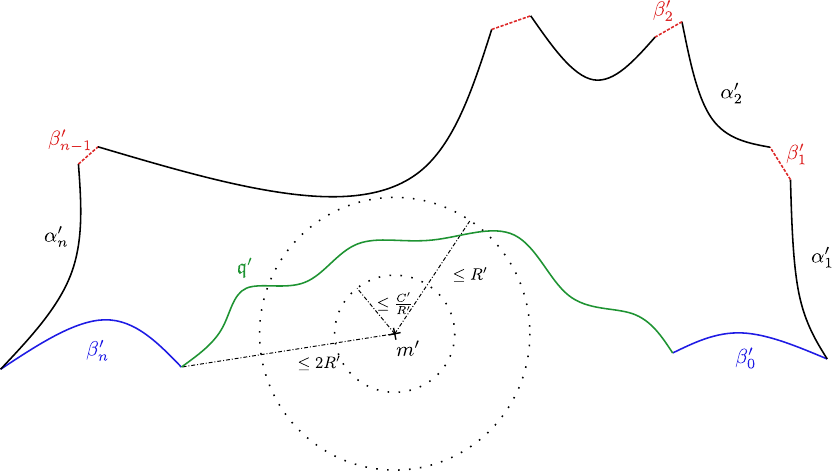}
        \caption{Situation in $X$.}
        \label{fig:navigable-in-x}
    \end{figure}
    \begin{figure}
        \centering
        \includegraphics[width=.9\linewidth]{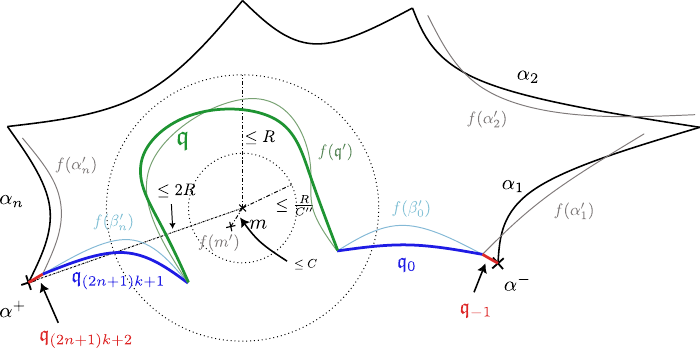}
        \caption{Situation in $Y$.}
        \label{fig:navigable-in-y}
    \end{figure}

   For $1\leq i \leq n$, let $\beta_i'\in \mc P$ be a special path from $\alpha_i'^+$ to $\alpha_{i-1}'^-$. 
   Since $f$ is a quasi-isometry, it follows that for an appropriate constant $K$ and $R' = R/K - K$, that the path $\pgot' = \alpha_1'\ast \beta_1'\ast \alpha_2'\ast \ldots \ast \beta_{n-1}'\ast \alpha_n'$ is a $(2n-1, \mc P)$--polygonal line which is disjoint from $B_X(m', R')$. Let $\beta_0'\in \mc P$ be a special path which is disjoint from $B_X(m', R')$, for which $d(\beta_0'^-, m')\leq 2R'$ and $\beta_0'^+ = \alpha_1'^-$. Such a path exists, for example a suitable subpath of a special path from $m'$ to $\alpha_1'^-$ satisfies these constraints. Similarly, let $\beta_n'\in \mc P$ be a special path from $\alpha_{n}'^+$ to a point in $B_X(m', 2R')$ such that $\beta_n'$ is disjoint from $B_X(m', R')$. For $C''$ large enough, we have that $R'\geq C'$ and hence we can apply the navigability of $\mc P$ to the $(2n+1, \mc P)$--polygonal line $\pgot'' = \beta_0'\ast \pgot'\ast \beta_n'$. 

    This yields a $((2n+1)k, \mc P)$--polygonal line $\qgot' = \qgot_1'\ast \ldots \ast \qgot_{k(2n+1)}'$ from $\beta_0'^-$ to $\beta_n'^+$ which is disjoint from $B_X(m', R'/C')$ and with $\norm{\qgot'}\leq C'R'(2n+1)$. Now, we need to `transport' this back to $Y$, this is depicted in Figure~\ref{fig:navigable-in-y}. For $1\leq i \leq (2n+1)k$, let $\qgot_i\in \mc P_f^\pm$ be a special path with the same endpoints as $f(\qgot_{i}')$ and which is at Hausdorff distance at most $C_f$ from $f(\qgot_{i}')$ (these exist by Remark~\ref{rem:push-forward-properties}). Furthermore, let $\qgot_0, \qgot_{(2n+1)k+1}\in \mc P_f^{\pm}$ be special paths such that $\qgot_0$ and $\qgot_{(2n+1)k+1}$ have the same endpoints as $f(\beta_0'^{-1})$ and $f(\beta_n'^{-1})$ respectively and are at Hausdorff distance at most $C_f$ from $f(\beta_0'^{-1})$ and $f(\beta_n'^{-1})$ respectively. Lastly, let $\qgot_{-1}, \qgot_{(2n+1)k+2}\in \mc P_f^{\pm}$ be special paths from $\alpha^-$ to $f(\beta_0'^+)$ and from $f(\beta_n'^-)$ to $\alpha^+$. Both of those special paths have endpoints at most $C$ away from each other. The $((2n+1)k+4, \mc P_f^\pm)$--polygonal line $\qgot = \qgot_{-1}\ast \qgot_0\ast \ldots \ast \qgot_{(2n+1)k+2}$ has endpoints $\qgot^- = \alpha^-$ and $\qgot^+ = \alpha^+$ and for an appropriate constant $K'$, only depending on $C, C'$ and the quasi-geodesic constants of $\mc P$, we have that $\qgot$ is disjoint from $B_Y(m, R/K'-K')$. Using that quasi-geodesics are improved, we get that for $C''$ large enough compared to $K'$ and $k$
   \begin{align*}
       \norm{\qgot}\leq K'\norm{\qgot'} + K'(2n+1)k + \norm{\qgot_0} + \norm{\qgot_{-1}} + \norm{\qgot_{(2n+1)k+1}} + \norm{\qgot_{(2n+1)k+2}}  \leq C''Rn.
   \end{align*}
   Hence, for large enough $C''$ compared to $K''$ and $k$, we have that $\norm{\qgot}\leq C''Rn$ and that $\qgot$ is disjoint from $B_Y(m, R/C'')$, implying that $\mc P_f^\pm$ is indeed $(C'', 3k+4)$--navigable.
\end{proof}

\subsection{Bicombing path systems}

In this section we associate a path system, called combing path system, to every (bi)-combing. We then show that if the (bi)-combing is nice enough -- namely if it is a quasi-consistent bounded bicombing or a geodesic combing -- then the combing path system is navigable. Additionally, we obtain that a geodesic metric space with a bounded combing is Morse dichotomous.

\begin{definition}[Combing path system]\label{defn:combing_path_system}
Let $X$ be equipped with a (bi)-combing. The induced heap of paths $\mc P$ which contains all combing paths, inverses thereof and trivial paths between any point and itself is called {\emph{the combing heap}} and is an undirected heap of paths. The consistent closure $\mc {P} = \overline{\mc Q}$ of the combing heap is called the \emph{combing path system} and is an undirected path system.
\end{definition}

\subsubsection{Quasi-consistent and bounded bicombings}

We show that the combing path system of a quasi-consistent and bounded bicombing is avoidant and bounded and then use Proposition~\ref{prop:bounded-avoidant-implies-navigable} to conclude that it is navigable.

\begin{lemma}\label{lem:bicombng-is-bounded}
    Let $\mc P$ be the combing path system induced by a $\kao$--quasi-consistent $\kao$--bounded $(\lao, \kao)$--quasi-geodesic bicombing on a geodesic metric space $X$. Then $\mc P$ is $3\kao$--bounded.
\end{lemma}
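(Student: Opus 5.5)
We have to show the following: for every special path $\spath\in\mc P$ from $x$ to $y$ and every $y'$ with $d(y,y')\leq 1$, there is a special path $\spath'\in\mc P$ from $x$ to $y'$ with $d_{\mathrm{Haus}}(\spath,\spath')\leq 3\kao$. By Definition~\ref{defn:combing_path_system}, $\spath$ is a subpath $\psub{\col}{x,y}$ of some combing line $\col=\col_{ab}$, or of an inverse of one. Since $\mc P$ is undirected, and Hausdorff distance ignores orientation, we may assume $\col=\col_{ab}$ and that $x$ comes before $y$ along it; the reversed cases are the same argument with the roles of the endpoints swapped. The plan is to pass from the subpath to a genuine combing line with the same endpoints, then compare two combing lines that share a starting point.

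\textbf{Step 1 (quasi-consistency).} By $\kao$--quasi-consistency,
\[
d_{\mathrm{Haus}}(\psub{\col_{ab}}{x,y},\col_{xy})\leq \kao .
\]

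\textbf{Step 2 (candidate path and boundedness).} Take $\spath'=\col_{xy'}$, which lies in $\mc P$. The two combing lines $\col_{xy}$ and $\col_{xy'}$ start at the same point. The bicombing bound \eqref{eq:dist2}, with $x_1=x_2=x$, gives for every $t\in\R$
\[
d(\col_{xy}(t),\col_{xy'}(t))\leq \kao\, d(y,y')+\kao\leq 2\kao .
\]
Here both lines are extended by constants past their endpoints.

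\textbf{Step 3 (parameterwise to Hausdorff).} Every point of $\col_{xy}$ is $\col_{xy}(t)$ for some $t$ in its domain. It is therefore within $2\kao$ of $\col_{xy'}(t)$, and this lies on $\col_{xy'}$ because the extension is constant. The symmetric statement holds in the same way, so $d_{\mathrm{Haus}}(\col_{xy},\col_{xy'})\leq 2\kao$. Combining with Step 1 gives $d_{\mathrm{Haus}}(\spath,\spath')\leq 3\kao$, as required.

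\textbf{Main obstacle.} The delicate point is Step 3: it works only because the extension convention of Section~\ref{sect:combings} makes \eqref{eq:dist2} hold for all $t\in\R$. This covers the case where the two domains have different lengths, and the constant extensions stay on the images. A second place needing care is Step 1. If $\spath$ arises as a subpath of an inverse combing line $\col_{ab}^{-1}$, quasi-consistency compares it with $\col_{yx}$, which starts at $y$ rather than $x$. Then Step 2 must compare $\col_{yx}$ with a combing line from $y'$, namely $\col_{y'x}$. The same bound from \eqref{eq:dist2} applies, now varying the first endpoint, so the estimate is unchanged. We then take $\spath'=\col_{y'x}^{-1}\in\mc P$, using undirectedness. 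Finally, the trivial-path case $x=y$ is handled by the same argument, since $\col_{xx}$ is a combing line.
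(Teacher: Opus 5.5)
Your proof is correct and follows the paper's argument exactly. Quasi-consistency puts the subpath within $\kao$ of the combing line with the same endpoints, and boundedness with one shared endpoint puts that line within $2\kao$ of the combing line to the perturbed endpoint; the inverse case is handled by reversing paths. Your Step 3 (parameterwise bound to Hausdorff bound via the constant extensions) and your explicit treatment of the inverse case spell out details the paper leaves implicit.
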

\begin{proof}
    This is a direct consequence of quasi-consistency and boundedness. Namely, if $\spath'\in \mc P$ is a subpath of a combing line $\spath$, and $\qgot$ is the combing line between $\spath'^-$ and $\spath'^+$  then by quasi-consistency $d_{\mathrm{Haus}}(\spath', \qgot)\leq \kao$. Now, if $y$ is a point with $d(y, \spath'^+)\leq 1$ and $\qgot'$ is the combing line between $\spath'^-$ and $y$ we have by boundedness that $d_{\mathrm{Haus}}(\qgot, \qgot')\leq 2\kao$ and hence $d_{\mathrm{Haus}}(\spath',{\qgot'})\leq 3\kao$.  The case where $\spath'$ is a subpath of the inverse of a combing line follows analogously, we just have to reverse all paths.
\end{proof}

\begin{lemma}\label{lem:bicombing-is-avoidant}
    Let $\mc P$ be the combing path system induced by a $\kao$--bounded $\kao$--quasi-consistent $(\lao, \kao)$--quasi-geodesic bicombing on a geodesic metric space $X$. Then $\mc P$ is $(C,6)$--avoidant, for some constant $C$ only depending on $(\lao, \kao)$. 
\end{lemma}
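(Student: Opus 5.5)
We need to show avoidance: given $R\geq C/2$, points $y,z_1,z_2,m$ with $d(m,z_i)\le 4R$, and special paths $\spath_i\in\mc P$ from $z_i$ to $y$ with $d(\spath_i,m)>R$, we must produce a $(6,\mc P)$--polygonal line from $z_1$ to $z_2$ of length at most $CR$ that avoids $B(m,2R/C)$. The plan is to truncate the two paths far out and join the truncation points by a short combing line. First, by quasi-consistency, replace each $\spath_i$ (a subpath of a combing line or of its inverse) by the combing line $\col_{z_iy}$, or by $\col_{yz_i}$ reversed. These replacements stay within Hausdorff distance $\kao$ of $\spath_i$, hence at distance $>R-\kao$ from $m$. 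Since $C$ is large compared to $\kao$ and $R\ge C/2$, we may take $R-\kao\ge R/2$.

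Next, parametrize both paths from $z_1$ and $z_2$ respectively and use boundedness. We have $d(z_1,z_2)\le 8R$, and the combing lines $\col_{z_1y}$ and $\col_{z_2y}$ share the endpoint $y$. Boundedness \eqref{eq:dist2} then gives
\[
d\bigl(\col_{z_1y}(t),\col_{z_2y}(t)\bigr)\le 8\kao R+\kao
\]
for all $t$. If the paths are reversed (for instance $\col_{yz_i}$), we use the version with the common endpoint at time $0$. Synchronizing from the $y$ end requires care, since the lengths of the two domains may differ by at most about $\lao(8R+\kao)$. Fix a time $t_0$ of order $R$, say $t_0=\lambda R$ with $\lambda$ depending on $\lao,\kao$, and let $w_i$ be the point of $\spath_i$ at parameter $t_0$ measured from $z_i$. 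If $d(z_i,y)$ is too small for such a $t_0$ to exist, then $y$ itself is within $O(R)$ of both $z_i$, and we simply take $w_i=y$. Then $d(w_1,w_2)\le K R$ with $K=K(\lao,\kao)$.

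The polygonal line is $\pref{\spath_1}{w_1}\ast[w_1,w_2]_{\mc P}\ast\pref{\spath_2}{w_2}^{-1}$, possibly with a few extra pieces coming from the quasi-consistency replacements; the total number of pieces stays below $6$. Its length is $O(R)$ because quasi-geodesics are improved and the parameters involved are $O(R)$. The two prefixes avoid $B(m,R/2)$. The main obstacle is the middle segment $[w_1,w_2]_{\mc P}$: it has diameter $O(R)$, but it could still pass near $m$. To handle this, choose $t_0$ large enough that $d(w_i,m)\ge t_0/\lao-\kao-4R$ exceeds $D_{\mc P}(KR)+2R/C$; this forces $[w_1,w_2]_{\mc P}$ to stay far from $m$. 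The difficulty is that $d(w_1,w_2)$ itself grows with $t_0$, so the two requirements must be compatible. Boundedness helps here because it bounds $d(w_1,w_2)$ by $8\kao R+\kao$ independently of $t_0$, whereas $d(w_i,m)$ grows linearly in $t_0$. Hence a choice $t_0=\lambda R$ with $\lambda$ large works, and $C$ is then taken larger than all the constants involved.

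In the degenerate case where $y$ is close to the $z_i$, the two paths themselves already form a short line through $y$ avoiding $m$. Finally, take $C\ge 3\kao$ and set $C$ so large that $2R/C\le R/2$.
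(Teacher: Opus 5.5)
Your same-orientation argument is sound. Suppose $\spath_1,\spath_2$ are both close to forward lines $\col_{z_iy}$, or both close to reversed lines $\col_{yz_i}^{-1}$. Truncate at domain distance $\lambda R$ from $z_i$ and join the truncation points $w_1,w_2$ by a short special path. This works because boundedness keeps $d(w_1,w_2)=O(R)$ independently of $\lambda$, while $d(w_i,m)$ grows linearly in $\lambda$. It is essentially the claim inside the paper's proof.

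\textbf{The gap: the mixed case.} You never treat the case where $\spath_1$ is a subpath of a combing line, so $\spath_1\approx\col_{z_1y}$, and $\spath_2$ is a subpath of the inverse of one, so $\spath_2\approx\col_{yz_2}^{-1}$. This case genuinely occurs: $\mc P$ contains inverses of combing lines, and the bicombing is not assumed reversible.

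Boundedness only compares lines traversed in the same direction, so it says nothing about $\col_{z_1y}$ versus $\col_{yz_2}^{-1}$. Your bound $d(w_1,w_2)\le KR$ is therefore unjustified. In general $d(w_1,w_2)$ grows linearly in $t_0$, which destroys the balancing you rely on.

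Nor may you swap $\spath_2$ for $\col_{z_2y}$, because that line can run straight through $m$. For example, take the Cayley graph of $\Z^2$ with the bounded, consistent combing by horizontal-first L-shaped paths, and let $z_2=(0,0)$, $y=(N,N)$ with $N\gg R$, and $m=(3R,0)$. Then $\col_{yz_2}^{-1}$ goes up first and stays at distance $3R$ from $m$, while $\col_{z_2y}$ passes through $m$.

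\textbf{The fix: a bridge between the two orientations.}
\begin{enumerate}
\item Move $x$ along $\spath_2'=\col_{yz_2}^{-1}$ from $y$ to $z_2$ in steps of length at most $1$. By boundedness, $\col_{xy}$ moves by Hausdorff distance at most $2\kao$ per step.
\item At $x=y$ this line is at distance at least $R-c$ from $m$, with $c$ depending only on $(\lao,\kao)$. At $x=z_2$ it comes within $4R$ of $m$.
\item Hence some $x\in\spath_2'$ satisfies $R-c-2\kao\le d(\col_{xy},m)\le 4R$. Pick $z\in\col_{xy}$ with $d(z,m)\le 4R$.
\item Run your same-orientation argument for $\spath_1'$ and $\col_{zy}$, which share the endpoint $y$, with the avoidance radius lowered by a constant. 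This gives a $(3,\mc P)$--polygonal line from $z_1$ to $z$.
\item Run it again for $\col_{xz}$ and $\col_{xz_2}$, which share the starting point $x$. By quasi-consistency they are $\kao$--close to subpaths of $\col_{xy}$ and $\col_{yz_2}$, so they avoid $m$ up to a constant. This gives a $(3,\mc P)$--polygonal line from $z$ to $z_2$.
\end{enumerate}
The concatenation is the required $(6,\mc P)$--polygonal line. This is exactly where the $6$ in the statement comes from, so it is not slack, as your piece count suggests.
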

\begin{proof}
    We first prove the following claim, which states that the `set of combing paths' is slightly better than $(C', 3)$--avoidant for large enough $C'$. 
    \begin{claim}\label{claim:bic-avoid}
        There exists a constant $C'$ such that the following holds for all $R\geq C'/2$. Let $m\in X$ be a point and let $\qgot_1, \qgot_2$ be two combing lines for which $d(\qgot_1^-, m)\leq 4R, d(\qgot_2^-, m)\leq 4R,\qgot_1^+ = \qgot_2^+$ and $d(m, \qgot_1\cup \qgot_2)\geq R - 4\kao$. There exists a $(3, \mc P)$--polygonal line $\pgot$ from $\qgot_1^-$ to $\qgot_2^-$ with $\norm{\pgot}\leq C'R$ and $d(\pgot, m) > 2R/C'$.
    \end{claim}
    \textit{Proof of Claim}
    Let $\qgot_1, \qgot_2, m, R$ be as in the statement of the claim. By boundedness of the bicombing, for any point $x_1\in \qgot_1$, there exists a point $x_2\in \qgot_2$ with $d(x_1, x_2)\leq 8\kao R+\kao\leq 9\kao R$. Choosing $C'$ large enough, we get $R - 4\kao \geq 2R/C'$. 

    Fix a constant $K$, which will depend only on $\lao$ and $\kao$. If $\abs{\qgot_1}\leq KR$, then we can define $\pgot = \qgot_1\ast \qgot_2^{-1}$ and we have that $\norm{\pgot}\leq C'R$ as long as $C'$ is large enough compared to $\lao$ and $\kao$ (and $K$). Note that $d(m, \pgot) > R - 4\kao \geq 2R/C'$.

    \smallskip
    
    If $\abs{\qgot_1} > K R$, let $x_1 = \qgot(t_1)$ be the point on $\qgot_1$ whose domain distance from $\qgot_1^-$ is $KR$. Let $x_2\in \qgot_2$ be a point with $d(x_1, x_2)\leq 9\kao R$ and define $\pgot = \psub{\qgot_1}{\qgot_1^-, x_1}\ast [x_1, x_2]_{\mc P} \ast \psub{\qgot_2^{-1}}{x_2,\qgot_2^-}$. By repeatedly applying the triangle inequality and using that all quasi-geodesics are improved, we get $\norm{\pgot}\leq C'R$ as long as $C'$ is large enough compared to $\lao$ and $\kao$ (and $K$). Moreover, if $K$ is large enough compared to $\lao, \kao$, then $d([x_1, x_2]_{\mc P}, m)\geq d(x_1, m) -  \diam([x_1, x_2]_{\mc P}) > R$, implying $d(m, \pgot) > 2R/C'$.
    \hfill$\blacksquare$

    \smallskip

    By an analogous argument, we see that Claim~\ref{claim:bic-avoid} holds if both $\qgot_1$ and $\qgot_2$ are inverses of combing lines instead of combing lines. Now we have to deal with the general case. 
    
    \smallskip

    Let $C = 2C'$ and let $R\geq C/2$. Let $\spath_1, \spath_2\in \mc P$ be special paths and let $m \in X$ be point such that $d(m, \spath_1\cup \spath_2)\geq R, \spath_1^+=\spath_2^+ = y$ and $d(m, \spath_i^-)\leq 4R$ for $i\in \{1, 2\}$. For $i\in \{1, 2\}$, let $\spath_i'$ be a combing line (or an inverse thereof) with the same endpoints as $\spath_i$ and at Hausdorff distance at most $\kao$ from $\spath_i$ (this exists by quasi-consistency of the combing). Now, if both $\spath_i'$ are combing lines, Claim~\ref{claim:bic-avoid} concludes the proof. Similarly, if both $\spath_i'$ are inverses of combing lines, an analogue of Claim~\ref{claim:bic-avoid} concludes the proof. Hence, without loss of generality, we assume that $\spath_1'$ is a combing line and $\spath_2'$ is an inverse of a combing line. Note that $d(m, \spath_i')\geq R - \kao$ for $1\leq i \leq 2$. By a (coarse) continuity argument, there exists a combing line $\qgot$ from a point $x\in \spath_2'$ to $y$ such that $R - 3\kao \leq d(\qgot, m)\leq 4R$. Let $z\in \qgot$ be a point with $d(z, m)\leq 4R$. This is depicted in Figure~\ref{fig:bic-nav}.

    \begin{figure}[ht]
        \centering
        \includegraphics[width=0.6\linewidth]{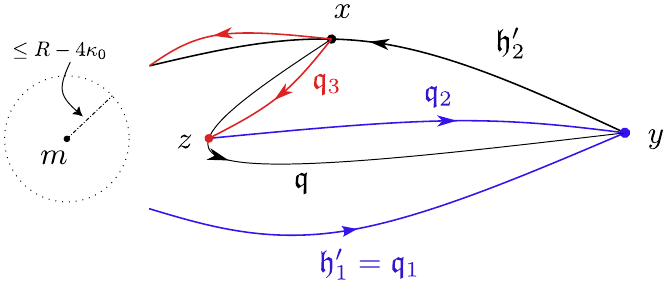}
        \caption{The arrows indicate which direction comes from a combing line.}
        \label{fig:bic-nav}
    \end{figure}
 
    Let $\qgot_1 = \spath_1'$, let $\qgot_2$ be a combing line from $z$ to $y$, let $\qgot_3$  a combing line from $x$ to $z$ and let $\qgot_4$ a combing line from $x$ to $\spath_2^-$. By quasi-consistency, we have that $d(m, \qgot_i)\geq R -4\kao$ for $1\leq i \leq 4$. Applying Claim~\ref{claim:bic-avoid} to $\qgot_1$ and $\qgot_2$ and then to $\qgot_3$ and $\qgot_4$ and concatenating the two obtained paths, we get a path $\pgot$ form $\spath_1^-$ to $\spath_2^-$ with $\norm{\pgot}\leq 2C'R$ and which avoids $B(m, 2R/C')$. Since $C = 2C'$, this concludes the proof.
\end{proof}

We can now put everything together and obtain the following. 

\begin{proposition}\label{prop:bicombing-is-navigable}
    Let $\mc P$ be the combing path system induced by a $\kao$--quasi-consistent $\kao$--bounded $(\lao, \kao)$--quasi-geodesic bicombing on a geodesic metric space $X$. Then $\mc P$ is $(C, 12)$--navigable for some constant $C$ only depending on $(\lao, \kao)$.
\end{proposition}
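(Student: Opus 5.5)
The plan is to combine the two preceding lemmas with Proposition~\ref{prop:bounded-avoidant-implies-navigable}. Lemma~\ref{lem:bicombng-is-bounded} gives that $\mc P$ is $3\kao$--bounded, and Lemma~\ref{lem:bicombing-is-avoidant} gives that $\mc P$ is $(C_1,6)$--avoidant for some $C_1$ depending only on $(\lao,\kao)$. We apply Proposition~\ref{prop:bounded-avoidant-implies-navigable} with the heap of paths $\mc Q=\mc P$. Here $\mc P$ is undirected by Definition~\ref{defn:combing_path_system}, and $\overline{\mc P}=\mc P$ because a path system equals its consistent closure. This yields $(C,2\cdot 6)=(C,12)$--navigability, which is exactly the claimed $k=12$.

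The only care needed is with the constants in the hypothesis $C\geq 3\kao'\geq 3$ of Proposition~\ref{prop:bounded-avoidant-implies-navigable}, where $\kao'=3\kao$ is the boundedness constant.
\begin{enumerate}
\item Replacing $\kao$ by $\max\{\kao,1\}$ keeps the bicombing $\kao$--bounded and $\kao$--quasi-consistent, since both conditions are monotone in the constant. So we may assume $\kao\geq 1$, and then $3\kao'\geq 3$.
\item Set $C=\max\{C_1, 9\kao\}$.
\item $(C_1,6)$--avoidance implies $(C,6)$--avoidance for $C\geq C_1$, but this needs a small check because $C$ enters the definition in three places. The range $R\geq C/2$ only shrinks as $C$ grows. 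The length bound $\norm{\pgot}\leq C_1R\leq CR$ is preserved. The avoidance condition $d(m,\pgot)>2R/C_1\geq 2R/C$ is also preserved.
\item Likewise, $3\kao$--boundedness implies $\kao''$--boundedness for any $\kao''\geq 3\kao$, so the constants can be matched.
\end{enumerate}

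There is no real obstacle, since both ingredients are already proved. The only step needing attention is the monotonicity of the avoidance and boundedness constants in item 3, which ensures a single $C$ works. The resulting $C$ depends only on $(\lao,\kao)$, because $C_1$ does.
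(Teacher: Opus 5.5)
Your proposal is correct and follows exactly the paper's argument: combine Lemma~\ref{lem:bicombng-is-bounded} ($3\kao$--boundedness) and Lemma~\ref{lem:bicombing-is-avoidant} ($(C,6)$--avoidance) with Proposition~\ref{prop:bounded-avoidant-implies-navigable} to get $(C,12)$--navigability. Your extra bookkeeping correctly supplies the hypothesis $C\geq 3\kao\geq 3$, which the paper leaves implicit: you enlarge $C$ to at least $9\kao$ with $\kao\geq 1$, and you check that avoidance is monotone in $C$.
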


\begin{proof}
    Lemma~\ref{lem:bicombng-is-bounded} and Lemma~\ref{lem:bicombing-is-avoidant} show that $\mc P$ is $3\kao$--bounded and $(C, 6)$--avoidant for some constant $C$ only depending on $(\lao, \kao)$. Then Proposition~\ref{prop:bounded-avoidant-implies-navigable} concludes the proof.
\end{proof}

\subsubsection{Geodesic combings}

We show that if a space admits a bounded geodesic combing then any geodesic path system that contains the combing heap is navigable. Combined with the results from Section~\ref{sec:globalizations}, this yields that such spaces are Morse dichotomous, that is, all their Morse geodesics are strongly contracting.

\begin{lemma}\label{lem:combing-avoidance}
    Let $\mc P$ be the combing path system of a $\kao$--bounded geodesic combing on a geodesic metric space $X$. Then $\mc P$ is $(C, 5)$--avoidant for a constant $C$ only depending on $\kao$.
\end{lemma}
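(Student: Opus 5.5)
We mimic the proof of Lemma~\ref{lem:bicombing-is-avoidant}. Geodesic combings buy us two things. First, subpaths of geodesics are geodesics, so quasi-consistency is replaced by the exact statement that $\overline{\mc Q}$ consists of geodesics. Second, because combing lines are parametrised by arc length, boundedness \eqref{eq:dist1} compares points at equal times. Let $R\geq C/2$, and let $y,z_1,z_2,m$ be points with special paths $\spath_i$ from $z_i$ to $y$, where $d(m,z_i)\leq 4R$ and $d(\spath_i,m)>R$. Each $\spath_i$ is a subsegment of a combing line $\qgot_{ab}$ or of its inverse. The difficulty is that we have no consistency: a subsegment of a combing line need not be the combing line between its endpoints. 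The only tool available is therefore boundedness of lines sharing their \emph{first} point.

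\textbf{Step 1 (same basepoint case).} Suppose two combing lines $\qgot_{y z_1}$ and $\qgot_{y z_2}$ issue from the common point $y$ and both avoid $B(m,R-c)$. By \eqref{eq:dist1} they stay within $\kao d(z_1,z_2)+\kao\leq 9\kao R$ of each other at equal times. Take the points $x_i$ at time $\max(0,|\qgot_{yz_i}|-KR)$ on each line, so at distance about $KR$ from $z_i$. Join them by a geodesic $[x_1,x_2]$, which we may take to be a combing line. Provided $K\gg \kao$, this geodesic stays far from $m$, because $d(x_i,m)\geq KR-4R$ while $\diam[x_1,x_2]\leq 9\kao R+|\,|\qgot_{yz_1}|-|\qgot_{yz_2}|\,|\leq (9\kao+8)R$. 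The result is a $3$--polygonal line
\[
\text{suffix of }\qgot_{yz_1}^{-1}\ast[x_1,x_2]\ast\text{suffix of }\qgot_{yz_2}
\]
of length $O(KR)$ that avoids $B(m,R-c)$. If the lines are short ($\leq KR$), we simply concatenate them. All pieces are subpaths of combing lines or of their inverses, so they lie in $\overline{\mc Q}$.

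\textbf{Step 2 (reduction).} The given $\spath_i$ are arbitrary geodesic subsegments, not lines from $y$. We therefore replace $\spath_i$ by the combing line $\qgot_{yz_i}$, and need this line also to avoid a ball around $m$. We do this by a sliding argument as in Proposition~\ref{prop:bounded-avoidant-implies-navigable}. Move a point $w$ along $\spath_i^{-1}$ from $y$ towards $z_i$, in unit steps. At $w=y$ the line $\qgot_{yy}$ is trivial and far from $m$, and consecutive lines $\qgot_{yw}$ are $2\kao$--Hausdorff close by boundedness. If some $\qgot_{yw}$ first comes $(R/2)$--close to $m$, then at that moment $\qgot_{yw}$ avoids $B(m,R/2-2\kao)$ and passes within distance $\leq 4R$ of $m$. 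In that case we pick a point $u\in\qgot_{yw}$ near $m$, and we use the geodesic segment of $\spath_i$ from $z_i$ to $w$, whose distance from $m$ exceeds $R$. This splits the problem into two pieces. One piece is handled by Step 1 applied to lines from $y$. The other piece uses the subsegment of $\spath_i$ together with a combing line from its endpoint $w$: this is again a same-basepoint configuration, with basepoint $w$, by taking $\qgot_{wz_i}$ and comparing it with $\qgot_{wu}$.

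Counting pieces gives at most $5$ segments after merging adjacent ones: two suffix pieces, one bridge, and the two connectors coming from the slide. Constants depend only on $\kao$, since geodesics have quasi-geodesic constants $(1,0)$.

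The main obstacle is Step 2. One must control the switch between the given geodesics $\spath_i$, which are arbitrary subsegments, and genuine combing lines from a common basepoint, while keeping the number of pieces at $5$ and the ball radius at $2R/C$. I would first try to avoid the slide entirely by directly comparing $\qgot_{z_iy}$ with $\spath_i$. If the combing is not reversible this fails, and then the sliding argument above is the fallback.
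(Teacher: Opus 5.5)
\textbf{Verdict: genuine gap in Step 2.} Your Step 1 is sound and is the bridging step of the paper's argument, and the five-piece shape you aim for is exactly the paper's. The gap is Step 2, which carries the content, and as written it fails.

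\begin{itemize}
\item In the ``other piece'' you apply Step 1 at basepoint $w$ to $\qgot_{wz_i}$ and $\qgot_{wu}$. Neither satisfies the hypothesis of Step 1. The line $\qgot_{wz_i}$ is an arbitrary geodesic ending at $z_i$, which is within $4R$ of $m$. Since there is no consistency, nothing ties it to the subsegment of $\spath_i$ between $w$ and $z_i$, so it may pass through $m$; in the $\ell^1$ plane this genuinely happens. The line $\qgot_{wu}$ ends at a point $u$ you chose near $m$.
\item Even for the first piece, Step 1 needs the far endpoints to lie within $O(R)$ of $m$, so that boundedness yields a bridge of length $O(\kao R)$. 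Your slide gives no control on $d(w,m)$ at the stopping point.
\item Your preferred first attempt, comparing $\qgot_{z_iy}$ with $\spath_i$, fails even for reversible combings. The path $\spath_i$ is an arbitrary subsegment of a combing line, and there is no consistency to relate it to $\qgot_{z_iy}$.
\end{itemize}

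\textbf{The missing idea.} An elementary consequence of geodesicity removes the need for any slide.

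\begin{itemize}
\item If $d(y,z_i)$ is of order $\kao R$ for some $i$, simply take $\spath_1\ast\spath_2^{-1}$.
\item Otherwise, let $x_i\in\spath_i$ be the point with $d(z_i,x_i)=5R$, and let $\alpha_i$ be the combing line from $y$ to $x_i$. Since $x_i$ lies on a geodesic from $z_i$ to $y$, every $p\in\alpha_i$ satisfies
\[
d(z_i,p)\geq d(z_i,y)-d(y,p)=d(z_i,x_i)+d(x_i,p)\geq 5R,
\]
so $d(p,m)\geq R+d(x_i,p)$.
\item Thus $\alpha_i$ avoids $B(m,R)$ for free, and $d(x_i,m)\leq 9R$. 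Your Step 1 therefore applies to $\alpha_1,\alpha_2$. Cut them at points $u_i$ at distance of order $\kao R$ from $x_i$; these are far from $m$ by the same inequality. Join $u_1,u_2$ by a special path whose length is $O(\kao R)$ by boundedness.
\end{itemize}

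This produces the paper's line
\[
\psub{\spath_1}{z_1,x_1}\ast\psub{\alpha_1^{-1}}{x_1,u_1}\ast[u_1,u_2]_{\mc P}\ast\psub{\alpha_2}{u_2,x_2}\ast\psub{\spath_2^{-1}}{x_2,z_2}.
\]
The given paths $\spath_i$ are used only on the stretch of length $5R$ next to $z_i$, where avoidance of $m$ is a hypothesis. This is what replaces consistency. The same inequality also shows your slide could not trigger before $d(w,z_i)\leq 9R/2$. That is exactly what Step 2 would have needed to control $d(w,m)$, but at that point the slide is superfluous.
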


\begin{figure}
    \centering
    \includegraphics[width=0.5\linewidth]{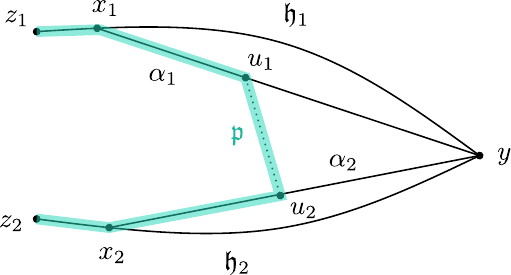}
    \caption{Construction of the path $\pgot$ in the proof of Lemma~\ref{lem:combing-avoidance}.}
    \label{fig:geodesic-combing}
\end{figure}

\begin{proof} 
    We show this for $C = 100(\kao+1)$. Let $R\geq C/2$. Let $m\in X$, and for $i = 1, 2$, let $\spath_i\in \mc P$ from $z_i$ to $y$ be special paths as in the definition of avoidance. If $d(y, z_i)\leq (20\kao + 5)R$ for $i = 1$ or $i = 2$, then we can define $\pgot = \spath_1\ast \spath_2^{-1}$. By the triangle inequality, $\norm{\pgot}\leq CR$.

    So now assume that $d(y, z_i)\geq (20\kao + 5)R$ for both $i$. For $i = 1, 2$, do the following. Define $x_i$ on $\spath_i$ as the point at distance $5R$ from $z_i$, define $\alpha_i$ as the combing line from $y$ to $x_i$ and define $u_i$ as the point on $\alpha_i$ at distance $20\kao R$ from $x_i$. This is depicted in Figure~\ref{fig:geodesic-combing}. Define the $(5, \mc P)$--polygonal line $\pgot$ as $\psub{\spath_1}{z_1, x_1}\ast\psub{\alpha_1^{-1}}{x_1, u_1}\ast [u_1, u_2]_{\mc P} \ast \psub{\alpha_2}{u_2, x_2}\ast \psub{\spath_2^{-1}}{x_2, z_2}$. For $\alpha_1$ and $\alpha_2$, it follows from the triangle inequality that $d(m, \alpha_i)\geq d(z_i, x_i)-d(z_i, m)\geq R$. By boundedness, $d(u_1, u_2)\leq \kao d(x_1, x_2)\leq 18\kao R + \kao$. Hence, $d([u_1, u_2]_{\mc P}, m)\geq R$ by the triangle inequality as well. It remains to show that $\norm{\pgot}\leq CR$, which again follows from the triangle inequality.
\end{proof}

\begin{proposition}\label{prop:combing-navigability}
    Let $\mc Q$ be the combing heap of a $\kao$--bounded geodesic combing of a geodesic metric space $X$. Let $\mc P\supseteq \mc Q$ be a geodesic system of paths, then $\mc P$ is $(C, 42)$ navigable for a constant $C$ only depending on $\kao$.   
\end{proposition}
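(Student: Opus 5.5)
The plan is to reduce to the criterion of Proposition~\ref{prop:bounded-avoidant-implies-navigable}, combined with the avoidance estimate of Lemma~\ref{lem:combing-avoidance}. That proposition cannot be applied to $\mc P$ directly, because $\mc P$ is an arbitrary geodesic path system and need not be bounded. It is not clear that it applies to $\mc Q$ either. Boundedness of the combing, $\dist(\col_{xy}(t),\col_{xy'}(t))\leq \kao\dist(y,y')+\kao$, controls sliding the terminal endpoint of a combing line: if $d(y,y')\leq 1$ then $d_{\mathrm{Haus}}(\col_{xy},\col_{xy'})\leq 2\kao+1$, say. It gives no control over the initial endpoint, which is what one needs for inverses. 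So I will carry out the sliding argument by hand, using only terminal-endpoint slides, and then feed the result into Lemma~\ref{lem:combing-avoidance}.

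\textbf{Step 1 (replace each geodesic by combing pieces meeting an annulus).} Fix $R\geq C$, a point $m$, and an $(n,\mc P)$--polygonal line $\alpha=\alpha_1\ast\cdots\ast\alpha_n$ with $d(\alpha,m)\geq R$ and endpoints within $2R$ of $m$. Let $\alpha_i$ run from $a$ to $b$.
\begin{itemize}
\item Consider the family of combing lines $\col_{b,\alpha_i(t)}$, with $t$ running from the $b$-end to the $a$-end of $\alpha_i$. At the $b$-end the line is trivial. Consecutive members (parameters differing by $1$) are at Hausdorff distance $\leq 2\kao+1$.
\item If no member comes within $R-3\kao$ of $m$, I replace $\alpha_i$ by the single path $\col_{ba}^{-1}$, which lies in the combing path system.
\item Otherwise, take the first parameter $t_0$ at which the line enters the $R$--neighbourhood of $m$. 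I then write $\alpha_i$ as a two-piece line $\isub{\alpha_i}{a,\alpha_i(t_0)}\ast \col_{b,\alpha_i(t_0)}^{-1}$. Both pieces stay at distance $>R-3\kao$ from $m$, and the second contains a point $w_i$ with $R-3\kao<d(w_i,m)\leq R$.
\end{itemize}
The first piece is a subgeodesic of $\alpha_i$ and is handled by iterating the same construction, sliding along combing lines issuing from $\alpha_i(t_0)$. Because $\alpha_i$ is geodesic, $d(\alpha_i(t_0),m)$ is controlled, and I expect at most one further slide to be needed before every piece meets the annulus $\{R-3\kao<d(\cdot,m)\leq R\}$. This mirrors the central and side slides in the proof of Proposition~\ref{prop:bounded-avoidant-implies-navigable}.

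\textbf{Step 2 (apply avoidance).} After Step 1, $\alpha$ has been replaced by a polygonal line with a bounded number of pieces per original segment, each piece in the combing path system, avoiding $B(m,R-3\kao)$, with consecutive annulus points $w_j$. For consecutive annulus points I exhibit the configuration in the definition of avoidance: two combing-system paths ending at a common point $y$, namely the junction of the two pieces, starting at $w_j,w_{j+1}$, with $d(w_j,m)\leq 4R$, staying outside $B(m,R')$ for $R'=R-3\kao\geq R/2$. Lemma~\ref{lem:combing-avoidance} then gives a $(5,\mc P)$--polygonal line from $w_j$ to $w_{j+1}$ of length $\leq CR'$ avoiding $B(m,2R'/C)\supseteq B(m,R/C)$. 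The two end segments, from $\alpha^\pm$ to the first and last annulus points, are handled the same way, since $\alpha^\pm$ lie within $2R\leq 4R$ of $m$. Concatenating gives a polygonal line with at most $kn$ pieces, for a universal $k$, of total length $\leq CnR$, avoiding $B(m,R/C)$. The bookkeeping of the number of slides per segment, plus the end pieces, is what should produce the constant $42$; if my count comes out different I would accept any universal $k$ and only then optimise.

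\textbf{Main obstacle.} The delicate point is Step 1: ensuring that the iterated slides terminate after a \emph{uniformly bounded} number of pieces per segment $\alpha_i$ while staying outside $B(m,R-O(\kao))$. Only terminal endpoints of combing lines may be moved, so every slide must be arranged so that the moving endpoint is the terminal one. I would first try always basing the combing lines at the far endpoint of the current piece, and using that $\alpha_i$ is a geodesic. This forces the entry point into the $R$--neighbourhood to be reached before the piece becomes too long, which is what bounds the number of repetitions.
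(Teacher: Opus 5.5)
Your Step~1 has a genuine gap, and it is more basic than the termination count you flag. Sliding the junction \emph{along} $\alpha_i$ gives no mechanism that forces any piece into the annulus $\{R-3\kao<d(\cdot,m)\leq R\}$. The segment $\alpha_i$ already stays at distance $\geq R$ from $m$, and combing lines between points of $\alpha_i$ need not approach $m$ at all.

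Take $X=\mathbb R^2$ with the straight-line combing, $m=0$, and $\alpha$ the three segments $(2R,0)\to(2R,L)\to(-2R,L)\to(-2R,0)$ with $L\gg R$. Every line $\col_{b,\alpha_i(t)}$ is a subsegment of $\alpha_i$, so your procedure returns $\alpha$ itself, of length about $2L$ rather than $\leq 3CR$. It produces no annulus points, so Step~2 has nothing to connect. Avoidance also cannot be applied at the junctions $(\pm 2R,L)$, since they lie at distance about $L$ from $m$, not $\leq 4R$.

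The same objection applies to your fallback of replacing $\alpha_i$ by $\col_{ba}^{-1}$: that piece need not meet the annulus, whereas Step~2 tacitly assumes every piece does. Separately, nothing in your argument bounds the number of iterations by a constant independent of $R$; ``at most one further slide'' is only an expectation.

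The missing idea is the \emph{central slide} of Proposition~\ref{prop:bounded-avoidant-implies-navigable}. The junction must be pushed along a geodesic $[m,x]$ \emph{towards} $m$ until one of the two pieces enters $B(m,R)$. Only then does one slide along that piece, and only then does avoidance apply.

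To start this from an arbitrary geodesic $\alpha_i$ from $a$ to $b$, the paper first chooses $x\in\alpha_i$ such that $\col_{ax}$ and $\col_{bx}$ both stay at distance $\geq R/4-2\kao$ from $m$. Because $x$ lies on a geodesic from $a$ to $b$, the concatenation $\col_{ax}\ast\col_{bx}^{-1}$ is a geodesic through $x$, so at least one of the two lines avoids $B(m,R/4)$. A left/right-point argument, together with boundedness, then produces an $x$ where both lines do; the trivial line at $a$, resp.\ $b$, settles the ends. After rescaling to $R'\approx R/8$ and adding connectors to points at distance $2R'$ from $m$, navigability of $\mc Q$ (Proposition~\ref{prop:bounded-avoidant-implies-navigable} with Lemma~\ref{lem:combing-avoidance}) finishes the proof.

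Your worry about initial endpoints is well founded: the paper's assertion that $\mc Q$ is $2\kao$--bounded is not literally justified for inverses of combing lines. It is cured by orientation, however, not by a different sliding scheme. In the decomposition above both pieces are combing lines \emph{ending} at the junction. Hence every central and side slide moves only terminal endpoints, which boundedness of the combing controls up to Hausdorff distance $2\kao$.
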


Most notably, we can choose $\mc P$ as the combing path system or the set of all geodesics in $X$.

\begin{proof}
    
   First observe the following claim which will help us replace geodesics form $\mc P$ by $(2, \mc Q)$--polygonal lines.

    \begin{claim}\label{claim:replacement-geodesics}
        Let $\gamma$ be a geodesic with $d(\gamma, m)\geq R$, then there exists a point $x$ on $\gamma$ and geodesics $\beta_1, \beta_2\in \mc Q$ from $\gamma^-$ to $x$ and from $x$ to $\gamma^+$ with $d(\beta_i, m)\geq R/4 - 2\kao$ for $i = 1,2$. 
    \end{claim}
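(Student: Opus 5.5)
The approach is a discrete continuity argument along $\gamma$. It rests on one geometric observation: at every point of $\gamma$, at least one of the two combing lines joining that point to the endpoints of $\gamma$ stays far from $m$.

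\textbf{Key observation.} For a point $x\in\gamma$, let $\beta^-_x\in\mc Q$ be the combing line from $\gamma^-$ to $x$. Let $\beta^+_x\in\mc Q$ be the inverse of the combing line from $\gamma^+$ to $x$; it runs from $x$ to $\gamma^+$ and lies in $\mc Q$ because the combing heap contains inverses. I claim that $d(\beta^-_x,m)\geq R/4$ or $d(\beta^+_x,m)\geq R/4$.

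Suppose instead that there are points $p\in\beta^-_x$ and $p'\in\beta^+_x$ with $d(p,m)<R/4$ and $d(p',m)<R/4$, so that $d(p,p')<R/2$. All paths involved are geodesics, and $x\in\gamma$. This gives
\[
d(\gamma^-,p)+d(p,x)+d(x,p')+d(p',\gamma^+)=d(\gamma^-,x)+d(x,\gamma^+)=d(\gamma^-,\gamma^+).
\]
On the other hand, the triangle inequality gives
\[
d(\gamma^-,\gamma^+)\leq d(\gamma^-,p)+d(p,p')+d(p',\gamma^+)<d(\gamma^-,p)+R/2+d(p',\gamma^+).
\]
Comparing the two, $d(p,x)+d(x,p')<R/2$. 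Hence $d(x,m)\leq d(x,p)+d(p,m)<R/2+R/4<R$, which contradicts $d(\gamma,m)\geq R$.

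\textbf{Discrete sliding.} Parametrise $\gamma\colon[0,T]\to X$ and take the parameters $s_j=\min\{j,T\}$ for $j=0,1,\dots,\lceil T\rceil$. The two ends behave well: at $s_0=0$ the path $\beta^-_{\gamma(0)}$ is trivial and sits at $\gamma^-$, so its distance to $m$ is at least $R\geq R/4$. At $s=T$ the path $\beta^+_{\gamma(T)}$ is trivial, so it too is far.

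If $\beta^+_{\gamma(s_j)}$ is $R/4$--far for some $j$ with $\beta^-_{\gamma(s_j)}$ also far, we take $x=\gamma(s_j)$ and are done. Otherwise, let $j$ be maximal such that $\beta^-_{\gamma(s_j)}$ is $R/4$--far. If $j$ is the last index we are in the previous situation, so assume it is not. Then $\beta^-_{\gamma(s_{j+1})}$ is not far, and by the key observation $\beta^+_{\gamma(s_{j+1})}$ is $R/4$--far.

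Since $d(\gamma(s_j),\gamma(s_{j+1}))\leq 1$, boundedness of the combing applies to the two combing lines from the common basepoint $\gamma^+$. We get $d(\mathfrak q_{\gamma^+\gamma(s_j)}(t),\mathfrak q_{\gamma^+\gamma(s_{j+1})}(t))\leq 2\kao$ for every $t\in\R$. Because both lines are extended by constants, every point of one line equals its value at some time $t$. So every point of $\beta^+_{\gamma(s_j)}$ is within $2\kao$ of $\beta^+_{\gamma(s_{j+1})}$, giving $d(\beta^+_{\gamma(s_j)},m)\geq R/4-2\kao$.

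Setting $x=\gamma(s_j)$, $\beta_1=\beta^-_x$ and $\beta_2=\beta^+_x$ then proves the claim.

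\textbf{Main obstacle.} The only delicate step is the key observation. The point is that the estimate must use the fact that the lines are geodesics ending on the geodesic $\gamma$, rather than any combing property. The sliding step itself only needs the pointwise boundedness estimate, with the constant extension handling the small difference in lengths of the two compared lines.
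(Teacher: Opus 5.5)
Your proof is correct and follows the paper's argument essentially step for step. Both use the same observation that, at each point of $\gamma$, one of the two combing geodesics to the endpoints stays $R/4$--far from $m$, proved by the same geodesic-plus-triangle-inequality computation. Both then locate a switch between ``left'' and ``right'' points at distance at most $1$ and transfer the estimate using boundedness of the combing; your explicit treatment of orientations and of the constant extensions of combing lines is just a slightly more careful rendering of the paper's final step.
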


    \textit{Proof of Claim.}    
        Let $x\in \gamma$ and define $\beta_{1,x}, \beta_{2,x}$ as combing lines from $\gamma^-$ to $x$ and from $\gamma^+$ to $x$ respectively. In particular, $\beta_x = \beta_{1, x}\ast \beta_{2, x}^{-1}$ is a geodesic and contains $x$. We first argue that $d(m, \beta_{i, x})\geq R/4$ for at least one of $i = 1$ and $i = 2$. Assume by contradiction that there exist points $y_i$ on $\beta_i$ for $i = 1, 2$ with $d(y_i, m)< R/4$. Now, $\psub{\beta_x}{y_1, y_2}$ is a geodesic from $y_1$ to $y_2$ and contains $x$. By the triangle inequality we get $d(x, m)\leq d(y_1, m) + d(y_1, x)\leq 2d(y_1, m) + d(y_2, m) \leq 3R/4 < R$. This contradicts $d(\gamma, m) \geq R$.

        We call a point $x\in \gamma$ a \emph{left (right) point} if $d(\beta_{1, x}, m)\geq R/4$ ($d(\beta_{2, x}, m)\geq R/4$). The above shows that each point is a left point or right point (or both). Moreover, $\gamma^-$ and $\gamma^+$ have to be left and right points respectively. Consequently, there exists points $x, x'\in \gamma$ with $d(x, x')\leq 1$ such that $x$ is a left point and $x'$ is a right point. 

        Since $x$ is a left point, we have $d(\beta_{1, x}, m)\geq R/4$. Since $x'$ is a right point we have $d(\beta_{2,x'}, m)\geq R/4$. Finally, by boundedness of the combing, we have $d(\beta_{2, x}, m) \geq R/4 - 2\kao$, hence we can take $\beta_1 = \beta_{1, x}$ and $\beta_2 = \beta_{2, x}$. 
    \hfill$\blacksquare$

    \smallskip

    Since the combing is geodesic, the trivial paths in the combing heap $\mc Q$ are also combing lines. Thus, $\mc Q$ inherits $2\kao$--boundedness directly from the combing being $\kao$--bounded. Moreover, by Lemma~\ref{lem:combing-avoidance}, $\overline{\mc Q}$, which is the combing path system, is $(C', 5)$--avoidant for a constant $C'\geq 3\kao\geq 3$ only depending on $\kao$. Consequently, by Proposition~\ref{prop:bounded-avoidant-implies-navigable}, $\mc Q$ is $(C', 10)$--navigable.
    
    Let $C = 32\max\{ C', \kao\}$. We want to show that $\mc P$ is $(C, 42)$--navigable. Let $R\geq C$, $m \in X$ and $\alpha = \alpha_{1}\ast\alpha_{2} \ldots \ast\alpha_{n}$ be an $(n, \mc P)$--polygonal line as in the definition of navigability.

    Define $R' = (R/4 - 2\kao)/2 \geq R /16$. We can use Claim~\ref{claim:replacement-geodesics} to replace each of the $\alpha_i$ by a $(2, \mc Q)$--polygonal line to obtain a $(2n, \mc Q)$--polygonal line $\beta = \beta_1\ast \ldots\ast \beta_{2n}$ from $\alpha^-$ to $\alpha^+$ with $d(\beta, m)> R'$. Further, let $u_-, u_+$ be points at distance $2R'$ from $m$ closest to $\beta^-$ and $\beta^+$. Now, $\beta' = [u_-, \alpha^-]_{\mc Q}\ast \beta \ast [\alpha^+, u_+]_{\mc Q}$ is a $(2n+2, \mc Q)$--polygonal line from $u_-$ to $u_+$ and has $d(\beta', m) > R'$. Since $\mc Q$ is $(C', 10)$--navigable, there exists a $(10(2n+2), \overline{\mc Q})$--polygonal line $\pgot$ form $u_-$ to $u_+$ with $d(\pgot, m) > R'/C'$ and $\norm{\pgot}\leq (2n+2)C'R'$. Let $\pgot' = [\alpha^-, u_-]_{\mc Q} \ast \pgot \ast [u_+, \alpha^+]_{\mc Q}$, this is a $(42n, \overline{\mc Q})$--polygonal line. Moreover, $\norm{\pgot'}\leq \norm{\pgot}+2R\leq (2n+2)C'R'+2R\leq nCR$. Hence $\mc P$ is $(42, C)$--navigable.
\end{proof}

We finish this subsection by recording that if the path system consisting of all geodesics is navigable, then the space is Morse dichotomous. As a corollary, we obtain that spaces equipped with a geodesic combing are Morse dichotomous.

\begin{proposition}\label{lem:geodesic-path-navigable-implies-morse-dichotomous}
    Let $X$ be a geodesic metric space such that the path system $\mc P$ consisting of all geodesics in $X$ is navigable. Then $X$ is Morse dichotomous, that is, for all Morse gauges $M$ there exists a constant $C$ such that all $M$--Morse geodesics are $C$--contracting.
\end{proposition}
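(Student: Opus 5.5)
We want: if the geodesic path system $\mc P$ is navigable, then every $M$--Morse geodesic is $C$--strongly contracting with $C=C(M)$. The plan is to pass from Morse to $\mc P$--contracting via the earlier machinery, and then from $\mc P$--contracting to strong contraction using that $\mc P$ contains \emph{all} geodesics.

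\textbf{Step 1 (Morse $\Rightarrow$ $\mc P$--contracting).} Let $\gamma$ be an $M$--Morse geodesic. It is a special path of $\mc P$, and $\mc P$ is undirected. By Lemma~\ref{lemma:localmltg-implies-path-avoidence}, $\gamma$ is $(R;\eps_0,A_0;3,\mc P)$--weakly polygonally Morse, where $\eps_0,A_0$ are the constants of Theorem~\ref{thm:combined-wpm-results} and $R$ depends only on $M$. Case \eqref{P:special-path} (or \eqref{P:globally-wpm}) of Theorem~\ref{thm:combined-wpm-results} then shows that $\gamma$ is $\mc P$--contracting with a constant $C_1$ depending only on $M$ and the navigability constants. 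Equivalently, one can invoke Corollary~\ref{cor:morse-implies-mccontracting} directly.

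\textbf{Step 2 ($\mc P$--contracting $\Rightarrow$ strongly contracting).} Let $\pi$ be the projection provided by Definition~\ref{def:contracting}, and let $\pi_\gamma$ denote the closest point projection.

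First, $\pi$ and $\pi_\gamma$ are uniformly close. Take $x\in X$ and $p\in\pi_\gamma(x)$, and let $[x,p]$ be a geodesic; it belongs to $\mc P$. Since $\pi(p)$ is within $C_1$ of $p$, suppose $d(\pi(x),\pi(p))\ge C_1$. Then $[x,p]$ passes within $C_1$ of $\pi(x)\in\gamma$. Every point $w\in[x,p]$ satisfies $d(w,\gamma)\ge d(x,\gamma)-d(x,w)=d(w,p)$, so a point of $[x,p]$ near $\gamma$ must be near $p$. Hence $d(\pi(x),p)\le 3C_1$ in all cases.

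Next, let $B=B(x,r)$ be a ball disjoint from $\gamma$, so $r\le d(x,\gamma)$, and let $y\in B$. Suppose $d(\pi(x),\pi(y))\ge C_1$. The geodesic $[x,y]\in\mc P$ then comes within $C_1$ of $\pi(x)$. However, every point of $[x,y]$ lies in $B$, so its distance to $\gamma$ is at least $d(x,\gamma)-r\ge 0$; this alone gives no contradiction. The argument has to use a point $w\in[x,y]$ with $d(w,\pi(x))\le C_1$. Then $d(x,w)\le r$ and $d(x,\gamma)\ge r$, while $d(x,\pi(x))\le d(x,\gamma)+3C_1$. Now apply contraction to the pair $x,w$ instead. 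Since $d(w,\gamma)\le C_1$, we get $\pi(w)$ within $O(C_1)$ of $w$. Moreover $d(x,w)\le r\le d(x,\gamma)$, which forces $w$ to be near the closest point $\pi_\gamma(x)$ region up to bounded error. The same reasoning applied to the subsegment of $[x,y]$ near $\pi(y)$, again a geodesic, shows $\pi(y)$ is also within bounded distance of $w$-type points reached from $x$ within distance $r$. This yields $\diam\pi_\gamma(B)\le C$.

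\textbf{Main obstacle.} The hardest part is this final ball estimate. The cleanest route is to use the standard fact that the bounded geodesic image property for \emph{all} geodesics implies strong contraction (\cite[Corollary~7.2]{ACHG:contraction_morse_divergence} and the surrounding discussion). I would try to cite or adapt that first. Failing that, I would prove it directly by projecting the midpoint-free subsegments, using the inequality $d(w,\gamma)\ge d(x,\gamma)-d(x,w)$ as above.
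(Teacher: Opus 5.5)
Your Step 1 and the first half of Step 2 match the paper. The paper also goes Morse $\Rightarrow$ weakly polygonally Morse $\Rightarrow$ $\mc P$--contracting; you pass through the $3$--gon version or Corollary~\ref{cor:morse-implies-mccontracting}, while the paper applies Proposition~\ref{prop:local_poly_P_contracting} directly with $7k$--gons, and either works. Your comparison of $\pi$ with the closest point projection, using $d(w,\gamma)=d(w,p)$ for $w\in[x,p]$, is also the paper's argument; it actually gives the bound $2C_1$.

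The gap is the ball estimate, which you yourself flag as unresolved, and it is the whole content of Step 2. You assert that a point of $[x,y]$ at distance at most $d(x,\gamma)$ from $x$ and near $\gamma$ is ``near the closest point region up to bounded error''. For the point $w$ near $\pi(x)$ this is trivial by definition. For the point of $[x,y]$ near $\pi(y)$ it is exactly what must be proved, and ``the same reasoning'' supplies nothing, since you have given no reasoning that controls that point. The fallback of citing \cite{ACHG:contraction_morse_divergence} is a hedge rather than an argument. ``Projecting the midpoint-free subsegments'' does not correspond to anything here.

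The missing step is a one-dimensional observation along $[x,y]$ that uses both points supplied by contraction. It is the same trick you already used in the projection comparison.
\begin{itemize}
\item Suppose $d(\pi(x),\pi(y))\ge 3C_1+1$. With your $2C_1$ comparison this holds whenever $d(\pi_\gamma(x),\pi_\gamma(y))\ge 7C_1+1$.
\item Contraction gives $z,z'\in[x,y]$ with $d(z,\pi(y))\le C_1$ and $d(z',\pi(x))\le C_1$, so $d(z,z')\ge C_1+1$.
\item Both points lie within $C_1$ of $\gamma$, so $d(x,\gamma)\le \min\{d(x,z),d(x,z')\}+C_1$.
\item Both lie on one geodesic from $x$, so $\min\{d(x,z),d(x,z')\}+d(z,z')\le d(x,y)$.
\item Combining these gives $d(x,\gamma)<d(x,y)$, which contradicts $y\in B$, since $y\in B$ forces $d(x,y)\le d(x,\gamma)$.
\end{itemize}
Hence $\diam\pi_\gamma(B)\le 14C_1+2$, which is the paper's claim.
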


\begin{proof}
    Let $A, \eps$ be constants suitably chosen to apply Proposition~\ref{prop:local_poly_P_contracting}. Let $M$ be a Morse gauge. By Lemma~\ref{lemma:localmltg-implies-path-avoidence}, there exists $R>0$ such that any $M$--Morse geodesic is $(R; \eps, A; 7k, \mc P)$--weakly polygonally Morse. By Proposition~\ref{prop:local_poly_P_contracting}, there exists $C$ such that any $M$--Morse geodesic is $\mc P$--contracting with constant $C$. 

    It remains to prove the following claim, which concludes the proof.
    
    \begin{claim}\label{claim:p-contracting-implies-contracting}
        Let $\gamma$ be a $\mc P$--contracting geodesic with constant $C$, then $\gamma$ is $(14C+2)$--contracting.
    \end{claim}

    \textit{Proof of Claim.}
    Let $\pi_{\gamma}$ be the associated projection for which \eqref{5.2.1} and \eqref{5.2.2} of the Definition of $\mc P$--contracting are satisfied. Let $\tau_{\gamma}$ denote a choice of closest point projection of $X$ onto $\gamma$. We start by showing that $\tau_{\gamma}$ and $\pi_{\gamma}$ coarsely agree. Let $x\in X$. Since $\gamma$ is $\mc P$--contracting with constant $C$, at least one of the following holds: $d(\pi_\gamma(x), \pi_\gamma(\tau_\gamma(x)))\leq C$ or $d([x, \tau_{\gamma}(x)], \pi_{\gamma}(x))\leq C$. If the former happens, we use that $\tau_\gamma(x)$ lies on $\gamma$ and \eqref{5.2.1} to obtain $d(\pi_\gamma(x), \tau_\gamma(x))\leq 2C$. If the latter happens, there exists a point $y$ on $[x, \tau_\gamma(x)]$ with $d(y, \pi_{\gamma}(x))\leq C$. Since $\tau_\gamma$ is a closest point projection, we have $d(y, \tau_\gamma(x))\leq C$, implying $d(\tau_\gamma(x), \pi_\gamma(x))\leq 2C$. Thus, indeed, $\pi_\gamma$ and $\tau_\gamma$ have pointwise distance at most $2C$.

    Let $x\in X$ and let $B$ be a ball around $x$ disjoint from $\gamma$. Let $y\in B$. We have that $d(x, y)\leq d(x, \gamma)$. If $d(\tau_\gamma(x), \tau_\gamma(y))\geq 7C+1$, then $d(\pi_\gamma(x), \pi_\gamma(y)) \geq 3C +1$ and $\gamma$ being $\mc P$--contracting with constant $C$ implies that there exists $z, z'\in [x, y]$ with $d(z, \pi_\gamma(y))\leq C$ and $d(z', \pi_\gamma(x))\leq C$. In particular, by the triangle inequality, $d(z, z') \geq C +1$. But now, $d(x, \gamma)\leq \min\{d(x, z), d(x, z')\}+C < d(x, y)$, a contradiction to $d(y, x)\leq d(x, \gamma)$. Hence, $d(\tau_\gamma(x), \tau_\gamma(y) )\leq 7C+1$, implying that $\gamma$ is $(14C+2)$--contracting.
    \hfill$\blacksquare$
\end{proof}

\begin{corollary}\label{cor:geodesic-combing-implies-morse-dichotomous}
   A geodesic metric space equipped with a bounded geodesic combing is Morse dichotomous. 
\end{corollary}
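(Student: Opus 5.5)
This corollary should follow by chaining Proposition~\ref{prop:combing-navigability} with Proposition~\ref{lem:geodesic-path-navigable-implies-morse-dichotomous}. Let $X$ carry a $\kao$--bounded geodesic combing and let $\mc Q$ be its combing heap. Let $\mc P$ be the set of all geodesics in $X$. First I check that $\mc P$ is a geodesic path system in the sense of Definition~\ref{def:path syst}. Its paths are $(1,0)$--quasi-geodesics, hence uniform. It contains the trivial paths and is closed under taking subsegments. Any two points are joined by a geodesic, since $X$ is geodesic. It is undirected, because the inverse of a geodesic is a geodesic. Finally, $\mc P \supseteq \mc Q$: combing lines are geodesics, and so are their inverses and the trivial paths.

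Proposition~\ref{prop:combing-navigability} then applies and shows that $\mc P$ is $(C,42)$--navigable, with $C$ depending only on $\kao$. The remark following that proposition records this very choice of $\mc P$ as the set of all geodesics.

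Hence $X$ meets the hypothesis of Proposition~\ref{lem:geodesic-path-navigable-implies-morse-dichotomous}, namely that the path system of all geodesics is navigable. That proposition gives the conclusion directly: for every Morse gauge $M$ there is a constant $C'$ such that every $M$--Morse geodesic is $C'$--strongly contracting. This is exactly Definition~\ref{def:morse-dichotomous}.

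I expect no real obstacle. The only point needing care is that the earlier propositions use the convention that quasi-geodesics are improved; geodesics are continuous with length equal to endpoint distance, so they trivially satisfy this. The substantive work, namely navigability through boundedness and avoidance and the passage from $\mc P$--contracting to strongly contracting, was already carried out in Proposition~\ref{prop:combing-navigability} and in Claim~\ref{claim:p-contracting-implies-contracting}.
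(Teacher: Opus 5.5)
Your proposal is correct and is exactly the paper's argument: take $\mc P$ to be the set of all geodesics, get navigability from Proposition~\ref{prop:combing-navigability}, and conclude with Proposition~\ref{lem:geodesic-path-navigable-implies-morse-dichotomous}. Your extra checks (that $\mc P$ is an undirected path system containing the combing heap, and that geodesics are trivially improved) are the routine verifications the paper leaves implicit.
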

\begin{proof}
    Let $\mc P$ be the path system containing all geodesics in $X$. By Proposition~\ref{prop:combing-navigability}, $\mc P$ is navigable. Proposition~\ref{lem:geodesic-path-navigable-implies-morse-dichotomous} concludes the proof.
\end{proof}

\subsection{Geodesics in median spaces yield bounded, avoidant, navigable path systems}

Every geodesic metric space $X$ comes naturally equipped with the path system consisting of all geodesic segments. We show that in median spaces such a path system is bounded and navigable. As a corollary we obtain that all geodesic median spaces are Morse dichotomous.

\begin{lemma}
Let $X$ be a median geodesic metric space and let $\mc P$ be the path system containing all geodesics in $X$. The path system $\mc P$ is $2$--bounded.
\end{lemma}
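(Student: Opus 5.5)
The plan is to verify Definition~\ref{defn:bounded_path_system} directly using the median point. Fix a geodesic $\spath$ from $x$ to $y$ and a point $y'$ with $d(y,y')\leq 1$. Let $\mu = \mu(x,y,y')$ be the median of the three points. By definition of median spaces, $\mu$ lies on some geodesic between each pair of $x,y,y'$: in metric terms $d(x,\mu)+d(\mu,y)=d(x,y)$, $d(x,\mu)+d(\mu,y')=d(x,y')$ and $d(y,\mu)+d(\mu,y')=d(y,y')\leq 1$. In particular $d(\mu,y)\leq 1$ and $d(\mu,y')\leq 1$.

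The key step is to build the new geodesic $\spath'$ from $x$ to $y'$ out of two pieces. The first piece is the initial segment of $\spath$ up to the point at distance $d(x,\mu)$ from $x$. The second piece is a geodesic from that point to $y'$. We cannot assume $\mu\in\spath$, so we argue instead with the point $p\in\spath$ satisfying $d(x,p)=d(x,\mu)$. Then $d(p,y)=d(\mu,y)\leq 1$.

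The claim to establish is that $p$ is itself a median of $x,y,y'$, or at least that $\spath|_{[x,p]}$ concatenated with a geodesic $[p,y']$ is a geodesic. For this we need $d(p,y')=d(\mu,y')$. Consider the median $\nu=\mu(p,y,y')$. Since $p$ lies on a geodesic from $x$ to $y$, the interval structure of median spaces gives that $\mu(x,y,y')$ and $\mu(p,y,y')$ interact well. I expect the median identity $\mu(\mu(x,y,y'),y,p)$ together with the fact that intervals $I(x,y)$ are convex under medians to yield $\nu=\mu$ and $p=\mu$. If this fails, I will fall back on the cruder estimate $d(p,y')\leq d(p,y)+d(y,y')\leq 2$. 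With that estimate I take $\spath'$ to be any geodesic from $x$ to $y'$ and bound the Hausdorff distance using the fact that $\mu$ lies on a geodesic $x\to y$. This last step is the main obstacle, since geodesics in median spaces are far from unique: we may choose $\spath'$, but $\spath$ is arbitrary.

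The cleaner route that avoids this issue is to choose $\spath'$ ourselves. Take $\spath' = \spath|_{[x,p]}\ast[p,y']$, where $[p,y']$ is any geodesic; it has length $d(x,p)+d(p,y')$. To show this is a geodesic, I will prove the following. In a median space, if $p\in I(x,y)$ with $d(p,y)=d(\mu,y)$ where $\mu=\mu(x,y,y')$, then $p=\mu(x,y,y')$ holds up to the identity $d(x,p)+d(p,y')=d(x,y')$. This follows by computing via the median $\mu(p,\mu,y)$ and using that $d(x,\cdot)+d(\cdot,y)$ is additive on $I(x,y)$.

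Granting this, the Hausdorff distance is controlled as follows:
\begin{itemize}
\item $\spath$ and $\spath'$ share the segment up to $p$;
\item the remaining tails have length at most $d(p,y)\leq 1$ and $d(p,y')\leq 2$ respectively;
\item every tail point is therefore within $2$ of $p$, which lies on both paths.
\end{itemize}
This gives $d_{\mathrm{Haus}}(\spath,\spath')\leq 2$. The fallback gives the same bound whenever $d(p,y')\leq 2$ and the concatenation is geodesic, which again comes down to the identity above.
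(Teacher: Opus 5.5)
Your proof has a genuine gap: it hinges on the claim that $\spath|_{[x,p]}\ast[p,y']$ is a geodesic, i.e.\ $d(x,p)+d(p,y')=d(x,y')$ (equivalently $p=\mu(x,y,y')$), and this is false. The fallback rests on the same identity.

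\emph{Counterexample.} Take $(\R^2,\ell^1)$, a geodesic median space, with $x=(0,0)$, $y=(a,b)$ for $a,b\geq 1$, $\spath$ the geodesic $(0,0)\to(a,0)\to(a,b)$, and $y'=(a-\tfrac12,b+\tfrac12)$. Then $\mu(x,y,y')=(a-\tfrac12,b)$. The point of $\spath$ at distance $d(x,\mu)=a+b-\tfrac12$ from $x$ is $p=(a,b-\tfrac12)$. So $d(p,y')=\tfrac32\neq\tfrac12=d(\mu,y')$, and your concatenation has length $d(x,y')+1$.

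\emph{Why no choice of $p$ helps.} Every geodesic from $x$ to $y'$ has monotone coordinates, so its first coordinate never exceeds $a-\tfrac12$. It can therefore agree with $\spath$ at most on the initial piece ending at $(a-\tfrac12,0)$, after which both paths still have length about $b$ to go. So no construction of the form ``common prefix of $\spath$ plus tails of length at most $2$'' can work: $\spath'$ must fellow-travel $\spath$ along its whole length. Here $(0,0)\to(a-\tfrac12,0)\to(a-\tfrac12,b+\tfrac12)$ does this. Your other option, an arbitrary geodesic from $x$ to $y'$, also fails: $(0,0)\to(0,b+\tfrac12)\to(a-\tfrac12,b+\tfrac12)$ is at Hausdorff distance about $\min\{a,b\}$ from $\spath$.

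\emph{What is missing} is a mechanism that produces such a parallel geodesic. The paper gets it by induction on $d(x,y)+d(x,y')$. Choose $z\in\spath$ and $z'$ on some geodesic from $x$ to $y'$ with $d(x,z)=d(x,z')$ and $d(x,z)+d(x,z')=d(x,y)+d(x,y')-1$. Let $m$ be the median of $x,z,z'$, so $d(m,z)=d(m,z')\leq 1$. Apply the induction hypothesis to $\psub{\spath}{x,z}$ and $m$ to get a geodesic $\spath''$ from $x$ to $m$, and take $\spath''\ast[m,z']\ast[z',y']$.

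Your median idea can also be repaired directly by projecting every point of $\spath$ rather than a single one. Set $\pi(w)=\mu(x,w,y')$.
\begin{itemize}
\item Since $\spath(t)=\mu(x,\spath(t),y)$ and the median is $1$--Lipschitz in each variable, $d(\spath(t),\pi(\spath(t)))\leq d(y,y')\leq 1$.
\item The median identity $\mu(x,\mu(x,u,w),\mu(x,v,w))=\mu(x,\mu(x,u,v),w)$ shows that $\pi(\spath(s))$ lies between $x$ and $\pi(\spath(t))$ for $s\leq t$.
\item Hence $\pi\circ\spath$ is, after reparametrisation, a geodesic from $x$ to $\mu(x,y,y')$.
\item Appending a geodesic from $\mu(x,y,y')$ to $y'$ gives a geodesic from $x$ to $y'$ at Hausdorff distance at most $2$ from $\spath$.
\end{itemize}
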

\begin{proof}
    Let $x, y, y'\in X$ so that $d(y,y')\leq 1$ and let $\spath \in \mc P$ be a geodesic from $x$ to $y$. We have to show that there exists a geodesic from $x$ to $y'$ whose Hausdorff distance to $\spath$ is at most $2$. We prove the statement by induction on $d(x, y) + d(x, y')$. If $d(x, y) + d(x, y')\leq 2$, then the statement follows trivially and holds for any $\spath'\in \mc P$ from $x$ to $y'$. 
    
    From now on, we assume that $d(x, y) + d(x, y')\geq 2$ and that for all points $a, a'\in X$ and all geodesics $\gamma$ form $x$ to $a$ such that $d(a, a')\leq 1$ and $d(x, a) + d(x, a')\leq d(x, y) + d(x, y') - 1$, there exists a geodesic $\gamma'$ from $x$ to $a'$ with $d_{\mathrm{Haus}}(\gamma, \gamma')\leq 2$. 

    \begin{figure}[ht]
        \centering
        \includegraphics[width=0.5\linewidth]{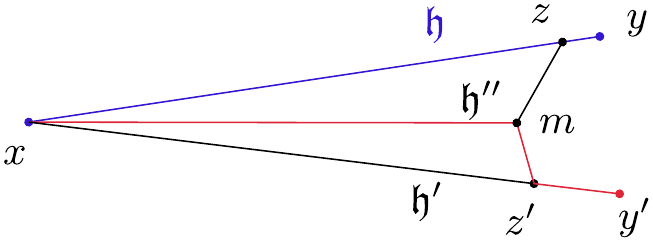}
        \caption{The red path has Hausdorff distance at most $2$ to $\spath$.}
        \label{fig:bounded-median}
    \end{figure}
    
    Let $\spath'\in \mc P$ be a special path (which is a geodesic) from $x$ to $y'$ and let $z\in \spath$ and $z'\in \spath'$ be points such that $d(x, z) = d(x, z')$ and such that $d(x, z) + d(x, z') = d(x, y) + d(x, y') -1$. Since $d(y, y')\leq 1$ and $d(x, y) + d(x, y')\geq 2$, such points always exist. Further note that $d(z, z')\leq 2$. Let $m$ be the median of the points $(z, z', x)$. Since $d(x, z) = d(x, z')$ we have that $d(m, z) = d(m, z')\leq 1$. This is depicted in Figure~\ref{fig:bounded-median}.

    Observe that we can now use the induction hypothesis to the points $z, m$ and $x$ (this can be done since $d(x, z)+d(x, m)\leq d(x, z) + d(x, z')\leq d(x, y) + d(x, y') -1$) to get a special path $\spath''\in \mc P$ from $x$ to $m$ with
    \begin{align*}
        d_{\mathrm{Haus}}(\psub{\spath}{x, z}, \spath'')\leq 2.
    \end{align*}
    Further observe that $\psub{\spath}{z, y}$ is contained in the $2$--neighbourhood of $z'$, $[m, z']$ is contained in the $2$--neighbourhood of $z$ and $[z', y']$ is contained in the $2$--neighbourhood of $z$. In particular, 
    \begin{align*}
        d_{\mathrm{Haus}}(\spath, \spath''\ast[m, z']\ast[z', y'])\leq 2.
    \end{align*}
    Since $\spath''\ast[m, z']\ast[z', y']$ is a geodesic and hence a special path, this concludes the induction step and hence the proof.
\end{proof}
In light of Proposition~\ref{prop:bounded-avoidant-implies-navigable}, to conclude navigability it suffices to show that the geodesic path system is bounded. 
\begin{lemma}
    Let $X$ be a median metric space and let $\mc P$ be the path system containing all geodesics in $X$. The path system $\mc P$ is $(C, k)$--avoidant for $C = 28$ and $k = 3$.
\end{lemma}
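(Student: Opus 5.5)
The plan is to imitate the geodesic-combing argument of Lemma~\ref{lem:combing-avoidance}, using medians in place of the combing. Let $R\geq 14$, $m\in X$, and let $\spath_1,\spath_2$ be geodesics from $z_1,z_2$ to a common endpoint $y$, with $d(m,z_i)\leq 4R$ and $d(\spath_i,m)>R$. We must produce a $(3,\mc P)$--polygonal line $\pgot$ from $z_1$ to $z_2$ with $\norm{\pgot}\leq 28R$ and $d(m,\pgot)>R/14$.

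The first step treats the case where $y$ is close to $m$. If $d(y,z_i)$ is at most a fixed multiple of $R$ (say $d(y,z_1)+d(y,z_2)\leq 28R$), we simply take $\pgot=\spath_1\ast\spath_2^{-1}$. This is a $2$--polygonal line that avoids $B(m,R)$, and its length is at most $28R$.

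In the main case, we take the median $w=\mu(z_1,z_2,y)$. Then $w$ lies on some geodesic from $z_1$ to $y$ and on some geodesic from $z_2$ to $y$, and $d(z_1,w)+d(w,z_2)=d(z_1,z_2)\leq 8R$. The natural candidate is the $2$--polygonal line $[z_1,w]\ast[w,z_2]$, which is a geodesic through $w$. Its length is $d(z_1,z_2)\leq 8R$, so the only issue is keeping it away from $m$.

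That is the main obstacle. We must show that for some choice of points, everything stays $R/14$--far from $m$. My approach is to replace $w$ by medians built from $m$ as well. Let $p_i=\mu(m,z_i,y)$. This is a point of the geodesic interval between $z_i$ and $y$, and by convexity of intervals it can be taken on a geodesic we may use in place of $\spath_i$; the hypothesis $d(\spath_i,m)>R$ only controls the given $\spath_i$, however, so we instead rely on the gate/interval structure. The key median identity is $d(m,p_i)=(d(m,z_i)+d(m,y)-d(z_i,y))/2$, the Gromov product. We then argue by cases on where the geodesic $[z_1,z_2]$ through $w$ passes near $m$. If some point $q$ of it satisfies $d(q,m)\leq R/14$, the median $\mu(m,q,y)$ gives a point close to $m$ lying in the interval $I(z_i,y)$. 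Since in median spaces every point of an interval lies on some geodesic, one then tries to reroute: go along $\spath_1$ from $z_1$ for distance about $5R$ to $x_1$, take $[x_1,x_2]$, and return along $\spath_2$ from $x_2$, with $x_i\in\spath_i$ at distance $5R$ from $z_i$. This is the analogue of the combing proof, and it gives three geodesic pieces, matching $k=3$. The lengths are then $5R+d(x_1,x_2)+5R$.

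The remaining task is to bound $d(x_1,x_2)$ by about $18R$ and to keep $[x_1,x_2]$ far from $m$, using medians in place of boundedness of the combing. To do this, I would compare $\mu(x_1,x_2,y)$ with the median of the points at distance $5R$ from $z_i$ along the intervals, and use the $1$--Lipschitz property of the median to obtain $d(x_1,x_2)\leq d(z_1,z_2)+10R$ together with distance estimates from $m$. Adjusting the constants so that $5R+18R+5R=28R$ fixes $C=28$.
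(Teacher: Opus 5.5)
Your final construction is the paper's: take $x_i\in\spath_i$ at distance $5R$ from $z_i$ and set $\pgot=\psub{\spath_1}{z_1,x_1}\ast[x_1,x_2]\ast\psub{\spath_2^{-1}}{x_2,z_2}$. The bound $d(x_1,x_2)\le 18R$ is then just the triangle inequality through $z_1,m,z_2$. The gap is the one step that actually needs the median structure, namely that the middle piece avoids $m$. You leave this as a vague plan (``compare $\mu(x_1,x_2,y)$ with \dots''), and for an arbitrary geodesic from $x_1$ to $x_2$ it is false.

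Here is a counterexample in $(\R^2,\ell^1)$. Take $m=(0,0)$, $y=(Y,Y)$ with $Y$ large, $z_1=(-\tfrac52R,-\tfrac32R)$ and $z_2=(-\tfrac32R,-\tfrac52R)$. Let $\spath_1$ go horizontally from $z_1$ to $(Y,-\tfrac32R)$ and then up to $y$; let $\spath_2$ go vertically from $z_2$ to $(-\tfrac32R,Y)$ and then right to $y$. All hypotheses hold: $d(m,z_i)=4R$ and $d(\spath_i,m)\ge\tfrac32R$. Then $x_1=(\tfrac52R,-\tfrac32R)$ and $x_2=(-\tfrac32R,\tfrac52R)$. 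The monotone path $x_1\to(0,-\tfrac32R)\to(0,\tfrac52R)\to x_2$ is a geodesic, and it passes through $m$. So $[x_1,x_2]$ must be chosen deliberately. Your material on $p_i=\mu(m,z_i,y)$, the Gromov product and points $q$ near $m$ never produces such a choice and can be discarded.

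The missing idea is to choose $[x_1,x_2]$ as a geodesic through $u=\mu(x_1,x_2,y)$, which exists since $u\in I(x_1,x_2)$. Because $x_1\in I(z_1,y)$ and $u\in I(x_1,y)$, you get $d(z_1,y)=d(z_1,x_1)+d(x_1,u)+d(u,y)$, hence $d(z_1,u)=d(z_1,x_1)+d(x_1,u)$. So every point $a$ on the subgeodesic from $x_1$ to $u$ satisfies $d(z_1,a)=d(z_1,x_1)+d(x_1,a)\ge 5R$, and therefore $d(a,m)\ge 5R-4R=R$. The piece from $u$ to $x_2$ is handled symmetrically. Together with $d(\spath_i,m)>R$ for the outer pieces, this gives $d(m,\pgot)\ge R>R/14$ and $\norm{\pgot}\le 5R+18R+5R=28R$. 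In the example, $u=(\tfrac52R,\tfrac52R)$, and this geodesic stays at distance at least $\tfrac52R$ from $m$.
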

\begin{proof}
    Let $R\geq 28 = C$. For $i = 1, 2$, let $y, z_i, m\in X$ be points with $d(m, z_i)\leq 4R$ and $\spath_i\in \mc P$ be special paths from $z_i$ to $y$ such that $d(\spath_i, m)\geq R$.

    \smallskip
        
    To show that $\mc P$ is $(C, 3)$--avoidant, we construct a $(3, \mc P)$--polygonal line $\pgot$ from $z_1$ to $z_2$ such that 
    \begin{align*}
        d(m, \pgot)\geq R \qquad \text{and} \qquad \norm{\pgot}\leq CR.
    \end{align*}

    \begin{figure}
        \centering
        \includegraphics[width=0.6\linewidth]{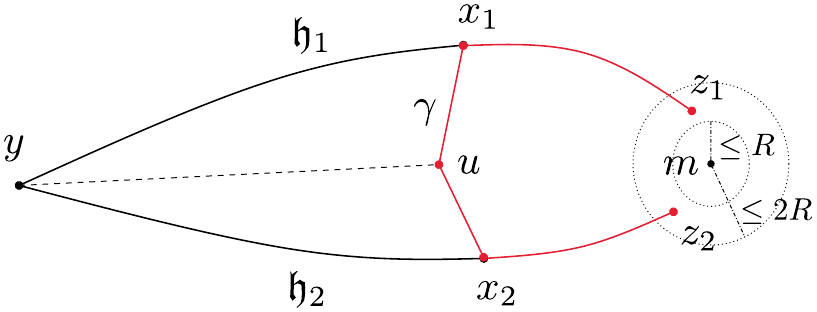}
        \caption{The red path avoids the $R$--ball around $m$.}
        \label{fig:media-avoidant}
    \end{figure}

    If $d(y, z_1)\leq 5R$ or $d(y, z_2)\leq 5R$, then the path $\pgot = \spath^ {-1}\ast \spath'$ satisfies all the criteria (by the triangle inequality, the other side has length at most $13R$). It thus satisfies to consider the case where $d(y, z_i)\geq 5R$ for both $i = 1$ and $i = 2$.
    Then, for $i=1, 2$, let $x_i\in \spath_i$ be points with $d(y, x_i) = 5R$. Let $u$ be the median of $(x_1, x_2, y)$ and let $\gamma$ be a geodesic from $x_1$ to $x_2$ which passes through $u$. This is depicted in Figure~\ref{fig:media-avoidant}. Note that $\psub{\gamma}{x_1, u}$ lies on a geodesic from $z_1$ to $y$ and thus for any point $a\in \psub{\gamma}{x_1, u}$ we have that 
    \begin{align*}
        d(a, m)\geq d(a, z_1) - d(z_1, m) \geq d(x_1, z_1) - 4R \geq R.
    \end{align*}
    Hence $d(\psub{\gamma}{x_1, u}, m)\geq R$. Analogously, one can show that $d(\psub{\gamma}{u, x_2} ,m)\geq R$. Thus, the path
    \begin{align*}
        \pgot = \psub{\spath^{-1}}{z_1, x_1}\ast \gamma \ast \psub{\spath'}{x_2, z_2}
    \end{align*}
    satisfies that $d(\pgot, m)\geq R$. Furthermore, we have that
    \begin{align*}
        \norm{p}\leq d(z_1, x_1) + d(x_1, x_2) + d(x_2, z_2)\leq 5R + 18R + 5R\leq 28 R,
    \end{align*}
    implying that $\pgot$ satisfies the desired properties and hence concluding the proof.
\end{proof}

A combination of the two results above and Proposition~\ref{prop:bounded-avoidant-implies-navigable} yields the following.
\begin{proposition}\label{prop:median-implies-navigable}
    Let $X$ be a geodesic metric space which is median and let $\mc P$ be the path system induced by all geodesics in $X$, then $\mc P$ is $(C, k)$--navigable for $C = 28$ and $k=3$.
\end{proposition}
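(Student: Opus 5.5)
The plan is to avoid Proposition~\ref{prop:bounded-avoidant-implies-navigable}: combining it with the two lemmas above only gives $(28,6)$--navigability, since its sliding step splits each side into two pieces before avoidance is applied. Instead, I would prove $(28,3)$--navigability directly, replacing each side $\alpha_j$ of the given polygonal line by exactly three geodesics, built from medians as in the avoidance lemma.

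\textbf{Setup.} Fix $R\geq 28$, a point $m$, and an $(n,\mc P)$--polygonal line $\alpha=\alpha_1\ast\cdots\ast\alpha_n$ with vertices $p_0=\alpha^-,p_1,\dots,p_n=\alpha^+$, such that $d(\alpha,m)\geq R$ and $d(p_0,m),d(p_n,m)\leq 2R$. The main tool is the gate property of intervals in median spaces. For $a,b\in X$, the median $\nu=\mu(m,a,b)$ satisfies $d(m,x)=d(m,\nu)+d(\nu,x)$ for every $x$ in the interval $I(a,b)$. Since $\alpha_j\subseteq I(p_{j-1},p_j)$, this gives, for $\nu_j=\mu(m,p_{j-1},p_j)$,
\[ d(m,\nu_j)+d(\nu_j,\alpha_j)\geq R . \]

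\textbf{Anchor points.} For each vertex $p_j$, let $w_j$ be the point at distance $\min\{d(m,p_j),2R\}$ from $m$ on a geodesic $[m,p_j]$; in particular $w_0=p_0$ and $w_n=p_n$. For $0<j<n$ we have $d(m,p_j)\geq R$, so $R\leq d(m,w_j)\leq 2R$. For each $j$ I then build three geodesics $w_{j-1}\to x_j\to x_j'\to w_j$ with total length at most $28R$, each at distance at least $R/28$ from $m$. Concatenating over $j$ gives a $(3n,\mc P)$--polygonal line from $\alpha^-$ to $\alpha^+$ of length at most $28nR$ avoiding $B(m,R/28)$, which is exactly $(28,3)$--navigability.

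\textbf{Choice of $x_j,x_j'$.} The natural candidates are $x_j=\mu(w_{j-1},p_{j-1},p_j)$ and $x_j'=\mu(w_j,p_{j-1},p_j)$, the gates of $w_{j-1}$ and $w_j$ in $I(p_{j-1},p_j)$; if the lengths come out too large, I would instead use medians with $\nu_j$. The argument then splits into the following checks.
\begin{enumerate}
\item Lengths: $d(w_{j-1},x_j)$, $d(x_j,x_j')$ and $d(x_j',w_j)$ should be bounded by a fixed multiple of $R$. Gate maps are $1$--Lipschitz and $w_{j-1},w_j$ lie within $2R$ of $m$, so all three points involved are within bounded distance of $\nu_j$. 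This should be a triangle-inequality computation.
\item Middle geodesic: $[x_j,x_j']$ can be taken inside $I(p_{j-1},p_j)$, by convexity of intervals. The gate identity above then bounds its distance to $m$ from below by roughly $R-d(m,\nu_j)+d(\nu_j,[x_j,x_j'])$. This is where the displayed inequality is used.
\item Outer geodesics: I would choose $[w_{j-1},x_j]$ to pass through the median of $w_{j-1}$, $x_j$ and $p_{j-1}$. One part then lies in $I(w_{j-1},p_{j-1})$, which sits inside a geodesic from $m$ through $w_{j-1}$ to $p_{j-1}$ and so stays at distance at least $d(m,w_{j-1})\geq R$ from $m$ (for $j=1$, $w_0=p_0$ and this part is a point at distance $\geq R$). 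The other part lies in $I(p_{j-1},x_j)\subseteq I(p_{j-1},p_j)$, where the gate identity applies. This is the same device as in the avoidance lemma, where $\gamma$ is routed through the median $u$.
\end{enumerate}

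\textbf{Main obstacle.} The hard step is making the avoidance bounds in items 2 and 3 uniform when $\nu_j$ is close to $m$, that is, when $d(m,\nu_j)$ is small and $\alpha_j$ is far from $\nu_j$. In that case the points of the interval near $\nu_j$ may come close to $m$, and the three geodesics must be routed through parts of the interval that are at distance at least comparable to $R$ from $\nu_j$. I would handle this by splitting into cases according to whether $d(m,\nu_j)\geq R/2$ or $d(\nu_j,\alpha_j)\geq R/2$. In the second case, I would replace $x_j,x_j'$ by the points of $[p_{j-1},\nu_j]$ and $[\nu_j,p_j]$ at distance $R/2$ from $\alpha_j$, and then rerun the length estimate. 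Checking that the constants close up at exactly $28$ is routine once this case split is fixed.
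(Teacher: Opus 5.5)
You are right that the paper's own proof, which is exactly the combination of $2$--boundedness, $(28,3)$--avoidance and Proposition~\ref{prop:bounded-avoidant-implies-navigable}, yields $k=6$ rather than $k=3$. If one tracks lengths, it also doubles the length constant. Only navigability with some constants is used later. Your direct route, however, has a genuine gap. Everything rests on building the three geodesics $w_{j-1}\to x_j\to x_j'\to w_j$, and the constructions you propose fail in both regimes of $d(m,\nu_j)$.

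\textbf{Large $d(m,\nu_j)$.} Here item 1 is false. The gates $x_j,x_j'$ are within $2R$ of $\nu_j$, but $w_{j-1},w_j$ are within $2R$ of $m$. In a tree one gets $w_{j-1}=w_j\in[m,\nu_j]$ and $x_j=x_j'=\nu_j$, so your path has length $2(d(m,\nu_j)-2R)$, which is unbounded. Medians with $\nu_j$ do not repair this. In $(\mathbb{R}^3,\ell^1)$, take $m=0$, $p_{j-1}=(100R,100R,0)$ and $p_j=(100R,0,100R)$. Put the anchors $w_{j-1}=(0,2R,0)$ and $w_j=(0,0,2R)$ on the geodesics $[m,p_{j-1}]$ through $(0,100R,0)$ and $[m,p_j]$ through $(0,0,100R)$. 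Then $\nu_j=(100R,0,0)$ and $x_j=(100R,2R,0)$, while $\mu(w_{j-1},w_j,\nu_j)=m$.

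\textbf{Small $d(m,\nu_j)$.} Take $(\mathbb{R}^2,\ell^1)$ with $m=0$, $p_{j-1}=(-L,-L)$, $p_j=(L,L)$ and $L\gg R$. Let $\alpha_j$ pass through $(L,-L)$, and take the anchors $w_{j-1}=(0,-2R)$ and $w_j=(0,2R)$ on geodesics through $(0,\mp L)$.
\begin{itemize}
\item Then $\nu_j=m$, $x_j=w_{j-1}$ and $x_j'=w_j$. The unique geodesic between $x_j$ and $x_j'$ passes through $m$.
\item More generally, every point obtained by iterated medians of $m,w_{j-1},w_j,p_{j-1},p_j$ that lies within distance $L$ of $m$ is on the axis $\{x=0\}$. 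This includes $\nu_j$ and all gates. Geodesics between points of this axis are unique vertical segments, so no bounded-length recipe with such vertices can avoid $m$.
\item Your case-2 fix does use $\alpha_j$. But on every geodesic $[p_{j-1},\nu_j]$, the point at distance $R/2$ from $\alpha_j$ has second coordinate $-L+R/2$. It is therefore at distance about $L$ from both $m$ and $w_{j-1}$, and the length bound is lost.
\end{itemize}

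\textbf{What is missing.} You need a way to extract from the interior of $\alpha_j$ a point at distance comparable to $R$ from $m$ on the correct side. In the example, the side is determined by whether $\alpha_j$ passes $(L,-L)$ or $(-L,L)$. You then need to join such points for consecutive sides with controlled length. This is exactly what the central and side slides in Proposition~\ref{prop:bounded-avoidant-implies-navigable} produce. Avoidance is then applied between consecutive slid points sharing an endpoint, and that step is where the doubling of $k$ comes from. Without such an ingredient, the claim that the constants ``close up at exactly $28$'' is not routine. As written, the proposal does not establish navigability with any constants.
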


As in the geodesic combing case, Proposition~\ref{prop:median-implies-navigable} has the following consequence. 

\begin{corollary}\label{cor:median-implies-morse-dichotomous}
    Let $X$ be a geodesic median space. Then $X$ is Morse dichotomous.
\end{corollary}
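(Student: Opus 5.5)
The plan is to follow exactly the template of Corollary~\ref{cor:geodesic-combing-implies-morse-dichotomous}. Let $\mc P$ be the path system consisting of all geodesics in $X$. By Proposition~\ref{prop:median-implies-navigable}, $\mc P$ is $(28,3)$--navigable. It is also trivially undirected, since the inverse of a geodesic is a geodesic, and it is a path system, being closed under subsegments and connecting every pair of points because $X$ is geodesic. Hence the hypotheses of Proposition~\ref{lem:geodesic-path-navigable-implies-morse-dichotomous} are satisfied, and that proposition gives directly that for every Morse gauge $M$ there is $C$ with every $M$--Morse geodesic $C$--contracting. This is precisely Morse dichotomy in the sense of Definition~\ref{def:morse-dichotomous}.

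For the reader's orientation, the inner mechanism of Proposition~\ref{lem:geodesic-path-navigable-implies-morse-dichotomous} runs in three steps. First, Lemma~\ref{lemma:localmltg-implies-path-avoidence} upgrades $M$--Morse to $(R;\eps,A;7k,\mc P)$--weakly polygonally Morse, with $k=3$. Second, Proposition~\ref{prop:local_poly_P_contracting} upgrades this to $\mc P$--contracting with a uniform constant. Its extra hypothesis, that subpaths be $C_0$--close to special paths, is automatic here because subpaths of geodesics are themselves geodesics, hence special paths. Third, Claim~\ref{claim:p-contracting-implies-contracting} converts $\mc P$--contraction into strong contraction, using that $\mc P$ contains every geodesic and so in particular the geodesics to closest-point projections.

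The only point I expect to need care is checking that the median lemmas feeding into Proposition~\ref{prop:median-implies-navigable} apply to the full geodesic path system. These are $2$--boundedness and $(28,3)$--avoidance, combined via Proposition~\ref{prop:bounded-avoidant-implies-navigable}. That proposition is stated for a bounded heap $\mc Q$ with $\overline{\mc Q}$ avoidant. Here $\mc Q=\mc P=\overline{\mc P}$ and the constant requirement $C\geq 3\kao$ reads $28\geq 6$, so there is no obstacle. The corollary is therefore a formal combination of these earlier results.
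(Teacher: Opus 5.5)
Your proposal is correct and is the paper's argument: the paper deduces the corollary exactly as in the geodesic-combing case, by combining navigability of the geodesic path system (Proposition~\ref{prop:median-implies-navigable}) with Proposition~\ref{lem:geodesic-path-navigable-implies-morse-dichotomous}. The only inaccuracy is that Proposition~\ref{prop:bounded-avoidant-implies-navigable} literally yields $(28,6)$-navigability rather than the stated $(28,3)$; this does not matter, since any navigability constants suffice.
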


\section{The generalized contraction space}\label{sec:generalised_contraction}
The rough idea of the contraction space is to collapse every pair of points that are not along a  `negatively curved' direction. For this reason, it is very convenient to assume that our geodesic metric space is, in fact, a connected graph. 

\textbf{Setup:} For the rest of the section, $X = (V, E)$ is a graph with induced path metric $d$ and $\mc P$ is an undirected path system on $X$.

In this section, we aim to construct a generalized version of the contraction space. More precisely, we describe a construction analogous to that of the contraction space in \cite{Zbinden:hyperbolic}, but with the paths in the path system $\mc P$ playing the part of the geodesics.

\subsection{Midthin, anti-contracting and their consequences}\label{subsec:allowed-contraction-gauges}

As in \cite{Zbinden:hyperbolic}, in order to define what it means to `not be along a negatively curved direction', we need the notion of a contraction gauge and midthinness. 

\begin{definition}[Contraction gauge]
    A \emph{contraction-gauge} is a non-decreasing function $K: \R_{\geq 0}\to \R_{\geq 0}\cup\{ \infty\}$. Given an integer $n\geq 7$ and contraction gauge $K$, we call $\mc K = (K, n, \mc P)$ a \emph{contraction triple}. We say that $K$ respectively $\mc K$ is \emph{full}, if $\infty$ is not in the image of $K$.
\end{definition}

Often, we only care about contraction gauges that are larger than a specific minimal function $K_0$, as this allows us to deduce nice properties of the contraction space such as hyperbolicity. Specifically, if we say that a result holds \emph{for a large enough contraction gauge $K$} or a large enough contraction triple $\mc K = (K, n , \mc P)$, then we mean that the result holds if $K\geq K_0$ for some full contraction gauge $K_0$.   

\begin{definition}[Midthin]\label{def:midthin}
Let $\mc K = (K, n, \mc P)$ be a contraction triple. A special path $\spath\in \mc P$ is \emph{$(r; n, \mc P)$-midthin} if every $(n, \mc P)$-polygonal line $\pgot$ from $\spath^+$ to $\spath^-$ intersects the closed $r$-ball around the midpoint $m_\spath$. If in addition $\abs{\spath}\geq K(r)$, we say that $\spath$ is $\mc K$--midthin and call $r$ a \emph{neck-size} of $\spath$. 
\end{definition}

We say that a pair of points $(x, y)$ is anti-contracting if there exists a special path from $x$ to $y$ which does not have $\mc K$--midthin subpaths.

\begin{definition}[Anti-contracting]\label{def:anti-contracting}
    Let $\mc K$ be a contraction triple. We say that a special path $\spath\in \mc P$ is \emph{$\mc K$--anti-contracting} if none of its subsegments is $\mc K$--midthin. 
    
    We say that a pair of vertices $(x, y)\in V(X)\times V(X)$ is \emph{$\mc K$-anti-contracting} if there exists a special path $\spath\in \mc P$ from $x$ to $y$ which is $\mc K$--anti-contracting. We denote by $A_{\mc K} \subset V(X)\times V(X)$ the set of all $\mc K$-anti-contracting pairs.
\end{definition}

Note that the order of the quantifiers differs from the one in \cite[Definition~2.2]{Zbinden:hyperbolic}. Since $\mc P$ is undirected, $(x, y)\in \mc A_{\mc K}$ implies that $(y, x)\in \mc A_{\mc K}$.

First, we record an immediate consequence of the definition of anti-contraction.

\begin{lemma}\label{lemma:diameter-of-anti-contracting-paths}
    Let $\mc K$ be a contraction triple. If $\spath \in \mc P$ is $\mc K$--anti-contracting, then all of its subsegments are $\mc K$--anti-contracting as well. In particular, the set of vertices lying on $\spath$ forms a clique in $\hat{X}_{\mc K}$ and hence $\widehat{\diam}_{\mc K}(\spath)\leq 3$.
\end{lemma}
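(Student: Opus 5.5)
The plan is to read both assertions straight off the definitions. The contraction space $\hat{X}_{\mc K}$ has not yet been formally introduced at this point, so I will assume it is the graph on the vertex set $V(X)$ whose edges are the edges of $X$ together with all $\mc K$--anti-contracting pairs in $A_{\mc K}$. I will also take $\widehat{\diam}_{\mc K}(\spath)$ to mean the diameter in $\hat{X}_{\mc K}$ of the points of $\spath$, each point being attached to a nearest vertex.

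First, heredity. Let $\spath' $ be a subsegment of a $\mc K$--anti-contracting special path $\spath$. Because $\mc P$ equals its consistent closure, $\spath'\in\mc P$. Every subsegment of $\spath'$ is also a subsegment of $\spath$, so by hypothesis none of them is $\mc K$--midthin. Hence $\spath'$ is $\mc K$--anti-contracting.

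Next, the clique. Take two vertices $x,y$ lying on $\spath$, say $x=\spath(s)$ and $y=\spath(t)$ with $s\le t$. Then $\isub{\spath}{s,t}$, or its inverse if the order is reversed, is a special path from $x$ to $y$. Reversal is harmless because $\mc P$ is undirected, and a reversed path has exactly the reversed subsegments; their midpoints are the same, and midthinness is symmetric in the endpoints since $\pgot^{-1}$ is again a polygonal line. By the first step this path is $\mc K$--anti-contracting, so $(x,y)\in A_{\mc K}$. Therefore $x$ and $y$ are adjacent in $\hat{X}_{\mc K}$, or equal, and the vertices on $\spath$ form a clique.

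Finally, the diameter bound. Any point of $\spath$ lies on an edge of $X$, so it is within distance $1$ in $\hat{X}_{\mc K}$ of a vertex on $\spath$. This is the only step needing care: if the path passes through the interior of an edge, I use the endpoint of that edge at which $\spath$ enters or exits it. Suppose first that such an endpoint lies on $\spath$. Then two arbitrary points are joined by the route point, vertex, vertex, point, which has length at most $1+1+1=3$. If instead $\spath$ lies entirely inside a single edge, its diameter is at most $1$ anyway. This gives $\widehat{\diam}_{\mc K}(\spath)\le 3$.
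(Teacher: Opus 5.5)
Your proposal is correct and follows the same route as the paper, which records this lemma as an immediate consequence of the definitions without writing out a proof. You unpack that consequence the way it is intended: heredity comes from the consistent closure, the clique comes from subsegments of $\spath$ between vertices, and the bound $3$ comes from routing each point of $\spath$ through a vertex of $\spath$ at distance at most $1$. Your handling of reversed subsegments and of points in edge interiors is the care that unpacking requires.
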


We start by recording some consequences for large enough contraction triples. The following two lemmas are depicted in Figure~\ref{fig:non-allowed}.

\begin{figure}
     \centering
     \begin{subfigure}[b]{0.45\textwidth}
         \centering
         \includegraphics[width=\textwidth]{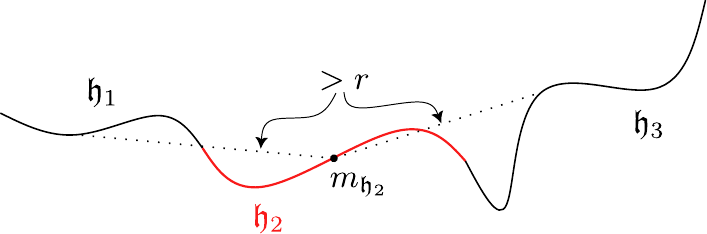}
     \end{subfigure}
     \hfill
     \begin{subfigure}[b]{0.45\textwidth}
         \centering
         \includegraphics[width=\textwidth]{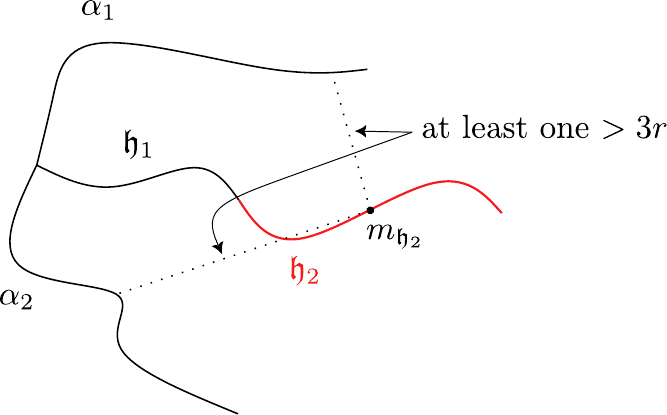}
     \end{subfigure}
     \hfill
        \caption{Depictions of Lemma~\ref{lemma:continued-special-path-distance} (left) and Lemma~\ref{lemma:T-lemma} (right). The red path $\spath_2$ is $\mc K$--midthin with neck size $r$.}
     \label{fig:non-allowed}
\end{figure}

\begin{lemma}\label{lemma:continued-special-path-distance}
    Let $\mc K =(K, n, \mc P)$ be a large enough contraction triple. Let $\spath = \spath_1\ast \spath_2\ast \spath_3\in \mc P$ be a special path. If $\spath_2$ is $\mc K$--midthin with neck-size $r$, then $d(\spath_i, m_{\spath_2})> r$ for $i = 1, 3$. Similarly, $d(\psub{\spath}{\spath_1^-, \spath_1^+}, m_{\spath_{2}}) > r$.
\end{lemma}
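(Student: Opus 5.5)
The plan is to use the length condition $\abs{\spath_2}\geq K(r)$, with $K$ large compared to the quasi-geodesic constants $(\lao,\kao)$ of $\mc P$, to show that points of $\spath_1$ and $\spath_3$ are far from the midpoint $m_{\spath_2}$. The point is that $\spath$ is a single special path, hence a $(\lao,\kao)$--quasi-geodesic. Along $\spath$, the midpoint $m_{\spath_2}$ sits at parameter distance $\abs{\spath_2}/2$ from every parameter belonging to $\spath_1$ or to $\spath_3$.

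Concretely, let $m_{\spath_2}=\spath(t_m)$. Any point of $\spath_1$ or $\spath_3$ has the form $\spath(t)$ with $\abs{t-t_m}\geq \abs{\spath_2}/2$. The quasi-geodesic inequality then gives
\[
d(\spath(t), m_{\spath_2})\geq \frac{\abs{\spath_2}}{2\lao}-\kao.
\]
It therefore suffices to choose the minimal full gauge $K_0$ so that $K_0(r)> 2\lao(r+\kao)$ for every $r\geq 0$. Then $\abs{\spath_2}\geq K(r)\geq K_0(r)$ forces this lower bound to exceed $r$, which gives the first claim.

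For the second claim I read $\psub{\spath}{\spath_1^-,\spath_1^+}$ as the maximal subpath of $\spath$ whose points lie between $\spath_1^-$ and $\spath_1^+$. Its domain runs from the smallest parameter mapping to $\spath_1^-$ to the largest parameter mapping to $\spath_1^+$. This could be longer than $\spath_1$ if $\spath$ revisits these points, so the issue is to control how far that largest parameter can extend.

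The plan is to use the quasi-geodesic inequality once more. Suppose $\spath(s)=\spath_1^+=\spath(s_1)$, where $s_1$ is the right endpoint of the domain of $\spath_1$. Then $\abs{s-s_1}\leq \lao\kao$. Hence every parameter in the domain of $\psub{\spath}{\spath_1^-,\spath_1^+}$ is at least $\abs{\spath_2}/2-\lao\kao$ away from $t_m$, and the same estimate yields
\[
d\bigl(\psub{\spath}{\spath_1^-,\spath_1^+}, m_{\spath_2}\bigr)\geq \frac{\abs{\spath_2}/2-\lao\kao}{\lao}-\kao.
\]
This exceeds $r$ once $K_0(r)>2\lao(r+\kao)+2\lao\kao$, so I would take $K_0(r)=2\lao(r+2\kao)+1$.

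There is no genuine obstacle here. The only care needed is this possible non-injectivity of $\spath$ in the $\psub{\cdot}{\cdot}$ notation, and the estimate above absorbs it. Midthinness itself is not used, only the length condition in the definition of $\mc K$--midthin.
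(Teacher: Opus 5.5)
Your proposal is correct and is the paper's argument. The paper's proof is one line: the lemma holds once $K(r) > 2D_{\mathcal P}(r)$, because special paths are uniform quasi-geodesics. Your parameter-distance estimate is that argument with explicit constants. Your treatment of the possible overshoot of at most $\lambda_0\kappa_0$ in parameter, when $\mathfrak{h}$ revisits $\mathfrak{h}_1^+$, makes explicit a small extra margin that the paper's proof absorbs into ``large enough''.
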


\begin{proof}
    This holds if $K(r) > 2D_{\mc P}(r)$ because special paths are uniform quasi-geodesics.
\end{proof}

\begin{lemma}\label{lemma:T-lemma}
    Let $\mc K = (K, n, \mc P)$ be a large enough contraction triple. Let $\spath = \spath_1\ast \spath_2$ and $\alpha = \alpha_1\ast \alpha_2$ be special paths such that $\spath_2$ is $\mc K$--midthin and $\spath_1^- = \alpha_1^+ = \alpha_2^-$. Then for  $i=1$ or $i=2$ we have $d(\alpha_i, m_{\spath_2})  > 3r$, where $r$ is a neck size of $\spath_2$. 
\end{lemma}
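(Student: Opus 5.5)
We argue by contradiction: a point of each $\alpha_i$ close to the neck would produce a short polygonal line from $\spath_2^+$ to $\spath_2^-$ avoiding the neck $B(m_{\spath_2},r)$, contradicting midthinness. So suppose that for both $i=1,2$ there is a point $a_i\in\alpha_i$ with $d(a_i,m_{\spath_2})\le 3r$. Write $p=\spath_1^-=\alpha_1^+=\alpha_2^-$. The sub-paths $\psub{\alpha_1}{a_1,p}$ and $\psub{\alpha_2}{p,a_2}$ are special, since $\mc P$ is consistently closed, and so are their inverses, since $\mc P$ is undirected. Let $m=m_{\spath_2}$.

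\textbf{Step 1 (locating $p$).} By Lemma~\ref{lemma:continued-special-path-distance}, $d(\spath_1,m)>r$, and since $\spath_2$ has length $\abs{\spath_2}\ge K(r)\gg D_{\mc P}(r)$, the point $p$ is far from $m$: $d(p,m)\ge D_{\mc P}^{-1}(K(r)/2)-D_{\mc P}(r)$, which is huge compared with $r$ once $K$ is large. Consequently each special path from $a_i$ to $p$ has length much larger than $r$.

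\textbf{Step 2 (the polygonal line).} Consider the concatenation of special paths
\[
\pgot=[\spath_2^+,a_2]_{\mc P}\ast\psub{\alpha_2}{p,a_2}^{-1}\ast\spath_1^{\,}\ast\spath_2\dots
\]
This naive choice fails, since it runs through $\spath_2$ itself. The workable route is instead the $3$-polygonal line
\[
\spath_1^{-1}\ast\alpha_1\ldots,
\]
that is, one must use the two branches $\alpha_1,\alpha_2$ at $p$ to bypass the neck. Concretely, the real structure is a ``T'': $\spath_2$ continues $\spath_1$ away from $p$, while $\alpha_1$ arrives at $p$ and $\alpha_2$ leaves it. So $\alpha=\alpha_1\ast\alpha_2$ is a single special path passing through $p$. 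If both halves come within $3r$ of $m$, then $\psub{\alpha}{a_1,a_2}$ is a special path whose endpoints lie within $3r$ of $m$ but which passes through $p$, far from $m$. A special path that makes such an excursion is a uniform quasi-geodesic, so its length is at most $D_{\mc P}(6r)$, while its diameter is at least $d(p,m)-3r$. This is incompatible with Step 1 as soon as $K(r)$ is large compared with $D_{\mc P}^{2}(6r)$.

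The argument therefore closes using only the quasi-geodesicity of the special path $\alpha$ and the bound of Step 1, and midthinness enters only through $\abs{\spath_2}\ge K(r)$. The remaining work is to choose the minimal full gauge $K_0$ explicitly from $D_{\mc P}$, so that Step 1 makes $d(p,m)$ exceed $D_{\mc P}(6r)+3r$. The one real obstacle is the Step 1 estimate $d(p,m)\gg r$. It requires the parametrised distance from $p$ to $m$ along $\spath$ to be at least $\abs{\spath_2}/2$, and converting this into an ambient distance uses the quasi-geodesic constants; I expect this to be routine.
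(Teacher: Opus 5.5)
Your final argument is correct and is essentially the paper's proof. The subsegment of $\alpha$ between $a_1$ and $a_2$ is a special path whose endpoints are at distance at most $6r$ and which passes through $p=\alpha_1^+$, so $d(p,m)\le 3r+D_{\mc P}(6r)$. This contradicts the fact that the subpath of $\spath$ from $p=\spath^-$ to $m$ has domain length at least $\abs{\spath_2}/2\ge K(r)/2$, once $K(r)>2D_{\mc P}(3r+D_{\mc P}(6r))$.

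You should cut the opening ``avoid the neck'' framing, the abandoned polygonal lines in Step~2, and the appeal to Lemma~\ref{lemma:continued-special-path-distance} (with its spurious $-D_{\mc P}(r)$). As you note, and as Remark~\ref{rem:lemmas-for-not-midthin} records, only $\abs{\spath_2}\ge K(r)$ is used.
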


\begin{proof}
    Assume by contradiction that for both $i= 1$ and $i = 2$ there are points $x_i\in \alpha_i$ with $d(x_i, m_{\spath_2})\leq 3r$. Since $\alpha_1^+$ lies between $x_1$ and $x_2$ on $\alpha$ and $d(x_1, x_2)\leq 6r$, we have $d(x_1, \alpha_1^+)\leq D_{\mc P}(6r)$. In particular, $d(m_{\spath_2}, \alpha^+)\leq 3r+D_{\mc P}(6r)$, a contradiction if $K(r) > D_{\mc P}(3r+D_{\mc P}(6r))$.
\end{proof}

\begin{remark}\label{rem:lemmas-for-not-midthin}
    For the Lemmas~\ref{lemma:continued-special-path-distance} and \ref{lemma:T-lemma} it suffices to assume that $\abs{\spath_2}\geq K(r)$ instead of $\spath_2$ being $\mc K$--midthin with necksize $r$, but we have stated them as above as this is the most frequent setting in which they will be used.
\end{remark}

\subsection{The construction of the generalised contraction space}\label{subsec:construction}

In what follows,  $\mc K = (K, n, \mc P)$ denotes a contraction triple.

\begin{definition}[The $K$-contraction space]\label{def:K-contraction-space}
The $\mc K$-contraction space of $X$, denoted by $\hat{X}_{\mc K}$, is obtained from $X$ by adding an edge between every pair of $\mc K$-anti-contracting vertices $(x, y)\in \mc A_{\mc K}$.
\end{definition}

With this definition, the $\mc K$--contraction space $\hat{X}_{\mc K}$ is a graph with the same vertex set as $X$. We denote the induced path metric in $\hat{X}_{\mc K}$ by $\hat{d}_{\mc K}$. As a general rule, we use the hat notation to specify that objects considered are with respect to the metric $\hat{d}_{\mc K}$, for instance we denote $D$-neighbourhoods in $\hat{X}_{\mc K}$ by $\hat{\mc{N}}_D(\cdot, \mc K)$ and the diameter with respect to $\hat{d}_{\mc K}$ by $\widehat{\diam}_{\mc K}(\cdot)$. There is a natural distance-non-increasing inclusion $\iota_{\mc K}:X\hookrightarrow \hat{X}_{\mc K}$ and we identify $X$ with its image in $\hat{X}_{\mc K}$. For any path $\gamma: I\to X$ we denote the composition $\iota_{\mc K}\circ \gamma : I\to \hat{X}_{\mc K}$ by $\hat{\gamma}_{\mc K}$. While the images of $\gamma$ and $\hat{\gamma}_{\mc K}$ are equal, the important distinction is that if $\gamma$ is a (quasi-)geodesic, $\hat{\gamma}_{\mc K}$ need not be one. For any pair of points $x, y\in \hat{X}_{\mc K}$, $[x, y]_{\mc K}$ denotes a choice of geodesic (with respect to $\hat{d}_{\mc K}$) from $x$ to $y$. If we do not specify, then closest points, neighbourhoods, being a geodesic and so on is always considered to be with respect to the metric $d$.

 We record three facts that follow immediately from the definition of anti-contraction.

\begin{lemma}\label{lemma:small-implies-anti-contracting}
      Assume that $\mc K$ is large enough. If $\spath\in \mc P$ is a special path with $d(\spath^-, \spath^+)\leq 1$, then $\spath$ is $\mc K$--anti-contracting. In particular, if $x, y\in V(X)$ and $\hat{d}_{\mc K}(x, y) \leq 1$, then $(x, y)$ is $\mc K$--anti-contracting. 
\end{lemma}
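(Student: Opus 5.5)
The plan is to show that a special path $\spath$ with $d(\spath^-,\spath^+)\leq 1$ has no $\mc K$--midthin subsegment, once $K$ dominates a suitable function determined by the quasi-geodesic constants of $\mc P$. The ``in particular'' part then follows directly. If $\hat d_{\mc K}(x,y)=0$ there is nothing to prove, since the trivial path is special and short. If $\hat d_{\mc K}(x,y)=1$, then either $(x,y)\in \mc A_{\mc K}$ already, or $x,y$ are joined by an edge of $X$, so $d(x,y)\leq 1$. In the second case any special path $[x,y]_{\mc P}$ is $\mc K$--anti-contracting by the first part.

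For the first part, let $\spath'$ be any subsegment of $\spath$. Since $\spath$ is a $(\lao,\kao)$--quasi-geodesic with endpoints at distance at most $1$, Remark~\ref{remark:definition-of-dp} gives $\abs{\spath}\leq D_{\mc P}(1)$, and hence $\abs{\spath'}\leq D_{\mc P}(1)$. Suppose $\spath'$ were $\mc K$--midthin with neck-size $r$. Then $D_{\mc P}(1)\geq \abs{\spath'}\geq K(r)$. We impose that the full gauge satisfies $K(r)>D_{\mc P}(1)$ for every $r\geq 0$, for instance $K_0(r)\geq D_{\mc P}(1)+r+1$, which is compatible with the other growth requirements used in Lemmas~\ref{lemma:continued-special-path-distance} and~\ref{lemma:T-lemma}. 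This gives a contradiction for every $r\geq 0$, so no subsegment of $\spath$ is midthin, and $\spath$ is $\mc K$--anti-contracting.

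There is essentially no obstacle. The one point to check is that neck-sizes are allowed to be arbitrarily small, including $r=0$, so the bound on $K$ must hold uniformly in $r$. This is why $K_0$ is required to satisfy $K_0(0)>D_{\mc P}(1)$, and monotonicity then handles all larger $r$. With that choice the condition $\abs{\spath'}\geq K(r)$ in Definition~\ref{def:midthin} can never be met by a subsegment of so short a path, and the midthin condition on polygonal lines is never even examined.
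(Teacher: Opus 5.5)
Your proof is correct and follows the paper's argument. A special path whose endpoints are at distance at most $1$ has domain length at most $D_{\mc P}(1)$, so requiring $K(r) > D_{\mc P}(1)$ for all $r\geq 0$ rules out $\mc K$--midthin subsegments. The ``in particular'' part then follows by splitting on whether the edge of $\hat{X}_{\mc K}$ is an added one or an edge of $X$. Your version is slightly more explicit than the paper's: the paper's proof argues only the second statement and leaves the first statement and the case $x=y$ implicit.
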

\begin{proof}
   If $x,y\in V(X)$ and $\hat{d}_{\mc K}(x, y) \leq 1$, then either there is an edge between them in $\hat{X}_{\mc K}$, or there is not. In the first case, the pair is anti-contracting by definition. In the second case, since $x,y\in V(X)$ we see that $d(x,y)\leq 1$. Thus, the pair $(x,y)$ is $\mc K$--anti-contracting as long as $K(r) > D_{\mc P}(1)$ for all $r\geq 0$.
\end{proof}

\begin{lemma}\label{lem:diam-anti-contracting}
   If $\spath\in \mc P$ is a $\mc K$--anti-contracting special path, then the set of vertices lying on $\spath$ forms a clique in $\hat{X}_{\mc K}$ and hence $\widehat{\diam}_{\mc K}(\spath)\leq 3$.
\end{lemma}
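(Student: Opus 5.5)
The plan is to reduce everything to the definition of $\mc K$--anti-contracting and the observation that this property is inherited by subsegments. Let $\spath\in\mc P$ be $\mc K$--anti-contracting, and let $x=\spath(s)$ and $y=\spath(t)$ be two vertices on $\spath$, with $s\le t$ after possibly swapping the two points.

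First, the restriction $\isub{\spath}{s,t}$ is again a special path, because $\mc P$ equals its consistent closure. Every subsegment of $\isub{\spath}{s,t}$ is also a subsegment of $\spath$, so none of them is $\mc K$--midthin. Hence $\isub{\spath}{s,t}$ is a $\mc K$--anti-contracting special path from $x$ to $y$, and the pair $(x,y)$ lies in $A_{\mc K}$. By Definition~\ref{def:K-contraction-space}, $x$ and $y$ are then joined by an edge of $\hat{X}_{\mc K}$ (unless $x=y$). Undirectedness, or simply the symmetry of the edge relation, disposes of the case $s>t$. So the vertices lying on $\spath$ form a clique, and any two of them are at $\hat d_{\mc K}$--distance at most $1$.

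For the diameter bound we must also account for non-vertex points of $\spath$, such as interior points of edges of $X$. The image of $\spath$ in $\hat X_{\mc K}$ is the same set as in $X$, and $\iota_{\mc K}$ is $1$--Lipschitz, so a point of $\spath$ lying on an edge of $X$ is within $\hat d_{\mc K}$--distance $1$ of some vertex. That vertex is an endpoint of the edge; it need not lie on $\spath$, and I would allow this and not try to control it. Instead, let $p,q\in\spath$ be arbitrary points. If $\spath$ contains a vertex at all, I would pick vertices $v_p,v_q\in\spath$ as follows. Since $\spath$ is continuous, $p$ is joined within $\spath$ to a vertex of $\spath$ through a piece of a single edge, so $d(p,v_p)\le1$; the same holds for $q$ and $v_q$. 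The triangle inequality then gives
\[
\hat d_{\mc K}(p,q)\le 1+\hat d_{\mc K}(v_p,v_q)+1\le 3.
\]
If $\spath$ contains no vertex, it lies inside a single edge, and its $\hat d_{\mc K}$--diameter is at most $1$.

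The only step needing care is this bookkeeping for non-vertex points. The clique statement itself is immediate once one notes that subsegments of special paths are special and inherit anti-contraction.
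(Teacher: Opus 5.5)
Your proof is correct and follows essentially the paper's route: the paper derives this directly from Lemma~\ref{lemma:diameter-of-anti-contracting-paths}, namely that every subsegment of an anti-contracting special path is again special (by consistency) and anti-contracting, so any two vertices on $\spath$ form a $\mc K$--anti-contracting pair and are adjacent in $\hat{X}_{\mc K}$. Your extra bookkeeping for non-vertex points on $\spath$, each within distance $1$ of a vertex of $\spath$ by continuity, simply makes explicit where the constant $3$ comes from, which the paper leaves implicit.
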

\begin{proof}
    This follows from Lemma~\ref{lemma:diameter-of-anti-contracting-paths}.
\end{proof}

\subsection{Hyperbolicity of the contraction space}\label{sec:hyperbolicity}

We want to use the guessing geodesics technique (see Theorem~\ref{thm:guessing_geodesics}) to prove that the contraction space is hyperbolic. In \cite{Zbinden:hyperbolic}, this is done as follows: the guessed paths are geodesics in $X$ and if we have a triangle $\Delta = (\gamma, \pgot_1,\pgot_2)$ of geodesic in $X$ and a point $m$ on $\gamma$, we can show that $\hat{d}_K(m, x)\leq 1$, where $x$ is the closest point on $\pgot_1\cup \pgot_2$ to $m$. Deducing $\hat{d}_K(m, x)\leq 1$ has two main ingredients: firstly, if it was not, then $[m, x]$ has to have a midthin (called thin in \cite{Zbinden:hyperbolic}) subsegment. The geodesic $[m, x]$ `cuts' $\Delta$ into two quadrangles $\mc Q_1$ and $\mc Q_2$, so the midpoint $m'$ of the midthin segment of $[m, x]$ has to come close to points $q_1, q_2$ on $\mc Q_1$ and $\mc Q_2$. But, since $x$ was specifically chosen as the closest point, the distance between $m'$ and $\pgot_1\cup \pgot_2$ has to be at least $d(m', x)$, showing that $q_1$ and $q_2$ must lie on $\gamma$. The triangle inequality then produces a contradiction. Now, replacing geodesics by special paths we lose $d(m', \pgot_1 \cup \pgot_2) \geq d(m', x)$, in fact, at least a priori, $m'$ might even lie on $\pgot_1\cup \pgot_2$. 

To remedy this, we introduce a function $\Phi$ we call the \emph{triangular pull} (see \eqref{eq:def-of-triangular-pull}) whose minimizer $x\in \pgot_1 \cup \pgot_2$ has the following property, even if paths in $\mc P$ are only quasi-geodesics instead of geodesics, no point $m'$ on $[m, x]_{\mc P}$ can be close to both $\gamma$ and $\pgot_1 \cup \pgot_2$. Hence, either both $q_1, q_2$ lie on $\gamma$ or they both lie on $\pgot_1\cup \pgot_2$. The former still cannot happen, as it violates the triangle inequality. To ensure the latter cannot happen, we `cut off the tip' of $\pgot_1 \cup \pgot_2$ before defining $x$ as the minimizer of $\Phi$ on $\pgot_1\cup \pgot_2$. Then $q_1, q_2$ cannot both be close to $\pgot_1\cup \pgot_2$ as such points do not exist any more. 

Of course, you need to be extremely careful when cutting off the tip of $\pgot_1 \cup \pgot_2$: If you cut off too much, there all of the points that are left on $\pgot_1\cup \pgot_2$ might be far from $m$. If on the other hand you do not cut off enough, you cannot conclude that $q_1$ and $q_2$ both lying on the uncut part of $\pgot_1\cup \pgot_2$ are a contradiction. Then, you have to connect the stumps where you cut off the tip with a special path, say $\alpha$, and you have to deal with the possibility of $q_1$ or $q_2$ lying on $\alpha$.

The proposition below uses the above described proof strategy to show that in a triangle of special paths where one them has a midthin subsegment with midpoint $m$, there exists a point on the other two sides which is close to $m$ with respect to both $d$ and $d_{\mc K}$. The proposition is then later used as the main ingredient of Theorem~\ref{thm:contraction_space_hyp_for_graphs}, the proof of hyperbolicity of the contraction space.

\begin{proposition}\label{lem:technical-properties-of-contraction-space}
    Let $\mc K =(K, n, \mc P)$ be a large enough contraction triple. Let $\gamma\in \mc P$ be a special path whose endpoints are vertices and let $\tilde{\gamma}\subset \gamma$ be a $\mc K$--midthin subsegment of $\gamma$ with necksize $r$ and midpoint $m$. Let $\pgot = \pgot_1\ast \pgot_2$ be a $(2, \mc{P})$--polygonal line from $\gamma^+$ to $\gamma^-$. Then there exists a vertex $x\in \pgot_1\ast \pgot_2$ with $\hat{d}_{\mc K}(x, \tilde{\gamma})\leq 1$ and $d(x, m)\leq (2r+3)^2$.
\end{proposition}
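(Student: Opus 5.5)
The plan is to follow the strategy sketched just before the statement. First, locate a candidate region on $\pgot$. Since $\tilde\gamma$ is $\mc K$--midthin with neck-size $r$, and $\pgot$ together with the two subpaths of $\gamma$ outside $\tilde\gamma$ gives polygonal lines, midthinness forces some polygonal line from $\tilde\gamma^+$ to $\tilde\gamma^-$ to pass through $B(m,r)$. To make this usable, I would consider $(n,\mc P)$--polygonal lines of the form $\suf{\gamma}{\tilde\gamma^+}\ast\pgot\ast\pref{\gamma}{\tilde\gamma^-}$ (at most $4\le n$ pieces). By Lemma~\ref{lemma:continued-special-path-distance}, the parts coming from $\gamma$ stay farther than $r$ from $m$. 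Hence $\pgot$ meets $B(m,r)$.

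Next, \emph{cut off the tip}. By Lemma~\ref{lemma:T-lemma}, applied at the common endpoint $\pgot_1^+=\pgot_2^-$ with $\gamma$'s subpaths, only one of $\pgot_1,\pgot_2$ can come within $3r$ of $m$ near the junction. So I would choose points $a\in\pgot_1$ and $b\in\pgot_2$, at controlled distance (of order $r$) from $m$ or else at the tip, and replace the tip $\psub{\pgot}{a,b}$ by a special path $\alpha=[a,b]_{\mc P}$. On the truncated line $\pgot'=\pref{\pgot}{a}\ast\alpha\ast\suf{\pgot}{b}$, I would define the \emph{triangular pull}
\[
\Phi(x)=d(x,m)+\lambda\, d(x,\gamma)
\]
for a suitable weight $\lambda$, or alternatively a sum of distances to the sides. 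I would then take $x$ to be a minimizing vertex on $\pref{\pgot}{a}\cup\suf{\pgot}{b}$, noting that $\Phi(x)\lesssim r$ because $\pgot$ meets $B(m,r)$. This already yields $d(x,m)\le (2r+3)^2$ after routine bookkeeping with $D_{\mc P}$.

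The main step is to show $\hat d_{\mc K}(x,y)\le1$ for a point $y\in\tilde\gamma$ near $m$, i.e.\ that $[x,y]_{\mc P}$ is $\mc K$--anti-contracting. Suppose instead that some subsegment $\beta\subset[x,y]_{\mc P}$ is $\mc K$--midthin, with neck-size $r'$ and midpoint $m'$. The path $[x,y]_{\mc P}$ splits the triangle into two closed polygonal loops: one through $\gamma$'s left part and $\pgot$'s left part, the other through the right parts and $\alpha$. Each loop yields a polygonal line from $\beta^+$ to $\beta^-$ with few pieces, so midthinness of $\beta$ gives points $q_1,q_2$ within $r'$ of $m'$, one on each loop. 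By Lemma~\ref{lemma:continued-special-path-distance}, neither can lie on $[x,y]_{\mc P}$ itself.

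I would then rule out the remaining cases. If $q_1$ and $q_2$ both lie on $\gamma$, on opposite sides of $y$, then the quasi-geodesic property of $\gamma$ forces $y$ close to $m'$, which contradicts Lemma~\ref{lemma:continued-special-path-distance} since $K(r')\gg D_{\mc P}(r')$. If one of them lies on $\pgot'$, minimality of $\Phi$ at $x$ is contradicted: that point would have smaller pull, because $m'$ lies strictly inside $[x,y]_{\mc P}$ and is near both a point of $\pgot'$ and a point of $\gamma$, or both of $\pgot'$. In the latter configuration the quasi-geodesicity of the pieces of $\pgot'$ makes $x$ close to $m'$, again contradicting Lemma~\ref{lemma:continued-special-path-distance}. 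The case where a $q_i$ lies on $\alpha$ is handled by the tip choice: $\alpha$ is short relative to $K(r')$ and stays far from $m'$, using Lemma~\ref{lemma:T-lemma}.

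I expect this case analysis, and especially calibrating the tip cut together with the weight in $\Phi$, to be the main obstacle. I would first try the weight $\lambda=2$ and cut points at distance $3r$ from $m$.
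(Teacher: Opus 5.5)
Your proposal follows the architecture sketched before the statement, but it leaves unresolved the two points on which the proof actually depends.

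\textbf{First gap: the tip cut.} Your tip cut rests on a misuse of Lemma~\ref{lemma:T-lemma}. That lemma requires $\alpha_1\ast\alpha_2$ to be a \emph{single} special path. Here $\pgot_1\ast\pgot_2$ is only a $(2,\mc P)$--polygonal line, and nothing prevents both $\pgot_1$ and $\pgot_2$ from passing within $r$ of $m$: in a tree, take a tripod whose branch point is $m$.

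More seriously, the dangerous configuration in your final case analysis is $q_1\in\pgot_1$, $q_2\in\pgot_2$, with $m'$ far from $\gamma$. There $x$ does not lie between $q_1$ and $q_2$ on one special path, so quasi-geodesicity of the pieces gives nothing. This obstruction lives at the scale $r'$ of the midthin subpath of $[x,y]_{\mc P}$, which is unrelated to $r$ and may be much smaller. A cut at a fixed distance $\sim 3r$ from $m$ therefore cannot exclude it. For the same reason, your claim that ``$\alpha$ is short relative to $K(r')$'' is unjustified. You would also need to recheck that the truncated line still passes near $m$.

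The paper makes the cut scale-adapted. It defines \emph{crosses} $(z_1,z_2,x,\spath,m_x,\ell)$: a point $m_x$ on a path from a candidate $x$ to $\gamma$ that is $f(\ell)$--close to both $\pgot_1$ and $\pgot_2$ and $g(\ell)$--far from $\gamma$. It cuts at an extremal cross and then trims by $h(\ell)$, with $f\ll h\ll g$. The ``one on each side'' case then produces a new cross, contradicting extremality. The $\alpha$--cases are closed by comparing scales ($\ell\le g^{-1}(4\ell_0)$) or by the $h(\ell)$--trimming.

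\textbf{Second gap: the pull.} With $\Phi(x)=d(x,m)+\lambda d(x,\gamma)$ and $y$ fixed near $m$, the step ``that point would have smaller pull'' does not follow. From $d(u,m')\le r'$ and $m'\in[x,y]_{\mc P}$ you only get $d(m',y)\le D_{\mc P}(d(x,y))$, hence $d(u,m)\lesssim C_{\mc P}\,d(x,m)+O(r')$. This is a multiplicative loss that no additive weight can absorb, for instance when $d(x,\gamma)$ is small compared to $d(x,m)$. For geodesics one has $d(m',y)\le d(x,y)-d(x,m')$, an additive saving of order $K(r')$; this is precisely what is lost for special paths.

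The paper's pull is instead a product over \emph{pairs}, $\Phi(x,y)=\max\{d(x,y),1\}\cdot\max\{1,d(x,y),d(m,y)\}$, and the $\gamma$--point $\Psi(x)$ is optimised as well. In the contradiction one compares with a pair $(u,w)$ where $w\in\gamma$ is near $m_0$, not near $m$. Lemma~\ref{claim:good-fraction} bounds $d(w,m)$ only up to the factor $C_\ast$. This multiplicative loss in the second factor is paid for by the multiplicative gain $d(u,w)<d(x_0,\Psi(x_0))/C_0$ in the first factor. Your fixed choice of $y$ near $m$ and additive pull cannot reproduce this trade-off.
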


Before we start with the proof, we want to recall a simple fact following from the triangle inequality which will be used in the proof.

    \begin{lemma}
        \label{claim:good-fraction}
        There exists a constant $C_\ast$ depending only on the quasi-geodesic constants of $\mc P$ such that the following holds: If $\beta\in \mc P$ is a special path from $a$ to $b$, $c$ is a point on $\beta$ and $c'$ is a point at distance at most $d(a, b)$ from $c$, then for any point $m$, we have 
    \begin{align}\label{eq:fractional-bound}
        d(c', m)\leq C_\ast\max\{1, d(a, b), d(m, b)\}.
    \end{align}
    \end{lemma}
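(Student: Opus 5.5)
The plan is to use only the triangle inequality along $\beta$ together with the fact that $\beta$ is an improved $(\lao,\kao)$--quasi-geodesic. Concretely, we will bound $d(c,b)$ by a linear function of $d(a,b)$, and then estimate
\[ d(c',m)\leq d(c',c)+d(c,b)+d(b,m).\]

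\emph{Step 1: locate $c$ near $b$.} Since $c$ lies on $\beta$, Remark~\ref{remark:definition-of-dp} gives
\[ d(c,b)\leq \diam(\beta)\leq D_{\mc P}(d(a,b)) = C_{\mc P}\,d(a,b)+C_{\mc P}.\]
Here $D_{\mc P}$ bounds the diameter of a special path in terms of the distance between its endpoints, and $C_{\mc P}$ depends only on the quasi-geodesic constants of $\mc P$.

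\emph{Step 2: combine the three terms.} By hypothesis $d(c',c)\leq d(a,b)$. Plugging this and Step 1 into the triangle inequality above gives
\[ d(c',m)\leq (C_{\mc P}+1)\,d(a,b)+C_{\mc P}+d(m,b).\]
Each of the three terms on the right is at most its coefficient times $\max\{1,d(a,b),d(m,b)\}$. Summing the coefficients, the estimate \eqref{eq:fractional-bound} holds with $C_\ast = 2C_{\mc P}+2$, which depends only on the quasi-geodesic constants of $\mc P$.

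There is no real obstacle here. The only point needing care is that the additive constant of the quasi-geodesic must be absorbed into the maximum. This is exactly why $1$ appears in $\max\{1,d(a,b),d(m,b)\}$: it handles the case where $a=b$ or $a$ and $b$ are very close, in which $\beta$ can still have diameter up to $C_{\mc P}$.
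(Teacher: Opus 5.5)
Your proof is correct and is essentially the paper's argument: a bound on the distance from $c$ to an endpoint of $\beta$ that is linear in $d(a,b)$ and comes from the quasi-geodesic constants, followed by the triangle inequality. The only cosmetic difference is that the paper bounds $d(a,c)\leq C^2 d(a,b)+2C^3$ directly from the quasi-geodesic inequality, whereas you bound $d(c,b)$ through $D_{\mc P}$ and route the triangle inequality through $b$, which saves one $d(a,b)$ term.
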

    \begin{figure}
        \centering
        \includegraphics[width=0.5\linewidth]{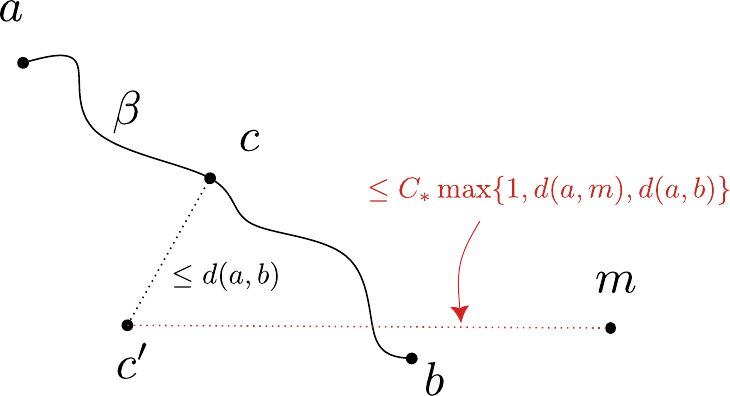}
        \caption{Upper bounding the distance $d(c', m)$.}
        \label{fig:constellation}
    \end{figure}        
    \begin{proof}
        Let $C\geq 1$ be a constant such that $\mc P$ is a $C$--quasi-geodesic path system. Since $c$ lies on a special path and hence is a $C$--quasi-geodesic from $a$ to $b$, we have $d(a,c)\leq C^2d(a, b) + 2C^3$.  This is depicted in Figure~\ref{fig:constellation}. The statement follows by the triangle inequality for example for $C_\ast = C^2 + 2C^3 + 2$.
    \end{proof}

\begin{proof}[Proof of Proposition~\ref{lem:technical-properties-of-contraction-space}]
    Let $f, h, g : \R_{\geq 0}\to \R_{\geq 0}$ be functions and let $C_0\geq C_*$ be a large enough constant. Morally, we should think of them as $K^{-1}(s)\ll f(s) \ll h(s) \ll g(s) \ll s$, where $K$ is the function from the contraction triple $\mc K$. In practice, there are specific inequalities that need to be satisfied, which are technical and not very illuminating. We have added them to the end of the proof for completeness.

    \smallskip
    
    Next, define the \emph{triangular pull} $\Phi : X\times X\to \R_{\geq 0}$ as follows:
    \begin{align}\label{eq:def-of-triangular-pull}
        \Phi(x, y) =  \max\{d(x, y), 1\} \cdot \max\{1, d(x, y), d(m, y)\}.
    \end{align}

    For any vertex $x$ on $\pgot$, we denote by $\Psi(x)$ a (choice of) vertex $y\in \gamma$ which minimizes $\Phi(x, y)$ among vertices on $\gamma$. 

    \begin{claim}\label{claim:dist1-enough}
        Let $x\in \pgot$ be a vertex with $\Phi(x, \Psi(x))\leq (2r+2)^2$. We have $d(x, m)\leq (2r+3)^2$.
    \end{claim}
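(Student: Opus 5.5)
The claim is a direct triangle-inequality estimate. Set $y = \Psi(x)$, a vertex on $\gamma$. Since $\Phi(x,y)\le (2r+2)^2$ and $\Phi$ is a product of two factors each at least $1$, we obtain two bounds. First, $d(x,y)\le \max\{d(x,y),1\}\le (2r+2)^2$. Second, and more usefully, $\max\{1,d(x,y),d(m,y)\}\le (2r+2)^2$, so in particular $d(m,y)\le (2r+2)^2$. A naive combination gives $d(x,m)\le d(x,y)+d(y,m)\le 2(2r+2)^2$, which is weaker than the claimed bound $(2r+3)^2$. So we must use the product structure more carefully.

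\textbf{Case 1: $d(x,y)\le 1$.} Then $d(x,m)\le 1+d(m,y)$. Moreover $\Phi(x,y)=\max\{1,d(x,y),d(m,y)\}\ge d(m,y)$, so $d(m,y)\le (2r+2)^2$. Hence
\[
d(x,m)\le (2r+2)^2+1\le (2r+3)^2 .
\]

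\textbf{Case 2: $d(x,y)=a>1$.} Then $\Phi(x,y)=a\cdot\max\{a,d(m,y)\}$. Write $b=d(m,y)$.
\begin{itemize}
\item If $b\le a$, then $\Phi=a^2\le (2r+2)^2$. So $a\le 2r+2$, and $d(x,m)\le a+b\le 2a\le 4r+4\le (2r+3)^2$.
\item If $b>a$, then $ab\le (2r+2)^2$ with $a>1$. The sum $a+b$ subject to $ab\le S:=(2r+2)^2$, $a\ge 1$, $b\ge a$ is maximised at the extreme $a\to 1$. This gives $a+b\le 1+S\le (2r+3)^2$. (For fixed product, $a+S/a$ is decreasing on $[1,\sqrt S]$.)
\end{itemize}

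In all cases $d(x,m)\le d(x,y)+d(y,m)\le (2r+3)^2$, which proves the claim. There is no real obstacle here. The only point needing care is to exploit the product form of $\Phi$, rather than bounding each factor separately, so that the additive constant fits inside $(2r+3)^2$. Note also that $(2r+2)^2+1\le(2r+3)^2$ holds since $r\ge0$.
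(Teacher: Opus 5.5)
Your proof is correct, but it takes a longer route than the paper. The paper avoids any case split by noting that the second factor of $\Phi$ always dominates the first: $\max\{1,d(x,y),d(m,y)\}\ge \max\{d(x,y),1\}$. Hence $\Phi(x,y)\ge \max\{d(x,y),1\}^2$, and so $d(x,\Psi(x))\le\sqrt{\Phi(x,\Psi(x))}\le 2r+2$ in every case. Since the first factor is at least $1$, you also have $d(\Psi(x),m)\le\Phi(x,\Psi(x))\le(2r+2)^2$. The triangle inequality then gives $d(x,m)\le (2r+2)+(2r+2)^2\le(2r+3)^2$.

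So the ``careful use of the product structure'' you identified as necessary reduces to the single inequality $\Phi\ge\max\{d(x,y),1\}^2$. That is exactly what your first subcase of Case 2 uses, and it holds without restriction.

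Your case analysis is nonetheless valid. In the subcase $d(m,y)>d(x,y)$, the conditions $b>a$ and $ab\le S$ force $a<\sqrt S$, which places you on the interval where $a+S/a$ is decreasing. The argument even gives the marginally sharper bound $d(x,m)\le 1+(2r+2)^2$, but that extra precision is not needed anywhere.
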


    \textit{Proof of claim.}
    By the definition of $\Phi$, we have $d(x, \Psi(x))\leq \sqrt{\Phi(x, \Psi(x))}\leq 2r+2$. We also have $d(\Psi(x), m)\leq \Phi(x,\Psi(x))\leq (2r+2)^2$. The statement follows from the triangle inequality.
    \hfill$\blacksquare$

    \smallskip
 
    We want to find subpaths $\qgot_i$ of $\pgot_i$, which avoid so called \emph{crosses}, which we now define. 
    
    Let $x\in \pgot$ be a point such that $\Phi(x, \Psi(x))\leq (2r+2)^2$ and let $\spath\in \mc P$ be a special path from $x$ to $\Psi(x)$. Let $z_1\in \pgot_1$, $z_2\in \pgot_2$, $m_x\in \spath$, and $\ell = d(m_x, \Psi(x))$. We say that the data $(z_1, z_2, x, \spath, m_x, \ell)$ is a \emph{cross} and $(z_1, z_2)$ are \emph{a pair of cross points} if
    \begin{align}
        d(m_x, z_i)&\leq f(\ell) \qquad \text{for $i = 1, 2$, and}&\label{eq:close-to-side}\\
        d(m_x, \gamma)&\geq g(\ell).\label{eq:far-from-gamma}
    \end{align}

    \smallskip

    As mentioned, we want to find subpaths $\qgot_i$ of $\pgot_i$, and a path $\alpha$ between them, so that $\qgot_i$ avoid crosses.    
    If there does not exist a cross we define $\qgot_1 = \pgot_1$ and $\qgot_2 = \pgot_2$ and $\alpha$ as the trivial path from $\pgot_1^+$ to itself. In this case, we define $\ell  = -1$. Otherwise, let $(\pgot_1(t_1), \pgot_2(t_2), x, \spath, m_x, \ell)$ be a cross which minimizes $t_1$ (and which maximizes $t_2$ among crosses that minimize $t_1$). Finally, let $\qgot_1 = \pref{\pgot_1}{\pgot_1(t_1)}$, let $\qgot_2 = \suf{\pgot_2}{\pgot(t_2)}$ and let $\alpha\in \mc P$ be a special path from $\qgot_1^+$ to $\qgot_2^-$. In any case, define $\qgot = \qgot_1\ast \alpha\ast \qgot_2$, $z_1 = \qgot_1^+$ and $z_2 = \qgot_2^-$. This is depicted in Figure~\ref{fig:constellation}.

    Note that the cross we found must be minimal: 

    \begin{claim}\label{claim:no-cross-points}
        If $(v_1, v_2)$ is a pair of cross points with $v_1\in \qgot_1$ and $v_2\in \qgot_2$, then $v_1 = z_1$ and $v_2 = z_2$. 
    \end{claim}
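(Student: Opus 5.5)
The claim is a minimality statement about the chosen cross, so I would argue by contradiction directly from the way $(z_1,z_2)$ was selected. In the case where no cross exists there is nothing to prove: there are no pairs of cross points at all. So assume a cross $(\pgot_1(t_1),\pgot_2(t_2),x,\spath,m_x,\ell)$ was chosen, minimizing $t_1$ and then maximizing $t_2$ among crosses minimizing $t_1$. Let $(v_1,v_2)$ be a pair of cross points with $v_1\in\qgot_1$ and $v_2\in\qgot_2$. By definition this means there is some cross $(v_1,v_2,x',\spath',m_{x'},\ell')$.

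First I would handle $v_1$. Since $\qgot_1=\pref{\pgot_1}{\pgot_1(t_1)}$, write $v_1=\pgot_1(s_1)$ with $s_1\le t_1$. Minimality of $t_1$ forces $s_1\ge t_1$, so $s_1=t_1$ and $v_1=z_1$.

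There is one small subtlety here, since $\pgot_1$ need not be injective. The prefix is taken up to the \emph{first} time $\pgot_1$ reaches $\pgot_1(t_1)$. A point of $\qgot_1$ therefore has some parameter $s_1\le t_1$, and if $s_1<t_1$ minimality is violated. Even when $v_1=\pgot_1(t_1)$ as a point, the first parameter $s_1$ at which it is attained satisfies $s_1\le t_1$ with equality by minimality. In either case $v_1=z_1$.

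Next I would handle $v_2$. Now $(z_1,v_2)$ is a pair of cross points realized by a cross whose first coordinate has minimal parameter $t_1$. Since $\qgot_2=\suf{\pgot_2}{\pgot_2(t_2)}$, we can write $v_2=\pgot_2(s_2)$ with $s_2\ge t_2$. Among crosses minimizing $t_1$, the chosen one maximizes $t_2$, so $s_2\le t_2$. Hence $v_2=\pgot_2(t_2)=z_2$, using the same first/last-parameter care for the suffix.

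The only genuine point to check is that the minimum and maximum in the selection actually exist; otherwise "the" minimizing cross is not defined. I would secure this by restricting to vertices, as in the setup: $x$ ranges over vertices and $\Psi(x)$ is a vertex. I would then also take cross points to be vertices, or argue via a compactness/closedness remark, so that only finitely many parameters are relevant on the compact domains. If the definition is read with arbitrary points, I would instead choose the cross to minimize $t_1$ up to an arbitrarily small error. The same argument then shows that $v_i$ coincides with $z_i$ up to that error, which is all the later argument needs. I expect this existence/well-definedness issue to be the only obstacle; the rest is immediate from the definitions of prefix and suffix.
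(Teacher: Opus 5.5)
Your argument is correct and is the paper's own: the paper's proof just says the claim follows from the definition of $z_1,z_2$, i.e.\ from the minimality of $t_1$ and then the maximality of $t_2$ among crosses minimizing $t_1$, and your handling of first/last parameters for the prefix $\qgot_1$ and the suffix $\qgot_2$ spells this out. Whether an extremal cross exists is a question about the construction preceding the claim, which the paper takes for granted, so the claim's proof does not need it; note, though, that your approximate-minimizer fallback would only give the weaker statement that $v_i$ is close to $z_i$, not the claim as stated.
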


    \textit{Proof of Claim.}
        This is a consequence of the definition of $z_1$ and $z_2$.
    \hfill$\blacksquare$

     \smallskip

    \begin{claim}\label{claim:cross-facts2}
        For $i = 1, 2$, we have $d(\alpha, \gamma) \geq h(\ell)$.
    \end{claim}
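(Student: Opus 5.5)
The claim follows from the triangle inequality once we use that $\alpha$ is a short special path anchored near $m_x$, while $m_x$ is far from $\gamma$. The only real content is recording the growth condition on $f,h,g$ that makes the estimate work. This condition goes into the list of technical inequalities collected at the end of the proof.

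\emph{Trivial case.} If no cross exists, then $\alpha$ is the trivial path at $\pgot_1^+$ and $\ell=-1$. The claim then holds by the convention that $h(-1)\le 0$; equivalently, this case can simply be excluded, since $\alpha$ is then never used. So assume a cross $(z_1,z_2,x,\spath,m_x,\ell)$ was selected, with $z_1=\qgot_1^+\in\pgot_1$, $z_2=\qgot_2^-\in\pgot_2$, and $\alpha\in\mc P$ a special path from $z_1$ to $z_2$.

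\emph{Length bound on $\alpha$.} By \eqref{eq:close-to-side} we have $d(z_1,m_x)\le f(\ell)$ and $d(z_2,m_x)\le f(\ell)$. Hence $d(\alpha^-,\alpha^+)=d(z_1,z_2)\le 2f(\ell)$. Since $\alpha$ is a special path, Remark~\ref{remark:definition-of-dp} gives $\diam(\alpha)\le D_{\mc P}(2f(\ell))$.

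\emph{Proximity to $m_x$.} For every point $a\in\alpha$,
\[
d(a,m_x)\le d(a,z_1)+d(z_1,m_x)\le D_{\mc P}(2f(\ell))+f(\ell).
\]

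\emph{Distance to $\gamma$.} Using \eqref{eq:far-from-gamma} together with the previous bound, for every $a\in\alpha$,
\[
d(a,\gamma)\ge d(m_x,\gamma)-d(a,m_x)\ge g(\ell)-f(\ell)-D_{\mc P}(2f(\ell)).
\]

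\emph{Condition on the gauges.} The claim therefore holds once the functions satisfy
\[
g(s)\ge h(s)+f(s)+D_{\mc P}(2f(s))\quad\text{for all } s\ge 0.
\]
This is compatible with the heuristic ordering $f\ll h\ll g$, because $D_{\mc P}$ is linear. I would add this inequality to the list of constraints on $f,h,g$ at the end of the proof.

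There is no genuine obstacle in this step. The only care needed is that this constraint stays consistent with the other requirements imposed on $f,h,g$ relative to $K$; since $D_{\mc P}$ is linear, it only forces $g$ to dominate a fixed linear function of $f$ plus $h$.
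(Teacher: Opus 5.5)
Your argument is correct and matches the paper's proof: the triangle inequality using $d(m_x,\gamma)\ge g(\ell)$, $d(z_i,m_x)\le f(\ell)$ and $\diam(\alpha)\le D_{\mc P}(2f(\ell))$, with the no-cross case handled by the convention $h(-1)=0$. The one difference is that you propose adding $g\ge h+f+D_{\mc P}(2f)$ as a new requirement, whereas the paper notes that it already follows from the listed constraints $2f\le h$, $3D_{\mc P}(2f)\le h$ and $8D_{\mc P}(h)\le g$ (because $D_{\mc P}(h)\ge h+1$), so no new constraint is needed.
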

    \textit{Proof of claim.}
    If there was no cross, then $h(\ell)  = 0$ by definition of $h$ and $\ell$ and the statement follows. For $i = 1, 2$, by the triangle inequality \eqref{eq:far-from-gamma} and \eqref{eq:close-to-side}, we have $d(\alpha, \gamma)\geq  d(m_x, \gamma) - d(z_i, m_x)  - \diam(\alpha) \geq g(\ell) - f(\ell)  -D_{\mc P}(2f(\ell)) \geq h(\ell)$, where the last step follows from $g\gg h\gg f$ or more specifically from \ref{2a}.
    \hfill $\blacksquare$

    \smallskip
    
    Let $\qgot_1 = \qgot_1''\ast \qgot_1'$ and $\qgot_2 = \qgot_2 ' \ast \qgot_2''$ be subdivisions of $\qgot_1$ and $\qgot_2$ such that $d(\alpha, \qgot_1'')\geq h(\ell)$ and $d(\alpha, \qgot_2'')\geq h(\ell)$ and such that the domain length of $\qgot_1'$ and $\qgot_2'$ are minimal amongst such possible subdivisions. By Claim~\ref{claim:cross-facts2} and continuity, such subdivisions exist and $d(\qgot_1''^+, \alpha) = h(\ell) = d(\alpha, \qgot_2''^-)$. Since the endpoints of $\gamma$ are vertices, both $\qgot_2''$ and $\qgot_1''$ contain at least one vertex.

    \smallskip 

    \begin{claim}\label{claim:diameter-alpha}
    The path $\qgot_1'\ast \alpha\ast \qgot_2'$  has diameter at most $3D_{\mc P}(h(\ell))$.  
    \end{claim}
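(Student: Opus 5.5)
The bound will come from splitting $\qgot_1'\ast\alpha\ast\qgot_2'$ into its three pieces and estimating each piece's diameter by the quasi-geodesic constants through $D_{\mc P}$. The triangle inequality then glues the pieces, since consecutive pieces share endpoints.

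If there is no cross, then $\ell=-1$ and $\alpha$ is trivial, and the convention $h(-1)=0$ forces $\qgot_1'$ and $\qgot_2'$ to be trivial. So I assume that a cross exists.

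\textbf{The path $\alpha$.} By \eqref{eq:close-to-side} we have $d(z_1,z_2)\le 2f(\ell)$. Since $\alpha\in\mc P$, Remark~\ref{remark:definition-of-dp} gives $\diam(\alpha)\le D_{\mc P}(2f(\ell))$. Using $f\ll h$, I expect this to be at most $D_{\mc P}(h(\ell))$.

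\textbf{The path $\qgot_1'$.} By minimality of the subdivision, every point of $\qgot_1'$ other than its initial point $\qgot_1''^+$ lies within $h(\ell)$ of $\alpha$. Also $d(\qgot_1''^+,\alpha)=h(\ell)$. Pick $a\in\alpha$ with $d(\qgot_1''^+,a)=h(\ell)$. Since $\qgot_1'$ ends at $z_1=\alpha^-$, we get
\[
d(\qgot_1'^-,\qgot_1'^+)\le h(\ell)+\diam(\alpha).
\]
Because $\qgot_1'$ is a subpath of the special path $\pgot_1$, it is again a special path, as $\mc P$ equals its consistent closure. Hence
\[
\diam(\qgot_1')\le D_{\mc P}\bigl(h(\ell)+D_{\mc P}(2f(\ell))\bigr),
\]
and the same bound holds for $\qgot_2'$.

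\textbf{Assembling.} Adding the three diameters bounds $\diam(\qgot_1'\ast\alpha\ast\qgot_2')$. To obtain exactly $3D_{\mc P}(h(\ell))$, I would absorb the small term: $h$ is chosen large compared with $D_{\mc P}(2f(\ell))$, and the bound on each piece is expressed through $D_{\mc P}(h(\ell))$. I would add this as one of the technical inequalities listed at the end of the proof, alongside \ref{2a}.

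\textbf{Main obstacle.} The delicate point is that the naive estimate on $\qgot_1'$ produces $D_{\mc P}$ of a quantity slightly larger than $h(\ell)$, not $D_{\mc P}(h(\ell))$ itself. I would first try simply imposing $h$ large enough relative to $f$, together with a slightly rescaled constraint, so that the crude bound fits under $D_{\mc P}(h(\ell))$. If that fails, the fallback is to redefine the cutoff of $\qgot_i''$ at $h(\ell)/4$, or to restate the claim with the bound $3D_{\mc P}(2h(\ell))$, since only a linear-in-$h$ bound is used downstream.
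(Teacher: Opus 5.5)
Your proposal is the paper's argument. You bound $\diam(\alpha)$ through $d(z_1,z_2)\le 2f(\ell)$, bound each $\qgot_i'$ by $D_{\mc P}$ of its endpoint distance using $d(\qgot_1''^+,\alpha)=h(\ell)=d(\alpha,\qgot_2''^-)$, and add the three diameters.

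The subtlety you flag is real. The paper writes $D_{\mc P}(d(\qgot_1'^-,z_1))\le D_{\mc P}(h(\ell))$, silently dropping the $\diam(\alpha)$ term. However, it needs neither a new constraint nor any of your fallbacks: the constraint you are looking for is already \ref{2a}. Note also that the slack comes from $\alpha$, not from the $\qgot_i'$. No choice of $h$ relative to $f$ brings your per-piece bound $D_{\mc P}(h(\ell)+D_{\mc P}(2f(\ell)))$ for $\qgot_i'$ down to $D_{\mc P}(h(\ell))$, so you should bound the sum instead.

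Explicitly, \ref{2a} gives $\diam(\alpha)\le D_{\mc P}(2f(\ell))\le h(\ell)/3$. Hence $d(\qgot_1'^-,z_1)\le h(\ell)+\diam(\alpha)\le \tfrac43 h(\ell)$ and $\diam(\qgot_1')\le D_{\mc P}(\tfrac43 h(\ell))$, and likewise for $\qgot_2'$. With $D_{\mc P}(r)=C_{\mc P}r+C_{\mc P}$ and $C_{\mc P}\ge 1$, this yields
\[
\diam(\qgot_1'\ast\alpha\ast\qgot_2')\le \tfrac83 C_{\mc P}h(\ell)+2C_{\mc P}+\tfrac13 h(\ell)\le 3C_{\mc P}h(\ell)+3C_{\mc P}=3D_{\mc P}(h(\ell)).
\]

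One minor inaccuracy: minimality of the subdivision does not imply that every point of $\qgot_1'$ after its initial point lies within $h(\ell)$ of $\alpha$. It only says that $\qgot_1''$ cannot be extended. You never use this; you use only $d(\qgot_1''^+,\alpha)=h(\ell)$, so nothing breaks.
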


    \textit{Proof of Claim}
        By \eqref{eq:close-to-side}, $d(z_1, z_2)\leq 2f(\ell)$ and hence $\diam(\alpha)\leq D_{\mc P}(h(\ell))$ since $f\ll h$ or more specifically by \ref{1}. Since $d(\qgot_1'^-, \alpha) = h(\ell)$, we have $\diam({\qgot_1'}^{-})\leq D_{\mc P}(\qgot_1^-, z_1)\leq D_{\mc P}(h(\ell))$.  Similarly, $\diam(\qgot_2')\leq D_{\mc P}(h(\ell))$. The claim follows.
    \hfill$\blacksquare$

    \smallskip 
    
    \begin{claim}\label{claim:cross-facts}
        If $d(\qgot_1'\ast\alpha\ast \qgot_2', m)\leq r$, then $d(\qgot_1'^-, m)\leq 2r$.
    \end{claim}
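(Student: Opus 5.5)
The plan is a triangle-inequality argument. The path $\qgot_1'\ast\alpha\ast\qgot_2'$ is short by Claim~\ref{claim:diameter-alpha}, so $d(\qgot_1'^-,m)$ exceeds $d(\qgot_1'\ast\alpha\ast\qgot_2',m)\leq r$ by at most its diameter. It therefore suffices to show
\[
3D_{\mc P}(h(\ell))\leq r .
\]
The only other input is that the endpoint $z_1=\alpha^-$ of this short path is $f(\ell)$--close to the centre $m_x$ of a cross, and $m_x$ is $g(\ell)$--far from $\gamma\ni m$.

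First I dispose of the degenerate case in which no cross exists. Then $\ell=-1$, $\alpha$ is trivial and $h(\ell)=0$. Since $\qgot_1'$ and $\qgot_2'$ have minimal domain length, they are trivial as well, so $\qgot_1'\ast\alpha\ast\qgot_2'$ is the single point $\qgot_1'^-$. The hypothesis then gives $d(\qgot_1'^-,m)\leq r\leq 2r$ directly.

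In the case where a cross $(z_1,z_2,x,\spath,m_x,\ell)$ exists, let $w$ be a point of $\qgot_1'\ast\alpha\ast\qgot_2'$ with $d(w,m)\leq r$. The point $z_1=\alpha^-$ also lies on this path, so Claim~\ref{claim:diameter-alpha} gives
\[
d(z_1,m)\leq r+3D_{\mc P}(h(\ell)).
\]
Combining this with \eqref{eq:close-to-side}, we get
\[
d(m_x,m)\leq f(\ell)+r+3D_{\mc P}(h(\ell)).
\]
On the other hand $m\in\gamma$, so \eqref{eq:far-from-gamma} gives $d(m_x,m)\geq g(\ell)$. Hence
\[
g(\ell)-f(\ell)-3D_{\mc P}(h(\ell))\leq r.
\]

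The key step is to convert this into $3D_{\mc P}(h(\ell))\leq r$ using the hierarchy $f\ll h\ll g$. I would add to the list of technical inequalities at the end of the proof the requirement
\[
g(s)\geq f(s)+6D_{\mc P}(h(s))\quad\text{for all } s\geq 0 .
\]
This is compatible with the earlier constraints such as \ref{2a}, since it merely asks $g$ to dominate $f$ and $D_{\mc P}\circ h$. With it, the left-hand side above is at least $3D_{\mc P}(h(\ell))$, so $3D_{\mc P}(h(\ell))\leq r$. Consequently
\[
d(\qgot_1'^-,m)\leq d(w,m)+\diam(\qgot_1'\ast\alpha\ast\qgot_2')\leq r+3D_{\mc P}(h(\ell))\leq 2r .
\]

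The main obstacle is making sure this new functional inequality is consistent with the other constraints on $f,h,g,K$ in the proof; I expect it to be, since all of them only require each function to dominate the earlier ones by compositions with $D_{\mc P}$.
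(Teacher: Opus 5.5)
Your proposal is correct and is essentially the paper's argument. You bound $d(m_x,m)$ above by $f(\ell)+3D_{\mc P}(h(\ell))+r$ via $z_1$ and Claim~\ref{claim:diameter-alpha}, bound it below by $g(\ell)$ since $m\in\gamma$, and conclude; the paper phrases the last step as first getting $g(\ell)\le 2r$ and then using $3D_{\mc P}(h(\ell))\le g(\ell)/2\le r$. The inequality $g\ge f+6D_{\mc P}\circ h$ you propose to add is already implied by the existing requirements, since $f\le h/2\le D_{\mc P}(h)/2$ and $8D_{\mc P}(h)\le g$, so no new constraint needs to be imposed.
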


    \textit{Proof of claim.}
        If there was no cross, the claim holds trivially. Assume there was a cross and $d(y, m)\leq r$ for some point $y$ on $\qgot_1'\ast\alpha\ast \qgot_2'$. We have on one hand by Claim~\ref{claim:diameter-alpha},
        \begin{align*}
            d(m_x, m)&\leq d(m_x, z_1) + 3D_{\mc P}(h(\ell)) + d(y, m)\\
            &\leq f(\ell) + 3D_{\mc P}(h(\ell)) + r.
        \end{align*}
        Similarly, we obtain $d(\qgot_1'^-, m)\leq 3D_{\mc P}(h(\ell))+r$. On the other hand by \eqref{eq:far-from-gamma}, we have $g(\ell) \leq d(m_x, \gamma)\leq d(m_x, m)$. We have $f(\ell) + 3D_{\mc P}(h(\ell))\leq g(\ell)/2$ by \ref{2a}, hence $g(\ell)\leq 2r$. Using $3D_{\mc P}(h(\ell))\leq g(\ell)/2$ once more, we obtain $d(\qgot_1'^-, m)\leq 2r$ as desired.
    \hfill$\blacksquare$

    \medskip
    
    Let $x_0$ be a vertex in $\qgot_1''\cup \qgot_2''$ which minimizes $\Phi(\cdot, \Psi(\cdot))$ among vertices on $\qgot_1''\cup \qgot_2''$. Without loss of generality, we may assume that $x_0$ lies on $\qgot_2''$.

    \begin{claim}\label{claim:still-point-close}
        We have $\Phi(x_0, \Psi(x_0))\leq (2r+2)^2$.
    \end{claim}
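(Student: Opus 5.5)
To prove the claim, I would exhibit a vertex of $\qgot_1''\cup\qgot_2''$ lying close to $m$, and then use the minimality of $x_0$. The source of such a vertex is the midthinness of $\tilde\gamma$. Since $\tilde{\gamma}$ is $(r;n,\mc P)$--midthin and $n\geq 7$, every short polygonal line joining the endpoints of $\tilde\gamma$ must pass through $B(m,r)$. Concretely, I write $\tilde\gamma$ as a subpath of $\gamma$ and form the polygonal line
\[
\psub{\gamma}{\tilde\gamma^+,\gamma^+}\ast \qgot_1\ast\alpha\ast\qgot_2\ast\psub{\gamma}{\gamma^-,\tilde\gamma^-}.
\]
Inverting as needed and passing to the consistent closure, this is a $(5,\mc P)$--polygonal line from $\tilde\gamma^+$ to $\tilde\gamma^-$. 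Hence some point $y$ on it satisfies $d(y,m)\leq r$. By Lemma~\ref{lemma:continued-special-path-distance}, the two pieces of $\gamma$ outside $\tilde\gamma$ stay at distance $>r$ from $m$. Therefore $y$ lies on $\qgot_1\ast\alpha\ast\qgot_2$.

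Next I locate $y$ in the decomposition $\qgot_1''\ast(\qgot_1'\ast\alpha\ast\qgot_2')\ast\qgot_2''$. If $y$ lies in the middle part, Claim~\ref{claim:cross-facts} gives $d(\qgot_1'^-,m)\leq 2r$. The point $\qgot_1'^-=\qgot_1''^+$ belongs to $\qgot_1''$, and since $\gamma$ has vertex endpoints, a vertex $v$ of $\qgot_1''$ lies within distance $1$ of it. This gives $d(v,m)\leq 2r+1$. If instead $y$ lies in $\qgot_1''\cup\qgot_2''$, taking a nearby vertex again gives a vertex $v\in\qgot_1''\cup\qgot_2''$ with $d(v,m)\leq r+1\leq 2r+1$. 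In either case we obtain a vertex $v$ in $\qgot_1''\cup\qgot_2''$ with $d(v,m)\leq 2r+1$.

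It remains to bound $\Phi(v,\Psi(v))$. Let $m'$ be a vertex of $\gamma$ with $d(m,m')\leq 1$; such a vertex exists because $\gamma$ runs through the graph with vertex endpoints. Then $d(v,m')\leq 2r+2$, so
\[
\Phi(v,m')\leq \max\{1,2r+2\}\cdot\max\{1,2r+2,1\}=(2r+2)^2.
\]
By the definition of $\Psi$, $\Phi(v,\Psi(v))\leq\Phi(v,m')$. By the choice of $x_0$ as the minimizer over vertices of $\qgot_1''\cup\qgot_2''$, $\Phi(x_0,\Psi(x_0))\leq\Phi(v,\Psi(v))\leq(2r+2)^2$, which is the claim.

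The main obstacle is the rounding to vertices. The points $m$, $y$ and $\qgot_1'^-$ need not be vertices, and the unit slack must be absorbed within the budget $2r+2$. I would handle this with the bound $d(\cdot,V)\leq 1/2$ in a graph, or, if the budget is too tight, by noting that Claim~\ref{claim:cross-facts} actually yields the slightly better bound $r+3D_{\mc P}(h(\ell))\leq 2r$ and using the remaining slack. A second point to check is that the concatenation really is admissible as an $(n,\mc P)$--polygonal line from $\tilde\gamma^+$ to $\tilde\gamma^-$. This uses undirectedness to reverse pieces, together with the degenerate case in which there is no cross, where $\alpha$ is trivial.
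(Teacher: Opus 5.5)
Your proposal is correct and follows the paper's proof essentially step by step. Both use the same $(5,\mc P)$--polygonal line $\suf{\gamma}{\tilde\gamma^+}\ast\qgot_1\ast\alpha\ast\qgot_2\ast\pref{\gamma}{\tilde\gamma^-}$, exclude the two pieces of $\gamma$ via Lemma~\ref{lemma:continued-special-path-distance}, split cases with Claim~\ref{claim:cross-facts}, and conclude from $\Phi(x_0,\Psi(x_0))\leq\Phi(v,\tilde m)\leq(d(v,m)+1)^2$ for a vertex $\tilde m$ of $\gamma$ near $m$. The rounding you flag as the main obstacle is not an issue: rounding by at most $1$ at each end gives exactly $d(v,\tilde m)\leq 2r+2$, which is precisely the budget, so no extra slack is needed.
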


    \textit{Proof of Claim.}
    Let $\tilde{m}$ be a vertex on $\gamma$ at distance at most $1$ from $m$. For any vertex $x'$ on $\qgot_1''\cup \qgot_2''$ we have $\Phi(x', \Psi(x'))\leq \Phi(x', \tilde{m}) \leq (d(x', m) + 1)^2$ because $\Psi(x')$ is a minimum. Hence it suffices to show that there exists a vertex $x'\in \qgot_1'' \cup \qgot_2''$ with $d(x', m)\leq 2r+1$, as it implies $\Phi(x_0, \Psi(x_0))\leq \Phi(x', \tilde{m})\leq (2r+2)^2$.

    Since $\tilde{\gamma}$ is $\mc K$--midthin with necksize $r$, there has to exist a point $\tilde{x}$ on the $(5, \mc P)$--polygonal line $\suf{\gamma}{\tilde{\gamma}^+}\ast \qgot_1 \ast \alpha \ast \qgot_2 \ast \pref{\gamma}{\tilde{\gamma}^-}$ with $d(\tilde{x}, m)\leq r$.

    If $\tilde{x}$ lies on $\qgot_2''$ respectively $\qgot_1''$, we can choose $x'$ as a vertex on $\qgot_2''$ respectively $\qgot_1''$ at distance at most $1$ from $\tilde{x}$ and get $d(x', m)\leq r+1$.

    If $\tilde{x}$ lies on $\qgot_1'\cup \alpha \cup \qgot_2'$, we have by Claim~\ref{claim:cross-facts} that $d(m, \qgot_1'^-)\leq 2r$. So we can choose $x'$ as a vertex on $\qgot_1''$ at distance at most $1$ from $\qgot_1'^-$ and obtain $d(x', m)\leq 2r+1$.

    Lastly, $\tilde{x}$ cannot lie on $\suf{\gamma}{\tilde{\gamma}^+}$ or $\pref{\gamma}{\tilde{\gamma}^-}$ by Lemma~\ref{lemma:continued-special-path-distance}.
    \hfill $\blacksquare$

    \smallskip

    \begin{figure}
        \centering
        \includegraphics[width=1.0\linewidth]{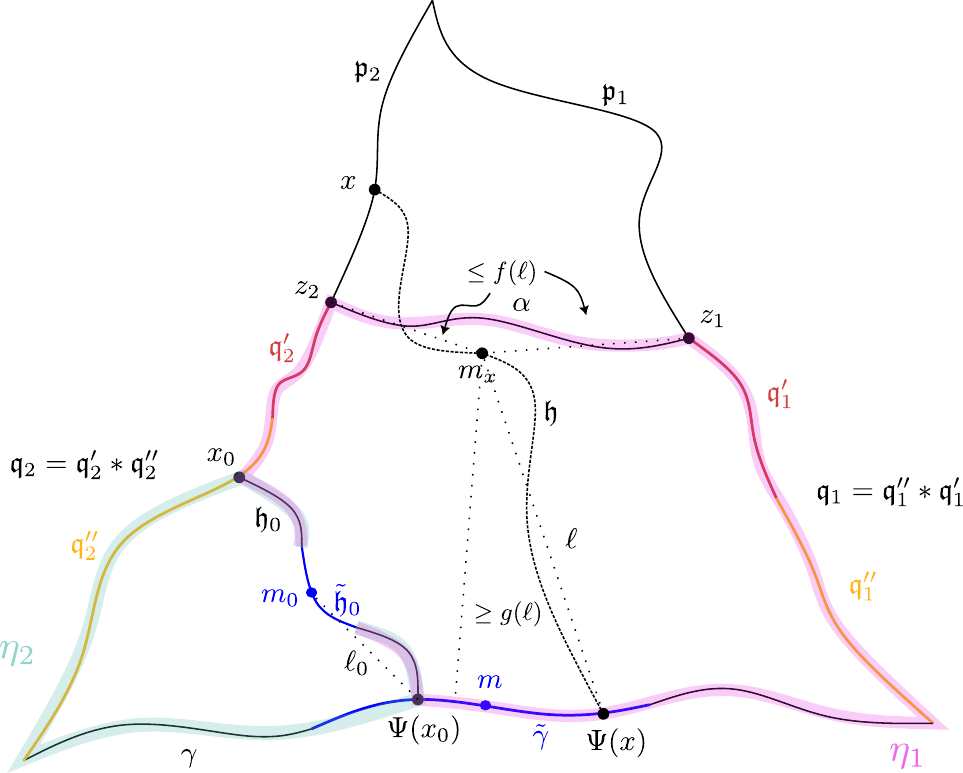}
        \caption{Depiction of the constructed paths. The point $x$ might also lie on $\pgot_1$ instead of $\pgot_2$.}
        \label{fig:two-quads}
    \end{figure}
    \begin{table}[]
\begin{tabular}{l|l|l|l|l|l|l}
                         & A: $\psub{\gamma}{\Psi(x_0), \gamma^+}$ & B: $\qgot_1''$ & C: $\qgot_1'$      & D: $\alpha$       & E: $\qgot_2'$             & F: $\pref{\qgot_2''}{x_0}$ \\ \hline
1: $\pref{\gamma}{\Psi(x_0)}$ & Lemma~\ref{lemma:T-lemma}     & $\Phi$    & $\Phi \oplus$ & $\Phi\oplus$ & $\Phi \oplus$        & $\Phi$                \\ \hline
2: $\suf{\qgot_2''}{x_0}$    & $\Phi$                   & cross     & cross         & $d(x_0, z_2)$  & Lemma~\ref{lemma:T-lemma}& Lemma~\ref{lemma:T-lemma}
\end{tabular}
\caption{Case distinction: how do we reach a contradiction in case different parts of $\eta_1$ and $\eta_2$ come close to $m_0$.}
\label{table:1}
\end{table}

    If $\hat{d}_{\mc K}(x_0, \Psi(x_0)) \leq 1$, then, by Claim~\ref{claim:dist1-enough}, $x_0$ satisfies all the conclusions of the statement and hence we are done ($x_0$ satisfies the assumptions of Claim~\ref{claim:dist1-enough} by Claim~\ref{claim:still-point-close}). So from now on we assume that $\hat{d}_{\mc K}(x_0, \Psi(x_0)) > 1$, and show that this leads to a contradiction.

    \smallskip

    Let $\spath_0 \in \mc P$ be a special path from $x_0$ to $\Psi(x_0)$. Having $\hat{d}_{\mc K}(x_0,\Psi(x_0)) > 1$ implies that $\spath_0$ is not $\mc K$--anti-contracting and hence has a $\mc K$--midthin subpath $\tilde{\spath}_0$ with midpoint $m_0$ and necksize $r'$ for some $r'\geq 0$. Let $\ell_0 = d(m_0, \Psi(x_0))$. This is depicted in Figure~\ref{fig:two-quads}. Since $\mc K$ is large enough, we may assume $r'\leq f(\ell_0)\leq g(\ell_0)$ and $r'\leq \ell_0$.

    Using that $\spath_0$ is $(r', n, \mc P)$--midthin on the $(6, \mc P)$--polygonal lines $\eta_1  =  \suf{\spath_0}{\tilde{\spath}_0^+}\ast\psub{\gamma}{\Psi(x_0), \gamma^+}\ast \pref{\qgot}{x_0} \ast \pref{\spath_0}{\tilde{\spath}_0^-}$ and $\eta_2 = \suf{\spath_0}{\tilde{\spath}_0^+}\ast(\suf{\qgot}{x_0}\ast \pref{\gamma}{\Psi(x_0)})^{-1} \ast \pref{\spath_0}{\tilde{\spath}_0^-}$ (shaded pink and turquoise in Figure~\ref{fig:two-quads}) yields points $v_1, v_2$ on $\eta_1$ and $\eta_2$ which are in the $r'$--neighbourhood of $m_0$. Lemma~\ref{lemma:continued-special-path-distance} shows that $v_1$ and $v_2$ do not lie on $\spath_0 - \tilde{\spath}_0$. Hence, $m_0$ either has to be in the $r'$ and hence $g(\ell_0)$--neighbourhood of $\psub{\gamma}{\Psi(x_0), \gamma^+}$, or if it does not, $v_1$ lies on one of the $5$ special paths $\qgot_1'', \qgot_1', \alpha, \qgot_2'$ or $\pref{\qgot_2''}{x_0}$ (we denote these cases as Case $A, B, C$ etc. ). Moreover, either $m_0$ has distance at most $g(\ell_0)$ to $\pref{\gamma}{\Psi(x_0)}$ or if it does not $v_2$ lies on $\suf{\qgot_2''}{x_0}$ (we denote this as Case 1 and Case 2).

    We will now perform a case distinction how to get a contradiction in each of the twelve cases, see Table~\ref{table:1}. In the cases where it says `cross', we will show that $(v_1, v_2)$ are cross points on $\qgot_1$ and $\qgot_2$. In the case where it says $\Phi$ or $\Phi \oplus$ we will contradict the minimality of $\Phi(x_0, \Psi(x_0))$ and in the cases where it says Lemma~\ref{lemma:T-lemma}, we will find a contradiction to Lemma~\ref{lemma:T-lemma}. Finally, in the case 2D where it says $d(x_0, z_2)$ we will find a contradiction to the triangle inequality based on lower bounds on $d(x_0, z_2)$.

    \smallskip
    
    \textbf{Case `$\Phi$' and `$\Phi\oplus$', that is, Case 1B, 1C, 1D, 1E, 1F and 2A:} Recall that by assumption, $\hat{d}_{\mc K} (x_0, \Psi(x_0)) > 1$. Hence since $\mc K$ is large enough and in particular large enough compared to $C_0$, we can assume $d(x_0, \Psi(x_0)) \geq C_0$.
    
    Now observe the following: in each of these cases, by assumption, we have a vertex $w$ on $\gamma$ (namely a vertex at distance at most 1 from either $v_1$ (Case 2A) or $v_2$ (Case 1B, 1C, 1D, 1E, 1F)) which is at distance at most $g(\ell_0)+1\leq \frac{d(x_0, \Psi(x_0))}{2C_0}$ from $m_0$ (the latter inequality holds by \ref{3a}). Hence by Lemma~\ref{claim:good-fraction}, we have 
    \begin{align}
        d(w, m)\leq C_0\max\{1,d(x_0, \Psi(x_0)), d(\Psi(x_0), m) \}.
    \end{align}

    If there exists a vertex $u$ in $\qgot_1''\cup \qgot_2''$ with $d(u, w) <  \frac{d(x_0, \Psi(x_0))}{C_0}$, then $\Phi(u, \Psi(u))\leq \Phi(u, w) < \Phi(x_0, \Psi(x_0))$, a contradiction to the minimality of $\Phi(x_0, \Psi(x_0))$.
    
    \begin{claim}\label{claim:alpha-close-to-m_0}
        There exist a vertex $u$ in $ \qgot_1'' \cup \qgot_2''$ with $d(u, m_0) < \frac{d(x_0, \Psi(x_0))}{2C_0}$.
    \end{claim}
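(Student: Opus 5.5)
The plan is to split according to which piece of $\eta_1$ or $\eta_2$ the point near $m_0$ lies on, since in every case under consideration (1B, 1C, 1D, 1E, 1F and 2A) we already have a point $v\in\{v_1,v_2\}$ with $d(v,m_0)\leq r'$. The aim is always to move from $v$ to a vertex of $\qgot_1''\cup\qgot_2''$ at controlled cost, and then compare the resulting bound with $d(x_0,\Psi(x_0))/(2C_0)$ using the inequality \ref{3a}, namely $g(\ell_0)+1\leq d(x_0,\Psi(x_0))/(2C_0)$, together with $r'\leq f(\ell_0)\leq g(\ell_0)$.

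\emph{Easy cases 1B, 1F and 2A.} Here $v$ lies on $\qgot_1''$, on $\pref{\qgot_2''}{x_0}$, or on $\suf{\qgot_2''}{x_0}$, so in all three cases $v\in\qgot_1''\cup\qgot_2''$. Since $X$ is a graph, there is a vertex $u$ on the same special path with $d(u,v)\leq 1$. Then
\[
d(u,m_0)\leq r'+1\leq g(\ell_0)+1.
\]
If \ref{3a} is strict, this is already $<d(x_0,\Psi(x_0))/(2C_0)$. If not, I would absorb the extra constant by strengthening \ref{3a} slightly; it is one of the technical inequalities collected at the end of the proof, so this is legitimate.

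\emph{Cases 1C, 1D, 1E.} Here $v_1$ lies on $\qgot_1'\ast\alpha\ast\qgot_2'$. By Claim~\ref{claim:diameter-alpha} this path has diameter at most $3D_{\mc P}(h(\ell))$, and $\qgot_1'^-=\qgot_1''^+$ lies on $\qgot_1''$. Taking a vertex $u$ of $\qgot_1''$ within distance $1$ of $\qgot_1''^+$ gives
\[
d(u,m_0)\leq r'+3D_{\mc P}(h(\ell))+1.
\]
The remaining task is to control $h(\ell)$ in terms of $\ell_0$, where $\ell$ is the parameter of the minimal cross used to cut off the tip. We are in Case 1, so $m_0$ is within $g(\ell_0)$ of $\pref{\gamma}{\Psi(x_0)}$. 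Hence
\[
d(\alpha,\gamma)\leq 3D_{\mc P}(h(\ell))+r'+g(\ell_0)\leq 3D_{\mc P}(h(\ell))+2g(\ell_0).
\]
On the other hand, the proof of Claim~\ref{claim:cross-facts2} gives
\[
d(\alpha,\gamma)\geq g(\ell)-f(\ell)-D_{\mc P}(2f(\ell)).
\]
Combining the two and using \ref{2a} (so that $f(\ell)+D_{\mc P}(2f(\ell))+3D_{\mc P}(h(\ell))\leq g(\ell)/2$), we get $g(\ell)\leq 4g(\ell_0)$. Because $h\ll g$, this should yield $3D_{\mc P}(h(\ell))\leq g(\ell_0)$, possibly after adding one more inequality of the same type to the list, such as $3D_{\mc P}(h(s))\leq g(t)$ whenever $g(s)\leq 4g(t)$.

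With that bound, $d(u,m_0)\leq 2g(\ell_0)+1$. This is less than $d(x_0,\Psi(x_0))/(2C_0)$ once \ref{3a} is stated with $2g(\ell_0)+1$ in place of $g(\ell_0)+1$. That is harmless, since $d(x_0,\Psi(x_0))\geq C_0$ can be taken large relative to $\ell_0\leq D_{\mc P}(d(x_0,\Psi(x_0)))$ by taking $\mc K$ large.

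The main obstacle is the middle cases 1C, 1D, 1E. There the natural point $v_1$ sits on the connecting path $\alpha$ or on the short stubs $\qgot_1'$, $\qgot_2'$, which are not part of $\qgot_1''\cup\qgot_2''$. The difficulty is to convert the a priori unrelated scale $\ell$ of the cross into the scale $\ell_0$. I would do this through the two-sided estimate on $d(\alpha,\gamma)$ above, and I would accept adjusting the auxiliary inequalities \ref{2a} and \ref{3a} as needed for the comparison to close.
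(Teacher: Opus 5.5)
Your proposal is correct and follows the paper's proof. It uses the same split into Cases 1B, 1F, 2A versus 1C--1E, the same choice of $u$ at the junction of $\qgot_1''$ (or $\qgot_2''$) with the stub $\qgot_1'\ast\alpha\ast\qgot_2'$, and the same way of comparing the cross scale $\ell$ with $\ell_0$: the cross lies $g(\ell)$--far from $\gamma$, while $m_0$ is close both to $v_1$ and to $\gamma$. The only variation is in that comparison: the paper bounds $g(\ell)\leq d(m_x,\Psi(x_0))\leq 2\ell_0+g(\ell)/2$ using $d(m_0,\Psi(x_0))=\ell_0$, without appealing to Case 1, which gives $\ell\leq g^{-1}(4\ell_0)$ as requirement \ref{3a} expects; you instead use Case 1 to get the sharper $g(\ell)\leq 4g(\ell_0)$, at the cost of the mild, satisfiable strengthening of \ref{2a} and \ref{3a} that you flag, and your passage to a nearby vertex in the easy cases also fixes a small slip in the paper, which takes $u=v_i$ although $v_i$ need not be a vertex.
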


    Since $d(m_0, w)\leq \frac{d(x_0, \Psi(x_0))}{2C_0}$ (see the definition of $w$) Claim~\ref{claim:alpha-close-to-m_0} concludes the proof.
    
    \textit{Proof of Claim~\ref{claim:alpha-close-to-m_0}.}
        If we are in Case $\Phi$ then this is true for $u = v_1$ (Cases 1B and 1F) or $u = v_2$ (Case 2A), as $u$ lies on $\qgot_1''\cup \qgot_2''$ and $d(u, m_0)\leq r'\leq g(\ell_0)\leq \frac{d(x_0, \Psi(x_0))}{2C_0}$. So we now aim to prove this in the Case $\Phi\oplus$.
        
        Now, if we are in Case $\Phi\oplus$ (that is Case 1C, 1D or 1E), $v_1$ lies on $\qgot_1'\ast \alpha\ast \qgot_2'$. In particular, by the triangle inequality, Claim~\ref{claim:diameter-alpha} and $f\ll h\ll g$, more specifically \ref{2a}, we have 
        \begin{align*}
            d(m_0, m_x)&\leq d(m_0, v_1) + d(v_1, z_2) + d(z_2, m_x) \leq r' + 3D_{\mc P}(h(\ell)) + f(\ell) \leq r' + \frac{g(\ell)}{2}.
        \end{align*}
        By the triangle inequality and since $r'\leq \ell_0$, we have 
        \begin{align*}
            d(\Psi(x_0), m_x)& \leq d(\Phi(x_0), m_0) + d(m_0, m_x)\leq \ell_0 + r' + \frac{g(\ell)}{2} \leq 2\ell_0 + \frac{g(\ell)}{2}.
        \end{align*}

        Using \eqref{eq:far-from-gamma}, which implies that $g(\ell)\leq d(\Psi(x_0), m_x)$, we obtain
        \begin{align}\label{eq:bound-on-ell}
             \ell\leq g^{-1}(4\ell_0).
        \end{align}

        We choose $u$ as the first vertex on $\qgot_2''$. Using $r'\leq f(\ell_0)$, \eqref{eq:bound-on-ell}, Claim~\ref{claim:diameter-alpha} and \ref{3a}, we obtain 
        \begin{align*}
            d(u, m_0)&\leq d(m_0, v_1) + d(v_1, z_2) + d(z_2, u) \leq f(\ell_0) + 3D_{\mc P}(h(\ell)) + 1\leq \frac{d(x_0, \Psi(x_0))}{2C_0}.
        \end{align*} 
    \hfill$\blacksquare$

    \smallskip
    
    \textbf{Case `cross', that is, Case 2B and 2C:} In this case $(v_1, v_2, x_0, \spath_0, m_0, \ell_0)$ is a cross. Indeed: Inequality \eqref{eq:far-from-gamma}, holds in all cases except 1A and 2X (for X $= $ A, B, C, D , E or F). In particular it holds for Case 2B and 2C. Moreover, we have $d(m_0, v_i)\leq r'\leq f(\ell_0)$ for $i = 1,2$, yielding \eqref{eq:close-to-side}. However, since $v_2\neq z_2$ (this follows from $v_2\in \qgot_2''$), $(v_1, v_2, x_0, \spath_0, m_0, \ell_0)$ cannot be a cross (see Claim~\ref{claim:no-cross-points}), a contradiction.

    \smallskip
    
    \textbf{Case `Lemma~\ref{lemma:T-lemma}', that is, 1A, 2E and 2F:} In Case 1A, both $v_1$ and $v_2$ lie on $\gamma$, in Cases 2E and 2F both $v_1$ and $v_2$ lie on $\qgot_2$. Lemma~\ref{lemma:T-lemma} leads to a contradiction.
    
    \smallskip
    
    \textbf{Case `$d(x_0, z_2)$', that is, Case 2D:} In this case, $v_1$ lies on $\alpha$ and $v_2$ lies on $\qgot_2''$. By the triangle inequality, we have 
        \begin{align*}
            d(v_2, z_2)&\leq d(v_2, m_0) + d(m_0, v_1) + d(v_1, z_2) \leq r' + r' + \diam(\alpha)\leq 2r'+ D_{\mc P}(2f(\ell)).
        \end{align*}
        Now, if $r' < D_{\mc P}(2f(\ell))$, then $d(v_2, z_2)< 3D_{\mc P}(2f(\ell)) $. This cannot be the case, since $v_2\in \qgot_2''$ and $d(\qgot_2'', z_2)\geq h(\ell)$ by definition. Hence $r'\geq D_{\mc P}(2f(\ell))$ and consequently $d(v_2, z_2)\leq 3r'$. This is a contradiction to Lemma~\ref{lemma:T-lemma}.

    \smallskip
    
    \textbf{Requirements on $f, g$ and $h$.}
    
    \begin{enumerate}
        \item $f, g, h$ are non-decreasing functions tending to infinity that satisfy $2f(s)\leq h(s) \leq g(s)$ for all $s\geq 0$. Moreover, define $h(-1) = 0$, \label{1}
        \item \label{2a} $3D_{\mc P}(2f(s))\leq h(s)$ and $8D_{\mc P}(h(s))\leq g(s)$ for all $s\geq 0$,
        \item If $a, b$ are points connected by a special path whose diameter is at least $s\geq C_0^2$, then $f(s) + 3D_{\mc P}(h(g^{-1}(4s)))+1 \leq g(s)+1\leq \frac{d(a, b)}{2C_0}$.\label{3a}
    \end{enumerate}

    For large enough $C_0$ we can find such functions: first choose $g$ such that the second inequality of \ref{3a} holds for $s\geq C_0^2$ and for $s\leq C_0^2$ define $g(s) = 10D_{\mc P}(10D_{\mc P}(s))$. Then we can choose $h$ small enough compared to $g$ and $f$ small enough compared to $h$ such that all the others hold.
\end{proof}

We use the following proposition of \cite{DowdallDurhamLeiningerSisto:extensionsI}, which is a version of the Guessing Geodesic criterion \cite[Proposition~3.1]{Bowditch(2006)}.

\begin{theorem}[Guessing Geodesics, Proposition 2.2 of \cite{DowdallDurhamLeiningerSisto:extensionsI}] \label{thm:guessing_geodesics}Let $Y$ be a path-connected metric space, let $S\subset Y$ be an $R$-dense subset for some $R > 0$, and suppose there is $\delta\geq 0$ such that for all pairs $x, y\in Y$ there are rectifiable path-connected sets $\eta(x, y)\subset Y$ containing $x, y$ and satisfying:
\begin{enumerate}
    \item for all $x, y\in S$ with $d(x, y)\leq 3R$ we have $\diam(\eta(x, y))\leq \delta$,\label{guess_geo1}
    \item for all $x, y, z\in S$ we have $\eta (x, y)\subset \mc N_{\delta}(\eta(x, z)\cup \eta(z, y))$.\label{guess_geo2}
\end{enumerate}
Then there exists a constant $\delta'$ depending only on $\delta$ and $R$ such that $Y$ is $\delta'$ hyperbolic and the Hausdorff distance between $\eta(x, y)$ and any geodesic from $x$ to $y$ is at most $\delta'$.
\end{theorem}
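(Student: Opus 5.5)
Since this is a cited result (Bowditch's criterion in the form of \cite{DowdallDurhamLeiningerSisto:extensionsI}), I would follow Bowditch's strategy. The plan is to first show that the guessed sets $\eta(x,y)$ lie logarithmically close to \emph{any} path joining $x$ and $y$. Then I would bootstrap this to a uniform Hausdorff bound between $\eta(x,y)$ and geodesics. Thinness of geodesic triangles then follows from condition~\eqref{guess_geo2}. Throughout I would work with points of $S$, moving arbitrary points of $Y$ to points of $S$ within distance $R$, which costs only additive constants depending on $\delta$ and $R$.

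\textbf{Step 1 (logarithmic lemma).} Let $x,y\in S$ and let $c$ be a rectifiable path from $x$ to $y$ of length $L$. I claim that
\[
\eta(x,y)\subseteq \mc N_{\delta\log_2 (L/R) + C}(c)
\]
for some $C=C(\delta,R)$. To prove it, subdivide $c$ dyadically into $2^k$ subarcs of length at most $R$, where $k\approx\log_2(L/R)$. Choose points of $S$ within $R$ of each subdivision point, so that consecutive chosen points are at distance at most $3R$. Applying condition~\eqref{guess_geo2} at each level of the binary tree, $\eta$ of a pair is in the $\delta$--neighbourhood of the union of $\eta$'s of the two half-pairs. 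After $k$ levels, $\eta(x,y)$ lies in the $k\delta$--neighbourhood of the union of the leaf sets. By condition~\eqref{guess_geo1} each leaf set has diameter at most $\delta$ and lies within a bounded distance of $c$.

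\textbf{Step 2 (uniform closeness to geodesics).} Fix a geodesic $\gamma$ from $x$ to $y$.

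First, $\gamma\subseteq\mc N_{D_1}(\eta(x,y))$. To see this, note that $\eta(x,y)$ is path-connected and, by Step~1 applied to $\gamma$, lies within $\delta\log_2(d(x,y)/R)+C$ of $\gamma$. Project points of $\eta(x,y)$ coarsely to $\gamma$; a connectivity argument shows the projection is coarsely onto. Making this uniform needs the second half of this step, so I would prove both halves together.

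The main obstacle is to show $\eta(x,y)\subseteq \mc N_{D}(\gamma)$ with $D$ independent of $d(x,y)$. Let $p\in\eta(x,y)$ realise (almost) the maximal distance $D$ to $\gamma$, and let $q\in\gamma$ be a closest point. Take points $x',y'$ on $\gamma$ at distance about $2D$ from $q$ on either side. Choose points $u,v\in\eta(x,y)$ close to $x',y'$; these exist by the first half, or by the Step~1 estimate if one starts from a weaker bound. Now build a path from $u$ to $v$: go to $x'$, follow $\gamma$ from $x'$ to $y'$, then go to $v$. This path has length $O(D)$ and contains $p$'s nearest geodesic stretch.

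Use condition~\eqref{guess_geo2} to compare $\eta(x,y)$ with $\eta(u,v)$ near $p$. Applying Step~1 to this path, $p$ lies within $\delta\log_2(D/R)+C'$ of it. The path is at distance at least about $D$ from $p$ except along $\gamma$, so $D\le \delta\log_2(D/R)+C''$, which forces $D$ to be bounded by a constant depending only on $\delta$ and $R$.

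I expect the delicate point to be arranging that $p$ is genuinely far from the detour parts of the path, which may need choosing $x',y'$ at distance $\approx D$ along $\gamma$ rather than $2D$. This yields the Hausdorff bound.

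\textbf{Step 3 (hyperbolicity).} Given a geodesic triangle with vertices in $S$, replace each side by the corresponding $\eta$ at bounded Hausdorff distance (Step~2). Condition~\eqref{guess_geo2} then gives that each side lies in a uniform neighbourhood of the union of the other two. So triangles are $\delta'$--slim, and $Y$ is hyperbolic. For general vertices one pays an additional $R$.
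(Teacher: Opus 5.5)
The paper does not prove this statement; it quotes it from Dowdall--Durham--Leininger--Sisto as a form of Bowditch's criterion. So I am judging your argument on its own. Step~1 is fine: it is the standard dyadic-subdivision lemma, and you use the $3R$ of condition~(1) correctly. Step~3 is also fine. Step~2, however, has a genuine gap at ``use condition~(2) to compare $\eta(x,y)$ with $\eta(u,v)$ near $p$''.

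The sets $\eta$ have no restriction property: for $u,v\in\eta(x,y)$, nothing relates $p\in\eta(x,y)$ to $\eta(u,v)$. After moving $u,v$ into $S$, condition~(2) gives only $p\in\mathcal{N}_{2\delta}(\eta(x,u)\cup\eta(u,v)\cup\eta(v,y))$. To rule out the two outer sets you need uniform control of $\eta$ for pairs at distance comparable to $d(x,y)$, which is the statement being proved. Step~1 gives only $\delta\log_2(d(x,y)/R)$ there. In the Bridson--Haefliger Morse-lemma argument you are imitating, this step is free because a subsegment of a geodesic is again the geodesic between its endpoints; that is exactly what the $\eta$'s lack.

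Separately, the two halves of Step~2 depend on each other: the second uses the first to find $u,v$, and the first is said to need the second. Falling back on the Step~1 bound makes $[u,x']$ and $[y',v]$ of length about $\delta\log_2 d(x,y)$, so the final inequality is no longer uniform.

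The missing idea is to split at points of the geodesic and to maximise over all pairs at once.
\begin{itemize}
\item \textbf{Setup.} Let $D_n$ be the supremum of $d(p,\gamma)$ over $x,y\in S$ with $d(x,y)\le n$, geodesics $\gamma$ from $x$ to $y$, and $p\in\eta(x,y)$. It is finite by Step~1. Pick such data with $D:=d(p,\gamma)\ge D_n-1$, and let $q\in\gamma$ be closest to $p$. Let $x',y'\in\gamma$ be at distance $3D$ from $q$ on either side (or the endpoints), and choose $x'',y''\in S$ within $R$ of them. Applying condition~(2) twice puts $p$ within $2\delta$ of a point $p'\in\eta(x,x'')\cup\eta(x'',y'')\cup\eta(y'',y)$.
\item \textbf{Middle piece.} If $p'\in\eta(x'',y'')$, apply Step~1 to the path $[x'',x']\ast[x',y']_{\gamma}\ast[y',y'']$. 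It has length at most $6D+2R$ and lies at distance at least $D-R$ from $p$, so $D\le\delta\log_2(6D/R+2)+O(\delta+R)$.
\item \textbf{Outer pieces.} If $p'\in\eta(x,x'')$ and $3D\ge R$, then $d(x,x'')\le n$. The triangle inequality shows every geodesic from $x$ to $x''$ stays at distance at least $2D-R$ from $p$. By definition of $D_n$, $p'$ is within $D_n$ of such a geodesic, so $2D-R-2\delta\le D_n\le D+1$.
\end{itemize}
Either way $D_n$ is bounded by some $D_*$ independent of $n$.

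The reverse inclusion then follows without circularity from path-connectedness. For $z\in\gamma$, take the closed subsets of $\eta(x,y)$ consisting of points within $D_*$ of $[x,z]_{\gamma}$, respectively $[z,y]_{\gamma}$. They cover $\eta(x,y)$ and contain $x$, respectively $y$, hence meet, which gives $d(z,\eta(x,y))\le 3D_*$. With both inclusions, your Step~3 goes through. For vertices outside $S$, pass through the four-point condition rather than moving geodesics directly.
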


\begin{theorem}\label{thm:contraction_space_hyp_for_graphs}
    Let $\mc K = (K, n, \mc P)$ be a large enough contraction triple. Then, the contraction space $\hat{X}_{\mc K}$ is $\delta_0$ hyperbolic, for a fixed constant $\delta_0$ not depending on $X$ or $\mc K$. 
\end{theorem}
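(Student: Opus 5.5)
We apply the Guessing Geodesics criterion (Theorem~\ref{thm:guessing_geodesics}) to $Y=\hat{X}_{\mc K}$, taking $S=V(X)$, which is $1$--dense, so $R=1$. For vertices $x,y$ we fix a special path $\spath_{xy}=[x,y]_{\mc P}$; since $\mc P$ is undirected we may take $\spath_{yx}=\spath_{xy}^{-1}$. The guessed path $\eta(x,y)$ is the image of $\hat{\spath}_{xy}$ in $\hat{X}_{\mc K}$, thickened by joining each point of $\spath_{xy}$ to a nearby vertex so that the set is rectifiable and path-connected. The aim is to verify conditions \eqref{guess_geo1} and \eqref{guess_geo2} with a $\delta$ that is a universal constant, independent of $X$ and $\mc K$. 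Then $\delta_0=\delta'(\delta,1)$ is also universal.

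For \eqref{guess_geo1}, let $x,y$ be vertices with $\hat{d}_{\mc K}(x,y)\leq 3$. I would like $\diam\eta(x,y)$ to be uniformly bounded, but closeness in $\hat{X}_{\mc K}$ says nothing directly about $\spath_{xy}$. Instead, I would deduce \eqref{guess_geo1} from a uniform estimate valid for all $x,y$: every vertex of $\spath_{xy}$ lies within a universal $\hat{d}_{\mc K}$--distance $\delta_1$ of any $\hat{X}_{\mc K}$--path between $x$ and $y$, or at least of a path of bounded length. To obtain this, take $\hat{d}_{\mc K}$--adjacent vertices $x=w_0,\dots,w_3=y$. Each consecutive pair is either $X$--adjacent or joined by a $\mc K$--anti-contracting special path, and such a path has $\hat{\diam}\leq 3$ by Lemma~\ref{lem:diam-anti-contracting}. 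Applying the triangle estimate below to the resulting polygon of special paths, subdivided into triangles, gives \eqref{guess_geo1} with $\delta$ of order $10$.

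For \eqref{guess_geo2}, take a triangle of special paths $\gamma=\spath_{xy}$, $\pgot_1=\spath_{xz}$, $\pgot_2=\spath_{zy}$, so that $\pgot=\pgot_1\ast\pgot_2$ is a $(2,\mc P)$--polygonal line from $\gamma^-$ to $\gamma^+$; after reversing orientations it is as in Proposition~\ref{lem:technical-properties-of-contraction-space}. Let $v$ be a vertex of $\gamma$. There are two cases.
\begin{enumerate}
\item Some $\mc K$--midthin subsegment $\tilde\gamma$ of $\gamma$ has $\hat{d}_{\mc K}(v,\tilde\gamma)$ small. Proposition~\ref{lem:technical-properties-of-contraction-space} gives a vertex $x'\in\pgot$ with $\hat{d}_{\mc K}(x',\tilde\gamma)\leq1$.
\item No midthin subsegment is near $v$. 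Then the maximal subsegment of $\gamma$ around $v$ containing no $\mc K$--midthin subsegment is anti-contracting, hence a clique by Lemma~\ref{lemma:diameter-of-anti-contracting-paths}. Its endpoints abut midthin subsegments, or are endpoints of $\gamma$, which lie on $\pgot$.
\end{enumerate}

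The main obstacle is that case (1) controls only $\hat{d}_{\mc K}(x',\tilde\gamma)$, not $\hat{d}_{\mc K}(x',v)$, and a midthin subsegment can be long in $\hat{X}_{\mc K}$. To handle this I would pass to minimal midthin subsegments. Given $v$, consider the subsegment of $\gamma$ starting at $v$; if it is not anti-contracting it contains a midthin subpath, and I would take the one closest to $v$ and of smallest length. Shrinking the interval from $v$ to that subpath gives an anti-contracting piece, so $v$ is within $\hat{d}_{\mc K}$--distance $1$ of its endpoint. If instead $v$ is inside $\tilde\gamma$, minimality forces the halves of $\tilde\gamma$ to be anti-contracting, so $\hat{\diam}(\tilde\gamma)\leq 7$. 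Combining these bounds yields $\hat{d}_{\mc K}(v,\pgot)\leq\delta$ with $\delta$ about $10$.

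It remains to check that this minimal subsegment still satisfies the hypotheses of Proposition~\ref{lem:technical-properties-of-contraction-space}, and I expect the delicate bookkeeping to be there. Since the constants in Proposition~\ref{lem:technical-properties-of-contraction-space} are absolute, $\delta$ does not depend on $\mc K$.
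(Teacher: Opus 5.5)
Your framework is the paper's: Guessing Geodesics with $S=V(X)$, $\eta(x,y)=\hat{\spath}_{\mc K}$, and Proposition~\ref{lem:technical-properties-of-contraction-space} as the engine for thin triangles. However, the step that makes $\delta$ universal is not established. You correctly identify the obstacle: the Proposition only gives $\hat{d}_{\mc K}(x',\tilde\gamma)\le 1$, and $\tilde\gamma$ may be long in $\hat{X}_{\mc K}$. Your remedy does not work as stated.

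Write $v=\gamma(t)$ and choose $\tilde\gamma=\gamma[a,b]$ in $\gamma[t,T]$ ``closest to $v$ and of smallest length'', i.e. minimal $a$, then minimal $b$. This only makes $\gamma[t,a]$ anti-contracting. Nothing in that choice excludes a midthin $\gamma[a',b']$ with $a<a'<b'<b$. So the halves of $\tilde\gamma$ need not be anti-contracting, $\widehat{\diam}_{\mc K}(\tilde\gamma)$ is uncontrolled, and $x'$ may be close only to the far end of $\tilde\gamma$. Moreover, nothing guarantees that the set of parameters at which midthin subsegments begin or end is closed. So these minimisers, and the ``maximal anti-contracting subsegment around $v$'' in your case (2), need not exist.

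The bookkeeping you defer is not about the hypotheses of the Proposition; any midthin subsegment of $\gamma$ satisfies those. It is exactly the problem of choosing one that lies entirely $\hat{d}_{\mc K}$-close to $v$.

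The paper avoids minimality with a discrete trick. If $\hat{d}_{\mc K}(v,\gamma^+)\le 1$ you are done, since $\gamma^+\in\pgot$. Otherwise let $y=\gamma(s)$ be the first vertex after $v$ with $\hat{d}_{\mc K}(v,y)=2$. Consecutive vertices along $\gamma$ are $X$-adjacent, so every vertex of $\gamma[t,s]$ is within $\hat{d}_{\mc K}$-distance $2$ of $v$. Since $v$ and $y$ are not adjacent in $\hat{X}_{\mc K}$, the pair $(v,y)$ is not $\mc K$-anti-contracting. Hence the special path $\gamma[t,s]$, which is in $\mc P$ by consistency, contains a $\mc K$-midthin subsegment $\tilde\gamma$. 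The Proposition then gives a vertex $z\in\pgot$ with $\hat{d}_{\mc K}(z,v)\le 3$. This yields $\delta=4$ with no case split.

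Your idea can be repaired by minimising the \emph{end} parameter instead. Let $b^*$ be the infimum of the ends of midthin subsegments of $\gamma[t,T]$. Then $\gamma[t,b]$ is anti-contracting for every $b<b^*$. A midthin subsegment ending just after $b^*$ therefore lies in a set containing $v$ of $\hat{d}_{\mc K}$-diameter about $4$.

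Finally, condition \eqref{guess_geo1} needs no triangle argument. Take $R=\tfrac12$ and choose $\eta(x,y)$ to be a $\mc K$-anti-contracting special path whenever $(x,y)\in A_{\mc K}$. Then Lemma~\ref{lemma:small-implies-anti-contracting} and Lemma~\ref{lem:diam-anti-contracting} give diameter at most $3$ directly. No thickening of $\hat{\spath}_{\mc K}$ is needed either, since $X$ is a subgraph of $\hat{X}_{\mc K}$.
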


\begin{proof}
    We use the Guessing Geodesic criterion (Theorem~\ref{thm:guessing_geodesics}) for $S = V(X) = V(\hat{X}_{\mc K})$ and define $\eta(x, y)= \hat{\spath}_{\mc K}$ for a special path $\spath\in \mc P$ from $x$ to $y$. In the case where the pair $(x, y)$ is $\mc K$--anti-contracting, we require that the special path $\spath$ is $\mc K$--anti-contracting. The set $S$ is $R = \frac{1}{2}$-dense in $\hat{X}_\mc{K}$. 
    Clearly, $x, y\in \eta(x, y)$, and since special paths are rectifiable, so are the paths $\eta(x, y)$. It remains to prove \eqref{guess_geo1} and \eqref{guess_geo2} of Theorem~\ref{thm:guessing_geodesics}, which we will prove for $\delta=4$.

    \eqref{guess_geo1}: Let $x, y\in S$ with $\hat{d}_{\mc K}(x, y)\leq 3/2$. Since $\hat{X}_{\mc K}$ is a graph and $x, y$ are vertices, we know that $\hat{d}_{\mc K}(x, y)\leq 1$. By Lemma~\ref{lemma:small-implies-anti-contracting}, $(x, y)$ and hence $\eta(x, y)$ is $\mc K$--anti-contracting. Lemma~\ref{lem:diam-anti-contracting} shows that $\widehat{\diam}_{\mc K}(\eta(x, y))\leq 3 \leq \delta$.\\ 

    We next prove the following claim, which directly implies \eqref{guess_geo2} of Theorem~\ref{thm:guessing_geodesics}. 
    
    \begin{claim}\label{claim:triangles-in-contraction-space}
    Let $\gamma, \gamma',\gamma''\in \mc P$ be special paths whose endpoints are vertices and such that $\gamma\ast \gamma'\ast \gamma''$ is a $(3, \mc P)$--polygonal line and $\gamma^- = {\gamma''}^+$. We have that $\gamma\subset \hat{\mc N}_{4}(\gamma'\cup\gamma'')$.     
    \end{claim}
    
    \textit{Proof of claim.} Let $x = \gamma(t)$ be a vertex on $\gamma$. Since $x$ was chosen arbitrarily it suffices to prove $\hat{d}_{\mc K}(x, \gamma'\cup \gamma'')\leq 3$. If $\hat{d}_{\mc K}(x, \gamma^+) < 2$, then this trivially holds. If $\hat{d}_{\mc K}(x, \gamma^+)\geq 2$, then there exists at least one vertex $y\in \suf{\gamma}{x}$ with $\hat{d}_{\mc K}(x, y) = 2$. We choose $y = \gamma(s)$ as the first such vertex. By minimality of $t$, we have $\hat{d}_{\mc K}(x, x')\leq 2$ for all $x'\in \isub{\gamma}{t, s}$. Since $\hat{d}_{\mc K}(x, y)>1$, $(x, y)$ is not $\mc K$--anti-contracting. In particular, there exists a subsegment $\spath$ of $\isub{\gamma}{ t, s}$ which is $\mc K$--midthin. Proposition~\ref{lem:technical-properties-of-contraction-space} yields a vertex $z\in \gamma'\ast \gamma''$ with $\hat{d}_{\mc K}(\spath, z)\leq 1$. By the triangle inequality, $\hat{d}_{\mc K}(z, x)\leq 3$, concluding the proof of the claim.
    \hfill$\blacksquare$
\end{proof}

\begin{remark}\label{rem:delta_0}
    We denote by $\delta_0$ the hyperbolicity constant obtained by applying Theorem~\ref{thm:guessing_geodesics} to $\delta = 4$. With this notation, the $\mc K$--contraction space is $\delta_0$ hyperbolic for large enough $\mc K$.
\end{remark}

\begin{remark}\label{rem:on_geodesic}
    The proof of Theorem~\ref{thm:contraction_space_hyp_for_graphs} shows that any special path $\spath\in \mc P$ between two vertices $x$ and $y$ has Hausdorff distance at most $[x, y]_{\mc K}$, unless $\spath$ is not $\mc K$--anti-contracting while $(x, y)$ is $\mc K$--anti-contracting. In particular, if $z$ lies on such a special path $\spath$, then $\hat{d}_{\mc K}(x, z)\leq \hat{d}_{\mc K}(x, y) + \delta_0$. 
\end{remark}

\subsection{Geometric connections}\label{sec:quasi-geodesics}

In this section we highlight further geometric connections between a graph $X$ and its contraction space $\hat{X}_{\mc K}$. In particular, we show that if $\mc K$ is large enough and full, then $\hat{\gamma}_{\mc K}$, the image of a special path $\gamma\in \mc P$ in the contraction space $\hat{X}_{\mc K}$, is a parametrized quasi-geodesic if and only if $\gamma$ is $\mc P$--contracting. In the case where $\mc P$ is navigable, we can use Corollary~\ref{cor:morse-implies-mccontracting} to get the even stronger statement that $\hat{\gamma}_{\mc K}$ is a quasi-geodesic if and only if $\gamma$ is Morse.

We start by showing that midthinness is a consequence of $\mc P$--contraction.

\begin{lemma}\label{lem:p-contracting-implies-midthin}
     For every $C, n$ there exists $r$ such that all special paths $\gamma\in \mc P$ which are $\mc P$--contracting with constant $C$ are $(r; n, \mc P)$--midthin.
\end{lemma}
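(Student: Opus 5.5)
The plan is to use the two conditions of Definition~\ref{def:contracting} together with the coarse Lipschitz property of $\pi_\gamma$ (Lemma~\ref{lem:projection_is_Lipschitz}). Fix $C$, $n$, and a special path $\gamma\in\mc P$ that is $\mc P$--contracting with constant $C$ via $\pi=\pi_\gamma$. Let $\pgot=\pgot_1\ast\cdots\ast\pgot_n$ be an $(n,\mc P)$--polygonal line from $\gamma^+$ to $\gamma^-$, with vertices $w_0=\gamma^+, w_1,\dots,w_n=\gamma^-$. The goal is to find a point of $\pgot$ within distance $r$ of the midpoint $m_\gamma$, where $r$ depends only on $C$, $n$ and the quasi-geodesic constants of $\mc P$.

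\textbf{Step 1 (locating the relevant segment).} By condition \eqref{5.2.1}, $\pi(w_0)$ lies within $C$ of $\gamma^+$ and $\pi(w_n)$ lies within $C$ of $\gamma^-$. Parametrize the projections $\pi(w_j)=\gamma(s_j)$. If $m_\gamma$ is within bounded distance of one of the points $\pi(w_j)$ and of $w_j$, we are done directly, so the main case is the following. Since the $s_j$ pass from near the end of the domain to near its start, there is an index $j$ such that the midpoint parameter lies between $s_{j-1}$ and $s_j$ (up to $C$). We may also assume $\abs{s_{j-1}-s_j}$ is large; otherwise $m_\gamma$ is within a bounded distance, controlled by the quasi-geodesic constants, of $\pi(w_j)$, and we need a separate argument (Step 3).

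\textbf{Step 2 (bounded geodesic image).} If $d(\pi(w_{j-1}),\pi(w_j))\ge C$, condition \eqref{5.2.2} gives points of $\pgot_j$ within $C$ of $\pi(w_{j-1})$ and of $\pi(w_j)$. By Lemma~\ref{lem:projection_is_Lipschitz}, $\pi$ restricted to the continuous path $\pgot_j$ moves in coarse steps. So I would subdivide $\pgot_j$ into pieces $[u,v]$ with $d(\pi(u),\pi(v))$ comparable to $C$, and apply \eqref{5.2.2} to the special subpaths of $\pgot_j$; this is allowed because $\mc P$ equals its consistent closure. This shows that every point of $\gamma$ between $\pi(w_{j-1})$ and $\pi(w_j)$ is within $C'$ of $\pgot_j$: given such a point $\gamma(s)$, pick consecutive points $u,v$ of $\pgot_j$ whose projections straddle $s$. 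Then $\pi(u)$ lies within a bounded distance of $\gamma(s)$, and $d(u,\pi(u))$ is bounded by \eqref{5.2.2} applied to $[u,v]$, or to $[u,w_j]$ once the projections are far apart. In particular $m_\gamma$ lies within some $r_1(C,\mc P)$ of $\pgot$.

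\textbf{Step 3 (the degenerate case).} Here the midpoint parameter is close to some $s_j$, but $w_j$ itself may be far from $\gamma$. In that case, take the neighbouring segment whose projections do spread past the midpoint. Among $\pgot_j$ and $\pgot_{j+1}$ this is possible because the total spread from $s_0$ to $s_n$ is the whole domain. I would handle this by running the argument of Step~2 on the pair $\pgot_j,\pgot_{j+1}$ jointly, with the threshold in \eqref{5.2.2} adjusted; the error accumulated is $n$-dependent but bounded.

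\textbf{Main obstacle.} I expect the main obstacle to be the discretization in Step~2. Turning the coarse-Lipschitz jumps of $\pi$ along $\pgot_j$ into a point of $\pgot_j$ that is near $m_\gamma$, and not merely one whose projection is near $m_\gamma$, requires care. The cleanest route I would try first: choose $u\in\pgot_j$ to be the last point whose projection lies on the $\pi(w_{j-1})$ side of the midpoint. Apply \eqref{5.2.2} to the special subpath from $u$ to $w_j$ to get a point within $C$ of $\pi(u)$, and hence close to $m_\gamma$. The final constant $r$ then depends only on $C$, $n$ and the quasi-geodesic constants of $\mc P$.
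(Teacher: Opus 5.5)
\textbf{Main mechanism: correct, and a different route from the paper.} Your handling of a segment with a large projection jump is sound. The paper first upgrades $\mc P$--contraction to the Morse property (Theorem~\ref{theorem:contracting-implies-morse}). For each $\pgot_i$ with $d(\pi(\pgot_i^-),\pi(\pgot_i^+))\ge C$, it takes the points $x',y'\in\pgot_i$ supplied by \eqref{5.2.2}. It then uses a standard Morse lemma to make the piece of $\pgot_i$ between them uniformly Hausdorff close to the piece of $\gamma$ between the two projections, and that piece contains $m_\gamma$ when it straddles the midpoint.

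You never use the Morse property. Your `last point' $u\in\pgot_j$ whose projection lies on the $\pi(w_{j-1})$--side has $\pi(u)$ uniformly close to $m_\gamma$. This follows from Lemma~\ref{lem:projection_is_Lipschitz}, continuity of $\pgot_j$, and the quasi-geodesic lower bound on $\gamma$. Then \eqref{5.2.2}, applied to the special subpath $\psub{\pgot_j}{u,w_j}$, gives a point of $\pgot_j$ near $\pi(u)$. If instead $d(\pi(u),\pi(w_j))<C$, apply \eqref{5.2.2} to $\pgot_j$ itself, since $m_\gamma$ is then near $\pi(w_j)$.

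This route is more elementary and self-contained, and it proves exactly what is needed rather than Hausdorff closeness. One correction to Step~2: \eqref{5.2.2} does not bound $d(u,\pi(u))$. It only gives a point of the subpath near $\pi(u)$, but that is all you actually use.

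\textbf{Gap: Step~3 fails as written.} Suppose the segment whose projections straddle the midpoint has a small jump. Nothing then forces $\pgot_j$ or $\pgot_{j+1}$ to have a large one. Several consecutive segments can each move the projection by a tiny amount while staying arbitrarily far from $\gamma$. For instance, in $\mathbb{H}^2$, a geodesic between two points far from a geodesic $\gamma$ whose projections are very close stays far from $\gamma$. Nor can you run Step~2 on $\pgot_j,\pgot_{j+1}$ `jointly', since \eqref{5.2.2} only concerns a single special path.

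The missing ingredient is the chain argument the paper uses:
\begin{itemize}
\item Let $k$ be the index closest to $j$ with $d(\pi(w_{k-1}),\pi(w_k))\ge C$.
\item Every segment strictly between $j$ and $k$ has jump $<C$. So the endpoint projection of $\pgot_k$ facing $j$ is within $nC$ plus a bounded amount of $m_\gamma$.
\item Applying \eqref{5.2.2} to $\pgot_k$ gives a point of $\pgot_k$ within $C$ of that endpoint projection.
\item If no such $k$ exists, then $d(\gamma^-,\gamma^+)\le (n+2)C$, and $m_\gamma$ is within $D_{\mc P}((n+2)C)$ of $\gamma^+\in\pgot$.
\end{itemize}
You need to state this short case explicitly, because your Step~1 tacitly assumes $\gamma$ is long. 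This chain step is also where the dependence of $r$ on $n$ genuinely comes from.
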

\begin{proof}
    Let $\gamma$ be a special path which is $\mc P$--contracting with constant $C$ and let $m$ be the midpoint of $\gamma$. Let $\pi_\gamma : X\to \gamma$ be the associated projection. Observe that by Theorem~\ref{theorem:contracting-implies-morse}, $\gamma$ is $M$--Morse for some Morse gauge $M$ only depending on $C$. Let $\alpha = \alpha_1\ast\ldots \ast \alpha_n$ be an $(n, \mc P)$--polygonal line from $\gamma^+$ to $\gamma^-$. We have to show that $d(\alpha, m)\leq r$ for some $r$ only depending on $C$. 

    Temporarily fix an index $i$ and denote $\pi_{\gamma}(\alpha_i^-)$ and $\pi_{\gamma}(\alpha_i^+)$ by $x$ and $y$ respectively. If $d(x, y)\geq C$, then since $\gamma$ is $\mc P$--contracting, there exist $x', y'\in \alpha_i$ such that $d(x',x)\leq C$ and $d(y', y)\leq C$. Thus, $\gamma$ being $M$--Morse,  yields an upper bound $r'$ only depending on $C$ of the Hausdorff distance between $\psub{\gamma}{x, y}$ and $\psub{\alpha}{x', y'}$ (see \cite[Lemma~2.1]{Cordes:Morse}). 

    Next define $r = r'+D_{\mc P}((n+2)C)$. If $d(\pi_\gamma(\alpha_i^-), \pi_\gamma(\alpha_i^+))\leq C$ for all $i$, then $d(\gamma^-, \gamma^+)\leq (n+2)C$. Hence $\diam(\gamma)\leq r$ yielding $d(m, \alpha)\leq d(m, \gamma^-)\leq r$. Assume instead that $d(\pi_\gamma(\alpha_i^-), \pi_\gamma(\alpha_i^+))> C$ for at least one index $i$. If $d(m, \gamma^-\cup \gamma^+)\geq D_{\mc P}(C)$, then $m$ has to lie between $\pi_\gamma(\alpha_j^+)$ and $\pi_\gamma(\alpha_j^-)$ for at least one index $j$. Let $k$ be an index minimizing $\abs{j - k}$ for which $d(\pi_\gamma(\alpha_k^-), \pi_\gamma(\alpha_k^+))>C$. With this, $d(m, \alpha_k)\leq r'+\abs{j-k}C\leq r$, concluding the proof.
\end{proof}

\begin{proposition}\label{prop:contr-to-quasi-geo}
    Assume that $\mc K = (K, n, \mc P)$ is large enough and full. For every $C\geq 0$ there exists $Q\geq 1$ such that the following holds. If $\gamma\in \mc P$ is $\mc P$--contracting with constant $C$, then its image $\hat{\gamma}_{\mc K}$ is a parametrized $Q$--quasi-geodesic.
\end{proposition}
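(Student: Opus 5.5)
Since $\iota_{\mc K}$ is $1$--Lipschitz and $\gamma$ is a uniform quasi-geodesic in $X$, the upper bound $\hat d_{\mc K}(\gamma(s),\gamma(t))\leq D_{\mc P}(\abs{s-t})$ is automatic. The work is the lower bound: we want $\hat d_{\mc K}(\gamma(s),\gamma(t))\geq \abs{s-t}/Q-Q$, and it suffices to check this between vertices near $\gamma$. The plan is to show that any $\hat{X}_{\mc K}$--geodesic from $\gamma(s)$ to $\gamma(t)$ makes only bounded progress along $\gamma$ per edge. For this we project each edge onto $\gamma$ via the projection $\pi_\gamma$ given by $\mc P$--contraction.

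\textbf{Step 1 (bounded projection of anti-contracting edges).} Fix $C$. By Lemma~\ref{lem:p-contracting-implies-midthin}, every special path that is $\mc P$--contracting with a constant $C'$ (depending only on $C$) is $(r;n,\mc P)$--midthin for some $r=r(C',n)$. Since $\mc K$ is full, $K(r)<\infty$. We will show there is $B=B(C,\mc K)$ such that if $(x,y)\in A_{\mc K}$ is witnessed by an anti-contracting special path $\spath$, then $d(\pi_\gamma(x),\pi_\gamma(y))\leq B$. Suppose instead $d(\pi_\gamma(x),\pi_\gamma(y))$ is huge, in particular $\geq C$. Then by Definition~\ref{def:contracting} the path $\spath$ passes $C$--close to both $\pi_\gamma(x)$ and $\pi_\gamma(y)$. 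Since $\gamma$ is Morse (Theorem~\ref{theorem:contracting-implies-morse}), the subpath of $\spath$ between these two points stays uniformly close to the subsegment $\gamma'$ of $\gamma$ between the projections. By Lemma~\ref{lem:contracting_preserved_Hdistance}, any long enough subsegment of that subpath of $\spath$ is $\mc P$--contracting with constant $C'$, hence midthin. Taking one of domain length $\geq K(r)$, which is possible once $B$ is larger than roughly $D_{\mc P}(K(r))$ plus Morse constants, gives a $\mc K$--midthin subsegment of $\spath$. This contradicts anti-contraction.

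\textbf{Step 2 (edges of $X$ and summation).} Edges of $X$ have projections at distance bounded via Lemma~\ref{lem:projection_is_Lipschitz}. Now take an $\hat{X}_{\mc K}$--geodesic $v_0=\gamma(s),\dots,v_N=\gamma(t)$. Each consecutive pair is either an $X$--edge or in $A_{\mc K}$, so $d(\pi_\gamma(v_{i}),\pi_\gamma(v_{i+1}))\leq B'$. Since $\pi_\gamma$ moves points of $\gamma$ by at most $C$, we get
\[
d(\gamma(s),\gamma(t))\leq NB'+2C.
\]
Combined with the quasi-geodesic lower bound on $d$, this yields $\hat d_{\mc K}(\gamma(s),\gamma(t))\geq \abs{s-t}/Q-Q$ with $Q$ depending only on $C$, $\mc K$ and $\mc P$.

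\textbf{Main obstacle.} Step 1 is the delicate part. One must check that the witnessing path $\spath$ genuinely fellow-travels $\gamma$ over a parameter length at least $K(r)$, with $r$ depending only on $C$; this is where fullness of $K$ is used. One must also ensure that the midthin subsegment found is a subsegment of $\spath$ itself, not of $\gamma$. The natural route is to apply Lemma~\ref{lem:contracting_preserved_Hdistance} to subsegments of $\spath$ whose endpoints lie near $\gamma$. This makes those subsegments contracting with a uniform constant, and Lemma~\ref{lem:p-contracting-implies-midthin} then applies to them directly.
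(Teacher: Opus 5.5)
Your proposal is correct and is essentially the paper's proof: the paper also shows that $\pi_\gamma$, viewed as a map from $(V(\hat{X}_{\mc K}),\hat d_{\mc K})$ to $\gamma$, is coarsely Lipschitz, using exactly your Step~1 contradiction. There, Lemma~\ref{lem:contracting_preserved_Hdistance} is applied directly to the whole subpath of $\spath$ between the two $C$--close points, so your Morse fellow-travelling detour is unnecessary, and the threshold $\kappa = D_{\mc P}(K(r)+1)+2C$ makes that subpath $\mc K$--midthin. The paper also treats $X$--edges as anti-contracting pairs via Lemma~\ref{lemma:small-implies-anti-contracting} instead of invoking Lemma~\ref{lem:projection_is_Lipschitz}, which is a cosmetic difference.
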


\begin{proof}
Let $\gamma$ be $\mc P$--contracting with constant $C$ and let $\pi_{\gamma} : X \to \gamma$ be the associated projection that makes $\gamma$ $\mc P$--contracting. Since $V(X) = V(\hat{X}_{\mc K})$, the map $\pi_\gamma$ induces a map $\tau : (V(\hat{X}_{\mc K}), \hat{d}_{\mc K})\to (\gamma, d)$. Note that the metrics on the two spaces differ. We now show that $\tau$ is $\ka$--coarsely Lipschitz for some constant $\ka\geq C$ only depending on $C$ and the contraction triple $\mc K$ as this implies that $\hat{\gamma}_{\mc K}$ is a $\ka$--quasi-geodesic. 

\begin{figure}
    \centering
    \includegraphics[width=0.5\linewidth]{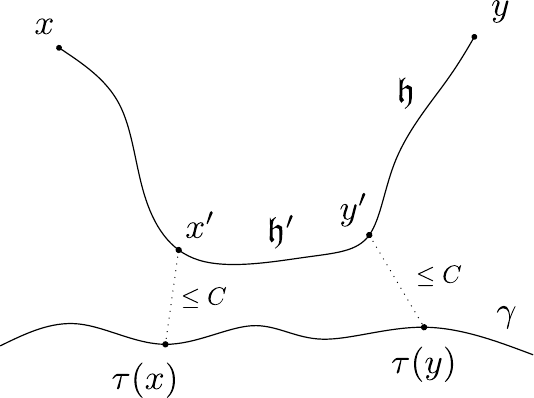}
    \caption{In the proof of Proposition~\ref{prop:contr-to-quasi-geo}, the path $\spath'\subset \spath$ has endpoints close to $\gamma$ and thus inherits $\mc P$--contraction from $\gamma$.}
    \label{fig:p-contracting-quasi-geodesic}
\end{figure}

Let $x, y \in V(\hat{X}_{\mc K})$ be vertices with $\hat{d}_{\mc K}(x, y)\leq 1$. In particular, there exists a special path $\spath\in \mc P$ from $x$ to $y$ which is $\mc K$--anti-contracting, see Lemma~\ref{lemma:small-implies-anti-contracting}. Now, it remains to show that $d(\tau(x), \tau(y))\leq \ka$. Assume $d(\tau(x), \tau(y)) > \ka$. In particular, $d(\tau(x), \tau(y)) > C$, which, since $\gamma$ is $\mc P$--contracting with constant $C$ implies that there exists points $x', y'$ on $\spath$ in the $C$ neighbourhood of $\tau(x)$ and $\tau(y)$ respectively. This is depicted in Figure~\ref{fig:p-contracting-quasi-geodesic}. Let $\spath'\subset\spath$ be a subpath whose endpoints are $x'$ and $y'$ respectively. By Lemma~\ref{lem:contracting_preserved_Hdistance}, $\spath'$ is $\mc P$--contracting with constant $C'$ only depending on $C$. Consequently, by Lemma~\ref{lem:p-contracting-implies-midthin}, $\spath'$ is $(r; n, \mc P)$--midthin for some $r\geq 0$ only depending on $C'$ and $n$. Now set $\ka = D_{\mc P}(K(r)+1)+2C$. With this, $\abs{\spath'} > K(r)$, in particular, $\spath'$ is $\mc K$--midthin. This contradicts $\spath$ being $\mc K$--anti-contracting. Hence $\tau$ is $\ka$--coarsely Lipschitz, concluding the proof.
\end{proof}

\begin{proposition}\label{prop:quasi-geodesic-implies-contracting}
     Assume that $\mc K$ is large enough. For every $Q\geq 0$ there exists $C\geq 0$ such that the following holds. If $\gamma\in \mc P$ is a special path whose image $\hat{\gamma}_{\mc K}$ is a parametrized $Q$--quasi-geodesic, then $\gamma$ is $\mc P$--contracting with constant $Q'$ only depending on $\mc K$ and $Q$.
\end{proposition}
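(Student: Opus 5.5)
We build the projection from the hyperbolic geometry of $\hat{X}_{\mc K}$ and use that geometry to check the two conditions of Definition~\ref{def:contracting}. By Theorem~\ref{thm:contraction_space_hyp_for_graphs}, $\hat{X}_{\mc K}$ is $\delta_0$--hyperbolic, and $\hat{\gamma}_{\mc K}$ is a $Q$--quasi-geodesic, hence Morse there with gauge depending only on $Q$ and $\delta_0$. For a vertex $x$, let $\hat\pi(x)$ be a closest point (in $\hat d_{\mc K}$) of $\hat\gamma_{\mc K}$ to $x$. Set $\pi_\gamma(x)=\hat\pi(x)$, viewed as a point of $\gamma\subset X$; for non-vertices, first move to a nearby vertex. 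Condition \eqref{5.2.1} is immediate.

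For \eqref{5.2.2}, take $x,y$ and a special path $\beta=[x,y]_{\mc P}$ with $d(\pi_\gamma(x),\pi_\gamma(y))\geq C$. We must show that $\beta$ passes within $C$ (in $d$) of $\pi_\gamma(x)$ and of $\pi_\gamma(y)$. The argument has three steps.

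\textbf{Step 1: $\hat d_{\mc K}$-separation.} Since $\gamma$ is a $(\lao,\kao)$-quasi-geodesic in $X$ and $\hat\gamma_{\mc K}$ is a parametrized $Q$-quasi-geodesic, $d$-distance along $\gamma$ and $\hat d_{\mc K}$-distance along $\gamma$ are comparable up to constants depending on $Q$ and $\mc P$. So a large $C$ forces $\hat d_{\mc K}(\hat\pi(x),\hat\pi(y))$ to be large.

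\textbf{Step 2: $\hat d_{\mc K}$-closeness of $\beta$.} Standard hyperbolic geometry (closest-point projections to quasi-geodesics are coarsely Lipschitz with bounded image of distant geodesics) implies that any $\hat d_{\mc K}$-geodesic from $x$ to $y$ passes within $\hat d_{\mc K}$-distance $D_1=D_1(Q,\delta_0)$ of $\hat\pi(x)$ and of $\hat\pi(y)$. By Remark~\ref{rem:on_geodesic} and the proof of Theorem~\ref{thm:contraction_space_hyp_for_graphs}, every special path is uniformly Hausdorff close in $\hat X_{\mc K}$ to a $\hat d_{\mc K}$-geodesic. The exception is an anti-contracting pair joined by a non-anti-contracting path; that case must be handled separately by comparing with the chosen anti-contracting path via Claim~\ref{claim:triangles-in-contraction-space}. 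Hence $\beta$ has a vertex $u$ with $\hat d_{\mc K}(u,\pi_\gamma(x))\leq D_2$.

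\textbf{Step 3: upgrade to $d$-closeness.} This is the main obstacle, since $\hat d_{\mc K}$-closeness says nothing directly about $d$. Using Step 1, pick points $a,b\in\gamma$ on either side of $\pi_\gamma(x)$ with $\hat d_{\mc K}(a,\pi_\gamma(x))$ and $\hat d_{\mc K}(b,\pi_\gamma(x))$ equal to $E\gg D_2+\delta_0$. Because $\hat\gamma_{\mc K}$ is a quasi-geodesic, the subpath $[a,b]$ of $\gamma$ is not anti-contracting, so it contains a $\mc K$--midthin subsegment $\tilde\gamma$ whose midpoint $m$ is $\hat d_{\mc K}$-near $\pi_\gamma(x)$. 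By fullness, or simply by $K$ being large enough, we can arrange that $\pi_\gamma(x)$ is within bounded $d$-distance of $m$. This uses that $\hat d_{\mc K}$-balls along $\gamma$ have $d$-bounded size by Step 1, together with $r\leq K^{-1}(|\tilde\gamma|)$ and $|\tilde\gamma|$ bounded in terms of $E$.

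Now form an $(n,\mc P)$--polygonal line from $\tilde\gamma^+$ to $\tilde\gamma^-$. It runs along $\gamma$ toward $\hat\pi(y)$, jumps via short special paths to $\beta$ near $\hat\pi(y)$, follows $\beta$ back to $u$, and returns via a special path from $u$ to $\pi_\gamma(x)$. At this point there is a problem: a path that is short in $\hat d_{\mc K}$ need not be short in $X$. To fix this, replace the $\hat d_{\mc K}$-short connectors by chains of at most $D_2$ anti-contracting special paths. Midthinness of $\tilde\gamma$ then forces this line to meet $B(m,r)$. Lemmas~\ref{lemma:continued-special-path-distance} and \ref{lemma:T-lemma} exclude the pieces along $\gamma$. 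Anti-contracting connectors cannot come $d$-close to $m$ without producing a $\hat d_{\mc K}$-shortcut across $\tilde\gamma$, which contradicts the quasi-geodesicity of $\hat\gamma_{\mc K}$ once $E$ is large. Hence $\beta$ itself meets $B(m,r)$, giving $d(\beta,\pi_\gamma(x))\leq C$ for $C$ large. This requires $n\geq 2D_2+5$, which we arrange by taking $\mc K$ large. The argument for $\pi_\gamma(y)$ is symmetric.
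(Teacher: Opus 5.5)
\textbf{Gap in Step 3.} Your Steps 1 and 2 are fine. In fact the exceptional case in Step 2 cannot occur: an anti-contracting pair has $\hat d_{\mc K}$--distance at most $1$, so its $\hat d_{\mc K}$--projections are close, which contradicts Step 1. The problem is Step 3, which is exactly the crux, and there are two issues there.

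First, your exclusion of the connectors at $\pi_\gamma(x)$ is unjustified, and it fails in your configuration. You choose $m$ to be $\hat d_{\mc K}$--near $\pi_\gamma(x)$, but the connectors from $u$ to $\pi_\gamma(x)$ lie in that same bounded $\hat d_{\mc K}$--ball. An anti-contracting special path can be arbitrarily long in $d$, so a connector may pass through $B(m,r)$. No $\hat d_{\mc K}$--shortcut ``across $\tilde\gamma$'' arises, because all points involved are $\hat d_{\mc K}$--close anyway, so there is no contradiction with $\hat{\gamma}_{\mc K}$ being a quasi-geodesic.

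Moreover, with $a,b$ on both sides of $\pi_\gamma(x)$ you do not control where $\tilde\gamma$ sits. If $\tilde\gamma$ contains $\pi_\gamma(x)$, or lies on the side away from $\pi_\gamma(y)$, then the last piece of your line runs along $\gamma$ from $\pi_\gamma(x)$ to $\tilde\gamma^-$ through $m$, and midthinness gives nothing.

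Second, your line has about $2D_2+3$ pieces with $D_2=D_2(Q,\delta_0)$. Demanding $n\geq 2D_2+5$ ``by taking $\mc K$ large'' makes $\mc K$ depend on $Q$. The statement fixes $\mc K$ first and then allows every $Q$. This uniformity is used later, e.g.\ in Corollary~\ref{cor:contracting-element}, where $Q$ comes from a loxodromic element acting on an already chosen $\hat X_{\mc K}$, and in Theorem~\ref{prop:morse-quasi-geo-equivalence}.

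\textbf{How to repair it, and how the paper argues.} Both issues can be fixed within your strategy.
\begin{itemize}
\item Use a single special path as each connector. By Remark~\ref{rem:on_geodesic}, or by choosing an anti-contracting path when the pair is anti-contracting, it has $\hat d_{\mc K}$--diameter bounded in terms of $D_2$ and $\delta_0$; its $d$--length is irrelevant. The line is then a $(5,\mc P)$--polygonal line, so $n\geq 7$ suffices.
\item Place the midthin segment on $\psub{\gamma}{\pi_\gamma(x),\pi_\gamma(y)}$. Take it inside a window $\psub{\gamma}{z_1,z_2}$ with $\hat d_{\mc K}(\pi_\gamma(x),z_1)=E$ and $\hat d_{\mc K}(z_1,z_2)=2$. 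Such a window has domain length at most $Q(Q+2)$, so $r<Q(Q+2)$.
\item Choose $E$ large compared to the connector diameters and to $Q(Q+2)$. Since $\iota_{\mc K}$ is $1$--Lipschitz, the connectors are then $d$--far from $m$.
\item The $\gamma$--pieces are excluded by Lemma~\ref{lemma:continued-special-path-distance}, so $\beta$ must meet $B(m,r)$.
\item Finally, $d(m,\pi_\gamma(x))$ is bounded in terms of $E$ and $Q$, which gives $d(\beta,\pi_\gamma(x))\leq C$.
\end{itemize}
This yields a valid proof that uses hyperbolicity of $\hat X_{\mc K}$.

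The paper avoids hyperbolicity altogether. Its projection is a $D$--lower projection point of a special path $\spath_x$ from $x$ to $\gamma$, with $D=D_{Q,Q}(2)$. It argues by contradiction, assuming $d(\beta,\pi(x))>Q'$. It places the midthin subpath at $d$--distance $Q'/4$ from $\pi(x)$ toward $\pi(y)$, which is the $d$--analogue of the placement above. The contradiction comes from a $(7,\mc P)$--line through $\beta$ and the prefixes of $\spath_x$ and $\spath_y$ before their upper projection points, all of which are $d$--far from $m$ by construction.
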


\begin{proof}
    Let $\gamma \in \mc P$ be a special path such that $\hat{\gamma}_{\mc K}$ is a $Q$--quasi-geodesic. We show that $\gamma$ is $\mc P$--contracting with constant $Q' = 100D_{\mc P}(D_{Q, Q}(2))$. Let $D = D_{Q, Q}(2)$.

    Define the projection $\pi$ as in the proof of Proposition~\ref{prop:local_poly_P_contracting}, that is, for $x\in X$ define $\pi(x)$ as a $D$--lower projection point of some special path $\spath_x$ from $x$ to $\gamma$. Further, define $\tau(x)$ as the $D$--upper projection point of $\spath_x$ onto $\gamma$. 

    Assume that there exist points $x, y\in X$ with $d(\pi(x), \pi(y)) > Q'$ but such that there exists a special path $ \beta = [y, x]_{\mc P}$ with $d(\pi(x), \beta) > Q'$.

    Let $z_1$ be a vertex on $\psub{\gamma}{x, y}$ with $d(\pi(x), z_1) = Q'/4$ and let $z_2$ be a vertex on $\psub{\gamma}{z_1, y}$ with $\hat{d}_{\mc K}(z_1, z_2) = 2$. Such vertices have to exist since $\hat{\gamma}_{\mc K}$ is a $Q$--quasi-geodesic and $Q'\gg Q$. Further, there has to exist a $\mc K$--midthin subpath $\tilde{\gamma}$ of $\psub{\gamma}{z_1,z_2}$ with midpoint $m$ and necksize $r$ for some $r$ with $K(r)\leq \abs{\tilde{\gamma}}\leq D_{Q, Q}(2)$. Also observe that $Q'/2\geq d(\pi(x), m) \geq Q'/4$ and $d(\pi(y), m)\geq Q'/4$ and for large enough $\mc K$, $r\leq Q'/8$ and $r\leq D$.

    \begin{figure}
        \centering
        \includegraphics[width=0.7\linewidth]{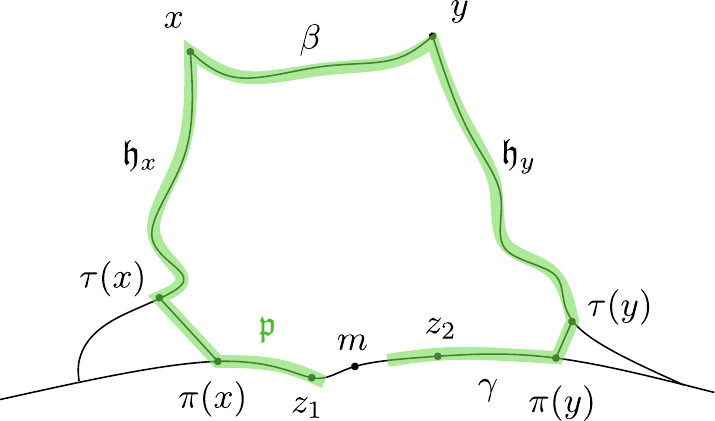}
        \caption{In the proof of Proposition~\ref{prop:quasi-geodesic-implies-contracting}, the path $\pgot$ has to intersect the $r$--neighbourhood of $m$.}
        \label{fig:quasi-implies-contracting-space}
    \end{figure}

    Next consider the $(7, \mc P)$--polygonal line 
    $$\pgot = \psub{\gamma}{\tilde{\gamma}^+, \pi(y)}\ast[\pi(y), \tau(y)]_{\mc P}\ast \pref{\spath_y}{\tau(y)}^{-1}\ast \beta\ast \pref{\spath_x}{\tau(x)}\ast[\tau(x), \pi(x)]_{\mc P} \ast \psub{\gamma}{\pi(x), \tilde{\gamma}^-},$$
    as depicted in Figure~\ref{fig:quasi-implies-contracting-space}.

    Since $\tilde{\gamma}$ is $\mc K$--midthin with necksize $r$ we have $d(\pgot, m)\leq r$. We can thus obtain a contradiction by showing that $d(\qgot, m) > r$ for all of the $7$ special paths $\qgot$ making up $\pgot$. We have $d(m, [\tau(x), \pi(x)]_{\mc P}) \geq d(m,\pi(x)) - \diam([\tau(x), \pi(x)]_{\mc P})> r$. Similarly, $d(m, [\pi(y), \tau(y)]_{\mc P})\geq d(m, \pi(y)) - \diam([\pi(y), \tau(y)]_{\mc P}) >  r$. Also $d(\beta, m) > d(\beta, \pi(x)) - d(\pi(x), m) >r$ by the choice of $\beta, x$ and $y$. If $\qgot = \pref{\spath_y}{\tau(y)}^{-1}$ or $\qgot = \pref{\spath_x}{\tau(x)}$, then $d(m, \qgot)> D\geq r$ by the definition of a $D$--upper projection point. Finally, if $\qgot \subset \gamma$, then for largen enough $K$, $d(\qgot, m) > r$ by Lemma~\ref{lemma:continued-special-path-distance}.
\end{proof}

\begin{theorem}\label{prop:morse-quasi-geo-equivalence}
    Assume that $\mc P$ is navigable and $\mc K$ is large enough and full. Let $\gamma\in \mc P$ be a special path. Its image $\hat{\gamma}_{\mc K}$ is a parametrized $C$-quasi-geodesic if and only if $\gamma$ is $M$--Morse. Moreover, this equivalence is quantitative in the sense that $M$ only depends on $C$ and vice versa. 
\end{theorem}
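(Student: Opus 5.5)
The plan is to route both directions through $\mc P$--contraction, using the two propositions just proved together with the Morse/contraction equivalence available for navigable path systems.

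For the forward direction, suppose $\hat{\gamma}_{\mc K}$ is a parametrized $C$--quasi-geodesic. Proposition~\ref{prop:quasi-geodesic-implies-contracting} (which only needs $\mc K$ large enough) then gives that $\gamma$ is $\mc P$--contracting with a constant $Q'$ depending only on $\mc K$ and $C$. Theorem~\ref{theorem:contracting-implies-morse} upgrades this to $\gamma$ being $M$--Morse, with $M$ depending only on $Q'$, $\mc P$ and $X$. Since $\mc K$ and $\mc P$ are fixed, $M$ depends only on $C$. This direction does not use navigability.

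For the converse, suppose $\gamma \in \mc P$ is $M$--Morse. Then it is in particular $L$--locally $M$--Morse for every $L$, since subpaths of Morse quasi-geodesics are Morse with the same gauge. Because $\mc P$ is navigable and undirected, Corollary~\ref{cor:morse-implies-mccontracting} yields a constant $C'$, depending only on $M$, such that $\gamma$ is $\mc P$--contracting with constant $C'$. Then Proposition~\ref{prop:contr-to-quasi-geo}, which is where fullness and largeness of $\mc K$ are used, gives $Q$ depending only on $C'$ (and $\mc K$) such that $\hat{\gamma}_{\mc K}$ is a parametrized $Q$--quasi-geodesic. Thus the quasi-geodesic constant depends only on $M$.

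The only subtle point I expect is bookkeeping. First, the proof of Corollary~\ref{cor:morse-implies-mccontracting} cites Lemma~\ref{lem:combing-avoidance}, but the fact actually needed is Lemma~\ref{lemma:localmltg-implies-path-avoidence}: Morse implies weakly polygonally Morse with $R$ depending on $M$ and the quasi-geodesic constants of $\mc P$. Applying Proposition~\ref{prop:local_poly_P_contracting} with $n = 7k$ is legitimate because $\gamma$ is itself a special path, so every subpath of $\gamma$ is trivially Hausdorff close to a special path. Second, the quasi-geodesic constants of $\gamma$ are those of $\mc P$, so no extra dependence enters. With these points checked, the two chains give the quantitative equivalence.
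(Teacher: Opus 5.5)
Your proposal is correct and is essentially the paper's argument: the paper proves the theorem by combining Corollary~\ref{cor:morse-implies-mccontracting} with Propositions~\ref{prop:contr-to-quasi-geo} and~\ref{prop:quasi-geodesic-implies-contracting}, and leaves implicit the appeal to Theorem~\ref{theorem:contracting-implies-morse} that you make explicit. Your bookkeeping remarks are also accurate: the citation in the proof of Corollary~\ref{cor:morse-implies-mccontracting} should indeed be Lemma~\ref{lemma:localmltg-implies-path-avoidence}, and the Hausdorff-closeness hypothesis of Proposition~\ref{prop:local_poly_P_contracting} holds automatically for $\gamma\in\mc P$.
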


\begin{proof}
    This follows directly by combining Corollary~\ref{cor:morse-implies-mccontracting}, Proposition~\ref{prop:contr-to-quasi-geo} and Proposition~\ref{prop:quasi-geodesic-implies-contracting}.
\end{proof}

\subsection{Group actions on the contraction space}\label{sec:acylindricity}

Let $\mc K$ be a contraction triple and let $G$ be a group which acts on $X$ by graph isomorphisms. If the path system $\mc P$ is $G$--invariant, then the action of $G$ on $X$ induces an action by graph isomorphisms of $G$ on $\hat{X}_{\mc K}$, because in that case the construction of $\hat{X}_{\mc K}$ is $G$--invariant. Moreover, if $G$ acts \emph{$\rho$--coboundedly} on $X$, that is, if for some $x_0\in X$, the set $G\cdot x_0$ is $\rho$-dense in $X$, then it acts $(\rho+1)$ coboundedly on $\hat{X}_{\mc K}$. We next show that if $G$ acts properly on $X$, it acts non-uniformly acylindrically on $\hat{X}_{\mc K}$.

\begin{theorem}\label{prop:non-uniformly-acylindrical}
    Let $G$ be a group which acts properly on $X$ and such that $\mc P$ is $G$--invariant. If $\mc K$ is large enough, then the induced action of $G$ on $\hat{X}_{\mc K}$ is non-uniformly acylindrical.
\end{theorem}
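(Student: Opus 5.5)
We use the notion of non-uniform acylindricity from \cite{Zbinden:hyperbolic}: for every $\epsilon>0$ there is $R$ such that, whenever $x,y\in V(\hat X_{\mc K})$ satisfy $\hat d_{\mc K}(x,y)\geq R$, the set
\[F(x,y;\epsilon)=\{g\in G \mid \hat d_{\mc K}(x,gx)\leq\epsilon,\ \hat d_{\mc K}(y,gy)\leq\epsilon\}\]
is finite. The strategy is to produce a point $m\in X$, depending only on $x$ and $y$, together with a constant $B$, such that every $g\in F(x,y;\epsilon)$ satisfies $d(m,gm)\leq B$. Properness of the action of $G$ on $X$ then shows that $F(x,y;\epsilon)$ is finite. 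Since $V(X)=V(\hat X_{\mc K})$ and the action is by graph isomorphisms, it suffices to treat vertices.

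\textbf{Step 1: locating a midthin subpath.} Fix $\epsilon$ and take $R\gg\epsilon+\delta_0$. Let $\gamma\in\mc P$ be a special path from $x$ to $y$. By Remark~\ref{rem:on_geodesic}, $\hat\gamma_{\mc K}$ coarsely tracks a $\hat d_{\mc K}$--geodesic from $x$ to $y$. Choose a vertex $z\in\gamma$ with $\hat d_{\mc K}(z,\{x,y\})\geq R/3$, and let $z'$ be the first vertex after $z$ on $\gamma$ with $\hat d_{\mc K}(z,z')=2$. As in the proof of Claim~\ref{claim:triangles-in-contraction-space}, the pair $(z,z')$ is not $\mc K$--anti-contracting, so $\isub{\gamma}{z,z'}$ contains a $\mc K$--midthin subsegment $\tilde\gamma$ with midpoint $m$ and neck-size $r$. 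Because $\hat d_{\mc K}(z,z')=2$, the subsegment $\tilde\gamma$ has $\hat d_{\mc K}$--diameter at most $3$, while it sits at $\hat d_{\mc K}$--distance roughly $R/3$ from $x$ and from $y$.

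\textbf{Step 2: $\gamma$ and $g\gamma$ pass close to $m$ and $gm$.} Fix $g\in F(x,y;\epsilon)$ and form the $(3,\mc P)$--polygonal line $\pgot=[y,gy]_{\mc P}\ast g\gamma^{-1}\ast[gx,x]_{\mc P}$ from $y$ to $x$. Apply Proposition~\ref{lem:technical-properties-of-contraction-space} twice, after splitting off a side so that only two-sided lines occur; alternatively, use midthinness of $\tilde\gamma$ directly with $n\geq7$ applied to $\suf{\gamma}{\tilde\gamma^+}\ast\pgot\ast\pref{\gamma}{\tilde\gamma^-}$. This yields a vertex $w\in\pgot$ with $\hat d_{\mc K}(w,\tilde\gamma)\leq1$ and $d(w,m)\leq(2r+3)^2$. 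By Remark~\ref{rem:on_geodesic}, every point of $[x,gx]_{\mc P}$ lies within $\hat d_{\mc K}$--distance $\epsilon+\delta_0$ of $x$, and similarly for $[gy,y]_{\mc P}$ and $y$. Since $R\gg\epsilon+\delta_0$, the vertex $w$ must therefore lie on $g\gamma$, say $w=gp$ with $p\in\gamma$. Applying the same argument to the $\mc K$--midthin subsegment $g\tilde\gamma$ of $g\gamma$ produces a point $q\in\gamma$ with $d(q,gm)\leq(2r+3)^2$.

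\textbf{Step 3: bounding $d(m,gm)$ (main obstacle).} We now need a uniform bound on $d(m,gm)$, and this is where I expect the real difficulty. From Step~2 we know only that $gp$ and $q$ are $d$--close to $m$ and $gm$ respectively, and that, after comparing $\hat d_{\mc K}$--positions along $\gamma$ and $g\gamma$ (which $g$ shifts by at most $\epsilon$), the points $p$ and $q$ lie within a bounded $\hat d_{\mc K}$--window of $m$ on $\gamma$. A bounded $\hat d_{\mc K}$--window can nevertheless be $d$--long. My first attempt is to avoid this by choosing three $\mc K$--midthin subsegments $\tilde\gamma_1,\tilde\gamma_2,\tilde\gamma_3$ of $\gamma$ in order, each separated from the next by $\hat d_{\mc K}$--distance much larger than $\epsilon+2\delta_0$, and to take $m$ to be the midpoint of the middle one. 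The idea is to show that $g\gamma$ must cross all three necks, and that Lemma~\ref{lemma:T-lemma} and Lemma~\ref{lemma:continued-special-path-distance} then force $g\tilde\gamma_2$ to sit next to $\tilde\gamma_2$ rather than at some other point of the window. Concretely, if $gm$ were $d$--far from $m$, then the outer necks $\tilde\gamma_1$ and $\tilde\gamma_3$ would be approached twice by special paths in a way that contradicts Lemma~\ref{lemma:T-lemma}. This argument would give $d(m,gm)\leq B$ with $B$ depending only on $r$, $\epsilon$, $\mc K$ and the constants of $\mc P$. The constant $B$ depends on $(x,y)$ through $r$, which is allowed in the non-uniform setting. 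Combined with properness, this completes the argument.
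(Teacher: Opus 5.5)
\textbf{Step 3 is unproven, and the obstacle it addresses is not real.} As written, Step~3 is only a sketch (``the idea is to show \dots'', ``would give''), so the proposal does not yet prove the theorem. You do not need it, though. Non-uniform acylindricity only asks that $F(x,y;\epsilon)$ be finite for each fixed pair $(x,y)$. The constant $B$ may therefore depend on everything fixed once $x,y$ are given, in particular on $\diam(\gamma)$. As soon as you have $p\in\gamma$ with $d(gp,m)\le(2r+3)^2$, you get $d(m,gm)\le d(m,gp)+d(gp,gm)=d(m,gp)+d(p,m)\le(2r+3)^2+\diam(\gamma)$, and properness finishes. This is exactly how the paper concludes: it shows $d(m,g\gamma)\le r$ and notes that, since $\gamma$ has finite length and the action is proper, only finitely many $g$ can do this. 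The estimate you are after, independent of the $d$--length of $\gamma$, is neither needed nor supported by anything in the paper, and the ``outer necks approached twice'' argument is not an argument.

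\textbf{The real gap is in Step 2.} Proposition~\ref{lem:technical-properties-of-contraction-space} only handles $(2,\mc P)$--polygonal lines. Neither of your reductions produces a vertex of $\pgot$ that is simultaneously $d$--close to $m$ and $\hat{d}_{\mc K}$--close to $\tilde\gamma$.

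Splitting along a diagonal $\beta$ puts the output on $\beta$. Transferring it to $g\gamma$ with $d$--control would require a midthin subsegment of $\beta$ whose midpoint is $d$--close to $m$, which you do not have.

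Applying midthinness of $\tilde\gamma$ directly to $\suf{\gamma}{\tilde\gamma^+}\ast\pgot\ast\pref{\gamma}{\tilde\gamma^-}$ gives $w\in\pgot$ with $d(w,m)\le r$, but no $\hat{d}_{\mc K}$--information. Since $r$ depends on $(x,y)$ while $R$ may depend only on $\epsilon$, you cannot use Remark~\ref{rem:on_geodesic} to rule out $w\in[gx,x]_{\mc P}$ or $w\in[y,gy]_{\mc P}$.

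Excluding these two short sides is the heart of the paper's proof:
\begin{enumerate}
\item Suppose $u\in[gx,x]_{\mc P}$ with $d(u,m)\le r$.
\item Take a second $\mc K$--midthin subsegment $\gamma'$, with neck size $r'$, of $\psub{\gamma}{x,m}$. It should lie between $x$ and $\tilde\gamma$, at $\hat{d}_{\mc K}$--distance from $x$ a few units more than $\epsilon+\delta_0$.
\item Apply Proposition~\ref{lem:technical-properties-of-contraction-space} to $\psub{\gamma}{x,m}$, its subsegment $\gamma'$, and the two-sided line $[m,u]_{\mc P}\ast\psub{[gx,x]_{\mc P}}{u,x}$.
\item By Remark~\ref{rem:on_geodesic}, the resulting vertex cannot lie on $[gx,x]_{\mc P}$. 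So it lies on the $d$--short path $[m,u]_{\mc P}$, and hence $m_{\gamma'}$ is $d$--close to $m$, with a bound in terms of $r$ and $r'$.
\item But along $\gamma$, the points $m_{\gamma'}$ and $m$ are separated by domain length at least $K(r)/2+K(r')/2$. This is a contradiction once $K$ is large.
\end{enumerate}
With both short sides excluded, midthinness of $\tilde\gamma$ on the $(5,\mc P)$--polygonal line together with Lemma~\ref{lemma:continued-special-path-distance} gives $d(m,g\gamma)\le r$. The trivial bound from the first paragraph then completes the proof.
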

\begin{proof}
    Since $V(X)$ is $1$-dense in $\hat{X}_{\mc K}$ it is enough to show that for all $\eps > 0$ there exists $R>0$ such that for every pair of vertices $x, y\in V(X)$ with $\hat{d}_{\mc K}(x, y)\geq R$ we have 
    \begin{align}\label{eq:non-acy-hyp}
        \abs{\{g\in G \mid \hat{d}_{\mc K}(x, gx) <\eps, \hat{d}_{\mc K}(y, gy)<\eps\}}<\infty.
    \end{align}
    Let $\eps > 0$ and define $R= 2\ceil{\eps+\delta_0} + 10$. Let $x, y\in X$ be vertices with $\hat{d}_{\mc K}(x, y)\geq R$ and let $g\in G$ be such that $\hat{d}_{\mc K}(x, gx)<\eps$ and $\hat{d}_{\mc K}(y, gy)<\eps$. Denote by $\spath, \spath_x$ and $\spath_y$ special paths from $x$ to $y$ from $gx$ to $x$ and from $y$ to $gy$ respectively. Let $z_x, z_y$ be the first respectively last vertex on $\spath$ at distance $\ceil{\eps+\delta_0}+4$ from $x$ respectively $y$. With this, $\hat{d}_{\mc K}(z_x, z_y) \geq 2$. Thus there exists a $\mc K$--midthin subpath $\gamma$ of $\psub{\spath}{z_x, z_y}$ with necksize $r$ for some $r\geq 0$.

    \begin{claim}
       We have $d(m_\gamma, g\spath)\leq r$. 
    \end{claim}

    \begin{figure}
        \centering
        \includegraphics[width=0.9\linewidth]{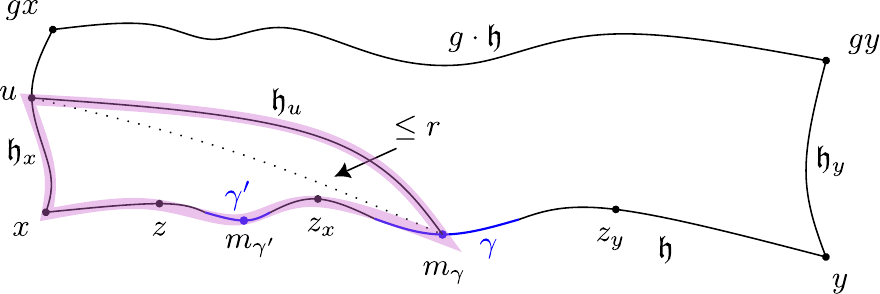}
        \caption{The blue paths $\gamma$ and $\gamma'$ are $\mc K$--midthin. We contradict this constellation by applying Proposition~\ref{lem:technical-properties-of-contraction-space} to the triangle shaded in pink.}
        \label{fig:non-uniform-acylindricity}
    \end{figure}
    Since the action of $G$ on $X$ is proper, and the length of $\spath$ is finite, there can only be finitely many elements $g\in G$ with $d(m_\gamma, g\spath)\leq r$. Hence the claim concludes the proof. 

    \smallskip
    
    \textit{Proof of claim.}
        Since $\gamma$ is $\mc K$--midthin with necksize $r$, $d(\pgot, m_\gamma)\leq r$ for the $(5, \mc P)$--polygonal line $\pgot = \psub{\spath}{\gamma^+, y}\ast \spath_y\ast (g\spath)^{-1}\ast \spath_x\ast \psub{\spath}{x, \gamma^-}$. By Lemma~\ref{lemma:continued-special-path-distance}, $d(\psub{\spath}{\gamma^+, y}, m_{\gamma}) > r$ and $d(\psub{\spath}{x, \gamma^-}) > r$. Thus, to conclude the proof, it suffices to show that $d(m_{\gamma}, \spath_x)>r$ and $d(m_{\gamma}, \spath_y)> r$. 

        We will show that $d(m_{\gamma}, \spath_x) > r$, the proof for $\spath_y$ goes analogously. Assume by contradiction that there exists $u\in \spath_x$ with $d(m_\gamma, u)\leq r$ and let $\spath_u\in \mc P$ be a special path from $m_{\gamma}$ to $u$. This is depicted in Figure~\ref{fig:non-uniform-acylindricity}. Denote $\psub{\spath}{x, m_{\gamma}}$ by $\spath'$ and let $z\in \spath'$ be the first vertex with $d(x, z) = \ceil{\eps+\delta_0}+2$. Note that $z_x$ lies on $\spath'$. By the triangle inequality, $\hat{d}_{\mc K}(z_x, z)\geq 2$, and hence there exists a special path $\gamma'\subset \psub{\spath'}{z, z_x}$ which is $\mc K$--midthin with necksize $r'$.

        Now we apply Proposition~\ref{lem:technical-properties-of-contraction-space} to $\pgot = \spath_u\ast \psub{\spath_x}{u, x}$, $\gamma = \spath'$ and $\tilde{\gamma} = \gamma'$ to get a vertex $w$ on $\spath_u\ast\psub{\spath_x}{u, x}$ with $\hat{d}_{\mc K}(w, \gamma')\leq 1$ and $d(w, m_{\gamma'})\leq r'$. Using Remark~\ref{rem:on_geodesic}, we obtain that any point $w'\in \spath_x$ satisfies $\hat{d}_{\mc K}(w', x)\leq \eps +\delta_0$, while $\hat{d}_{\mc K}(\gamma', x)\geq \ceil{\eps+2\delta_0}+2 - \delta_0$. Hence by the triangle inequality, $\hat{d}_{\mc K}(w, \gamma') \geq 2$, implying that $w$ has to lie on $\spath_u$. 

        Now, if $w$ lies on $\spath_u$, we have $d(m_{\gamma'}, m_{\gamma})\leq d(m_{\gamma'}, w)+d(w, m_{\gamma})\leq r' + D_{\mc P}(r)$. Implying, that $\abs{\psub{\spath'}{m_{\gamma'}, m_{\gamma}}}\leq D_{\mc P}(r'+D_{\mc P}(r))$. However, $\abs{\psub{\spath'}{m_{\gamma'}, m_{\gamma}}}\geq K(r)/2 + K(r')/2$. For large enough $K$, this yields the desired contradiction.
    \hfill$\blacksquare$

\end{proof}

The following corollary follows immediately from Theorem~\ref{prop:non-uniformly-acylindrical}.

\begin{corollary}\label{cor:wpd-elements}
    Let $\mc K$ be a large enough contraction triple. Let $G$ be a group which acts properly on $X$ and such that $\mc P$ is $G$--invariant. If $g\in G$ is an element such that $g\acts \hat{X}_{\mc K}$ is loxodromic then $g\acts \hat{X}_{\mc K}$ is WPD.
\end{corollary}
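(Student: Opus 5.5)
The plan is to read the corollary off Theorem~\ref{prop:non-uniformly-acylindrical} together with the standard equivalence between non-uniform acylindricity and the WPD property for a single loxodromic element. Recall that $g$ is WPD for the action on $\hat{X}_{\mc K}$ if, for every $\eps>0$ and every vertex $x$, there is $N$ such that
\[
\abs{\{h\in G \mid \hat{d}_{\mc K}(x,hx)<\eps,\ \hat{d}_{\mc K}(g^Nx,hg^Nx)<\eps\}}<\infty .
\]
The first step is to fix $\mc K$ large enough that Theorem~\ref{prop:non-uniformly-acylindrical} applies. Given $\eps>0$, that theorem (more precisely, what its proof establishes on vertices, namely \eqref{eq:non-acy-hyp}) yields $R>0$ such that any pair of vertices $x,y$ with $\hat{d}_{\mc K}(x,y)\geq R$ has only finitely many $h\in G$ moving both $x$ and $y$ by less than $\eps$.

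The second step uses loxodromicity. Since $g$ acts loxodromically on $\hat{X}_{\mc K}$, the orbit map $k\mapsto g^kx$ is a quasi-isometric embedding of $\Z$, and in particular $\hat{d}_{\mc K}(x,g^Nx)\to\infty$ as $N\to\infty$. It therefore suffices to choose $N$ with $\hat{d}_{\mc K}(x,g^Nx)\geq R$ and apply the previous step to the pair $y=g^Nx$. This gives exactly the finiteness required by WPD.

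The only point needing care is the restriction to vertices. If WPD is required for arbitrary points of $\hat{X}_{\mc K}$ rather than only vertices, first move each point to a vertex at distance at most $1$, which is possible because vertices are $1$-dense. Then run the same argument with $\eps+2$ in place of $\eps$ and $R$ enlarged by $2$. I do not expect any genuine obstacle: the substance is entirely contained in Theorem~\ref{prop:non-uniformly-acylindrical}.
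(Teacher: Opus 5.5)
Your proposal is correct and takes the same route as the paper, which says only that the corollary ``follows immediately'' from Theorem~\ref{prop:non-uniformly-acylindrical}. Your argument is the standard way to unpack that: apply the vertex version of non-uniform acylindricity to the pair $x, g^Nx$, choosing $N$ so that $\hat{d}_{\mc K}(x,g^Nx)\geq R$ by loxodromicity, with the routine $1$-density adjustment for non-vertex points.
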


We also obtain that, in the presence of a nice enough group action, having an unbounded contraction space implies having an $\mc P$--contracting ray.

\begin{corollary}\label{cor:contracting-element}
    Let $G$ be a group acting properly and coboundedly on $X$. Assume that $\mc P$ is $G$--invariant, $\mc K$ is large enough and $\widehat{\diam}_{\mc K}(\hat{X}_\mc{K}) = \infty$. Then $G$ has a $\mc P$--contracting element. In particular, $X$ has a $\mc P$--contracting quasi-geodesic ray. Moreover, if $G$ is not virtually cyclic, then $G$ is acylindrically hyperbolic.
\end{corollary}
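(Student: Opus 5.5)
The plan is to go through the standard criterion for loxodromic elements in hyperbolic spaces with a cobounded action, and then pull back to $X$ using the contraction results of Section~\ref{sec:quasi-geodesics}.

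\textbf{Step 1: a loxodromic element.} By Theorem~\ref{thm:contraction_space_hyp_for_graphs}, $\hat{X}_{\mc K}$ is $\delta_0$--hyperbolic. Since $G$ acts coboundedly on $X$, the induced action on $\hat{X}_{\mc K}$ is $(\rho+1)$--cobounded. The space has infinite diameter, so by Gromov's classification of actions on hyperbolic spaces the action is not elliptic. A cobounded action on an unbounded hyperbolic space cannot be horocyclic, since then every orbit, and hence the whole space, would be bounded. So the action is lineal or of general type, and in either case $G$ contains an element $g$ acting loxodromically on $\hat{X}_{\mc K}$. By Corollary~\ref{cor:wpd-elements}, $g$ is WPD.

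\textbf{Step 2: $g$ is $\mc P$--contracting in $X$.} Fix a vertex $x_0$ and, for each $N$, a special path $\gamma_N\in\mc P$ from $g^{-N}x_0$ to $g^Nx_0$. By Remark~\ref{rem:on_geodesic}, every special path between vertices is uniformly Hausdorff close in $\hat{X}_{\mc K}$ to a geodesic, except possibly when the pair is $\mc K$--anti-contracting. That exceptional case only occurs for pairs at $\hat d_{\mc K}$--distance at most $1$, so it is irrelevant for long stretches. Since $n\mapsto g^nx_0$ is a quasi-geodesic in $\hat{X}_{\mc K}$, I expect each subpath of $\gamma_N$ to have image $\hat{\gamma}_{N,\mc K}$ uniformly close to a geodesic. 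I would then argue that, after reparametrization by arc length in $X$, these images are parametrized $Q$--quasi-geodesics with $Q$ independent of $N$.

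\textbf{Main obstacle.} The difficulty in Step 2 is the parametrization. Lengths in $X$ and in $\hat{X}_{\mc K}$ are a priori unrelated, so Hausdorff closeness alone does not give a parametrized quasi-geodesic. I would try to get the lower bound using properness together with the WPD property: a long $X$--stretch of $\gamma_N$ with small $\hat{X}_{\mc K}$--diameter should force infinitely many group elements to almost fix two far-apart points, contradicting WPD.

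Granting this, Proposition~\ref{prop:quasi-geodesic-implies-contracting} makes every $\gamma_N$ $\mc P$--contracting with a uniform constant. The path $n\mapsto g^nx_0$ (connected by short segments) lies at bounded distance from the $\gamma_N$. By Lemma~\ref{lem:contracting_preserved_Hdistance}, the orbit quasi-geodesic of $g$ is then $\mc P$--contracting, so $g$ is a $\mc P$--contracting element.

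\textbf{Step 3: the ray.} Take $\gamma_N$ restricted from $x_0$ to $g^Nx_0$. These are uniformly contracting and hence uniformly Morse by Theorem~\ref{theorem:contracting-implies-morse}. Either take a diagonal limit using local finiteness near $x_0$, or directly use the orbit ray $n\mapsto g^nx_0$ for $n\ge 0$. Either gives a $\mc P$--contracting quasi-geodesic ray.

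\textbf{Step 4: acylindrical hyperbolicity.} Since $g$ is a loxodromic WPD element for an action on a hyperbolic space, Osin's theorem says $G$ is either virtually cyclic or acylindrically hyperbolic. If $G$ is not virtually cyclic, it is therefore acylindrically hyperbolic.
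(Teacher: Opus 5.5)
\textbf{Steps 1, 3, 4.} These follow the paper's argument. The cobounded action on the $\delta_0$--hyperbolic space $\hat X_{\mc K}$ gives a loxodromic $g$, which is WPD by Corollary~\ref{cor:wpd-elements}, and Osin's theorem then gives acylindrical hyperbolicity. Your justification in Step 1 contains a slip, though: horocyclic actions have unbounded orbits, so ``every orbit would be bounded'' is not why the action cannot be horocyclic. The correct reason is that a horocyclic orbit stays within bounded distance of a horosphere, and coboundedness forbids this. Alternatively, just cite Gromov as the paper does.

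\textbf{Step 2: the gap.} The genuine gap is in Step 2, at exactly the point you flag, and the mechanism you propose does not close it. Hausdorff closeness in $\hat X_{\mc K}$ gives no control over where $\gamma_N$ goes in $X$. A priori, $\gamma_N$ could leave the $g$--orbit and run ``parallel'' to it at large $X$--distance, through a region collapsed in $\hat X_{\mc K}$. Then $\hat\gamma_{N,\mc K}$ stalls and is not a uniform parametrized quasi-geodesic.

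Your WPD idea does not exclude this. A long $X$--stretch of small $\hat d_{\mc K}$--diameter, combined with coboundedness and properness, only produces many elements that move a \emph{single} point of $\hat X_{\mc K}$ a bounded amount, and WPD does not forbid that. To contradict WPD you need elements that coarsely fix \emph{two far-apart} points of the axis. An excursion of an arbitrary special path tells you nothing about how such elements move the rest of the axis. Making this work would essentially mean reproving that WPD axes are Morse.

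\textbf{The missing input.} The paper uses Sisto's theorem \cite{Sisto:quasi-convexity} (via hyperbolic embeddedness of $E(g)$): the axis of $g$ is $M$--Morse in $X$. Hence every special path $\spath$ between two points of the axis stays uniformly $X$--close to it. The map $n\mapsto g^nx_0$ is a quasi-geodesic both in $\hat X_{\mc K}$ and in $X$, since $\iota_{\mc K}$ is $1$--Lipschitz. So this $X$--closeness immediately makes $\hat\spath_{\mc K}$ a uniform parametrized quasi-geodesic. From there, your use of Proposition~\ref{prop:quasi-geodesic-implies-contracting} and Lemma~\ref{lem:contracting_preserved_Hdistance} goes through as in the paper.
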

\begin{proof}
    Since $G$ acts coboundedly on $X$ and $\mc P$ is $G$--invariant, $G$ acts coboundedly on $\hat{X}_{\mc K}$, which is $\delta_0$ hyperbolic for large enough $\mc K$. By \cite{Gromov:hyperbolic}, there exists an element $g\in G$ which is loxodromic. Since $G$ acts properly on $X$, $g$ is a WPD by Corollary~\ref{cor:wpd-elements}. Hence, if $G$ is not virtually cyclic, it is acylindrically hyperbolic by \cite[Theorem~1.2]{Osin:acylindrically}. By \cite[Theorem~1]{Sisto:quasi-convexity}, the axis $\gamma$ of $g$ in $X$ is $M$--Morse for some Morse gauge $M$. Let $\spath$ be any special path connecting two points $x, y$ on $\gamma$. Since $\gamma$ is $M$--Morse, $\spath$ is in a uniform neighbourhood of $\gamma$. In particular, $\hat{\gamma}_{\mc K}$ is a $Q$--quasi-geodesic for some uniform $Q$. Proposition~\ref{prop:quasi-geodesic-implies-contracting} shows that $\spath$ is $\mc P$--contracting with some uniform constant $Q'$. Lemma~\ref{lem:contracting_preserved_Hdistance} shows that $\psub{\gamma}{x, y}$ is $\mc P$--contracting with constant $C$ for some uniform constant $C$. Since this holds for any choice of $x, y$, $\gamma$ is $\mc P$--contracting, concluding the proof.
\end{proof}

Next we prove an analogue of \cite[Proposition~4.2]{Zbinden:hyperbolic}, which shows that if the action of $G$ on $X$ is cobounded, then the $\mc K$--contraction space satisfies a weak diameter dichotomy. We do this by adopting the proof of \cite[Proposition~4.2]{Zbinden:hyperbolic} in the more general setting of a group acting coboundedly on a hyperbolic space.

\begin{lemma}[Weak diameter dichotomy]\label{lemma:actions-on-finite-hyperbolic-spaces}
    Let $G$ be a group acting (by isometries) on a $\delta$ hyperbolic geodesic metric space $Y$. For every $\rho > 0$ there exists a constant $\Delta = \Delta(\delta, \rho) = 4\delta + 4\rho +3$ such that the following holds. If any orbit of $G$ is $\rho$--dense in $Y$, then one of the following holds
    \begin{itemize}
        \item $\diam(Y) \leq \Delta(\rho, \delta)$,
        \item $\diam(Y) = \infty$.
    \end{itemize}
\end{lemma}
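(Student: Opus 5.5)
We argue by contradiction: assume $\Delta < \diam(Y) < \infty$ and produce points at distance larger than $\diam(Y)$, using the orbit to translate a long segment and hyperbolicity to concatenate. Set $D=\diam(Y)$ and pick $x,y\in Y$ with $d(x,y)\geq D-1$; we will construct points $u,v$ with $d(u,v)> D$. Fix a basepoint $o$ whose orbit $Go$ is $\rho$--dense. Let $p$ be the midpoint of a geodesic $[x,y]$, so $d(p,x),d(p,y)\geq (D-1)/2$. The idea is to translate the configuration so that $x$ (or $y$) sits near $p$, and check that this extends the segment in a way that stays quasi-geodesic.

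Choose $g_1,g_2\in G$ with $d(g_1o,x)\leq\rho$ and $d(g_2o,p)\leq\rho$, and set $h=g_2g_1^{-1}$. Then $d(hx,p)\leq 2\rho$. Now $hx$ is close to $p$, and $hy$ lies at distance at least $D-1$ from $hx$. We consider the triple of points $x$, $y$ and $hy$ (also $hy'$ with the roles of $x,y$ swapped, i.e.\ using $h'$ moving $y$ near $p$). We look at the Gromov products $(x\cdot hy)_p$ and $(y\cdot hy)_p$. By $\delta$--hyperbolicity, $\min\{(x\cdot hy)_p,(y\cdot hy)_p\}\leq (x\cdot y)_p+\delta\leq \delta$, since $p$ lies on $[x,y]$. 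So without loss of generality $(x\cdot hy)_p\leq\delta$.

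This gives
\[
d(x,hy)\geq d(x,p)+d(p,hy)-2\delta .
\]
Moreover $d(p,hy)\geq d(hx,hy)-2\rho\geq D-1-2\rho$ and $d(x,p)\geq (D-1)/2$. Hence
\[
d(x,hy)\geq \tfrac{3}{2}(D-1)-2\rho-2\delta,
\]
which exceeds $D$ as soon as $D/2> \tfrac32+2\rho+2\delta$, i.e.\ $D>4\rho+4\delta+3=\Delta$. This contradicts $D=\diam(Y)$, so either $D\leq\Delta$ or $D=\infty$.

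The main obstacle is the case distinction forced by the Gromov product inequality: only one of $x$ or $y$ is guaranteed to be far from $hy$. This is handled by the symmetric bound above, together with the fact that $p$ is on a geodesic between $x$ and $y$. If $D$ is not attained we use the $-1$ slack as written; geodesicity of $Y$ ensures midpoints exist, and the constant bookkeeping is routine.
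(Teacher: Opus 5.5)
Your proof is correct and is essentially the paper's argument: move $x$ to within $2\rho$ of the midpoint of $[x,y]$, apply hyperbolicity to the triangle $x,y,hy$ at that midpoint, and conclude that one of $d(x,hy)$, $d(y,hy)$ is at least $\tfrac32(D-1)-2\rho-2\delta>D$. The paper phrases the hyperbolicity step via $\delta$-slim triangles rather than Gromov products. For $p\in[x,y]$, slimness gives exactly your bound $\min\{(x\cdot hy)_p,(y\cdot hy)_p\}\le\delta$, so the constant $4\delta+4\rho+3$ is the same under either convention, and your aside about a second element $h'$ is not needed.
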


\begin{figure}
    \centering
    \includegraphics{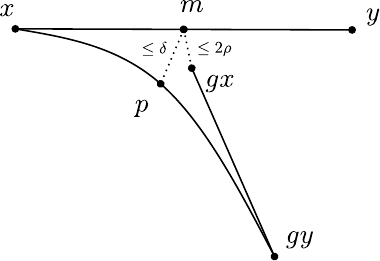}
    \caption{Proof of the weak diameter dichotomy.}
    \label{fig:diam}
\end{figure}

\begin{proof}
 Assume by contradiction that $\infty > \diam(Y) = T > \Delta = 4\delta+2\rho +3$ and let $x, y\in Y$ be points with $d(x, y) = T-1$. Let $m$ be the midpoint of the geodesic $[x, y]$. As depicted in Figure \ref{fig:diam}, there exists $g\in G$ such that $d(gx, m)\leq 2\rho$. Consider the geodesic triangle with vertices $x, y$ and $gy$. Since $Y$ is $\delta$ hyperbolic, there exists $p\in [x, gy]\cup[gy, y]$ with $d(m, p)\leq \delta$. If $p\in [x, gy]$, we have that 
\begin{align*}
    d(x, gy) &= d(x, p) + d(p, gy) \geq d(x, m) - \delta + d(gy, gx) - \delta - 2\rho \geq \\ 
    &\geq \frac{3}{2}(T-1) - 2\delta - 2\rho > T, 
\end{align*}
a contradiction. If $p\in[gy, y]$, we get a contradiction analogously, which concludes the proof.
\end{proof}

\begin{corollary}[Weak diameter dichotomy]\label{cor:weak-diameter}
     Let $\mc K$ be large enough. If there exists a group $G$ acting $\rho$--coboundedly on $X$ such that $\mc P$ is $G$--invariant, then one of the following holds
     \begin{itemize}
         \item $\widehat{\diam}(\hat{X}_{\mc K})\leq \Delta(\rho+1, \delta_0) = 4\delta_0+4\rho+5$,
         \item $\widehat{\diam}(\hat{X}_{\mc K}) = \infty$.
     \end{itemize}
\end{corollary}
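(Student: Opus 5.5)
The plan is to deduce Corollary~\ref{cor:weak-diameter} directly from Lemma~\ref{lemma:actions-on-finite-hyperbolic-spaces}, applied to $Y=\hat{X}_{\mc K}$ with the induced action of $G$. There are three hypotheses to check: $Y$ is a $\delta_0$--hyperbolic geodesic space, $G$ acts on it by isometries, and some (hence every) orbit is $(\rho+1)$--dense.

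\textbf{Hyperbolicity.} Since $\mc K$ is large enough, Theorem~\ref{thm:contraction_space_hyp_for_graphs} (with Remark~\ref{rem:delta_0}) shows that $\hat{X}_{\mc K}$ is $\delta_0$--hyperbolic. It is a connected graph with its path metric, and hence geodesic.

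\textbf{Isometric action.} Because $\mc P$ is $G$--invariant, the set $A_{\mc K}$ of $\mc K$--anti-contracting pairs is $G$--invariant. Indeed, the conditions of being $\mc K$--midthin and $\mc K$--anti-contracting are phrased only in terms of $d$ and $\mc P$, both of which $G$ preserves. Consequently $G$ acts on $\hat{X}_{\mc K}$ by graph isomorphisms, and therefore by isometries, as noted at the start of Subsection~\ref{sec:acylindricity}.

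\textbf{Density of orbits.} As also observed there, a $\rho$--dense orbit $G\cdot x_0$ in $X$ gives a $(\rho+1)$--dense orbit in $\hat{X}_{\mc K}$. The argument is as follows. Every point of $\hat{X}_{\mc K}$ lies within $\hat d$--distance $1/2\le 1$ of a vertex. Since $\iota_{\mc K}$ is $1$--Lipschitz, every vertex is within $\hat{d}_{\mc K}$--distance $\rho$ of the orbit. If $x_0$ is not a vertex, we first move it to a nearby vertex and absorb the resulting small error into the constant. For any other orbit $G\cdot y$, density changes by at most $\hat d(x_0,y)$; but in any case the lemma only requires the existence of one dense orbit, and only that orbit is used in its proof.

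\textbf{Conclusion.} Lemma~\ref{lemma:actions-on-finite-hyperbolic-spaces} with $\delta=\delta_0$ and density constant $\rho+1$ now gives that either $\widehat{\diam}(\hat{X}_{\mc K})\le \Delta(\rho+1,\delta_0)=4\delta_0+4(\rho+1)+3=4\delta_0+4\rho+7$, or the diameter is infinite. The stated bound $4\delta_0+4\rho+5$ differs only in the additive constant.

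\textbf{Main obstacle.} The only delicate point is this additive bookkeeping. The proof of the lemma actually uses $T>4\delta+2\rho+3$, which for density constant $\rho+1$ gives $4\delta_0+2\rho+5\le 4\delta_0+4\rho+5$. So one can either cite that sharper threshold, or refine the density constant by measuring the diameter on vertices only, as the paper's convention allows. Apart from this, the corollary is a formal consequence of the earlier results.
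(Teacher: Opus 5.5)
Your argument is the paper's own: its proof of Corollary~\ref{cor:weak-diameter} observes that $\hat{X}_{\mc K}$ is $\delta_0$--hyperbolic and that the induced action is $(\rho+1)$--cobounded, and then applies Lemma~\ref{lemma:actions-on-finite-hyperbolic-spaces}. Your checks of hyperbolicity and of the isometric action are fine.

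You are right that $4\delta_0+4(\rho+1)+3=4\delta_0+4\rho+7$. The paper's two-line proof, read literally, has the same slip.

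Your fix (a), however, does not work. The lemma needs only one $\rho$--dense orbit, but its proof then uses the orbit of the near-diametral point $x$, not the given orbit. That orbit is only $2\rho$--dense, which is why the proof picks $g$ with $d(gx,m)\le 2\rho$. So your remark that only the given orbit is used is inaccurate. With $2\rho$, the final inequality $\tfrac32(T-1)-2\delta-2\rho>T$ holds only when $T>4\delta+4\rho+3$. The threshold $4\delta+2\rho+3$ in the first line of that proof therefore contradicts both the lemma's statement and its own computation, and cannot be cited. Your fix (b) is too vague to check.

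The correct repair is already in your density paragraph; you just lost the half. Every point of $\hat{X}_{\mc K}$ lies within $\tfrac12$ of a vertex. Each vertex lies within $d$--distance $\rho$ of $G\cdot x_0$, hence within $\hat d_{\mc K}$--distance $\rho$, since the inclusion is $1$--Lipschitz. So $G\cdot x_0$ is $(\rho+\tfrac12)$--dense in $\hat{X}_{\mc K}$. There is no need to move $x_0$ to a vertex: all of $X$, edge points included, sits $1$--Lipschitz inside $\hat{X}_{\mc K}$, and the lemma applies to the orbit of any point. This gives exactly $4\delta_0+4(\rho+\tfrac12)+3=4\delta_0+4\rho+5$.

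In any case, Theorem~\ref{thm:strict-diameter-dichotomy} only uses that the bound is some finite $\Delta$, so the additive constant is immaterial there.
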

\begin{proof}
    If $G$ acts $\rho$--coboundedly on $X$ it acts $(\rho+1)$--coboundedly $\hat{X}_{\mc K}$, which is $\delta_0$ hyperbolic for large enough $\mc K$. Lemma~\ref{lemma:actions-on-finite-hyperbolic-spaces} concludes the proof.
\end{proof}

\begin{definition}\label{def:allowed-contraction-triple}
    We say that a contraction triple $\mc K = (K, n, \mc P)$ is \emph{allowed} if $K$ is full, $n\geq 7$ and $\mc K$ is large enough to satisfy all results of this Section.
\end{definition}

\section{Comparing contraction spaces}\label{sect:comparing_contraction_spaces}

\textbf{Standing assumptions:} Let $X = (V, E)$ be a graph with induced path metric $d$ and let $G$ be a group acting on $X$ by graph isomorphisms. Moreover, let $\mc P$ be an undirected $G$--invariant path system of $X$.

We will show that the contraction space satisfies a strong diameter dichotomy. 

\begin{restatable}[Strong Diameter dichotomy]{theorem}{strongdiameter}\label{thm:strict-diameter-dichotomy}
   Let $G$ be a non-virtually cyclic group which acts properly, coboundedly and by graph isomorphisms on a graph $X$ equipped with a $G$--invariant undirected quasi-geodesic path-system $\mc P$. There exists an allowed contraction triple $\mc K = (n, K, \mc P)$ such that exactly one of the following holds
   \begin{enumerate}
       \item $\widehat{\diam}_{\mc K}(V(X)) = 1$, that is, every pair of vertices $(x, y)\in V(X)\times V(X) $ is $\mc K$--anti-contracting.\label{item:diam-one}
       \item $\widehat{\diam}_{\mc K}(V(X)) = \infty$.\label{item:diam-inf}
   \end{enumerate}
\end{restatable}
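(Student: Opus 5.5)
The plan is to start from Corollary~\ref{cor:weak-diameter}: for any allowed $\mc K$, either $\widehat{\diam}_{\mc K}(\hat X_{\mc K})\le \Delta:=4\delta_0+4\rho+5$ or it is infinite. If some allowed $\mc K$ already gives infinite diameter, we are in case \eqref{item:diam-inf} and stop. So assume that for a fixed allowed $\mc K_1=(K_1,n,\mc P)$ the diameter is at most $\Delta$, a constant independent of $K_1$. We then want a second allowed triple $\mc K_2=(K_2,n,\mc P)$ with $K_2\ge K_1$ very large compared to $K_1$, and we aim to prove that every pair of vertices is $\mc K_2$--anti-contracting, i.e.\ $\widehat{\diam}_{\mc K_2}=1$. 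Enlarging $K$ only shrinks the set of $\mc K$--midthin paths, so $A_{\mc K_1}\subseteq A_{\mc K_2}$. The two alternatives are clearly mutually exclusive.

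The key step is the comparison: suppose some special path $\spath$ has a $\mc K_2$--midthin subpath $\tilde\spath$ with neck size $r$ and $\abs{\tilde\spath}\ge K_2(r)$. We aim to show that $\tilde\spath$ then contains many disjoint $\mc K_1$--midthin subsegments, stacked along it so that $\hat d_{\mc K_1}$ grows along $\tilde\spath$. This would contradict the bound $\Delta$ via Remark~\ref{rem:on_geodesic} and the claim in the proof of Theorem~\ref{thm:contraction_space_hyp_for_graphs}: points on a special path separated by more than $\Delta+\delta_0$ midthin pieces, each forcing a $\hat d_{\mc K_1}$--jump, cannot exist. So we must show that a very long path with a neck of size $r$ at its midpoint forces $\mc K_1$--midthin pieces along a long stretch around the midpoint. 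The first attempt is by contradiction. Suppose a long subsegment of $\tilde\spath$ of length $L\gg K_1(r)$ near $m_{\tilde\spath}$ has no $\mc K_1$--midthin pieces, so it is $\mc K_1$--anti-contracting and has $\hat d_{\mc K_1}$--diameter at most $3$. We then use the group to build an $(n,\mc P)$--polygonal line from $\tilde\spath^+$ to $\tilde\spath^-$ avoiding $B(m_{\tilde\spath},r)$.

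This is the main obstacle, and it is where non-virtual-cyclicity enters. The route we would try is counting. By properness and coboundedness, $G$ has superlinear growth, so the number of vertices in an $s$--ball grows faster than linearly in $s$. Bounded $\hat d_{\mc K_1}$--diameter means every pair $(x,y)$ is joined by a chain of at most $\Delta$ anti-contracting special paths. We then count the pairs $(g m_{\tilde\spath},\cdot)$: the set of vertices at $d$--distance at most $r$ from $m_{\tilde\spath}$ is small, and we use it as a pigeonhole to find $g\in G$ translating a neighbouring anti-contracting detour off the $r$--ball. Concretely, we pick endpoints near $\tilde\spath^\pm$ and choose, among the superlinearly many translates $g\tilde\spath$ that stay close to $\tilde\spath$ at the endpoints but are pairwise distinct, one whose $\mc K_1$--anti-contracting replacement path misses $B(m_{\tilde\spath},r)$. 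Linear growth along a single line is exactly what would prevent this. Concatenating at most $n\ge7$ special paths gives the forbidden polygonal line, which contradicts $\mc K_2$--midthinness once $K_2$ is chosen large relative to $K_1$, $\Delta$, $\rho$ and the quasi-geodesic constants.

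Finally, we check that $K_2$ can be chosen full and large enough to be allowed, which costs nothing since we only enlarge $K$. This yields case \eqref{item:diam-one}.
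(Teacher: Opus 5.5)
\textbf{There is a genuine gap.} Your reduction is logically sound. Fixing $\mc K_1$, assuming $\widehat{\diam}_{\mc K_1}\le\Delta$, and then trying to exclude $\mc K_2$--midthin paths is just the contrapositive of what the paper proves. But the central step is never supplied, and the mechanism you sketch cannot supply it.

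\emph{One path gives only one neck.} A single $\mc K_2$--midthin path $\tilde{\spath}$ yields only the neck at $m_{\tilde{\spath}}$. A subsegment centred at $m_{\tilde{\spath}}$ inherits midthinness directly: prolong a polygonal line by the two tails of $\tilde{\spath}$ and use Lemma~\ref{lemma:continued-special-path-distance}. This costs two sides and needs no group action. It gives a single $\hat d_{\mc K_1}$--jump, whereas you need more than $\Delta$ of them. Away from $m_{\tilde{\spath}}$ the hypotheses impose nothing, so there is no reason for $\mc K_1$--midthin pieces to be stacked along $\tilde{\spath}$.

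\emph{The counting is in the wrong place.} By properness, only boundedly many $g$ move both endpoints of $\tilde{\spath}$ by a bounded amount. So superlinear growth says nothing about translates ``close to $\tilde{\spath}$ at the endpoints'', and your ``anti-contracting replacement path'' is never defined.

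\emph{Many necks would not yet give a contradiction.} Even with many necks, ``some path of a short anti-contracting chain passes near necks'' does not contradict anything via Remark~\ref{rem:on_geodesic} and the thin-triangle claim in the proof of Theorem~\ref{thm:contraction_space_hyp_for_graphs}. You must transfer midthinness to the nearby anti-contracting path, which is Lemma~\ref{lemma:close-to-thin-implies-thin-contraction-gauge-version}. That transfer costs $2k+4$ sides and needs the necks spaced far more than $D_{\mc P}(K_1(2r))$ apart. Hence the large triple needs $n$ much larger than that of $\mc K_1$ (at least of order $\Delta$ plus this loss), not the same $n$.

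\textbf{The missing idea} is to manufacture the necks yourself by lining up translates of one midthin path. The stacked necks you want do exist, but on a polygonal line built from translates, not on $\tilde{\spath}$.

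Take a $\mc K_2$--midthin $\gamma$ with midpoint $m$. Lemma~\ref{lemma:attached-quasi-geo} gives long special paths $\spath,\spath'$ of length $M$ leaving $\gamma^+$ and $\gamma^-$ while staying away from $m$. Superlinear growth enters in Lemma~\ref{lemma:good-connector}. There are superlinearly many (in $M$) elements $g$ with $d(\spath'^+,g\spath^+)\le M/c$ for a suitable constant $c$. Only linearly many bring $gm$ near $\spath'$ or $m$ near $g\spath$. A good $g$ therefore yields a $(5,\mc P)$--connector $\eta$ from $\gamma^+$ to $g\gamma^-$ staying far from both $m$ and $gm$.

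The periodic line $\Lambda=\prod_i g^i\gamma\ast g^i\eta$ has necks at every $g^im$. Proposition~\ref{prop:no-skips} shows that a short polygonal line cannot come near $g^im$ and $g^jm$ without coming near every neck in between. Now suppose $\hat d_{\mc K_1}(x,g^{2\Delta+4}x)\le\Delta$. Then a chain of about $\Delta$ $\mc K_1$--anti-contracting special paths closes up $\Lambda$, so it passes near all intermediate necks. By pigeonhole, one path passes near $g^{i-1}m$, $g^im$ and $g^{i+1}m$. Lemma~\ref{lemma:close-to-thin-implies-thin-contraction-gauge-version} then gives that path a $\mc K_1$--midthin subpath, a contradiction. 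Corollary~\ref{cor:weak-diameter} upgrades the resulting lower bound to $\widehat{\diam}_{\mc K_1}=\infty$.
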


\subsection*{Proof Strategy.} We fix two allowed contraction triples $\mc K = (K, n, \mc P)$ and $\mc K' = (K', n', \mc P)$ such that $K'\gg K$ and $n' \gg n$. If $\hat{X}_{\mc K}$ satisfies (\ref{item:diam-inf}) or (\ref{item:diam-one}), we are done, so we can assume that $2 \leq  \widehat{\diam}_{\mc K}(V(X))$ and $\widehat{\diam}_{\mc K'}(V(X)) < \infty$. In particular, $\widehat{\diam}_{\mc K'}(V(X))< \Delta = \Delta(\rho+1, \delta_0)$ by the weak diameter dichotomy. Further, we can assume that there exists a $\mc K$--midthin special path $\gamma$. We want to use $\gamma$ and translates thereof to construct a polygonal line $\Lambda$ whose diameter with respect to $\hat{d}_{\mc K'}$ is larger than $\Delta$. The weak diameter dichotomy then immediately yields $\widehat{\diam}_{\mc K'}(V(x)) = \infty$. 

\subsubsection*{Construction of $\Lambda$.} This is done in Section~\ref{sec:construction-of-lambda}. The hardest part in the construction of $\Lambda$ is to find a translate $g\gamma$ of $\gamma$ such that $\gamma^+$ and $g\gamma^-$ can be joined by a $(5, \mc P)$--polygonal line $\eta$ staying far from both $m_\gamma$ and $gm_\gamma$. Then we can define $\Lambda$ as the periodic polygonal line that repeats $\gamma\ast \eta$ a fixed but large ($\approx 3 \Delta$) number of times and one can show (see Proposition~\ref{prop:no-skips}) that $\Lambda - g^k\gamma$ stays far from $g^km_\gamma$ for all $k$.

Constructing $\eta$ goes as follows: we first show that there are arbitrarily long special paths $\spath, \spath'$ such that $\spath^- \approx\gamma^+$ and $\spath'^+ \approx\gamma^-$ and such that both $\spath$ and $\spath'$ stay far from $m_\gamma$, this is done in Lemma~\ref{lemma:attached-quasi-geo}. Then, we use that $G$ has sublinear growth function to show in a combinatorial argument that there exists an element $g\in G$ such that $g\cdot(\spath'\ast \gamma)$ stays far from $m_\gamma$, $\gamma \ast \spath$ stays far from $gm_\gamma$ and $d(\spath^+, g\spath'^-)\ll\min(\abs{\spath}, \abs{\spath'})$. This is done in Lemma~\ref{lemma:good-connector}. Defining $\eta \approx \spath\ast [\spath^+, g\spath'^-]_{\mc P} \ast g\spath'$ yields the desired properties.

\subsubsection*{Lower bounding the diameter of $\Lambda$.} Since $\widehat{\diam}_{\mc K'}(V(X)) < \Delta$, there exists a $(\Delta+2, \mc P)$--polygonal line $\beta = \beta_1 \ast\ldots \ast \beta_{\Delta+2}$ form $\Lambda^+$ to $\Lambda^-$ such that all $\beta_i$ are $\mc K'$--anti-contracting. For $n\gg \Delta$ large enough, $\Lambda \ast \beta$ is a $(n, \mc P)$--polygonal line. In particular, we can use $\mc K$--midthinness of $\gamma$ and its translates to obtain that $g^jm_\gamma$ has to come close to $\pgot\ast \beta - g^j\gamma$ for all $j$. Recall that by construction $\pgot - g^j\gamma$ is far from $g^j\gamma$. Next, we use the pigeon hole principle, to obtain a single $\beta_i$ that comes close to $g^jm_\gamma$ for at least 3 different indices $j$. Finally we want to argue that this contradicts $\beta_i$ being $\mc K'$--anti-contracting, as it `passes too close to a $\mc K$--midthin' path. This can indeed be done and is made precise in Lemma~\ref{lemma:close-to-thin-implies-thin-contraction-gauge-version}.

\subsection{Paths close to midthin paths}

In this section, we prove Lemma~\ref{lemma:close-to-thin-implies-thin-contraction-gauge-version}, which states that a special path whose `endpoints are close to a polygonal line suitably containing a midthin path' is midthin. This is should be thought of analogue of the fact that quasi-geodesics are Morse ($\mc P$--contracting) if their endpoints are close to a Morse ($\mc P$--contracting) quasi-geodesic. In the midthin setting the statement becomes more technical, as we amongst other things need to line up the midpoints.

\begin{lemma}\label{lemma:close-to-thin-implies-thin}
    Let $\mc P$ be an undirected path system. Let $n', k$ be positive integers and let $r, C, C', \ell\geq 0$ be constants. Let $\gamma\in \mc P$ be a special path and let $\pgot = \pgot_1\ast \spath \ast \pgot_2$ be a $(2k+1, \mc P)$--polygonal line satisfying 
    \begin{enumerate}[label =\roman*)]
        \item $\spath\in \mc P$ is a special path and is $(r; 2k+4+n', \mc P)$--midthin. We denote its midpoint by $m$.\label{prop:pk-is-thin}
        \item \label{prop:pi-far-from-pk} For $i\in \{ 1, 2\}$, the path $\pgot_i$ is a $(k, \mc P)$--polygonal line with $d(\gamma, \pgot_i - \mc N_{C}(m))\leq \ell$ and $d(\pgot_i, m) > r$.
    \end{enumerate}
    If $C' > 2D_{\mc P}(2r)$ and $C\geq D_{\mc P}(C')+D_{\mc P}(\ell)+r+1$, then $\gamma$ contains a $(2r; n', \mc P)$--midthin subpath $\gamma'$ with $\abs{\gamma'}\geq C'$. 
\end{lemma}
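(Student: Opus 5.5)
The plan is to locate, on $\gamma$, two points $a,b$ lying close to the last point of $\pgot_1$ and the first point of $\pgot_2$ outside $\mc N_C(m)$. Then I would transfer midthinness from $\spath$ to the subpath $\gamma'=\psub{\gamma}{a,b}$. The main tool is polygonal-line concatenation: a polygonal line from $b$ to $a$ that misses the $2r$--ball around $m_{\gamma'}$ can be extended by short connectors and pieces of $\pgot$ into a polygonal line from $\spath^+$ to $\spath^-$ missing $B(m,r)$. This would contradict \ref{prop:pk-is-thin}.

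\textbf{Step 1: anchoring.} Since $d(\pgot_1, m)>r$ but $\spath^-\in\pgot_1$ is within $D_{\mc P}(\dots)$ of $m$, I first arrange the geometry. Let $u_1$ be the last point of $\pgot_1$ outside $\mc N_C(m)$ and $u_2$ the first point of $\pgot_2$ outside $\mc N_C(m)$. If no such point exists, I would take the endpoint of $\pgot_i$ instead, modulo a separate trivial case. By \ref{prop:pi-far-from-pk}, there are points $a,b\in\gamma$ with $d(a,u_1),d(b,u_2)\le\ell$. Let $\gamma'=\psub{\gamma}{a,b}$, possibly reversed. The remainder $\pgot_1'$ of $\pgot_1$ after $u_1$, together with the analogous piece $\pgot_2'$ of $\pgot_2$ before $u_2$, lies in $\mc N_C(m)$ but outside $B(m,r)$.

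\textbf{Step 2: the length bound.} By Step 1, $d(a,m),d(b,m)\ge C-\ell$. I would then show that $d(a,b)$ is large, using $C\ge D_{\mc P}(C')+D_{\mc P}(\ell)+r+1$. This is the subtle point, since $a$ and $b$ could a priori be close to each other. I would handle it by observing that if $d(a,b)$ were small, then the $(\le 2k+4+n')$--polygonal line
\[
[\spath^+,\ldots]\;\text{(through $\pgot_2'$)}\ast[u_2,b]_{\mc P}\ast[b,a]_{\mc P}\ast[a,u_1]_{\mc P}\ast\pgot_1'^{-1}
\]
misses $B(m,r)$. Its short connectors have diameter at most $D_{\mc P}(\ell)$ or $D_{\mc P}(C')$, and they sit at distance about $C$ from $m$. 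This contradicts \ref{prop:pk-is-thin}, so $\abs{\gamma'}\ge C'$.

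\textbf{Step 3: midthinness of $\gamma'$.} Take an $(n',\mc P)$--polygonal line $\qgot$ from $b$ to $a$. Form the line
\[
\pgot_2'^{-1}\ast[u_2,b]_{\mc P}\ast\qgot\ast[a,u_1]_{\mc P}\ast\pgot_1'^{-1},
\]
which has at most $2k+2+n'\le 2k+4+n'$ pieces. By \ref{prop:pk-is-thin} it meets $B(m,r)$. The pieces $\pgot_i'$ avoid $B(m,r)$, and the connectors avoid it as well by the choice of $C$. Hence $\qgot$ meets $B(m,r)$. It then remains to show $d(m,m_{\gamma'})\le r$, so that $\qgot$ meets $B(m_{\gamma'},2r)$. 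I expect this to be the main obstacle.

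To handle it, I would apply the same argument to the polygonal line $\psub{\gamma'}{\cdot}$ itself, traversed as two halves. Thinness says $m$ is within $r$ of some point $p\in\gamma'$. If $p$ is not near the midpoint, I would pass to the sub-subpath of $\gamma$ centred at $p$, symmetric in domain length around $p$. Its length is still at least $C'$, because $C'>2D_{\mc P}(2r)$ leaves room and the length lost is controlled by $d(p,\{a,b\})\ge C-\ell-r$. I would then rerun Step 3 with this recentred subpath in place of $\gamma'$, so that its midpoint is within $r$ of $m$. This recentring is the delicate part of the argument.
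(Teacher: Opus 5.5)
Once the recentring is carried out, your argument is the paper's proof assembled in a roundabout order. The paper recentres first. It applies midthinness of $\spath$ to the $(2k+3,\mc P)$--polygonal line made of five pieces: the piece of $\pgot_2$ from $\spath^+$ to $u_2$, a special path from $u_2$ to $b$, the subpath $\psub{\gamma}{b,a}$, a special path from $a$ to $u_1$, and the piece of $\pgot_1$ from $u_1$ to $\spath^-$. Since the $\pgot_i$ and the two short connectors miss $B(m,r)$, this produces a point $m'\in\psub{\gamma}{a,b}$ with $d(m,m')\le r$ (your $p$). The paper then takes $\gamma'$ to be the maximal subsegment of $\psub{\gamma}{a,b}$ with midpoint $m'$, so length and midthinness are checked only once. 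Your Step 2 and the first pass of Step 3 can therefore be deleted. Step 2 is also off as written: assuming $\abs{\gamma'}<C'$ only gives $\diam([b,a]_{\mc P})\le D_{\mc P}(D_{\mc P}(C'))$, which $C$ does not dominate. Using $\psub{\gamma}{b,a}$ instead of $[b,a]_{\mc P}$ turns it into exactly the paper's first step.

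Two points need repair.

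First, hypothesis (ii) is existential. The condition $d(\gamma,\pgot_1-\mc N_C(m))\le\ell$ says that \emph{some} point of $\pgot_1$ outside $\mc N_C(m)$ is $\ell$--close to $\gamma$, not that the last such point is. Choose $u_1,u_2$ to be such witnesses. Nothing later uses that the remaining pieces of $\pgot_i$ lie in $\mc N_C(m)$, and the case where no such point exists cannot occur.

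Second, when you rerun Step 3 for the recentred segment $\gamma''$ with endpoints $a'',b''$, the extended line must also contain the leftover pieces $\psub{\gamma}{b,b''}$ and $\psub{\gamma}{a'',a}$. These bring the count to exactly $2k+4+n'$. It is here, not in the length bound, that $C'>2D_{\mc P}(2r)$ is used. Every point of $\gamma$ outside $\gamma''$ has domain distance at least $\abs{\gamma''}/2\ge C'/2$ from $p$. Hence it lies at distance more than $2r$ from $p$, and so more than $r$ from $m$. The length bound $\abs{\gamma''}\ge C'$ itself comes from $d(p,\{a,b\})> C-\ell-r\ge D_{\mc P}(C')+1$, as you indicate. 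With these repairs your argument is complete.
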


The idea of the proof is as follows. Find a subpath $\gamma'$ of $\gamma$ whose midpoint is $r$--close to $m$. Then, for any $(n', \mc P)$--polygonal line $\qgot'$ between the endpoints of $\gamma'$, build an $(n, \mc P)$--polygonal line $\qgot$ between the endpoints of $\spath$ that consists of special paths of $\qgot'$ plus some auxiliary paths. Then, use the setup to show that the auxiliary paths are far away form $m$, implying that a point on $\qgot'$ is in the $r$--neighbourhood of $m$ and hence in the $2r$--neighbourhood of the midpoint of $\gamma'$. 

\begin{proof}
    \begin{figure}
        \centering
        \includegraphics[width=0.8\linewidth]{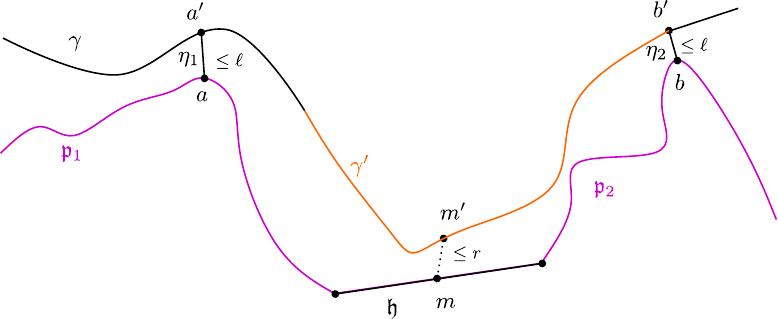}
        \caption{Showing that $\gamma$ has a subpath $\gamma'$ whose midpoint is close to $m$.}
        \label{fig:close-to-midthin1}
    \end{figure}
    \begin{figure}
        \centering
        \includegraphics[width=0.8\linewidth]{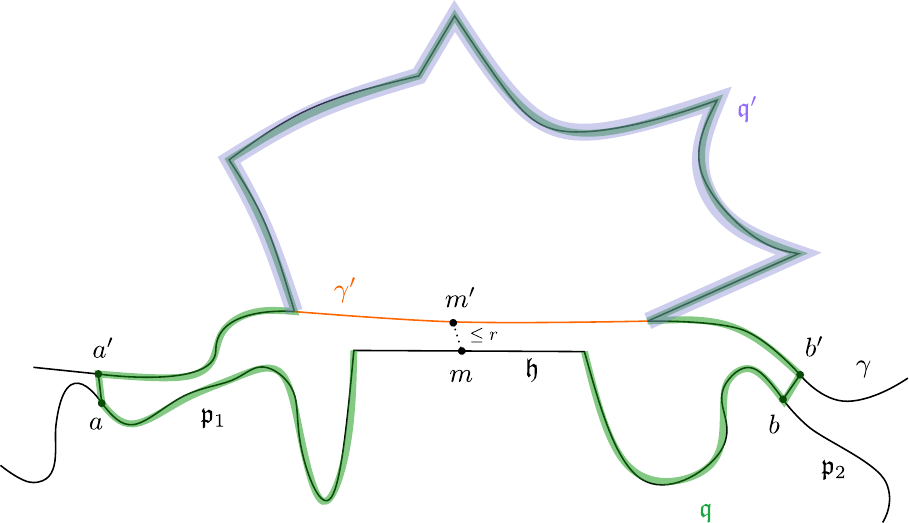}
        \caption{The path $\gamma'$ is midthin, as the polygonal path $\qgot'$ can be extended which allows the use of $\spath$ being midthin.}
        \label{fig:close-to-midthin2}
    \end{figure}
    We first show that there is a subpath $\gamma'$ of $\gamma$ whose midpoint $m' = m_{\gamma'}$ is in the $r$--neighbourhood of $m$. 
    
    Let $a', b'$ be points on $\gamma$ and let $a, b$ be points on $\pgot_1 - \mc N_{C}(m)$ respectively $\pgot_2 - \mc N_C(m)$ with $d(a, a')\leq \ell$ and $d(b, b')\leq \ell$. Let $\eta_1$ and $\eta_2$ be special paths from $a'$ to $a$ and from $b$ to $b'$ respectively. This is depicted in Figure~\ref{fig:close-to-midthin1}. The path
    \begin{align*}
        \qgot = \psub{\pgot_2}{\pgot_2^-,b}\ast \eta_2 \ast \psub{\gamma}{b', a'}\ast \eta_1 \ast \psub{\pgot_1}{a, \pgot_1^+}
    \end{align*}
    is a $(2k+3, \mc P)$--polygonal line from $\spath^+$ to $\spath^-$. Since $\spath$ is $(r; 2k+3, \mc P)$--midthin, there exists a point $m'\in \qgot$ with $d(m',m)\leq r$. By \ref{prop:pi-far-from-pk}, none of the paths $\pgot_i$ intersect the $r$--neighbourhood of $m$. Since $d(a, m)\geq C$, but $d(a, a')\leq \ell$, the conditions on $C$ imply that $d(m, \eta_i) > r$ for $i = 1, 2$. Thus, there has to be a point $m'$ on $[a', b']_{\gamma}$ which is in the $r$--neighbourhood of $m$. Define $\gamma'$ as a subsegment of $[a', b']_{\gamma}$ whose midpoint is $m'$ and which has maximal domain length amongst all such subsegments. In particular, at least one of $\psub{\gamma}{a', m'}$ and $\psub{\gamma}{m', b'}$ is a subsegment of $\gamma'$, yielding $\abs{\gamma'}\geq C'$. Indeed, $d(a, m)\geq C$ and the conditions on $C$ and $C'$ yield, $d(a', m')\geq D_{\mc P}(C')$ and hence $\abs{\psub{\gamma}{a', m'}}\geq C'$. Similarly, $\abs{\psub{\gamma}{m', b'}}\geq C'$.

    \smallskip
    
    It remains to show that $\gamma'$ is $(2r; n', \mc P)$--midthin, which we will now do. Let $\qgot'$ be an $(n', \mc P)$--polygonal line from $\gamma'^+$ to $\gamma'^-$. Denote by $\qgot $ the $(n'+2k+4, \mc P)$--polygonal line
    \begin{align*}
        \qgot  = \psub{\pgot_2}{\spath^+, b} \ast \eta_2\ast \psub{\gamma}{b', \gamma'^+} \ast \qgot' \ast \psub{\gamma}{\gamma'^-, a'}\ast \eta_1\ast \psub{\pgot_1}{a, \gamma^-}.
    \end{align*}
    This is depicted in Figure~\ref{fig:close-to-midthin2}. Since $\spath$ is $(r; n'+2k+4, \mc P)$--midthin, $\qgot $ has to intersect the $r$--neighbourhood of $m$. As argued above, $\eta_i$ and $\pgot_i$ do not intersect the $r$--neighbourhood of $m$. Further, $d(m',  \psub{\gamma}{b', \gamma'^+})\geq D_{\mc P}^{-1}(d(m', b')) \geq C'\geq 2r$, implying $d(\psub{\gamma}{b', \gamma'^+}, m) > r$. Similarly, $d( \psub{\gamma}{\gamma'^-, a'}, m) > r$.
 
    Therefore, $d(\qgot', m)\leq r$, implying $d(\qgot', m')\leq 2r$ as desired. 
\end{proof}

\begin{lemma}\label{lemma:close-to-thin-implies-thin-contraction-gauge-version}
    Let $n', k$ be positive integers, let $n = n'+2k+4$ and let $\mc K' = (K', n, \mc P)$ be a contraction triple. There exists a non-decreasing function $C:\R_{\geq 0}\to \R_{\geq 0}$ such that the following holds for all $r\geq 0$. Let $\gamma\in \mc P$ be a special path and let $\pgot = \pgot_1\ast \spath \ast \pgot_2$ be a $(2k+1, \mc P)$--polygonal line satisfying 
    \begin{enumerate}[label =\roman*)]
        \item $\spath\in \mc P$ is a special path and is $(r; n, \mc P)$--midthin. We denote its midpoint by $m$.\label{prop2:pk-is-thin}
        \item \label{prop2:pi-far-from-pk} For $i\in \{ 1, 2\}$, the path $\pgot_i$ is a $(k, \mc P)$--polygonal line with $d(\gamma, \pgot_i - \mc N_{C(r)}(m))\leq r$ and $d(\pgot_i, m) > r$.
    \end{enumerate}
    Then $\gamma$ contains a subpath $\gamma'$ which is $\mc K'$--midthin with necksize $2r$.
\end{lemma}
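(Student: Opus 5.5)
The plan is to deduce the statement from Lemma~\ref{lemma:close-to-thin-implies-thin}. The function $C$ will be chosen explicitly in terms of $D_{\mc P}$ and $K'$. One piece of polygon-count bookkeeping must be reconciled with the statement as worded, and I do not yet have a way to do so.

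\textbf{Step 1: choosing the function $C$.} Given $r\geq 0$, set
\begin{align*}
C'(r) = \max\{K'(2r),\, 2D_{\mc P}(2r)+1\},\qquad C(r) = D_{\mc P}(C'(r)) + D_{\mc P}(r) + r + 1 .
\end{align*}
Both are non-decreasing, since $K'$ and $D_{\mc P}$ are. We take $\ell = r$ in Lemma~\ref{lemma:close-to-thin-implies-thin}. Then hypothesis \ref{prop2:pi-far-from-pk} of the present statement is exactly hypothesis \ref{prop:pi-far-from-pk} there. The numerical conditions $C' > 2D_{\mc P}(2r)$ and $C\geq D_{\mc P}(C')+D_{\mc P}(\ell)+r+1$ also hold by construction.

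\textbf{Step 2: the parameter and the gap.} The parameter of Lemma~\ref{lemma:close-to-thin-implies-thin} is the number of pieces allowed in the test polygon, which I will call $p$ here. With $p = n'$, its hypothesis \ref{prop:pk-is-thin} asks that $\spath$ be $(r; 2k+4+n', \mc P)$--midthin, which is \ref{prop2:pk-is-thin} since $n = n'+2k+4$. Its conclusion is then a subpath $\gamma'\subseteq\gamma$ with $\abs{\gamma'}\geq C'(r)\geq K'(2r)$ that is $(2r; n', \mc P)$--midthin.

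This is not yet what the statement asks. Being $\mc K'$--midthin with necksize $2r$ means being $(2r; n, \mc P)$--midthin together with $\abs{\gamma'}\geq K'(2r)$, where $n$ is the entry of the triple $\mc K'=(K',n,\mc P)$. The length bound is supplied by the choice of $C'$, but Step~2 only certifies test polygons with $n' = n-(2k+4)$ pieces. This index mismatch is the main obstacle.

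\textbf{Step 3: closing the gap (the open step).} The first thing I would try is to rerun the second half of the proof of Lemma~\ref{lemma:close-to-thin-implies-thin} directly, with an $(n,\mc P)$--polygonal line $\qgot'$ between the endpoints of $\gamma'$. The aim is to avoid adding $2k+4$ auxiliary pieces to $\qgot'$. The idea is to use hypothesis \ref{prop2:pi-far-from-pk} to merge the auxiliary pieces into pieces of $\qgot'$ or into a single special path near $\gamma$. The tools would be Lemma~\ref{lemma:continued-special-path-distance} and Lemma~\ref{lemma:T-lemma}, applied along with the midthinness of $\spath$, to show that the merged pieces stay outside $B(m,r)$.

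I do not see how to make this merging work in general. The auxiliary pieces are arbitrary: $k$ pieces of each $\pgot_i$, the two connectors $\eta_i$, and the two arcs of $\gamma$. Hypothesis \ref{prop2:pk-is-thin} gives no control over $(n+2k+4)$--polygonal lines, so the extended polygon cannot be tested against $\spath$.

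If no such merging is available, the argument above proves the statement with $\mc K'$--midthinness replaced by $(K'(2r)$--long, $(2r; n', \mc P))$--midthinness. Equivalently, it proves the statement with \ref{prop2:pk-is-thin} strengthened to $(r; n+2k+4, \mc P)$--midthinness. As stated, the lemma additionally requires the final merging step, which this plan does not establish.
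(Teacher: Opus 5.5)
Your Steps 1--2 are exactly the paper's proof, which consists of one line: it applies Lemma~\ref{lemma:close-to-thin-implies-thin} with $\ell = r$, $C' = K'(2r)$ and $C(r) = D_{\mc P}(K'(2r)) + D_{\mc P}(r) + r + 1$.

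The index mismatch you flag is genuine for the statement as printed. It is a misprint in the statement, not a missing idea in your argument. The paper's argument also produces only a $(2r; n', \mc P)$--midthin subpath of length at least $K'(2r)$, and that is exactly what is needed where the lemma is used. In the proof of Theorem~\ref{thm:strict-diameter-dichotomy}, the triple against which $\alpha_j$ is anti-contracting is $\mc K' = (K', n', \mc P)$. The large count $n$ belongs to the other triple $\mc K = (K, n, \mc P)$, chosen large, and serves only to make $\spath = g^i\gamma$ $(r; n'+2k+4, \mc P)$--midthin; there $k = 6$, since $g^{i-1}\gamma \ast g^{i-1}\eta$ has six pieces. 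So the intended statement has $\mc K' = (K', n', \mc P)$, with $n = n'+2k+4$ entering only hypothesis i). Under that reading your Steps 1--2 already form a complete proof, and Step 3 should be dropped. The paper attempts no merging of the $2k+4$ auxiliary pieces, and, as you observe, nothing in the hypotheses controls the resulting longer polygonal lines.

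Your choice $C'(r) = \max\{K'(2r), 2D_{\mc P}(2r)+1\}$ is more careful than the paper's normalisation $K'(r) > r$. That normalisation does not by itself give the hypothesis $C' > 2D_{\mc P}(2r)$ of Lemma~\ref{lemma:close-to-thin-implies-thin}. Enlarging the length threshold in this way is harmless, because it only strengthens the conclusion.
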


\begin{proof}
    By potentially increasing $K'$ we may assume that $K'(r)> r$ for all $r\geq 0$. Define $C(r) =D_{\mc P}(K'(2r)) + D_{\mc P}(r) + r+1$. The statement follows directly from Lemma~\ref{lemma:close-to-thin-implies-thin} applied to $\ell = r,C' = K'(2r)$ and $C = C(r)$.
\end{proof}

\subsection{Constructing $\Lambda$}\label{sec:construction-of-lambda}

In this section, we construct the path $\Lambda = \Lambda (\gamma, k)$ as described at the beginning of Section~\ref{sect:comparing_contraction_spaces}. 

\begin{lemma}\label{lemma:attached-quasi-geo}
    Assume that $\diam(X)$ is infinite and $G$ acts $\rho$--coboundedly on $X$. Let $Q'$ be as in Lemma~\ref{lemma:quasi-geodesic-concatenation} applied to the quasi-geodesic constants $\lao, \kao$ of $\mc P$ and $2\rho$. Further, let $Q'' = 2Q'$, let $\gamma\in \mc P$ be a special path and let $M\geq 0$ be a constant. If $\abs{\gamma}\geq (Q'')^2D_{\mc P}(2\rho)$, then there exists a special path $\spath\in \mc P$ with $\abs{\spath} = M$ and such that
    \begin{align}
        d(\spath^-, \gamma^+)&\leq 2\rho,\label{eq:close-to-x}\quad \text{and}\\
        d(\spath\cup [\spath^-, \gamma^+]_{\mc P}, m_{\gamma}) &\geq \abs{\gamma}/Q''.\label{eq:far-from-mid}
    \end{align}
\end{lemma}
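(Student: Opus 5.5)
The plan is to use coboundedness and infinite diameter to produce long special paths starting near $\gamma^+$, and then invoke Lemma~\ref{lemma:quasi-geodesic-concatenation} to decide which direction keeps away from $m_\gamma$.

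\textbf{Step 1: produce candidate paths.} Since $\diam(X)=\infty$, pick a point $p$ far from $\gamma^+$, say with $d(p,\gamma^+)\gg M+\abs{\gamma}$. By $\rho$--coboundedness, choose $g\in G$ with $d(g x_0,\gamma^+)\le\rho$, where $x_0$ is a base point. Translating, we obtain a special path $\beta$ with $d(\beta^-,\gamma^+)\le 2\rho$ and $\abs{\beta}$ as large as we like; $G$--invariance of $\mc P$ keeps translates special. A cleaner route is to take special paths $[\gamma^+, q]_{\mc P}$ directly with $q$ far away. However, we need two essentially opposite directions, so that one of them avoids $m_\gamma$. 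We therefore take a long special path $\eta$ through a point within $2\rho$ of $\gamma^+$: a translate of a long special path whose midpoint is near $x_0$, placed so that the midpoint lies within $2\rho$ of $\gamma^+$. Call this point $x\in\eta$, and let the two halves $\psub{\eta}{\eta^-,x}$ and $\psub{\eta}{x,\eta^+}$ both have domain length at least $M$.

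\textbf{Step 2: apply Lemma~\ref{lemma:quasi-geodesic-concatenation}.} Apply it with $\gamma^{-1}$ in the role of $\gamma$, since its initial point $\gamma^+$ is within $\rho'=2\rho$ of $x\in\eta$. The hypothesis $\abs{\gamma}\ge 16Q^4+2Q\rho$ follows from $\abs{\gamma}\ge (Q'')^2D_{\mc P}(2\rho)$. The lemma gives that one of the halves, say $\eta_+=\psub{\eta}{x,\eta^+}$, satisfies $d(\gamma^-,\eta_+)\ge\abs{\gamma}/Q'$.

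That bound is about $\gamma^-$, but we need distance from $m_\gamma$. So instead apply the lemma to the half $\psub{\gamma}{m_\gamma,\gamma^+}^{-1}$, which still has domain length $\abs{\gamma}/2$. This yields a half of $\eta$ at distance at least $\abs{\gamma}/(2Q')=\abs{\gamma}/Q''$ from $m_\gamma$. Let $\spath$ be its initial subpath of domain length exactly $M$, which is special by consistent closure; $\spath^-=x$ gives \eqref{eq:close-to-x}.

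\textbf{Step 3: the connector.} It remains to check that $[\spath^-,\gamma^+]_{\mc P}$ is far from $m_\gamma$. Its diameter is at most $D_{\mc P}(2\rho)$. Meanwhile $d(\gamma^+,m_\gamma)\ge \abs{\gamma}/(2\lao)-\kao$, which exceeds $\abs{\gamma}/Q''+D_{\mc P}(2\rho)$ because $\abs{\gamma}\ge (Q'')^2D_{\mc P}(2\rho)$ and $Q''\ge 64\lao^3$. This gives \eqref{eq:far-from-mid}.

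\textbf{Main obstacle.} The delicate step is Step 1: guaranteeing a special path passing within $2\rho$ of $\gamma^+$ that extends at least $M$ in both directions. I would take $a,b$ with $d(a,b)$ huge, possibly using infinite diameter together with coboundedness to get arbitrarily long special paths. I would then translate one of them by $g$ so that the point at domain parameter $\abs{\cdot}/2$ lands within $\rho$ of a translate of $x_0$, and hence within $2\rho$ of $\gamma^+$. The constant bookkeeping is then routine.
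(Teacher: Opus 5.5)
Your proposal is correct and is essentially the paper's proof. You take a special path of domain length at least $2M$, translate it (the paper translates $\gamma$ instead, which is equivalent by $G$--invariance) so that a point at parameter distance at least $M$ from both ends lies within $2\rho$ of $\gamma^+$. You then let Lemma~\ref{lemma:quasi-geodesic-concatenation}, applied to the half $\psub{\gamma}{m_\gamma,\gamma^+}^{-1}$, select the side that stays at distance $\abs{\gamma}/Q''$ from $m_\gamma$, and keep a length-$M$ piece of that side.

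Two small remarks. First, when the selected side is $\psub{\eta}{\eta^-,x}$, reversing it needs undirectedness of $\mc P$, not just consistent closure. Second, your treatment of the connector through the lower bound on $d(\gamma^+,m_\gamma)$ is more careful than the paper's. The half-path application only gives $\abs{\gamma}/Q''$ at $\spath^-$, so there is no room left to subtract $D_{\mc P}(2\rho)$ at that point.
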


\begin{proof}
     Let $M\geq 0$. Let $\spath' :  [a, b] \to X$ be a special path with $\abs{\spath'} >  2M$ (this exists since $\diam(X)$ is infinite), and let $t = a+M$. Let $h\in G$ be an element such that $d(hx, \spath'(t))\leq 2\rho$. Applying Lemma~\ref{lemma:quasi-geodesic-concatenation}, yields
    \begin{align}
         d(\isub{\spath'}{t, b}, hm_\gamma)\geq \abs{\gamma}/Q' \qquad \text{or} \qquad d(\isub{\spath'}{a, t}, hm_\gamma)\geq \abs{\gamma}/Q'.\label{eq:one-stays-close}
    \end{align}
    If the former holds, define $\spath = h^{-1}\cdot\isub{\spath'}{t, t+M}$, if the latter holds, define $\spath = h^{-1}\cdot\left(\isub{\spath'}{t-M, t}\right)^{-1}$. Since $\mc P$ is $G$--invariant and undirected, we have $\spath\in \mc P$. With this, \eqref{eq:close-to-x} holds. Observe that $\diam([\spath^-, x]_{\mc P})\leq D_{\mc P}(2\rho) \leq \abs{\gamma}/Q''$. Hence \eqref{eq:one-stays-close} combined with the triangle inequality yields \eqref{eq:far-from-mid} and concludes the proof.
\end{proof}

\begin{definition}
    Let $x\in X$ be a point. We denote by $S(k, x_0)$ the number of elements $g\in G$ such that $d(x_0, gx_0)\leq k$. Further we define
    \[
    S(k)  = \min\{S(k, x) \mid x\in X\}
    \]
    and call $S$ the growth function of $G$. 
\end{definition}

If the action of $G$ is proper, then the above function is not infinite. If the action is $\rho$--cobounded, then $S(k, x) \leq S(k + 2\rho, y)$ and hence if $S(k,x)$ is linear for some $x$, if and only if $S(k)$ is.
We recall the following fact due to Justin (see \cite[Corollary~7.27]{CheccheriniDadderio:Topics} for a modern proof).

\begin{lemma}[{\cite{Justin:Groupes}}]\label{lemma:linear-growth-implies-virtually-cyclic}
    If $G$ acts properly and coboundedly on $X$ and is not virtually cyclic, then the growth function $S$ of $G$ is superlinear. 
\end{lemma}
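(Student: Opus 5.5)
The plan is to reduce to the classical statement that a finitely generated group of linear growth is virtually cyclic, via the Milnor--Schwarz lemma. Since $G$ acts properly and coboundedly by isometries on the geodesic space $X$ (Convention~\ref{conv:geo-iso}), $G$ is finitely generated. Moreover, for a finite generating set $S_0$ and a basepoint $x_0$, the orbit map $g\mapsto gx_0$ is a quasi-isometry $(G,d_{S_0})\to X$. Thus there is a constant $\lambda\geq 1$ such that
\[
\{g : |g|_{S_0}\leq (k-\lambda)/\lambda\}\subseteq \{g : d(x_0,gx_0)\leq k\}\subseteq \{g : |g|_{S_0}\leq \lambda k+\lambda\}.
\]
Hence $S(k,x_0)$ is equivalent, in the usual sense, to the word growth function $\beta_{S_0}(k)=|B_{S_0}(k)|$. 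Properness makes every $S(k,x)$ finite.

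Next we compare $S(k)$ with $S(k,x_0)$. As remarked before the statement, $\rho$--coboundedness gives $S(k,x)\leq S(k+2\rho,y)$ for all $x,y\in X$. Therefore $S(k)\geq S(k-2\rho,x_0)$, so $S$ is superlinear as soon as $\beta_{S_0}$ is. (Here "superlinear" means not bounded above by a linear function, i.e.\ $\limsup S(k)/k=\infty$, which is the sense needed later.)

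It therefore suffices to prove the following: if $\beta_{S_0}(k)\leq Ck$ for infinitely many $k$ (or for all $k$), then $G$ is virtually cyclic. This is Justin's theorem. One route is to note that linear (indeed polynomial) growth puts us in the setting of Gromov's theorem, which gives a virtually nilpotent group. The Bass--Guivarc'h formula then forces the growth degree to be $1$, hence $G$ is virtually $\mathbb{Z}$, or finite. The finite case is excluded because $G$ acts coboundedly on $X$, which is unbounded in the setting where the lemma is applied. The more elementary route is the one in \cite{CheccheriniDadderio:Topics}: show that the spheres $|\partial B(k)|$ are uniformly bounded for infinitely many $k$. This yields a bounded-size "cut" separating the Cayley graph, hence at most two ends with a finite separating set, and then Stallings/Hopf gives a virtually cyclic group.

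The only real obstacle is this last step, the growth-to-virtually-cyclic argument when linearity is known only along a subsequence. I would simply cite \cite{Justin:Groupes} (with \cite[Corollary~7.27]{CheccheriniDadderio:Topics}) for it, since the statement is precisely an attribution. The remaining work, the Milnor--Schwarz comparison and the basepoint independence, is routine.
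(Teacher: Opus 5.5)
Your proposal is correct and is essentially the paper's argument. The paper attributes the statement to Justin, with Ceccherini-Silberstein--D'Adderio for a modern proof, and uses the basepoint comparison $S(k,x)\leq S(k+2\rho,y)$ recorded just before the lemma; it leaves implicit the Milnor--Schwarz comparison between $S(k,x_0)$ and word growth, which you make explicit. Two cosmetic points: the right-hand inclusion should read $|g|_{S_0}\leq \lambda k+\lambda^2$, and finite $G$ needs no separate exclusion because finite groups are already virtually cyclic.
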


In fact, If $G$ is not virtually cyclic, then $S$ is at least quadratic, but we only need the weaker result stated above. 

\begin{lemma}\label{lemma:good-connector}
 Assume that $G$ is not virtually cyclic and acts properly and $\rho$--coboundedly on $X$. Let $Q''$ be as in Lemma~\ref{lemma:attached-quasi-geo}. Let $\gamma\in \mc P$ be a special path with $\abs{\gamma}\geq (Q'')^2D_{\mc P}(2\rho)$ and midpoint $m=m_\gamma$. There exists an element $g\in G$ and a $(5, \mc P)$--polygonal line $\eta$ from $\gamma^+$ to $g\gamma^-$ satisfying 
    \begin{align}
        d(m, g\gamma \cup \eta)&\geq \frac{\abs{\gamma}}{Q''}, \label{eq:eta-far1}\quad\text{and}\\
        d(gm, \gamma \cup \eta) &\geq \frac{\abs{\gamma}}{Q''}.\label{eq:eta-far2}
    \end{align}
\end{lemma}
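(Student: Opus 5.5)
We use Lemma~\ref{lemma:attached-quasi-geo} twice, together with a counting argument based on superlinear growth (Lemma~\ref{lemma:linear-growth-implies-virtually-cyclic}).

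\textbf{Step 1: the two attached paths.} Since $G$ is not virtually cyclic and acts properly and coboundedly, $X$ has infinite diameter. Fix a large parameter $M$. Lemma~\ref{lemma:attached-quasi-geo} applied to $\gamma$ gives a special path $\spath$ with $\abs{\spath}=M$, $d(\spath^-,\gamma^+)\le 2\rho$ and $d(\spath\cup[\spath^-,\gamma^+]_{\mc P},m)\ge \abs{\gamma}/Q''$. Applying the same lemma to $\gamma^{-1}$, which is in $\mc P$ because $\mc P$ is undirected and has the same midpoint, and then inverting the output, gives $\spath'$ with $\abs{\spath'}=M$, $d(\spath'^+,\gamma^-)\le 2\rho$ and $d(\spath'\cup[\gamma^-,\spath'^+]_{\mc P},m)\ge\abs{\gamma}/Q''$.

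\textbf{Step 2: the connector.} For $g\in G$, set
\[
\eta=[\gamma^+,\spath^-]_{\mc P}\ast\spath\ast[\spath^+,g\spath'^-]_{\mc P}\ast g\spath'\ast[g\spath'^+,g\gamma^-]_{\mc P}.
\]
This is a $(5,\mc P)$--polygonal line, using $G$--invariance. The first two pieces avoid $B(m,\abs{\gamma}/Q'')$ by Step 1. By equivariance, the fourth and fifth pieces avoid $B(gm,\abs{\gamma}/Q'')$.

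We need to choose $g$ so that the following hold:
\begin{enumerate}
\item $\gamma$, $\spath$ and $[\gamma^+,\spath^-]_{\mc P}$ avoid $B(gm,\abs{\gamma}/Q'')$;
\item $g\gamma$, $g\spath'$ and $[g\spath'^+,g\gamma^-]_{\mc P}$ avoid $B(m,\abs{\gamma}/Q'')$;
\item the middle connector $[\spath^+,g\spath'^-]_{\mc P}$ avoids both balls.
\end{enumerate}
For the third condition, it suffices that $d(\spath^+,g\spath'^-)\le \epsilon M$ for a small constant $\epsilon$. The connector then has diameter at most $D_{\mc P}(\epsilon M)$. Meanwhile $\spath^+$ and $g\spath'^-$ lie at distance roughly $M/C$ from $m$ and from $gm$ respectively, since $\spath$ and $\spath'$ are quasi-geodesics of domain length $M$ starting within $2\rho$ of $\gamma$. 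Hence for $M\gg\abs{\gamma}$ the third condition is automatic once $d(\spath^+,g\spath'^-)\le \epsilon M$.

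\textbf{Step 3: counting.} Fix $x_0$ near $\spath^+$ and let $F=\{g: d(\spath^+,g\spath'^-)\le\epsilon M\}$. Up to $2\rho$ errors, $F$ is a translate of a ball of radius about $\epsilon M$ in the orbit, so $\abs{F}\ge S(\epsilon M-4\rho)$. The set $B$ of bad elements, those violating the first or second condition, consists of $g$ with $gm$ within $\abs{\gamma}/Q''$ of $\gamma\cup\spath\cup[\gamma^+,\spath^-]_{\mc P}$, or with $g^{-1}m$ within that distance of the corresponding union for $\spath'$. These unions have length at most linear in $M$, with constants depending only on $\mc P$ and $\abs{\gamma}$. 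Covering each union by about $M$ balls of radius $\abs{\gamma}/Q''+1$ and using properness, we get $\abs{B}\le c(\gamma)\,M$, where $c(\gamma)$ does not depend on $M$.

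By superlinearity, $S(\epsilon M-4\rho)/M\to\infty$, so for large $M$ we have $\abs{F}>\abs{B}$ and some $g\in F\setminus B$ exists. For that $g$, the first and second conditions together with Steps 1 and 2 give \eqref{eq:eta-far1} and \eqref{eq:eta-far2}.

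\textbf{Main obstacle.} The delicate point is keeping the bound on $\abs{B}$ linear in $M$ while the lower bound on $\abs{F}$ grows superlinearly. This requires that the radius $\abs{\gamma}/Q''$ and the multiplicity per unit length stay independent of $M$, which we get from properness and a fixed $\gamma$. It also requires that the counting ball has radius proportional to $M$ rather than a fixed size.
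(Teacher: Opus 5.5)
Your proposal is correct and follows essentially the same route as the paper. Like the paper, you attach two length-$M$ special paths at $\gamma^+$ and $\gamma^-$ via Lemma~\ref{lemma:attached-quasi-geo}, use the same five-piece $\eta$ through a short connector $[\spath^+,g\spath'^-]_{\mc P}$, and pick $g$ by playing the superlinear lower bound (Lemma~\ref{lemma:linear-growth-implies-virtually-cyclic}) on the number of translates with nearby endpoints against an $O(M)$ bound on bad translates coming from properness along the attached paths.
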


\begin{proof}
    Let $Q$ be the maximum of the quasi-geodesic constants of $\mc P$. Let $M\geq 1$ be an integer which we determine later and which will depend on $\abs{\gamma}, Q$ and $\rho$. Let $\spath : [t, t +M]\to X\in \mc P$ be the special path we get if we apply Lemma~\ref{lemma:attached-quasi-geo} to $\gamma$ and $M$. Let $\spath' : [t', t' +M]\to X\in \mc P$ be the special path we get if we apply Lemma~\ref{lemma:attached-quasi-geo} to $\gamma^{-1}$. Note that by potentially reparametrizing $\spath$ and $\spath'$ we can assume that $t$ and $t'$ are integers.
    
    The idea how to conclude the proof is the following: if we can find an element $g\in G$ such that $d(g\spath, m)$ and $d(gm, \spath')$ are both sufficiently large while $d(g\spath^+, \spath'^+)$ is small compared to $M$, then
    \[
    [\gamma^+, \spath'^-]_{\mc P}\ast \spath' \ast[\spath'^+, g\cdot\spath^+]_{\mc P} \ast g\cdot \spath^{-1}\ast [g\cdot\spath^-, g\cdot \gamma^-]_{\mc P}
    \]
    satisfies all requirements of the statement, concluding the proof. Hence we now aim to find such a $g\in G$ using that the growth function is superlinear. 

    Let $\eps = \abs{\gamma}/Q'' + D_{\mc P}(2\rho) + D_{\mc P}(\abs{\gamma})$. There exist $K_1\geq 1$ depending only on $\eps$ and $Q$ such that 
    \begin{align}
        \diam([\spath'^+, g\cdot\spath^+]_{\mc P})+\eps < \min\{d(\spath^-, \spath^+), d(\spath'^-, \spath'^+)\},\label{eq:upper-path-bounded} 
    \end{align}
    whenever $M\geq K_1$ and $d(\spath'^+, g\cdot\spath^+)\leq M/K_1$.

    By the definition of $S(\cdot, \cdot)$, for any particular $s'\in [t', t'+M]$, there exists at most $S(\eps, \gamma^-)$ many elements $g\in G$ such that $d(\spath'(s'), g\gamma^-) \leq \eps$. In particular, there exist at most $(M+1)S(\eps, \gamma^-)$ elements $g\in G$ such that $d(\spath'(s'), g\gamma^-)\leq \eps$ for some $s'\in [t', t'+M]\cap \Z$. Consequently, there exist at most $\left ((M+1)S(\eps, \gamma^-)+(M+1)S(\eps, \gamma^+)\right )$ many elements $g\in G$ such that $d(\spath'(s'), g\gamma^-)\leq \eps$ for some $s'\in [t', t'+M]\cap \Z$ or $d(g\spath(s), \gamma^+)\leq \eps$ for some $s\in [t, t+M]\cap \Z$.

    Since $G$ is not virtually cyclic and hence its growth function $S$ is superlinear there exists $K_2\geq K_1$ such that for all $k\geq K_2$,
    \begin{align}
        S(k/K_1) &> (k+1)S(\eps, \gamma^-)+ (k+1)S(\eps, \gamma^+).\label{eq2}
    \end{align}

    Choose $M = K_2$. The inequalities above show that there exists an element $g\in G$ such that 
    \begin{align}
        d(\spath'^+, g\spath^+)&< M/K_1,\label{eq:upper-distance-bounded}\\
        d(\spath'(s'), gm) &> \eps,\quad &\text{for all $s'\in [t', t'+M]\cap \Z$.}\label{eq:h-bound}\\ 
        d(g\spath(s), m) &> \eps,\quad &\text{for all $s\in [t, t+M]\cap \Z$.}\label{eq:h':bound}
    \end{align}
    
    Define
    \begin{align*}
        \eta = [\gamma^+, \spath'^-]_{\mc P}\ast \spath' \ast[\spath'^+, g\cdot\spath^+]_{\mc P} \ast g\cdot \spath^{-1}\ast [g\cdot\spath^-, g\cdot \gamma^-]_{\mc P}
    \end{align*}

    We now check that $g$ and $\eta$ satisfy all conditions in the statement. We start by proving \eqref{eq:eta-far1}. Inequality \eqref{eq:upper-distance-bounded} implies that \eqref{eq:upper-path-bounded} holds. In particular, by the triangle inequality, $d([\spath'^+, g\spath^+]_{\mc P}, \spath^-) > \eps$, yielding  
    \[
    d([\spath'^+, g\spath^+]_{\mc P}, m)\geq  d([\spath'^+, g\spath^+]_{\mc P}, \spath^-) - d(\spath^-, m)\geq \eps - 2\rho - \diam(\gamma) \geq \abs{\gamma}/Q''.
    \]
    Further, by Lemma~\ref{lemma:attached-quasi-geo}, we have
    \begin{align*}
        d(m, [\gamma^+, \spath'^-]_{\mc P}\ast \spath')\geq \abs{\gamma}/Q''.
    \end{align*}

    Note that by \eqref{eq:h':bound}, we have 
    \begin{align*}
        d(m, g\spath) > \eps - D_{\mc P}(1)\geq \abs{\gamma}/Q''.
    \end{align*}
    Lastly, by the triangle inequality
    \begin{align*}
    d(m, [g\spath^-, g\gamma^+]_{\mc P}\cup g\gamma) &\geq d(m, g\spath^-) - \diam([g\spath^-, g\gamma^+]_{\mc P}) - \diam(\gamma)\\
    &\geq  \eps - D_{\mc P}(2\rho) - D_{\mc P}(\abs{\gamma})\geq \abs{\gamma}/Q''.
    \end{align*}
    Hence we have that $d(m, \eta \cup g\gamma)\geq \abs{\gamma}/Q''$. The proof that $d(gm, \gamma \cup \eta)\geq \abs{\gamma}/Q''$ works analogously.
\end{proof}

Let $\gamma, g$ be as in Lemma~\ref{lemma:good-connector}. For $k\geq 1$ we define the $(6k, \mc P)$--polygonal line $\Lambda(\gamma, k)$ as 
\begin{align*}
    \Lambda(\gamma, k) = \prod_{i=0}^{k-1} g^{i}\gamma\ast g^i\eta.
\end{align*}
Further define $\Lambda$ as $\Lambda(\gamma, \infty)$.

\begin{proposition}\label{prop:no-skips}
    Using the setting of Lemma~\ref{lemma:good-connector}. Additionally, assume that $\abs{\gamma}\geq Q''(r+D_{\mc P}(r))+1$ and $\gamma$ is $(r; n, \mc P)$--midthin. Let $\pgot$ be a $(c, \mc P)$--polygonal line $\pgot$. If $d(\pgot, g^im)\leq r$ and $d(\pgot, g^jm)\leq r$ for some $i\leq j$ with $6(j-i)+3+\max(c, 6)\leq n$, then $d(\pgot, g^lm)\leq r$ for all $i <l <j$. In particular,
    \begin{align}
        d(g^j\gamma, g^im) > r\quad &\text{for all $1\leq \abs{i - j} \leq (n-10)/6$,}\\
        d(g^j \eta, g^im) > r \quad &\text{for all $0\leq \abs{i-j} \leq (n - 10)/6$.}
    \end{align}  
\end{proposition}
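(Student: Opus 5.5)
The plan is to prove the main implication by contradiction, applying midthinness of the translate $g^l\gamma$, which is $(r;n,\mc P)$--midthin by $G$--invariance of $\mc P$. Suppose $d(\pgot,g^im)\le r$ and $d(\pgot,g^jm)\le r$, but $d(\pgot,g^lm)>r$ for some $i<l<j$. Choose points $u,v\in\pgot$ with $d(u,g^im)\le r$ and $d(v,g^jm)\le r$, and special paths $\sigma_u$ from a point of $g^i\gamma$ near $g^im$ to $u$, and $\sigma_v$ from $v$ to a point of $g^j\gamma$ near $g^jm$, each of length at most about $D_{\mc P}(r)$.

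Next I build a closed polygonal loop through $g^l\gamma$. Start at $(g^l\gamma)^+$ and follow $\Lambda$ forward along $g^l\eta, g^{l+1}\gamma,\dots$ up to the point of $g^j\gamma$ near $g^jm$. Then take $\sigma_v^{-1}$, the subpath of $\pgot$ between $v$ and $u$ (at most $c$ special paths), and $\sigma_u^{-1}$. Finally follow $\Lambda$ forward from the point of $g^i\gamma$ near $g^im$ through $g^i\eta,\dots,g^{l-1}\eta$ to $(g^l\gamma)^-$. Counting pieces: $\Lambda$ contributes $6(j-i)$ special paths, since each block $g^k\gamma\ast g^k\eta$ has six, with one fewer because $g^l\gamma$ is omitted and the partial pieces $g^i\gamma,g^j\gamma$ are added. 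Together with the two connectors and the $c$ pieces of $\pgot$, this gives at most $6(j-i)+3+c$ pieces, which is at most $n$. Midthinness of $g^l\gamma$ then yields a point of this loop within $r$ of $g^lm$.

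The main step is to rule out every piece except the $\pgot$ part.
\begin{itemize}
\item Pieces of the form $g^k\eta$ with $k\ne l$, or $k=l$: translating by $g^{-l}$ and $g^{-(l-1)}$, Lemma~\ref{lemma:good-connector} gives $d(g^lm,g^l\eta)$ and $d(g^lm,g^{l-1}\eta)\ge\abs{\gamma}/Q''>r$.
\item Adjacent translates $g^{l\pm1}\gamma$: these are also controlled by Lemma~\ref{lemma:good-connector}.
\item Farther translates $g^k\gamma$ and $g^k\eta$ with $\abs{k-l}\ge2$: here I expect the real obstacle, because Lemma~\ref{lemma:good-connector} controls only neighbours.
\end{itemize}

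For the farther pieces I would argue by induction on $j-i$. Apply the statement for smaller gaps to a single special path $\pgot=g^k\gamma$, so $c=1$, or to $\pgot=g^k\eta$ as a $5$--polygonal line. If, for example, $g^k\gamma$ came $r$--close to $g^lm$ with $k>l+1$, then, since it also passes through $g^km$, the inductive hypothesis would force $g^k\gamma$ to come close to $g^{l+1}m$. That point is $r$--close to the $\mc K$--midthin path $g^{l+1}\gamma$, which contradicts Lemma~\ref{lemma:good-connector} applied to $g^{l+1}$. Alternatively, it contradicts Lemma~\ref{lemma:continued-special-path-distance} when $g^k\gamma$ lies in a longer special path. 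This is why the condition uses $\max(c,6)$. The base cases $j-i\le1$ are vacuous. The connectors $\sigma_u,\sigma_v$ lie within $r+D_{\mc P}(r)$ of $g^im$ and $g^jm$, so they are far from $g^lm$ once those points are far apart, which follows from the same estimate and the assumption $\abs{\gamma}\ge Q''(r+D_{\mc P}(r))+1$. The portions of $g^i\gamma$ and $g^j\gamma$ are handled in the same way.

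The ``in particular'' statements follow by applying the main claim to $\pgot=g^j\gamma$ (with $c=1$) or $\pgot=g^j\eta$ (with $c=5$). If such a path came $r$--close to $g^im$, then, being near $g^jm$ or containing $g^j\gamma^\pm$, it would be close to every intermediate $g^lm$, and in particular to a neighbouring one. That contradicts \eqref{eq:eta-far1} and \eqref{eq:eta-far2} after translation. The range $\abs{i-j}\le(n-10)/6$ is exactly what makes $6\abs{i-j}+3+6\le n$ hold.
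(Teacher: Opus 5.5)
Your strategy works, and it is a genuine variant of the paper's argument rather than the same one.

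\textbf{Comparison with the paper.} The paper fixes a counterexample with $j-i$ minimal. It then tests midthinness not at the bad translate $g^l\gamma$ but at the first intermediate one, $g^{i+1}\gamma$, using the loop $\psub{\Lambda}{g^{i+1}\gamma^+, g^jm}\ast[g^jm,\pgot^-]_{\mc P}\ast\pgot\ast[\pgot^+,g^im]_{\mc P}\ast\psub{\Lambda}{g^im,g^{i+1}\gamma^-}$. With that choice, every piece on the $g^i$--side is adjacent to $g^{i+1}m$, so Lemma~\ref{lemma:good-connector} handles it directly, and the inductive hypothesis is only needed on the $g^j$--side. The price is that $\pgot$ itself must be excluded by minimality: if $\pgot$ met $B(g^{i+1}m,r)$, the smaller pair $(i+1,j)$ would force it to meet every intermediate ball, including the bad one. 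Testing at $g^l\gamma$, as you do, excludes $\pgot$ for free, but you then need the inductive argument on both sides of $l$. The piece count, $6(j-i)+2+c$, is the same in both.

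\textbf{Repairs needed.} As written, your treatment of the non-adjacent pieces does not quite go through. The fix is the device the paper uses.
\begin{itemize}
\item A translate $g^k\eta$ passes through no midpoint, so applying the inductive hypothesis to $g^k\eta$ alone gives nothing; you need a second anchor. Use the $(6,\mc P)$--lines $g^k\eta\ast g^{k+1}\gamma$ for $l+1\le k\le j-1$. Meeting the $r$--balls at $g^lm$ and $g^{k+1}m$ would force it to meet the one at $g^km$, contradicting $d(g^km,g^k\eta\cup g^{k+1}\gamma)\ge\abs{\gamma}/Q''$. On the other side, use $g^k\gamma\ast g^k\eta$ for $i\le k\le l-2$, with the contradiction at $g^{k+1}m$. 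This is where $\max(c,6)$ genuinely enters. Note that $g^{l+1}\eta$ is not controlled relative to $g^lm$ by Lemma~\ref{lemma:good-connector}, so your split into ``neighbours'' and ``$\abs{k-l}\ge 2$'' misses it; the concatenation covers it. The same anchoring is needed for $g^j\eta$ in the ``in particular'' part.
\item Once the inductive hypothesis puts $g^k\gamma$ near every intermediate midpoint, the contradiction must come from the midpoint adjacent to $g^km$: that is $g^{k-1}m$ for $k\ge l+2$, and $g^{k+1}m$ for $k\le l-2$. Lemma~\ref{lemma:good-connector} says nothing about $d(g^{l+1}m,g^k\gamma)$ for $k\ge l+3$. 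Lemma~\ref{lemma:continued-special-path-distance} does not apply either, since distinct translates are not subpaths of a single special path.
\item For the connectors, let $\sigma_u$ start at $g^im$ and apply the inductive hypothesis to $\sigma_u$ itself. If it met $B(g^lm,r)$ with $l\ge i+2$, it would meet $B(g^{i+1}m,r)$, giving $d(g^im,g^{i+1}m)\le r+D_{\mc P}(r)<\abs{\gamma}/Q''$, a contradiction. A lower bound on $d(g^im,g^lm)$ for non-adjacent indices is not something Lemma~\ref{lemma:good-connector} gives directly.
\end{itemize}
With these changes your argument is complete.
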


\begin{proof}
    Fix $c\geq 6$ and assume that $\pgot$ is a $(c, \mc P)$--polygonal line with $d(\pgot, g^im)\leq r$ and $d(\pgot, g^jm)\leq r$ for some $i\leq j$ with $6(j-i)+4+c\leq n$ but $d(\pgot, g^lm)> r$ for some $i< l < j$. Finally, assume that amongst all such paths, $\pgot$ minimizes $j - i$. By potentially passing to a subsegment of $\pgot$ or its inverse, we can assume that $d(\pgot^-, g^jm)\leq r$ and $d(\pgot^+, g^im)\leq r$.

    Let $ \qgot$ be the path from $g^{i+1}\gamma^+$ to $g^{i+1}\gamma^-$ which `follows along $\Lambda(\gamma, k)$ and then uses $\pgot$ to get back to $g^{i+1}\gamma^-$'.  That is, 
    \begin{align}
         \qgot =  \psub{\Lambda}{g^{i+1}\gamma^+, g^jm} \ast [g^jm, \pgot^-]_{\mc P}\ast \pgot\ast [\pgot^+, g^im]_{\mc P} \ast \psub{\Lambda}{g^im, g^{i+1}\gamma^-}.
    \end{align}

         The path $ \qgot$ is a $(n, \mc P)$--polygonal line. Hence $d( \qgot, g^{i+1}m)\leq r$. We now show that all defining subpaths of $ \qgot$ have distance more than $r$ from $g^{i+1}m$, a contradiction.
    \begin{itemize}
        \item \textbf{the path $\pgot$.} Having $d(\pgot, g^{i+1}m)\leq r$, would contradict the minimality of $j-i$. 
        \item \textbf{the connector $\xi_1 = [g^jm, \pgot^-]_{\mc P}$.} We have $\diam\left(\xi_1\right)\leq D_{\mc P}(r)$. In particular, 
        \[
        d(\xi_1, g^{j-1}m) \geq d(g^jm, g^{j-1}m) - \diam\left(\xi_1\right) > \abs{\gamma}/Q'' - D_{\mc P}(r) > r.
        \]
        If $d(\xi_1, g^{i+1}m)\leq r$, then the special path $\xi_1$ (which can be viewed as a $(c, \mc P)$--polygonal line) intersects the $r$ neighbourhood of $g^{i+1}m$ and $g^jm$ but not $g^{j-1}m$, contradicting the minimality of $j-i$. 
        \item \textbf{the connector $\xi_2 = [\pgot^+, g^im]_{\mc P}$.} We have shown above that $d(\xi_1, g^{j-1}m) > r$. An analogous argument shows that $d(\xi_2, g^{i+1}m)> r$. 
        \item \textbf{a subsegment of one of $g^i\gamma$, $g^{i+2}\gamma$, $g^i\eta$ or $g^{i+1}\eta$.} This holds by Lemma~\ref{lemma:good-connector} and the lower bound on $\abs{\gamma}$.
        \item \textbf{a subsegment of $g^{h-1}\eta$ or $g^h\gamma$ for some $ i+2\leq h \leq j$.} Define $\pgot' = g^{h-1}\eta\ast g^h\gamma$. If $d(\pgot', g^{i+1}m)\leq r$, then $\pgot'$ is a $(6, \mc P)$--polygonal line with $d(\pgot', g^{i+1}m)\leq r$ and $d(\pgot', g^hm)\leq r$. Further, by Lemma~\ref{lemma:good-connector} and the lower bound on $\abs{\gamma}$, we have $d(\pgot', g^{h-1}m) > r$. Hence $\pgot'$ intersects the $r$ neighbourhood of $g^{i+1}m$ and $g^hm$ but not $g^{h-1}m$, contradicting the minimality of $j-i$. Here we use that $c\geq 6$.
    \end{itemize}
    This concludes the proof of the first part of the statement for $c\geq 6$. For $c\leq 6$ the first part of the statement follows since for any $c\leq 6$ a $(c, \mc P)$--polygonal line is a $(6, \mc P)$--polygonal line. 

    We are left the show the in particular part of the statement. As used above, by the lower bound on $\abs{\gamma}$ and Lemma~\ref{lemma:good-connector}, we have that $d(g^i\gamma \cup g^i\eta, g^{i+1}m) > r$ and $d(g^i\gamma \cup g^{i-1}\eta, g^{i-1}m) > r$. If $d(g^i\gamma \cup g^i\eta, g^jm)> r$  (or similarly, $d(g^i\gamma \cup g^{i-1}\eta, g^{j}m)\leq r )$ for some $j$ with $6\abs{j-i} + 10\leq n$, then the main part of the statement yields a contradiction. 
\end{proof}

\subsection{Proof of Theorem~\ref{thm:strict-diameter-dichotomy}}

We are now ready to prove Theorem~\ref{thm:strict-diameter-dichotomy}, let us recall its statement.

\strongdiameter*

\begin{proof}
    Let $\mc K' = (K', n', \mc P)$ be an allowed contraction triple. Let $\rho>0$ be a constant such that $G$ acts $\rho$--coboundedly on $X$. Let $\Delta = \ceil{\Delta(\rho+1, \delta_0)}$ be as in Lemma~\ref{lemma:actions-on-finite-hyperbolic-spaces}. In particular, by Corollary~\ref{cor:weak-diameter}, $\diam_{\mc K'}(\hat{X}_{\mc K'})$ is either infinite or bounded by $\Delta$. Let $Q''$ be the constant from Lemma~\ref{lemma:good-connector}. Further, let $\mc K = (K, n, \mc P)$ be an allowed contraction triple such that $n$ and $K$ are large enough to be determined later.

    If all pairs of vertices $(x, y)\in V(X)\times V(X)$ are $\mc K$--anti-contracting, the statement holds. So we may assume that there exists a special path $\gamma\in \mc P$ which is $\mc K$--midthin and has necksize $r$ for some $r\geq 0$. Denote the endpoints of $\gamma$ by $x$ and $y$ and let $m$ be the midpoint of $\gamma$. Let the path $\eta$ and the element $g\in G$ be as in Lemma~\ref{lemma:good-connector}. 
    
    \begin{claim}
        For $n, K$ large enough, we have
        \begin{align*}
                \hat{d}_{\mc K'} (x, g^{2\Delta +4 } x) > \Delta.
        \end{align*}
    \end{claim}

    As a consequence of the claim and Corollary~\ref{cor:weak-diameter} we have $\widehat{\diam}(\hat{X}_{\mc K'}) = \infty$. It thus remains to prove the claim.

    \textit{Proof of claim.}
    Assume that $\hat{d}_{\mc K'} (x, g^{2\Delta +4 } x) \leq  \Delta$. Let $z_0 = x, z_{\Delta+2} = g^{2\Delta+4}x$. Further, for $1\leq i \leq \Delta+1$, let $z_i\in V(X)$ be a vertex such that $d(z_0, z_1)\leq 1$, $d(z_{\Delta+1}, z_{\Delta+2})\leq 1$ and $\hat{d}_{\mc K'}(z_{i}, z_{i+1})\leq 1$ for all $i$. Such a sequence exists since $\hat{d}_{\mc K'} (x, g^{2\Delta +4 } x) \leq  \Delta$ and $\hat{X}_{\mc K'}$ is a graph. 

    Now, for all $0\leq i \leq \Delta$, let $\alpha_i\in \mc P$ be a $\mc K'$--anti-contracting special path form $z_i$ to $z_{i+1}$. Such paths $\alpha_i$ exists by Lemma~\ref{lemma:small-implies-anti-contracting}. Let $\alpha = \alpha_1\ast\ldots \ast \alpha_{\Delta+1}$.

    Observe that $\gamma^{-1}\ast \alpha\ast g^{2\Delta+4}\gamma$ is a $(\Delta+4, \mc P)$--polygonal line going through $m$ and $g^{2\Delta+4}m$. For $n, K$ large enough, Proposition~\ref{prop:no-skips} implies that $d(\alpha, g^jm)\leq r$ for all $1\leq j < 2\Delta+4$. By the pigeon hole principle, there has to exist some $j$ such that $d(\alpha_j, g^im)\leq r$ for at least 3 different indices $i$ with $1\leq i <2\Delta +4$. Again by Proposition~\ref{prop:no-skips} we can assume that those indices are $i-1, i$ and $i+1$ for some $i$. Now we apply Lemma~\ref{lemma:close-to-thin-implies-thin-contraction-gauge-version} to $\gamma = \alpha_j$ and $\pgot_1  = g^{i-1}\gamma\ast g^{i-1}\eta$, $\spath = g^i\gamma$ and $\pgot_2 =g^i\eta \ast g^{i+1}\gamma$. Note that for $n, K$ large enough, the conditions for Lemma~\ref{lemma:close-to-thin-implies-thin-contraction-gauge-version} are satisfied by Lemma~\ref{lemma:good-connector} and the definition of $\gamma$. Hence, there exists a $\mc K'$--midthin subpath $\alpha_j'$ of $\alpha_j$. This contradicts $\alpha_j$ being $\mc K'$--anti-contracting and hence concludes the proof of the claim.
    \hfill$\blacksquare$
\end{proof}

\section{Weak rank rigidity}\label{sect:the_dichotomy}
We are finally ready to prove the main result of the paper. We start by controlling the divergence of a space in terms of its generalised contraction space.

\begin{theorem}\label{thm:divergence-dichotomy1}
    Let $G$ be a non-virtually cyclic group which acts properly and coboundedly and by graph isomorphisms on a graph $X$ which is equipped with a navigable, undirected and $G$--invariant path system $\mc P$. If there exists an allowed contraction triple $\mc K = (K, n, \mc P)$ with $\widehat{\diam}_{\mc K}(V(X)) = 1$, then $X$ has linear divergence.
\end{theorem}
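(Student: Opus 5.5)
The plan is to bound $\Dv_\eps(n,\delta)$ linearly by tightening special paths with navigability, using the hypothesis that every pair of vertices is $\mc K$--anti-contracting. Because $\widehat{\diam}_{\mc K}(V(X))=1$, every pair $(x,y)$ of vertices is joined by a special path containing no $\mc K$--midthin subpath. Since $K$ is full, every subsegment $\spath'$ with $\abs{\spath'}\geq K(r)$ then fails to be $(r;n,\mc P)$--midthin. Concretely, for every $r$ and every such subsegment there is an $(n,\mc P)$--polygonal line from $\spath'^+$ to $\spath'^-$ avoiding $B(m_{\spath'},r)$. The goal is to turn this "there is always a detour" property into a detour of linear length.

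The argument runs as follows. Fix $a,b,c\in V(X)$ with $d(a,b)\leq n_0$ and put $R=\delta\, d(c,\{a,b\})$. Take an anti-contracting special path $\spath$ from $a$ to $b$. If $\spath$ avoids $B(c,R)$, it already has length at most $D_{\mc P}(n_0)$ and we are done. Otherwise, let $a',b'$ be the first and last points of $\spath$ within distance $2\lambda R$ of $c$, for a constant $\lambda$ to be chosen. Pick a subsegment $\spath'$ of $\spath$ around the point nearest $c$, with domain length comparable to $R$ and larger than $K(r)$. Here $r$ is a fixed multiple of $R$, which is possible once $R$ is large: choose $r=\theta R$ so that $K(\theta R)\ll R$. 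This uses fullness, so that $K$ is finite on the relevant range.

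The non-midthinness of $\spath'$ gives an $(n,\mc P)$--polygonal line $\alpha$ between the endpoints of $\spath'$ avoiding $B(m_{\spath'},\theta R)$. Its length, however, is uncontrolled. This is exactly where navigability (Definition~\ref{defn:navigable}) enters. We concatenate $\alpha$ with the pieces of $\spath$ running to the $2\lambda R$--sphere, and choose the centre $m$ to be close to $c$ so that this concatenation avoids $B(m,\lambda' R)$ while its endpoints lie within $2\lambda' R$ of $m$. Navigability then replaces it by a $(kn',\mc P)$--polygonal line of length $\leq C_0 n' \lambda' R$ that avoids $B(m,\lambda' R/C_0)$. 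Since $R\lesssim n_0$ once $\spath$ passes near $c$, splicing this into $\spath$ produces a path from $a$ to $b$ of length $O(n_0)$ avoiding $B(c,\delta' R)$. Lemma~\ref{lemma:divergence-def} and Proposition~\ref{prop2:one-ended} then allow us to adjust $\delta,\eps$ and pass to $G$.

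I expect the main obstacle to be aligning the ball avoided by $\alpha$ (centred at $m_{\spath'}$, of radius $\theta R$) with the ball around $c$. The detour must also stay away from $c$ along the parts of $\spath$ that we keep. I would handle this by choosing $\spath'$ so that $m_{\spath'}$ is at distance $\approx \theta R/2$ from $c$, which is possible because $\spath$ is a uniform quasi-geodesic passing near $c$. Then $B(c,\theta R/2)\subseteq B(m_{\spath'},\theta R)$, and the remaining pieces of $\spath$ outside the $2\lambda R$--ball are far from $c$ by Lemma~\ref{lemma:continued-special-path-distance}-style estimates. The small-$R$ case, where $K(r)$ might exceed the available length, is absorbed into the additive constant $\eps$.
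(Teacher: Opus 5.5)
There is a genuine gap, at the step you treat as routine: ``choose $r=\theta R$ so that $K(\theta R)\ll R$; this uses fullness.'' Fullness only says that $K$ takes finite values. An allowed triple only requires $K$ to dominate some fixed gauge, so nothing bounds the growth of $K$ from above, and the theorem is asserted for every allowed triple.

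Your subsegment $\spath'$ lives on the special path $\spath$ from $a$ to $b$, with midpoint near $c$. Its half-length is therefore at most the domain distance from its midpoint to the nearer of $a,b$, so $\abs{\spath'}=O(R/\delta)$. Moving the midpoint elsewhere on $\spath$ does not help, since the avoided ball must still cover a ball of radius $\gtrsim R$ around $c$. If $K$ is superlinear (say $K(r)=r^2$ or $e^r$), then for every fixed $\theta$ you eventually have $K(\theta R)>\abs{\spath'}$. Anti-contraction then only rules out neck sizes of order $K^{-1}(R/\delta)=o(R)$, so your detour $\alpha$ avoids a ball of sublinear radius and no fixed $\delta'$ comes out. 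The problematic regime is large $R$, not small $R$, so it cannot be absorbed into $\eps$. As written, your argument at best covers gauges of at most linear growth.

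The missing idea is to leave the segment $[a,b]$. This is where coboundedness is really used, not just to pass to $G$ at the end.

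\begin{itemize}
\item The paper fixes $c'\in\spath$ near $c$ and takes $R$ a fixed multiple of $\delta\, d(c,\{a,b\})$.
\item It uses coboundedness and Lemma~\ref{lemma:quasi-geodesic-concatenation} to attach special paths $\eta_1,\eta_2$. These start within $\rho$ of $a$ and $b$, are arbitrarily long, and (with their short connectors) stay at distance $\geq R$ from $c'$.
\item It picks vertices $a',b'$ on $\eta_1,\eta_2$ at distance $\geq M'$ from $a,b$ and joins them by a $\mc K$--anti-contracting special path $\gamma$. This is where $\widehat{\diam}_{\mc K}(V(X))=1$ enters.
\item If $\gamma$ stays $R$--far from $c'$ you are done. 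Otherwise, let $m$ be a point of $\gamma$ within $R$ of $c'$. The maximal subsegment of $\gamma$ with midpoint $m$ has $a'$ or $b'$ as an endpoint, so its length exceeds $K(2R)$ once $M'$ is large, however fast $K$ grows.
\item That subsegment is then not $(2R;n,\mc P)$--midthin. This yields an $(n,\mc P)$--polygonal detour avoiding $B(m,2R)\supseteq B(c',R)$.
\item Splicing gives an $(n+8,\mc P)$--polygonal line avoiding $B(c',R)$, between the first and last points of $\spath$ in $B(c',2R)$.
\end{itemize}

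This polygonal line can be enormously long, depending on $M'$ and hence on $K$. But the length bound in navigability (Definition~\ref{defn:navigable}) depends only on $R$ and the number of pieces, not on the input length. So it compresses this $K$-dependent detour to length at most $(n+8)C'R$ while still avoiding $B(c',R/C')$. That is the real role of navigability here, not just controlling the length of a detour built inside $[a,b]$.
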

\begin{proof}
   Let $\mc K = (K, n, \mc P)$ be an allowed contraction triple with $\widehat{\diam}_{\mc K}(V(X)) = 1$. Let $k, C'$ be such that $\mc P$ is $(C', k)$--navigable. Let $a, b, c\in X$ be points. It suffices to show that $\dv_{\epsilon} (a, b, c; \delta)\leq  C d(a, b) + C$ for some suitably chosen constants $C, \eps, \delta$ not depending on $a, b, c$. 
    Let $\spath : [s, t]\to X\in \mc P$ be a special path from $a$ to $b$. If $d(\spath, c) >  \delta \ell - \eps$, then $\dv_{\epsilon} (a, b, c; \delta)\leq \norm{\spath}\leq C d(a, b) + C$ for $C$ large enough, depending only on the quasi-geodesic constants $(\lao, \kao)$ of the path system $\mc P$.
    
    We thus from now on assume that there exists $c'\in \spath$ with $d(c', c)\leq \delta\ell - \eps$. Note that any path that avoids the $2\delta \ell$--neighbourhood of $c'$ avoids the $\delta \ell$--neighbourhood of $c$ so it is enough to show that there exists a path $\pgot$ from $a$ to $b$ which avoids the $2\delta\ell$--neighbourhood of $c'$ and has $\norm{\pgot}\leq Cd(a, b) + C$.

    \begin{claim}\label{claim:detour}
        There exists a $(n+6, \mc P)$--polygonal line $\pgot'$ from $b$ to $a$ with $d(\pgot', c') > 2\delta \ell C' = R$. 
    \end{claim}

    \textit{Proof of Claim.}
    Let $\rho$ be such that $G$ acts $\rho$--coboundedly on $X$. Let $M$ be a constant we choose later. For $\delta$ small enough compared to the other constants, there exists a special path $\eta_1\in \mc P$ such that $d(a, \eta_1^-)\leq \rho$, $\abs{\eta_1}\geq M$ and $d([a, \eta_1^-]_{\mc P}\ast \eta_1, c')\geq R$ (see Lemma~\ref{lemma:quasi-geodesic-concatenation}). We denote $[a, \eta_1^-]_{\mc P}$ by $\xi_1$. 

    Analogously, there exist a special paths $\eta_2, \xi_2 =[b, \eta_2^-]_{\mc P}$ such that $d(b, \eta_2^-)\leq \rho$, $\abs{\eta_2}\geq M$ and $d(\eta_2\ast \xi, c')\geq R$. Let $M'\gg M$ be another constant to be determined later and let $a', b'\in V(X)$ be vertices on $\eta_1$ and $\eta_2$ respectively such that $d(a, a')\geq M'$ and $d(b, b')\geq M'$. Such points exist if $M$ is large enough compared to $M'$. Let $\gamma\in \mc P$ be a special path from $a'$ to $b'$. Since $\widehat{\diam}_{\mc K}(V(X)) = 1$, we know that $(a', b')$ is $\mc K$--anti-contracting and we can choose $\gamma$ to be $\mc K$--anti-contracting. This is depicted in Figure~\ref{fig:divergence}. There are two cases. 

      \begin{figure}
        \centering
        \includegraphics[width=0.6\linewidth]{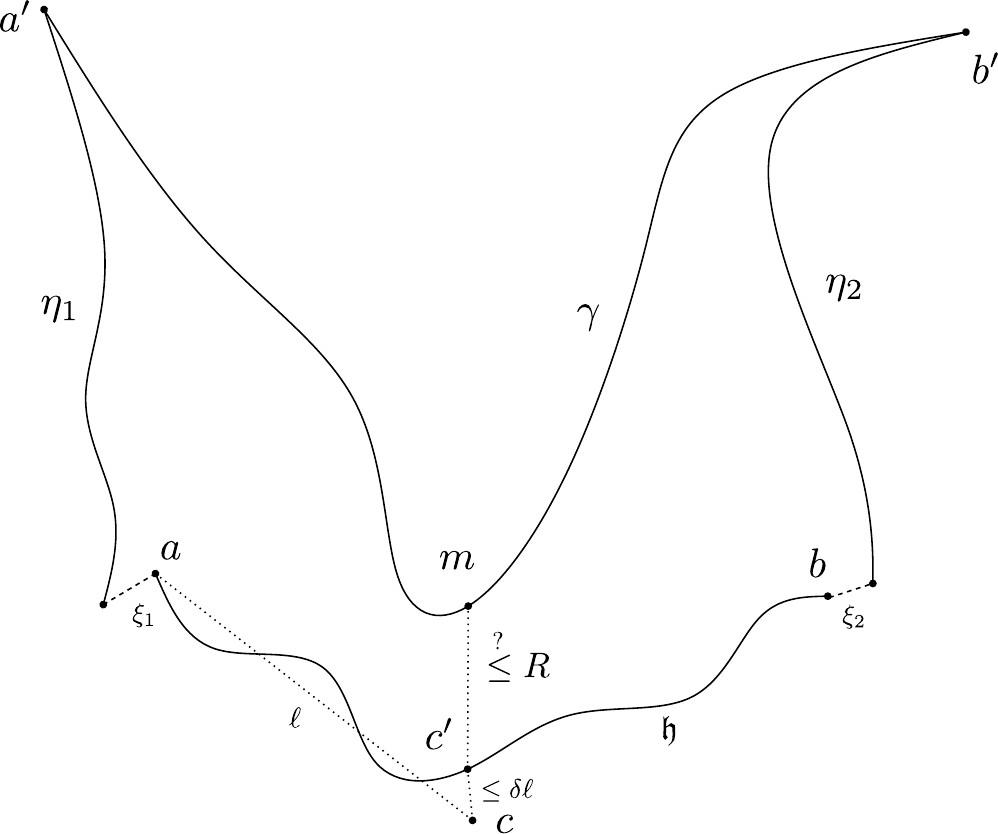}
        \caption{Proof of Claim~\ref{claim:detour}. We finish the proof differently depending on whether $d(c', m) > R$ or not.}
        \label{fig:divergence} 
    \end{figure}

    \textbf{Case 1:} $d(\gamma, c') > R$. In this case, 
    \begin{align*}
        \pgot' = \xi_2\ast \psub{\eta_2}{\eta_2^-, b'}\ast \gamma^{-1}\ast \psub{\eta_1^{-1}}{a',\eta_1^-}\ast \xi_1^{-1}
    \end{align*}
    is a $(5, \mc P)$--polygonal line with the desired properties.

    \textbf{Case 2:} $d(\gamma, c')\leq R$. Let $m\in \gamma$ be one of the points with $d(m, c')\leq R$ and let $\gamma'$ be a largest subsegment of $\gamma$ with $m_{\gamma'} = m$. In particular, one of the endpoints of $\gamma'$ is $a'$ or $b'$. If we choose $M'$ large enough, we can guarantee that $\abs{\gamma'}\geq K(2R)$.
    Since $\gamma$ is $\mc K$--anti-contracting, and $\abs{\gamma'}\geq K(2R)$, there exists a $(n, \mc P)$--polygonal line $\pgot''$ from $\gamma'^+$ to $\gamma'^-$ with $d(\pgot'', m) > 2R$ (and consequently $d(\pgot'', c')> R$). Define
    \begin{align*}
        \pgot ' = \xi_2\ast \psub{\eta_2}{\eta_2^-, b'}\ast \psub{\gamma^{-1}}{b', \gamma'^+}\ast\pgot''\ast \psub{\gamma^{-1}}{\gamma'^-, a'}\ast \psub{\eta_1^{-1}}{a',\eta_1^-}\ast \xi_1^{-1}.
    \end{align*}
    Using Remark~\ref{rem:lemmas-for-not-midthin} and the observations above, we obtain $d(\pgot', c)\geq R$. Hence $\pgot'$ is a $(n+6, \mc P)$--polygonal line with the desired properties.
    \hfill$\blacksquare$
   
    \smallskip

    Let $\pgot'$ as in Claim~\ref{claim:detour}. Let $a'' = \spath(s''), b'' = \spath(t'')$ be the first and last points on $\spath$ which are in $B(c', 2R)$. Consider the $(n+8, \mc P)$--polygonal line $\pgot'' = \isub{\spath}{t'', t}\ast \pgot'\ast \isub{\spath}{s, s''}$. This path avoids $B(c', R)$ and by construction has endpoints $b''$ and $a''$ in $B(c', 2R)$. 
    For $\eps > 1$ we have $\delta\ell > 1$, and hence $R\geq C'$. Thus by navigability of $\mc P$, there exists a $(k(n+8), \mc P)$--polygonal line $\tilde{\pgot}$ from $b''$ to $a''$ which avoids $B(c', R/C') = B(c', 2\delta \ell)$ and has $\norm{\tilde{\pgot}}\leq (n+6)C'R$. 
    Hence, for $C$ large enough compared to $C', n, \delta$, the path $\pgot = \isub{\spath}{s, s''}\ast \tilde{\pgot}^{-1}\ast \isub{\spath}{t'',t}$ avoids $B(c', 2\delta\ell)$ and satisfies $\norm{\pgot}\leq Cd(a, b)+C$.
\end{proof}

By combining all the above results, we prove the main result of the paper. 

\begin{theorem}\label{thm:main-dichotomy}
    Let $X$ be a geodesic metric space and let $G$ be a non-virtually cyclic group acting properly, coboundedly and by isometries on $X$. If $X$ admits a navigable, undirected, $G$--invariant path system $\mc P$, then either $G$ has linear divergence, or $G$ is acylindrically hyperbolic and has a $\mc P$--contracting element. 
\end{theorem}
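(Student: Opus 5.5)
The plan is to reduce to the graph setting of Sections~\ref{sec:generalised_contraction}--\ref{sect:the_dichotomy} and then combine the strong diameter dichotomy with the two ends of the dichotomy already proved.

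\textbf{Step 1: reduction to a graph.} Since $G$ acts properly and coboundedly on $X$, pick a basepoint $x_0$ and build a connected graph $Y$ with vertex set $G\cdot x_0$. Join $gx_0$ and $hx_0$ by an edge whenever $d(gx_0,hx_0)\leq 10\rho$, where $G x_0$ is $\rho$--dense. Then $G$ acts on $Y$ properly, coboundedly and by graph isomorphisms, and the orbit map $f\colon Y\to X$ is a $G$--equivariant quasi-isometry. Choose a coarse inverse $f'\colon X\to Y$, also equivariant up to bounded error; for instance, send $x$ to $gx_0$ with $d(x,gx_0)\leq\rho$, choosing $g$ in a $G$--invariant way on orbits. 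Push $\mc P$ forward to $\mc P' = (\mc P_{f'})^{\pm}$. By Remark~\ref{rem:push-forward-properties} this is a $G$--invariant, undirected, quasi-geodesic path system on $Y$. By Lemma~\ref{lem:navigable-push-forward} it is navigable. Divergence, acylindrical hyperbolicity and Morse elements are quasi-isometry invariants or intrinsic to $G$. Contracting elements transfer as well, since a $\mc P'$--contracting axis in $Y$ is Morse and hence gives a Morse element. So it suffices to work on $(Y,\mc P')$.

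\textbf{Step 2: apply the strong diameter dichotomy.} Theorem~\ref{thm:strict-diameter-dichotomy} gives an allowed contraction triple $\mc K$ with $\widehat{\diam}_{\mc K}(V(Y))\in\{1,\infty\}$.

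\textbf{Step 3: the two cases.}
\begin{itemize}
\item If the diameter is $1$, Theorem~\ref{thm:divergence-dichotomy1} shows that $Y$, and hence $G$ and $X$, has linear divergence.
\item If the diameter is $\infty$, Corollary~\ref{cor:contracting-element} gives an element $g$ that is loxodromic on $\hat{Y}_{\mc K}$. It is WPD, $G$ is acylindrically hyperbolic (because $G$ is not virtually cyclic), and the axis of $g$ is $\mc P'$--contracting.
\end{itemize}

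\textbf{Step 4: transfer back to $X$.} To obtain a $\mc P$--contracting element in $X$, note that the orbit of $g$ in $X$ is Morse, since $\mc P'$--contracting implies Morse by Theorem~\ref{theorem:contracting-implies-morse} and the Morse property is quasi-isometry invariant. Special paths of $\mc P$ between points of this orbit are then $M$--Morse. Corollary~\ref{cor:morse-implies-mccontracting}, which applies because $\mc P$ is navigable, shows these special paths are uniformly $\mc P$--contracting. Lemma~\ref{lem:contracting_preserved_Hdistance} then makes the axis itself $\mc P$--contracting.

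\textbf{Main obstacle.} The delicate point is Step 1, in two respects. First, $f'$ and the push-forward must be genuinely $G$--equivariant, so that $\mc P'$ is $G$--invariant. Second, the push-forward properties must hold after taking the directional closure, which is where Remark~\ref{rem:push-forward-properties}\eqref{prop:unidrected-push-forward} is used. The remaining steps are direct citations.
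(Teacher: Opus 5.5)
Your proposal is correct and follows the paper's proof essentially step for step. You replace $X$ by an orbit graph carrying the directionally closed push-forward path system, which stays navigable by Lemma~\ref{lem:navigable-push-forward}; you then apply Theorem~\ref{thm:strict-diameter-dichotomy}, concluding with Theorem~\ref{thm:divergence-dichotomy1} in the diameter-one case and Corollary~\ref{cor:contracting-element} in the infinite-diameter case. Your Step~4 usefully makes explicit how to pass from a contracting element for the pushed-forward system on the graph to a $\mc P$--contracting element in $X$, via Morseness and Corollary~\ref{cor:morse-implies-mccontracting}; the paper leaves this implicit. The one soft spot, shared with the paper, is that an exactly $G$--equivariant coarse inverse $X\to G\cdot x_0$ need not exist when point stabilisers differ, so ``choosing $g$ in a $G$--invariant way on orbits'' is not always possible. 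This is easily repaired, e.g.\ by taking as vertex set finitely many orbits that realise every stabiliser occurring in $B(x_0,\rho)$, or by using a set-valued push-forward.
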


\begin{proof}
    We want to use Theorem~\ref{thm:divergence-dichotomy1} so we need to replace $X$ with a graph $\Gamma$. We now proceed to construct this graph.
  
    Fix a basepoint $x_0\in X$ and let $\rho >0$ be a constant such that $X \subset G\cdot B(x_0, \rho)$. We define the graph $\Gamma$ as follows: its vertex set $V(\Gamma)$ is the orbit $G\cdot x_0$; the edge set $E(\Gamma)$ consists of all pairs of vertices $x, y$ with $d(x, y)\leq 4\rho$. The group $G$ has an obvious left action by graph isomorphisms on $\Gamma$, which inherits properness and coboundedness from the action of $G$ on $X$.

    A standard argument shows that $X$ being geodesic implies that $\Gamma$ is connected. Furthermore, there is a $G$--invariant quasi-isometry $f$ from $X$ to $\Gamma$. Let $\mc Q$ be the directional closure of the push-forward $\mc P_f$. The path system $\mc Q$ on $\Gamma$ inherits navigability and $G$--invariance from $\mc P$, see Lemma~\ref{lem:navigable-push-forward} and Remark~\ref{rem:push-forward-properties}.

    Thus, $(\Gamma, \mc Q)$ satisfies the standing assumptions of Section~\ref{sec:generalised_contraction} and satisfies the assumptions of Theorem~\ref{thm:strict-diameter-dichotomy} and Theorem~\ref{thm:divergence-dichotomy1}.

    By Theorem~\ref{thm:strict-diameter-dichotomy}, there exists an allowed contraction triple $\mc K$ where $\widehat{\diam}_{\mc K}(V(\Gamma))$ is either equal to $1$ or $\infty$. If it is equal to $1$, then Theorem~\ref{thm:divergence-dichotomy1} implies that the divergence of $\Gamma$ is linear. Since divergence is invariant under quasi-isometries, the divergence of $G$ and $X$ are also linear. 

    If on the other hand, $\widehat{\diam}_{\mc K}(V(\Gamma)) =\infty$, then $G$ is acylindrically hyperbolic and has a $\mc P$--contracting element by Corollary~\ref{cor:contracting-element}. 
\end{proof}

\section{Examples and applications}\label{sec:examples}

\subsection{Metric spaces}\label{sec:metric-spaces}

We list families of metric spaces to which our theorems apply.

\medskip

{\textbf{Example M1. Hilbert and Thompson geometries.}}\quad  Consider a cone $C$ in a vector space $V$ and let $\Omega \subset {\mathbb{P}}V$ be its projectivisation. We say that $C$ and $\Omega $ are {\emph{convex}} if there exists a hyperplane $H$ disjoint from $C\setminus \{ 0\}$ such that $\Omega$ is identified with a convex subset of the affine space ${\mathbb{P}}V \setminus {\mathbb{P}}H$. We say that $C$ and $\Omega $ are {\emph{properly convex}}  if there exists a hyperplane $H$ disjoint from the closure of $C$ with the origin removed. In finite dimension, this is equivalent to the property that the cone does not contain an affine line.      

A cone can be endowed with either Thompson's part metric or with Hilbert's projective pseudo-metric. The latter induces a genuine metric on the projectivisation $\Omega$ (see \cite[Chapter 1]{NussbaumMemoirs} and\cite{NussbaumWalsh}). If $V$ is finite dimensional and $\Omega$ is identified with an ellipsoid in an affine space ${\mathbb{P}}V \setminus {\mathbb{P}}H$ then the Hilbert metric on it defines a model of the hyperbolic space. 

{\emph{Both the Thompson and the Hilbert metric on a properly convex cone admit a bounded bicombing that is consistent and invariant under linear, respectively projective, automorphisms. The paths composing the bicombing are both geodesics and projective straight lines \cite[Theorems 1 and 2]{NussbaumWalsh}}}. 

More precisely, the bicombing $\phi$ described in \cite[Formula (1.2)]{NussbaumWalsh} has the required properties for the Thompson metric, while its projectivisation has these properties for the Hilbert metric. Indeed, the intersection of the cone with the plane defined by two points $x$ and $y$ is a 2-dimensional cone, a positive quadrant for some basis of ${\mathbb R}^2$; by considering the logarithm of each of the two coordinates one obtains a map from the quadrant to ${\mathbb R}^2$, mapping $\phi(s; x, y)$ on the line segment between the images of $x$ and $y$. This shows that $\phi$ is consistent, therefore its projectivisation is also consistent.   

The bicombing $\phi$ is invariant under linear automorphisms of the cone, since the parameters $M(x/y)$ and $M(y/x)$ used to define it are. Therefore, the projectivisation of $\phi$ is likewise invariant under projective automorphisms. 

In general, this bicombing does not come from a CAT(0) geometry. The space $\Omega$ endowed with a Hilbert metric is CAT(0) if and only if the cone $C$ is a Lorentz cone \cite{NussbaumWalsh}, equivalently, if $C$ is linearly isomorphic to a cone over an ellipsoid,
$C= \{ (t, x_1,\dots, x_n) \mid t^2 > x_1^2 + x_2^2+\cdots +x_n^2 \}.$ Indeed, by \cite{FoertschLytchakSchroeder}, a CAT(0) space is always Ptolemaic, and according to Kay \cite{Kay:ptolemaic} a Hilbert geometry is (locally) Ptolemaic if and only if the cone is over an ellipsoid.

The Hilbert geometry is not in general Gromov hyperbolic either. If the cone $C$ is properly convex, then  $\Omega$ endowed with a Hilbert metric is Gromov hyperbolic if and only if it is quasi-symmetrically convex \cite{Benoist:conesIHES03}.  

\medskip

{\textbf{Example M2. Injective metric spaces.}}\quad Injective metric spaces are the injective objects in the category of metric spaces with morphisms the 1-Lipschitz maps. They are always complete, contractible and geodesic.  

For equivalent definitions and further properties of injective metric spaces we refer to \cite{AronszajnP}, where they are called \textit{hyperconvex spaces}, \cite{Isbell:injective}, \cite{Lang:injective}, \cite{Basso:combing-improvement} and references therein. Examples of injective metric spaces are:

\begin{itemize}
    \item real trees;

    \item complete connected finite rank median spaces admit bi-Lipschitz deformations of their metric that remain ${\mathrm{Isom}} (X)$--invariant \cite[Theorem 7.8]{Bowditch:injective} and is injective. Asymptotic cones of mapping class groups and of Teichm\"uller spaces endowed with either the Teichm\"uller or the Weil-Petersson metric are examples of such median spaces.     
\end{itemize}

{\emph{ Every injective metric space $X$ admits an $\isom(X)$-equivariant reversible conical geodesic bicombing \cite[Proposition 3.8]{Lang:injective}}. 

\medskip

{\textbf{Example M3.}}\quad Another source of examples comes from {\emph{metric spaces that admit conical geodesic bicombings}}. 

One of the features that brought this class of examples to the forefront of current research is that spaces with conical geodesic bicombings have strong fixed-point properties \cite{Karlsson:fixed-point}. For instance, every isometry of such a space fixes a point in its metric compactification. The latter is the closure of the metric space $X$ seen as a subspace of the space ${\mathrm{Hom}}(X)$ of 1-Lipschitz functions $X\to {\mathbb{R}}$, endowed with the topology of pointwise convergence. The embedding of $X$ into ${\mathrm{Hom}}(X)$ is \textit{via} a map $x\mapsto \dist (\cdot , x) - \dist (x_0, x)$, where $x_0\in X$ is an arbitrary base point.

The class of spaces with conical geodesic bicombings includes: 
\begin{itemize}
    \item (convex subsets of) Banach spaces;
    \item CAT(0) spaces;
    \item ${\mathrm{Pos}}$, the space of positive bounded linear operators of a Hilbert space, on which every invertible bounded linear operator acts by isometry;
    \item spaces of probability measures endowed with the $1$-Kantorovich-Wasserstein distance;   
    \item the Teichmüller space endowed with the Weil-Petersson metric. 
\end{itemize}

{\emph{In a certain sense, subsets of injective metric spaces are generic examples of spaces with conical geodesic bicombings}}. Indeed, every space with a conical geodesic bicombing embeds isometrically into an injective metric space as a $\sigma$--convex subset (i.e. a subset for which there exists a conical bicombing in the ambient space such that all the combing lines with points in the subset are contained in the subset).

\medskip
{\textbf{Example M4}.}\quad {\emph{Spaces with a unique convex geodesic bicombing.}} These are studied in \cite{Haettel:CUB} under the name of Convexly Uniquely Bicombable (or CUB for shortness) metric spaces. These include 
\begin{itemize}
\item simplicial complexes with a polyhedral metric and a link condition essentially requiring that the complex be a lattice locally; examples are Euclidean buildings, some arc complexes of punctured spheres, the complex of homologous multicurves on a surface, the Kakimizu complex of minimal Seifert surfaces of a link etc; we refer to the section on groups for further examples; 
\item Busemann convex spaces; examples include normed vector spaces and certain Finsler metric spaces \cite{IvanovLytchak}. 
\end{itemize}

\medskip
{\textbf{Example M5}.}\quad {\emph{Systolic and weakly systolic simplicial complexes admit a consistent bounded quasi-geodesic bicombing.}}

In the case of systolic complexes, this is proved in \cite[Section 13]{JS2006}. This has been extended to weakly systolic complexes in \cite{ChalopinChepoiG}, where the construction of normal clique paths from  \cite{JS2006}, which define the bicombing, is extended. As in \cite{JS2006}, since these paths are equivalently defined globally and locally, they satisfy the (quasi)-consistency condition.

Examples of systolic simplicial complexes include the following.

\begin{itemize}
    \item A connected simply connected $2$-dimensional
simplicial complex is systolic if and only if it is a C(3)--T(6)
simplicial complex in the sense of \cite{Lyndon-Schupp(1977)}. For example, buildings of type $\widetilde{A}_2$ are systolic. In particular, the Cayley complex of a group with a triangular presentation is systolic if and only if the group (presentation) satisfies the C(3)--T(6) small cancellation condition. 
    \item A graph is the $1$-skeleton of a systolic complex if and only if it is bridged \cite{Chepoi:graphs}.
\end{itemize}

\medskip

{\textbf{Example M6}.}\quad Other examples of spaces endowed with a bounded, quasi-consistent quasi-geodesic bicombing are: 
\medskip
\begin{itemize}

\item Helly graphs of bounded valency (as they are $1$-skeleta of complexes which come endowed with a structure of injective metric spaces as above);
\item in the case of hierarchically hyperbolic spaces, the bicombing due to Petyt and Zalloum is bounded, quasi-consistent and quasi-geodesic \cite[Theorem~F and Proposition~4.7]{petytzalloum:constructing}; 
\item piecewise $L^p$--spaces, where $1<p<\infty$, that are locally uniquely geodesic are globally uniquely geodesic and the geodesic bicombing is bounded \cite{HaettelHodaPetytGT,HaettelHodaPetytLMS};
\end{itemize}

{\textbf{Example M7}.} \quad \textit{Spaces quasi-isometric to any of the above.} This is because having a navigable quasi-geodesic path system is a quasi-isometric invariant (see Lemma~\ref{lem:navigable-push-forward}). Notable examples in this category are coarsely injective (or coarsely hyperconvex) metric spaces. A definition of these spaces can be found in \cite[Section 3.3]{Chalopin:Helly}. According to \cite[Proposition 3.12]{Chalopin:Helly}, any coarsely hyperconvex metric spaces is quasi-isometric to a (possibly non proper) injective metric space. 

\subsection{Groups}\label{sec:groups}

In this section we enumerate groups acting properly discontinuously and coboundedly on spaces endowed with a bounded equivariant bicombing that is either quasi-consistent or geodesic. The succession of examples tries to follow the one in the Metric spaces section.

\textbf{Example G1. Convex cocompact groups.} Consider a cone $C\subset V$ and its projectivisation, as in Example M1. The cone $C$ is called {\emph{divisible}} if there exists a discrete subgroup $\Lambda$ in ${\mathrm{Aut}} (C) \leq GL(V)$ such that $C/\Lambda $ is compact. Likewise, $\Omega$ is called {\emph{divisible}} if there exists a discrete subgroup $\Gamma$ in ${\mathrm{Aut}} (\Omega ) \leq PGL(V)$ such that $\Omega /\Gamma $ is compact. 

The divisible properly convex cones that are homogeneous (or symmetric) have been classified by Koecher \cite{Koecher}, Vinberg \cite{Vinberg:hom-cones1, Vinberg:hom-cones2} and Rothaus \cite{Rothaus}. There are also many such cones that are not homogeneous, with the first such examples due to Kac and Vinberg \cite{KacVinberg}, see also \cite{BenoistDim3,Ballas,ChoiLeeMarquis1,ChoiLeeMarquis2,MarquisGD10}.

A systematic study of divisible convex cones was started by Benz\'ecri \cite{Benzecri} and continued by Yves Benoist (see \cite{Benoist:conesIHES03, BenoistDuke, BenoistTata, Quint:Bourbaki} and references therein). An interesting feature is that if $C$ is a properly convex cone that is irreducible (that is, it cannot be decomposed into a sum corresponding to a direct sum decomposition of $V$), the Zariski closure if a group $\Lambda$ as above is either commensurable with ${\mathrm{Aut}} (C)$ (when $C$ is homogeneous) or it is the entire $GL(V)$ \cite{BenoistDuke}. 

In \cite{BenoistTata} it is proven that if $\Omega $ is properly convex then $\Gamma$ is Gromov hyperbolic if and only if $\partial \Omega $  is of class $C^1$, equivalently if $\partial \Omega$ contains no segment of a projective line other than a point. 

This family of examples covers the following subcase. Given a compact manifold with a flat projective structure (i.e. a $( PGL(V), {\mathbb{P}}V )$--structure) and the property that every curve on it is homotopic (with fixed endpoints) to a geodesic segment if and only if it can be identified with the quotient of an open convex subset $\Omega$ of ${\mathbb{P}}V$ by a discrete subgroup of automorphisms of $\Omega$. Thus, fundamental subgroups of such manifolds are one set of examples.

A larger class of groups that still satisfy the assumptions of the main dichotomy theorem is that of {\emph{naively convex cocompact groups}}, in the terminology of \cite{DancigerGueritaudKassel}. These are groups $\Gamma$ that act properly discontinuously on a properly convex open subset $\Omega$ of $\mathbb{P}V$, and cocompactly on a nonempty $\Gamma$-invariant closed convex subset of $\Omega$.

Thus, a consequence of Theorem \ref{thm:main-dichotomy} is the following.

\begin{corollary}
 A naively convex cocompact group such that one asymptotic cone has a cut-point  is acylindrically hyperbolic.   
\end{corollary}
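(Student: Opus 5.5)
The plan is to deduce the corollary from Theorem~\ref{thm:main-dichotomy}. The work is in checking its hypotheses for a naively convex cocompact group $\Gamma$, and then in seeing why a cut-point in a single asymptotic cone rules out linear divergence.

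\textbf{Setting up the space.} Let $\Gamma$ act properly discontinuously on a properly convex open set $\Omega\subset\mathbb{P}V$ and cocompactly on a nonempty $\Gamma$--invariant closed convex subset $\mc C\subseteq\Omega$. Equip $\Omega$ with the Hilbert metric. By Example~M1, this metric carries a bounded, consistent bicombing $\phi$ invariant under $\mathrm{Aut}(\Omega)$, and its combing lines are both geodesics and projective segments. Because $\mc C$ is convex, every combing line with endpoints in $\mc C$ is a projective segment and so stays inside $\mc C$. Hence $\phi$ restricts to a bounded, consistent, $\Gamma$--invariant geodesic bicombing on $\mc C$ with the restricted metric. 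In particular $\mc C$ is a geodesic metric space, and $\Gamma$ acts on it properly and coboundedly by isometries.

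\textbf{Producing the path system.} Consistency implies quasi-consistency. Proposition~\ref{prop:bicombing-is-navigable} therefore shows that the combing path system on $\mc C$ is navigable. By Definition~\ref{defn:combing_path_system} it is undirected, and it is $\Gamma$--invariant because $\phi$ is. One could instead use Proposition~\ref{prop:combing-navigability}, since $\phi$ is a bounded geodesic combing.

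\textbf{Applying the dichotomy.} If $\Gamma$ is virtually cyclic, each asymptotic cone is a line or a point. A line has cut-points, so this case must be excluded or handled separately, presumably by the convention that $\Gamma$ is non-elementary. I would add this as a standing assumption, since virtually $\mathbb{Z}$ groups are not acylindrically hyperbolic. For non-virtually-cyclic $\Gamma$, Theorem~\ref{thm:main-dichotomy} says that either $\Gamma$ has linear divergence, or $\Gamma$ is acylindrically hyperbolic.

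It remains to show that linear divergence forbids cut-points in every asymptotic cone. This is where "one asymptotic cone" must be reconciled with "all". Proposition~\ref{prop2:one-ended} gives: linear divergence is equivalent to being wide, that is, to no asymptotic cone having a cut-point. So if some cone has a cut-point, the divergence is not linear, and the second alternative holds. The quasi-isometry invariance of divergence and of asymptotic cones lets us pass freely between $\Gamma$ and $\mc C$.

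\textbf{Main obstacle.} The delicate step is the restriction to $\mc C$. I need the bicombing lines between points of $\mc C$ to be exactly projective segments, so that they remain in $\mc C$, and I need the Hilbert metric of $\Omega$ restricted to $\mc C$ to be geodesic, so that the convention of geodesic spaces applies. Both follow from the straight-line description of $\phi$ in \cite{NussbaumWalsh}, and that is the point I would verify carefully.
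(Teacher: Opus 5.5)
Your argument is correct and follows the paper's route. The paper states this corollary as an immediate consequence of Theorem~\ref{thm:main-dichotomy}, using the $\mathrm{Aut}(\Omega)$--invariant bounded consistent bicombing of Example~M1; your restriction of that bicombing to the cocompact convex set and your appeal to Proposition~\ref{prop2:one-ended} make explicit steps the paper leaves implicit. You are also right that virtually cyclic groups must be excluded, since the statement as written drops the non-virtually-cyclic hypothesis of Theorem~\ref{thm:main-dichotomy}: for instance, a cyclic group generated by a hyperbolic isometry of the Klein model is naively convex cocompact and has asymptotic cone $\mathbb{R}$, but it is not acylindrically hyperbolic.
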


\textbf{Example G2.} Among the groups acting  properly discontinuously and cocompactly  on injective spaces (which are therefore proper) are the  {\emph{Helly groups}}, i.e. the groups acting p.d.c. on Helly graphs \cite[Theorem 1.5 (5)]{Chalopin:Helly}. According to \cite{Chalopin:Weaklymodular}, these include groups acting geometrically on swm-graphs.  

The class of Helly groups includes: 
\begin{itemize}
\item hyperbolic groups; 

\item groups hyperbolic relative to Helly subgroups;
\item cubulable groups; 
\item finitely presented graphical C(4)-T(4) small cancellation groups;
\item type-preserving uniform lattices in Euclidean buildings of type  $C_n$;
\item spherical Artin groups; 
\item FC Artin groups as well as certain affine Artin groups;  
\item Garside groups.
\end{itemize} 

\textbf{Example G3. Coxeter groups.} Some Coxeter groups are already covered by Example G1. In \cite[Theorems 1.3 and 1.8]{DancigerGueritaudKassel} Coxeter groups that have a convex cocompact representation are characterized. While not all Coxeter groups have such a representation, they all satisfy the assumptions of Theorem \ref{thm:main-dichotomy}, due to another type of geometry they can be endowed with. Namely, in \cite{OsajdaPrzytycki:Coxeter} the authors show that all Coxeter groups are biautomatic by constructing the \emph{voracious language} $\mathcal{V}$ and showing it satisfies the requirements for biautomaticity. Since words in $\mc V$ are geodesics, this yields a geodesic combing and the assumptions for our theorem are satisfied.

\medskip

\textbf{Example G4.} Groups acting properly discontinuously and coboundedly on CUB spaces. Among these are:
\begin{itemize}
    \item general simplices of groups;
    \item Euclidean Artin groups;
    \item (weak) Garside groups \cite{HuangOsajda:HellymeetsGarside}; the latter include Deligne's groups and fundamental groups of Salvetti complexes of oriented matroids \cite{Haettel:CUB}.
\end{itemize}

\medskip

\textbf{Example G5.} Groups acting properly discontinuously coboundedly on (weakly) systolic complexes. These are sometimes called {\emph{(weakly) systolic groups}}. Examples include:

\begin{itemize}
    \item Artin groups of almost large type \cite{HuangOsajda:largeArtin}; An Artin group $A_\Gamma $ is {\emph{of almost large type}} if and only if no triangle in $\Gamma $ has an edge with label $2$ and no square in  $\Gamma $ has three edges labelled by $2$. This class obviously contains the class of Artin groups of large type (i.e. the class where the label $2$ is not allowed).

    \medskip
    
    \item finitely presented subgroups of systolic groups are systolic \cite{Zadnik};
\end{itemize}  

Examples of groups that are weakly systolic and not systolic are provided in \cite{Osajda:preprint}.

\medskip 
\textbf{Example G6.} All hierarchically hyperbolic groups \cite{HaettelHodaPetytGT}. This includes:

\medskip
\begin{itemize}
    \item Mapping Class Groups,

    \item All three manifold groups that do not admit Nil or Sol, including non-geometric ones \cite{HRSS:equivariant},

    \item Artin groups of extra‑large, large, and hyperbolic type \cite{HagenMartinSisto:ExtraLarge},

    \item Extensions of lattice Veech groups \cite{DowdallDurhamLeiningerSisto:extensionsII}.
\end{itemize}

\medskip

\textbf{Example G7.}  Groups with a presentation satisfies a combinatorial condition described in \cite{HaettelHuang}. Examples are Artin groups of cyclic type and certain T(5) groups.

\medskip

\textbf{Example G8.} Biautomatic groups whose induced bicombing is either geodesic or quasi-consistent. Besides the examples presented elsewhere, these include all C(6) or C(3)-T(6) small cancellation groups \cite{GerstenShort:automatic}.
\medskip

\textbf{Example G9.} Fundamental groups of manifolds without focal points.
A generalization of the condition of non-positive curvature for Riemannian manifolds is that there exist no conjugate points. 
Given a closed Riemannian manifold $M$ without conjugate points, the universal cover $\widetilde M$ has a natural (consistent) geodesic bicombing that is $G$-invariant, for $G=\pi_1 (M)$. This can be generalized to a compact locally simply connected length space with no conjugate points $M$ \cite{Dibble}.  

This bicombing is known to be bounded if $M$ has no focal points \cite[p. 239]{IvanovKapovitch}. In the more general case of a Riemannian manifold without conjugate points it is an open question \cite[Question 8.3]{IvanovKapovitch}.

\bibliographystyle{alpha} 
\bibliography{cornelia} 

\end{document}